\documentclass[aos]{imsart}

\input{sections/supplement/preamble_shared}%% the preamble both journal roots share
\arxivversiontrue%% selects the arXiv-only branches of the shared sources (see arxiv_manifest.json)

\begin{document}

\begin{frontmatter}
%%%%%%%%%%%%%%%%%%%%%%%%%%%%%%%%%%%%%%%%%%%%%%
%%                                          %%
%% Enter the title of your article here     %%
%%                                          %%
%%%%%%%%%%%%%%%%%%%%%%%%%%%%%%%%%%%%%%%%%%%%%%
\title{Recursive-Head Geometry and Order-Free Efficient Inference in
Finite-State Nested Markov Models}
\runtitle{Order-Free Inference in Nested Markov Models}

\begin{aug}
%%%%%%%%%%%%%%%%%%%%%%%%%%%%%%%%%%%%%%%%%%%%%%%
%% Only one address is permitted per author. %%
%% Only division, organization and e-mail is %%
%% included in the address.                  %%
%% Additional information such as            %%
%% identifying the corresponding author must %%
%% be included in in the Acknowledgments     %%
%% section if necessary.                     %%
%% ORCID can be inserted by command:         %%
%% \orcid{0000-0000-0000-0000}               %%
%%%%%%%%%%%%%%%%%%%%%%%%%%%%%%%%%%%%%%%%%%%%%%%
\author[A]{\fnms{Haoyu}~\snm{Wei}\ead[label=e1]{h8wei@ucsd.edu}}
% \author[B]{\fnms{???}~\snm{???}\ead[label=e2]{???@???}}
% \author[B]{\fnms{???}~\snm{???}\ead[label=e3]{???@???}}
%%%%%%%%%%%%%%%%%%%%%%%%%%%%%%%%%%%%%%%%%%%%%%
%% Addresses                                %%
%%%%%%%%%%%%%%%%%%%%%%%%%%%%%%%%%%%%%%%%%%%%%%
\address[A]{Department of Economics, University of California, San Diego\printead[presep={,\ }]{e1}}

%\address[B]{???\printead[presep={,\ }]{e2,e3}}
\end{aug}

\begin{abstract}
Exploiting the equality restrictions that nested Markov models encode requires their tangent-space geometry.  For strictly positive finite-state models on arbitrary acyclic directed mixed graphs (ADMGs), we differentiate the recursive-head chart and prove that the range of its score map is the full tangent space.  Intrinsic-set coordinate blocks form an algebraic direct sum, blocks of distinct districts are orthogonal, and the resulting Gram projection needs neither mb-shieldedness nor a district order.  Exact counterexamples show that observational centering and kernel normalization alone do not certify tangency.

For the node, complete-source edge, and compatible path-specific intervention targets considered here, boundary substitution yields a normalized configured active law and an order-free canonical-gradient formula, extended to finite mixtures by independent source redraws.  A coherent one-step estimator with an exact remainder identity and a model-valid chart-flow targeted maximum likelihood estimator are efficient under stated local conditions.  On a narrower source-isolated fixed-node subclass, a sequential estimator has an exact transition-factorized drift and up to $2^m$ nuisance-correctness regimes over $m$ active-district transitions; its all-correct influence function projects onto the canonical gradient, and an exact rational law exhibits a strict variance gap.  These results separate order-free efficiency on arbitrary finite-state ADMGs from multiple robustness of a narrower construction.
\end{abstract}

\begin{keyword}[class=MSC]
\kwdgroup[type=primary]{\kwd{62H22}
\kwd{62G20}}
\kwdgroup[type=secondary]{\kwd{62D20}}
\end{keyword}

\begin{keyword}
\kwd{nested Markov model}
\kwd{acyclic directed mixed graph}
\kwd{efficient influence function}
\kwd{targeted maximum likelihood}
\end{keyword}

\end{frontmatter}

\tableofcontents

%%%%%%%%%%%%%%%%%%%%%%%%%%%%%%%%%%%%%%%%%%%%%%
%%%% Main text entry area:

\section{Introduction}\label{sec:introduction}

Nested Markov models encode equality constraints induced by latent-variable structure that are invisible to ordinary conditional-independence models \citep{richardson2023nested}.  Such constraints matter for inference even when the causal functional of interest is already identified: an estimator that ignores an equality restriction satisfied by the data-generating law can forgo precision, because the efficiency bound of the restricted model is never larger, and can be strictly smaller, than that of the model with the restriction dropped.  Exploiting this gain requires the tangent space of the restricted model and the projection that yields the target's canonical gradient.  This paper supplies that geometry for strictly positive finite-state nested Markov models on arbitrary ADMGs, without mb-shieldedness or an ordering of districts, and develops the efficient and robust estimators it supports.

Equality constraints of latent-variable margins beyond conditional independence were noted in \citep{robins1986new,verma1990equivalence}, and the c-component, or district, factorization for causal identification was developed in \citep{tian2002general}.  The Markov properties of ADMGs were established in \citep{richardson2003markov}, and the discrete ordinary Markov model was parameterized in \citep{evans2014markovian}.  Richardson, Evans, Robins and Shpitser \citep{richardson2023nested} defined the nested Markov model through fixing, proved that margins of DAG models lie in it, and characterized identifiable node interventions by fixing; the discrete latent-variable model is algebraically equivalent to the corresponding nested Markov model and has the same dimension \citep{evans2018margins}.  Evans and Richardson \citep{evans2019smooth} proved that the recursive-head parameterization is a smooth identifiable chart of the finite-state nested model, which is therefore a curved exponential family; that chart is the object differentiated here.  Identification of node interventions under latent confounding is complete \citep{shpitser2006identification,shpitser2008complete}; node, edge and path interventions were organized into a hierarchy in \citep{shpitser2016causal}, and graphical conditions for experimental identification of path-specific effects were given in \citep{avin2005identifiability}.  The complete-source edge and path-specific classes of Section~\ref{sec:intervention-bridge} are identified under the conditions of \citep{shpitser2013counterfactual,shpitser2018identification}; that section applies those results under their stated hypotheses and does not enlarge them.

On the estimation side, semiparametric efficiency theory and doubly robust estimation \citep{bang2005doubly,bickel1993efficient,robins1994estimation,tsiatis2006semiparametric} were extended to mediation and path-specific functionals in \citep{fulcher2020robust,miles2020semiparametric,tchetgen2012semiparametric,zhou2022semiparametric}, with estimator-specific combinations of correctly specified nuisance components.  The efficiency calculations for these mediation functionals use unrestricted observed-data models and do not incorporate additional nested Markov restrictions.  Multiple robustness in factorized likelihood models was characterized in \citep{molina2017multiple}.  Closest to the present work, Bhattacharya, Nabi and Shpitser \citep{bhattacharya2022semiparametric} obtain doubly robust augmented inverse probability weighted estimators under primal fixability, without mb-shieldedness; their tangent-space decomposition and corresponding efficiency results assume mb-shieldedness.  Targeted maximum likelihood estimation \citep{van2006targeted} uses locally least favorable updates; for a scalar target, the universal least favorable submodel \citep{vanderlaan2016universal} follows the canonical gradient along the path.  The present work supplies the tangent-space geometry of the nested model on an arbitrary finite-state ADMG---an order-free orthogonal decomposition by native districts with an explicit projection---and a targeting path that stays inside the model, constructed in recursive-head coordinates.

The setting is finite-state and interior throughout: every variable has a fixed finite state space, and the operative statistical model is the strictly positive part $\cN_+(\cG)$ of the nested Markov model $\cN(\cG)$, not the latent-margin model; the arbitrary-cardinality theorems do not rely on the binary worked examples.  On this support the recursive-head parameterization of \cite{evans2019smooth} is a smooth identifiable chart and the model is a curved exponential family \citep[Corollary~5.6]{evans2019smooth}, so regular finite-dimensional inference is available in principle.  What that fact does not supply is the geometry needed to use it on a graph: which functions of the data are model scores, how the scores attached to different parts of the graph relate to one another, and how an intervention target and its canonical gradient are expressed in the chart.  The difficulty is the identification of the correct score subspaces, not dimension counting.  A function centered under an observational conditional distribution need not be centered under a fixed intrinsic kernel, and a normalized tilt of a district kernel need not preserve the internal nested constraints of that district; Section~\ref{sec:counterexample} exhibits exact failures of both on a non-mb-shielded graph.

We therefore differentiate the forward recursive-head chart itself: the derivative of the chart map in a coordinate direction, divided pointwise by the mass function at that chart point, is a model-valid score by construction, and the coordinate blocks indexed by intrinsic sets give score blocks whose position relative to one another in $L_2(P)$ is the object of study.  Section~\ref{sec:setup} defines this chart score $J_P$ and its blocks $\cS_C(P)$.

\paragraph*{Contributions}

The parameterization is due to \cite{evans2019smooth}; the contributions of this paper are graph-specific consequences of its differential geometry, each stated here through its principal result.

\begin{enumerate}[label=(\roman*)]
    \item \emph{Native-district separation without an order (Theorem~\ref{thm:district-orthogonality}).}  At every strictly positive law the tangent space is the $L_2(P)$-orthogonal direct sum of the score spaces of the native districts, on every finite-state ADMG, without mb-shieldedness, a district order, or acyclicity of the district quotient.  The metric projection onto the tangent space is therefore a sum of districtwise Gram projections (Proposition~\ref{prop:gram-projection}), which is what makes an explicit order-free efficient projection possible.
    \item \emph{Configured active-chart selection (Theorem~\ref{thm:normalized-intervention-bridge}).}  For the node, complete-source edge, and compatible path-specific targets of classes (I1)--(I3) in Section~\ref{sec:intervention-bridge}, substituting the boundary values of the intervention selects existing tail-indexed coordinates and reassembles a normalized nested Markov law of the active graph, throughout $\cN_+(\cG)$.  The canonical gradient of the identified target is then a direct chart derivative solved district by district (Theorem~\ref{thm:intervention-chart-eif}), with no vertex order and no change-of-measure argument; finite mixtures defined by independent source redraws are covered by a product rule (Theorem~\ref{thm:independent-source-mixture-eif}).
    \item \emph{Efficient one-step estimation and chart targeting (Theorems~\ref{thm:coherent-one-step} and~\ref{thm:chart-tmle}).}  For an initial fit that is not an interior likelihood solution, a coherent one-step estimator built at a single fitted chart law has an exact remainder identity and is efficient under an $o_p(n^{-1/4})$ chart rate.  Under the local conditions of Theorem~\ref{thm:chart-tmle}, a least-favorable flow in recursive-head coordinates moves an initial model law to an empirical efficient-influence-function root on an event of probability tending to one; the resulting targeted estimator preserves substitution and range on every sample through its prespecified fallback and is asymptotically equivalent to the same-sample one-step estimator.
    \item \emph{A construction-specific efficiency--robustness comparison (Theorems~\ref{thm:sequential-multiple-robustness} and~\ref{thm:sequential-projection-comparison}).}  On the narrower source-isolated fixed-node subclass of Section~\ref{sec:sequential-union}, a full-history sequential estimator has an exact transition-factorized drift and is consistent on up to $2^m$ nuisance-correctness regimes, $m$ being the number of active-district transitions, but its all-correct influence function is generally not canonical: its projection onto the tangent space is the order-free canonical gradient, and an exact law exhibits a strict variance gap (Proposition~\ref{prop:sequential-strict-variance-gap}).  The comparison is between two constructions, not an impossibility theorem, and no union-model result is claimed for the full range of interventions in (I1)--(I3).
\end{enumerate}

\paragraph*{Organization}

Section~\ref{sec:setup} fixes notation and the finite-state chart; Section~\ref{sec:geometry} proves the tangent-space theorem, the district decomposition, and the Gram projection; Section~\ref{sec:counterexample} records the exact failure of the larger raw spaces.  Section~\ref{sec:intervention-bridge} states the causal identification input and proves the normalized active chart, its canonical gradient, and the independent-draw mixture extension.  Section~\ref{sec:targeted} develops the coherent one-step estimator and the chart-flow targeted estimator.  Section~\ref{sec:sequential-union} gives the restricted sequential alternative, its exact union drift, and the variance comparison, which Section~\ref{sec:finite-sample} illustrates at one law; Section~\ref{sec:discussion} discusses what an extension beyond fixed finite support would require.  \ifarxivversion Appendix~A and Appendix~\ref{app:prototype-ledger} are included below, with their sections, results, equations, figures and tables all numbered continuously with the article\else The Supplementary Material carries Appendix~A and Appendix~\ref{app:prototype-ledger}, whose sections, results and equations are numbered continuously with this article and whose figures and tables carry an S prefix\fi: proofs of the results of Sections~\ref{sec:setup}--\ref{sec:sequential-union} are collected in Sections~\ref{app:finite-state-parameterization-proof} and~\ref{app:proofs}, with the result-by-result scope summary in Section~\ref{app:results-scope} and the notation index in Section~\ref{app:notation}, and Appendix~\ref{app:prototype-ledger} keeps the exact computational verification separate from the general proofs.

\section{Finite-state nested Markov setup}\label{sec:setup}

Let $\cG=(V,E)$ be an acyclic directed mixed graph on a finite vertex set $V$, and let $X_V=(X_v:v\in V)$ take values in $\cX_V=\bigtimes_{v\in V}\cX_v$, where every $\cX_v$ is \emph{finite} and \emph{has at least two elements}.

A conditional ADMG (CADMG) $\mathcal H=(R,W,E_{\mathcal H})$ \citep[Definition~2.1]{evans2019smooth} has disjoint sets $R$ of \emph{random} vertices and $W$ of \emph{fixed} vertices.
Its directed edges belong to $(R\cup W)\times R$, its bidirected edges have both endpoints in $R$, and its directed part is acyclic; in particular, no edge has an arrowhead at a fixed vertex.  
Under the standing convention immediately following Definition 2.1 of \cite{evans2019smooth}, each displayed fixed vertex also has at least one child in $R$; Richardson-style fixing graphs may temporarily retain childless fixed vertices, which the reduction defined below removes.
A probability kernel for $X_R$ given $X_W$ is a nonnegative array $q_R(x_R\mid x_W)$ satisfying
\[
    \sum_{x_R\in\cX_R}q_R(x_R\mid x_W)=1 \quad\text{ for every } \quad x_W\in\cX_W.
\]
An ADMG is the special case $W=\varnothing$.

For a law $P$, write $p$ for its probability mass function and $Pf=\E_Pf$.  Probabilities of events are written with an upright $\pr$: $\pr(\cdot)$ when the law is generic or clear from the context, and $\pr_{Q}(\cdot)$ under a named law $Q$, so that $\pr_P(X_A=x_A)=\sum_{x_{V\setminus A}}p(x_V)$; for the chart laws $P_\theta$ introduced below we abbreviate $\pr_{P_\theta}$ to $\pr_\theta$.  Italic $P$, $P_0$, $P_\theta$ and $P^{\mathsf I}$ denote laws, lower-case $p$ and $q$ denote mass functions and kernels, and $\mathbb P_n$ denotes an empirical measure.  We write $\cN(\cG)$ for the finite-state nested Markov model of \cite[Definition~3.1]{evans2018margins} and define its strictly positive part by
\[
    \cN_+(\cG) :=\{P\in\cN(\cG):p(x_V)>0 ~~ \text{ for every } ~~ x_V\in\cX_V\}.
\]
We use the CADMG recursive factorization of \cite[Definition 3.3]{evans2019smooth} for coordinate synthesis.  
Proposition~\ref{prop:er-equivalence-rrs-compatibility} below establishes the bridge in the direction this paper uses: every law in $\cN_+(\cG)$ is nested Markov in the sense of \cite[Definitions~27--28, Proposition~29 and Theorem~38]{richardson2023nested}, so the fixing calculus of that source applies to such laws throughout --- the recursive formulation constructs the forward chart, and the fixing formulation identifies its reachable kernels.  The converse direction is neither claimed nor used.

Write $\cD(\cG)$ for the districts and $\cI(\cG)$ for the intrinsic sets \citep[see Definition 3 and Definition 33 in][]{richardson2023nested}; the sans-serif symbol $\mathsf I$ is reserved for interventions.
For $A\subseteq V$, write $\cG_A$ for the ordinary vertex-induced ADMG on $A$: its vertex set is $A$, and it retains exactly those directed and bidirected edges of $\cG$ whose endpoints both lie in $A$.  Applied to a CADMG the same construction additionally preserves each vertex's random or fixed status \citep{richardson2023nested}; for an ADMG that clause is vacuous, since every vertex is random.  Square brackets are reserved throughout this paper for the reduced CADMG of \cite[Definition 2.8]{evans2019smooth}, so $\cG[C]$ never abbreviates an unreduced fixing graph.
Following \cite[Definition 2.2]{evans2019smooth}, graphical relations are applied disjunctively to sets; in particular, $\pa_{\mathcal H}(A):=\bigcup_{a\in A}\pa_{\mathcal H}(a)$.

For $C\in\cI(\cG)$, its recursive head and tail are
\[
    H(C):=C\setminus\pa_{\cG}(C), \qquad T(C):=\pa_{\cG}(C).
\]
Thus, $H(C)=\operatorname{sterile}_{\cG}(C)$ is the \emph{sterile} subset of $C$---the set of sink nodes in the induced subgraph on $C$---and $T(C)\cap C=C\setminus H(C)$.
Recursive heads and intrinsic sets are in one-to-one correspondence \citep[Definition 4.1, Lemma 4.6, and the following remark]{evans2019smooth}.  Lemma~\ref{lem:intrinsic-set-bridge} of the Supplementary Material shows that the intrinsic sets of Definition~33 of \cite{richardson2023nested} are exactly those of that source, so the correspondence applies to $\cI(\cG)$ as defined here.
Every intrinsic set is bidirected-connected and therefore lies in a unique native district of $\cG$.
More generally, the districts of $\cG$ are its bidirected-connected components, so every nonempty bidirected-connected $S\subseteq V$ lies in exactly one of them; write $\delta(S)$ for that district.
Besides every intrinsic set, this covers every district of a vertex-induced subgraph $\cG_A$, because $\cG_A$ retains exactly the bidirected edges of $\cG$ with both endpoints in $A$, so a bidirected path witnessing connectedness in $\cG_A$ is one in $\cG$.

For $D\in\cD(\cG)$, write $\mathfrak d_D(\cG)$ for the district CADMG of \cite[Definition 2.5]{evans2019smooth}, whose random vertices are $D$ and whose fixed vertices are $\pa_{\cG}(D)\setminus D$. 
It retains every edge of $\cG$ with an arrowhead at a vertex in $D$.  
The forward map defined in \eqref{eq:general-forward-map} applies verbatim to a CADMG, conditionally on each value of its fixed vertices; we write that map as $\Phi_{\mathfrak d_D(\cG)}$.

More generally, for $C\subseteq V$, let $\cG[C]$ denote the reduced CADMG of \cite[Definition 2.8]{evans2019smooth}: its random vertices are $C$, its fixed vertices are $\pa_{\cG}(C)\setminus C$, its bidirected edges are those with both endpoints in $C$, and its directed edges run from $C\cup\pa_{\cG}(C)$ into $C$.
Equivalently, it is the subgraph containing precisely the edges whose arrowheads are all in $C$.
When $C$ is intrinsic, this is the reduced CADMG used for its intrinsic kernel.

A random vertex $v\in R$ of a CADMG $\mathcal H=(R,W,E_{\mathcal H})$ is \emph{fixable} when no other vertex of its district is a directed descendant of it, $\operatorname{de}_{\mathcal H}(v)\cap\dis_{\mathcal H}(v)=\{v\}$; write $\mathbb F(\mathcal H)$ for the set of fixable vertices.  Fixing a fixable $v$ deletes every edge with an arrowhead at $v$ and makes $v$ a fixed vertex.  For a strictly positive kernel $q$ that is Markov with respect to $\mathcal H$, the kernel operation divides by the conditional of $X_v$ given its Markov blanket $\operatorname{mb}_{\mathcal H}(v):=\{\dis_{\mathcal H}(v)\cup\pa_{\mathcal H}(\dis_{\mathcal H}(v))\}\setminus\{v\}$ in the current graph:
\[
    \phi_v(q;\mathcal H)(x_{R\setminus\{v\}}\mid x_{W\cup\{v\}}) :=\frac{q(x_R\mid x_W)}{q(x_v\mid x_{\operatorname{mb}_{\mathcal H}(v)})},
\]
where the divisor is $q(x_v\mid x_{\operatorname{mb}_{\mathcal H}(v)\cap R},x_W)$, formed by marginalizing and conditionalizing $q$ over random variables; the Markov property makes it independent of the fixed arguments indexed by $W\setminus\operatorname{mb}_{\mathcal H}(v)$, which the displayed shorthand suppresses \citep[Definitions~17 and~19]{richardson2023nested}.

A fixing sequence is \emph{valid} when each vertex is fixable in the graph current at its turn, iterated fixing $\phi_\omega$ is the composition along a valid sequence $\omega$, and a set is \emph{reachable} when some valid sequence produces it as the random set \citep[Definition~26]{richardson2023nested}; a set is \emph{intrinsic} when it is in addition a district in a reachable CADMG \citep[Definition~33]{richardson2023nested}.  Reachability alone is not sufficient --- in $a\to b$ with a third isolated vertex, $\{a,b\}$ is reachable but not intrinsic --- whereas in a directed acyclic graph every district is a singleton, so the intrinsic sets are exactly the singletons $\{v\}$, with $T(\{v\})=\pa_{\cG}(v)$, and the chart below reduces to the ordinary conditional-probability parameterization.

Every intrinsic $C$ is reachable as the entire random set of a terminal CADMG whose single district is $C$ (Section~\ref{app:er-rrs-compatibility} of the Supplementary Material).  Intrinsicness is what licenses the intrinsic kernel $q_C$ on $\cG[C]$ and the coordinate block indexed by $H(C)$ and $T(C)$: those two sets are defined by the displayed set operations for any $C$, but only for intrinsic $C$ do they index a block of the chart.

For a reachable random set $R$ with raw fixing pair $(\widetilde{\mathcal H}_R,\widetilde q_R)$, define the \emph{reduction}
\[
    \operatorname{red}(\widetilde{\mathcal H}_R,\widetilde q_R) :=(\mathcal H_R,q_R)
\]
by deleting the childless --- hence isolated --- fixed vertices of $\widetilde{\mathcal H}_R$ and suppressing the corresponding kernel arguments; the operation is well defined whenever $\widetilde q_R$ is constant in those arguments.
This is the general operator $\operatorname{red}_S$, the subscript naming the surviving random set; $\operatorname{red}_C$, $\operatorname{red}_D$, $\operatorname{red}_\Delta$ and the stagewise $\operatorname{red}_{R_j}$ below are its instances.
The reduction is pair-valued; $\operatorname{red}_R(\widetilde q_R)$ denotes its kernel component, the raw graph being understood.

\begin{proposition}[Compatibility of the recursive and fixing formulations]\label{prop:er-equivalence-rrs-compatibility}
    Let $\cG$ be an ADMG on a finite $V$ with every $\cX_v$ finite, and let $p>0$ be a mass function on $\cX_V$ with law $P$; for $V=\varnothing$, every clause is read through the empty reduced-CADMG convention of Appendix~\ref{app:finite-state-parameterization-proof} of the Supplementary Material, under which all three descriptions in \textup{(i)} hold trivially.
    \begin{enumerate}[label=(\roman*)]
        \item The following are equivalent: \textup{(a)} $P\in\cN(\cG)$, the model of \cite[Definition~3.1]{evans2018margins}; \textup{(b)} $p$ is representable in the head--tail parameterization of that source (\S3.2, opening paragraph); \textup{(c)} $p$ recursively factorizes according to $\cG$ in the sense of \cite[Definition~3.3]{evans2019smooth}.  Here \textup{(b)}$\iff$\textup{(c)} is \cite[Theorem~5.4]{evans2019smooth}, whose general finite-state form is Proposition~\ref{prop:standing-finite-state-import}\textup{(i)}.
        \item If the equivalent conditions in \textup{(i)} hold, then $P$ satisfies the nested Markov property of \cite[Definition~27]{richardson2023nested}.  Consequently, for every reachable random set $R$, the raw fixing pair $(\phi_\omega(\cG),\phi_\omega(p;\cG))$ is the same for every valid sequence $\omega$ reaching $R$ \citep[Theorem~31]{richardson2023nested}; the raw kernel $\widetilde q_R:=\phi_\omega(p;\cG)$ is strictly positive, and it is constant in the arguments indexed by the childless fixed vertices of $\phi_\omega(\cG)$ \citep[Corollary~32]{richardson2023nested}; and its reduction $q_R:=\operatorname{red}_R(\widetilde q_R)$ is a strictly positive kernel that recursively factorizes according to the reduced CADMG $\mathcal H_R$, hence lies in $\mathcal P^c(\mathcal H_R)$ by Lemma~\ref{lem:rf-implies-markov}.
        \item If the equivalent conditions in \textup{(i)} hold, then for every reduced reachable pair $(\mathcal H_0,q_{R_0})$ reached by a valid sequence and every $C\in\cI(\mathcal H_0)$, the kernel $r_C$ recursively derived from $(\mathcal H_0,q_{R_0})$ as in Proposition~\ref{prop:standing-finite-state-import}\textup{(iii)} equals the intrinsic kernel $q_C$ of \eqref{eq:fixing-kernel-bridge} below, for every admissible derivation.
    \end{enumerate}
\end{proposition}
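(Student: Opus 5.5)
The plan is to prove the three parts in order, importing the external results at the points the statement names and adding only bridging arguments. The empty-vertex case is disposed of first: by the empty reduced-CADMG convention every description in (i) holds trivially and (ii)--(iii) are vacuous. From then on $V\neq\varnothing$.

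For (i), the implication (b)$\iff$(c) is exactly \cite[Theorem~5.4]{evans2019smooth}, in the general finite-state form recorded as Proposition~\ref{prop:standing-finite-state-import}\textup{(i)}. The remaining link (a)$\iff$(b) is definitional. The model of \cite[Definition~3.1]{evans2018margins} is specified by the head--tail factorization, which \S3.2 of that source shows is equivalent to being representable in the head--tail parameterization. The only work is checking that the two sources' heads, tails, intrinsic sets and state spaces coincide. I would get this from Lemma~\ref{lem:intrinsic-set-bridge} and from the one-to-one head--intrinsic-set correspondence of \cite[Lemma~4.6]{evans2019smooth}.

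For (ii), the strategy is to show that recursive factorization implies the nested Markov property of \cite[Definition~27]{richardson2023nested}. I would argue by induction along valid fixing sequences. At the base, recursive factorization of $p$ with respect to $\cG$ gives, via Lemma~\ref{lem:rf-implies-markov}, the global Markov property for the CADMG.

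For the step, suppose the current reduced pair $(\mathcal H_R,q_R)$ recursively factorizes and $v$ is fixable. The Markov-blanket division $\phi_v$ then agrees with the division used in the recursive definition, namely the district-kernel operation of \cite[Definition~3.3]{evans2019smooth} applied to the relevant district. Hence the new kernel recursively factorizes with respect to $\phi_v(\mathcal H_R)$ after reduction, and constancy in the childless fixed arguments follows from the factorization. This gives the nested Markov property. Strict positivity is then preserved at each step, since $\phi_v$ divides a positive kernel by a positive conditional. Sequence independence and the constancy claim are then quoted from \cite[Theorem~31 and Corollary~32]{richardson2023nested}. Finally, the reduced kernel lies in $\mathcal P^c(\mathcal H_R)$ by Lemma~\ref{lem:rf-implies-markov} once more.

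For (iii), fix a reduced reachable pair and $C\in\cI(\mathcal H_0)$. Any admissible recursive derivation of $r_C$ in Proposition~\ref{prop:standing-finite-state-import}\textup{(iii)} is a sequence of district restrictions and marginalizations of sterile (hence, in the current graph, fixable) vertices. Each such marginalization coincides with fixing that vertex followed by reduction, because a childless random vertex is fixed by summing it out. Each district restriction coincides with fixing the vertices outside that district in a valid order.

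So every admissible derivation can be rewritten as a valid fixing sequence reaching $C$ from $R_0$, composed with the sequence already reaching $R_0$. By the sequence independence from (ii), the result equals $q_C$ of \eqref{eq:fixing-kernel-bridge}.

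The hard step is the inductive claim in (ii) together with the matching in (iii): that a single Richardson fixing step coincides, after reduction, with the Evans--Richardson recursive operation. I would first handle the district-factorization identity $q_R=\prod_{D}q_D$ and show that dividing by $q(x_v\mid x_{\operatorname{mb}(v)})$ only removes the factor of $v$'s district. I would then invoke the ancestral-margin and district clauses of \cite[Definition~3.3]{evans2019smooth}.
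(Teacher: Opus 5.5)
Your proposal follows the paper's route: clause (i) by citation, then an induction along the valid fixing sequence that carries recursive factorization (hence, via Lemma~\ref{lem:rf-implies-markov}, Markov membership) as the invariant. At each step, fixing $v$ replaces its district factor by the $v$-margin of that factor (it does not remove the factor), and (RF2) and (RF1) of \cite[Definition~3.3]{evans2019smooth} then produce the factors on the new districts; Definition~27, Theorem~31 and Corollary~32 of \cite{richardson2023nested} follow, and (iii) comes from rewriting each recursive reduction as a valid fixing sequence and invoking order-invariance.

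The one step your ``hence'' passes over is the reassembly. Knowing that each new district factor recursively factorizes on its district CADMG gives (RF1) for the reduced $\phi_v(\mathcal H_R)$, but not (RF2) for every ancestral set of that new graph. The paper proves this separately through the chart, using Proposition~\ref{prop:standing-finite-state-import}(i), Lemma~\ref{lem:algebraic-district-factorization} and Corollary~\ref{lem:algebraic-normalization}. You would need to supply it too, for example by an induction that peels off sterile vertices one at a time.
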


Clause \textup{(i)} is by citation, as stated.  Clauses \textup{(ii)}--\textup{(iii)}, the transfer of Markov membership across constant extension by childless fixed vertices, and the derivation-level form of \textup{(iii)} are stated in full and proved in Section~\ref{app:er-rrs-compatibility} of the Supplementary Material (Lemma~\ref{lem:rf-implies-markov} and Proposition~\ref{prop:er-rrs-compatibility-full}), by induction along the fixing sequence.

The intrinsic kernel of $C\in\cI(\cG)$ is the reduction of its fixing kernel.  Let $P\in\cN_+(\cG)$ have mass function $p$ and put $\widetilde\cG_C:=\phi_{V\setminus C}(\cG)$ and $\widetilde q_C:=\phi_{V\setminus C}(p;\cG)$, the unreduced fixing graph and kernel, which by Proposition~\ref{prop:er-equivalence-rrs-compatibility}(ii) do not depend on the valid sequence; following the convention fixed at the start of this section, the unreduced graph is written without brackets.  

Deleting the childless fixed vertices $Z_C:=(V\setminus C)\setminus\pa_{\cG}(C)$ of $\widetilde\cG_C$ leaves exactly $\cG[C]$ (Section~\ref{app:er-rrs-compatibility} of the Supplementary Material).  By Proposition~\ref{prop:er-equivalence-rrs-compatibility}(ii), $\widetilde q_C$ is constant in $x_{Z_C}$, with the external parents $\pa_{\cG}(C)\setminus C$ as its retained argument set, so the reduction operator applies and its kernel component defines the parent-indexed intrinsic kernel on the reduced CADMG $\cG[C]$,
\begin{equation}\label{eq:fixing-kernel-bridge}
    q_C(x_C\mid x_{\pa_{\cG}(C)\setminus C}) :=\operatorname{red}_C(\widetilde q_C)(x_C\mid x_{\pa_{\cG}(C)\setminus C}),
\end{equation}
equivalently $\widetilde q_C(x_C\mid x_{V\setminus C})=q_C(x_C\mid x_{\pa_{\cG}(C)\setminus C})$ with the right side read as its constant extension; no claim is made that every retained argument is functionally active at every law.

For every district $D\in\cD(\cG)$ the two definitions agree, $\mathfrak d_D(\cG)=\cG[D]$ (Section~\ref{app:er-rrs-compatibility} of the Supplementary Material).

\subsection{General finite-state recursive-head coordinates}

For each $v\in V$, choose a \emph{corner state} $k_v\in\cX_v$ and set
\[
    \widetilde\cX_v:=\cX_v\setminus\{k_v\}, \qquad \widetilde\cX_A:=\bigtimes_{v\in A}\widetilde\cX_v.
\]
In the binary examples $k_v=1$, so $\widetilde\cX_v=\{0\}$ and the single indexed head event is the all-zero event.
The corner state must not be confused with that indexed event.

For $C\in\cI(\cG)$, the coordinate block of $C$ is the family of real numbers
\begin{equation}\label{eq:general-coordinate-block}
    \theta_C(a_H\mid x_T), \qquad a_H\in\widetilde\cX_{H(C)},\quad x_T\in\cX_{T(C)},
\end{equation}
one for each displayed pair of arguments.  This display names the block and fixes its index ranges; away from a law the entries are free coordinates, and at a law they are defined by \eqref{eq:coordinate-extraction} below.
For $C\in\cI(\cG)$, define the coordinate index set and its associated array space by
\[
    \mathcal A_C :=\widetilde\cX_{H(C)}\times\cX_{T(C)}, \qquad U_C:=\R^{\mathcal A_C}, \qquad d_C:=|\mathcal A_C|.
\]

Form the \emph{tagged} disjoint union
\[
    \mathcal A_{\cG} :=\bigsqcup_{C\in\cI(\cG)}\bigl(\{C\}\times\mathcal A_C\bigr), \qquad U_{\cG}:=\R^{\mathcal A_{\cG}}, \qquad d:=|\mathcal A_{\cG}|=\sum_{C\in\cI(\cG)}d_C.
\]

At a law $P\in\cN_+(\cG)$, let $q_C$ be its intrinsic kernel defined by the fixing bridge in \eqref{eq:fixing-kernel-bridge}.  Every numerator and conditional divisor in the one-step fixing operation is positive when $p$ is strictly positive; induction over a valid fixing sequence therefore gives $\widetilde q_C>0$, and hence $q_C>0$.  The coordinate is then \emph{defined} as the kernel conditional probability
\begin{equation}\label{eq:coordinate-extraction}
    \theta_C(a_H\mid x_T) = \frac{q_C(a_H,x_{C\setminus H}\mid x_{T\setminus C})} { \sum_{z_H\in\cX_H} q_C(z_H,x_{C\setminus H}\mid x_{T\setminus C})}, \qquad H=H(C),\ T=T(C).
\end{equation}
Here, $x_T=(x_{T\cap C},x_{T\setminus C})$ and $T\cap C=C\setminus H$, so $x_T$ uniquely supplies both the internal tail configuration $x_{C\setminus H}$ and the external fixed-parent configuration $x_{T\setminus C}$. Strict positivity makes the denominator positive.  
Collecting these extracted arrays as the $C$-tagged blocks gives the map $\Theta_{\cG}:\cN_+(\cG)\to U_{\cG}$, characterized by $\{\Theta_{\cG}(P)\}_C=\theta_C(P)$ for every $C\in\cI(\cG)$.

Let $\iota_C:U_C\to U_{\cG}$ extend an array by zero outside the coordinates tagged by $C$.
For $\theta\in U_{\cG}$, write $\theta_C\in U_C$ for its restriction to the coordinates tagged by $C$.  Then $\theta=\sum_{C\in\cI(\cG)}\iota_C(\theta_C)$ uniquely.  The images of distinct blocks have disjoint tagged supports, so, by construction,
\[
    U_{\cG}=\bigoplus_{C\in\cI(\cG)}\iota_C(U_C).
\]

After fixing an ordering of the finite set $\mathcal A_{\cG}$, we identify $U_{\cG}$ with $\R^d$; the same ordering identifies each $U_C$ with $\R^{d_C}$.  The direct sum just displayed is a property of the tagged index set, and $d$ is the model dimension of \cite[Corollary~5.6]{evans2019smooth}, whose sum over recursive heads becomes the sum over intrinsic sets under the head--intrinsic-set correspondence.  Explicitly,
\[
    d_C =\left\{\prod_{v\in H(C)}(|\cX_v|-1)\right\} \left\{\prod_{v\in T(C)}|\cX_v|\right\},
\]
which reduces to $d_C=2^{|T(C)|}$ for binary variables.

\subsection{The forward M\"obius map}

For each recursive head $H$, let $C(H)$ denote its unique intrinsic set. For $B\subseteq V$, Definition 4.3 of \cite{evans2019smooth} defines $I_{\cG}(B)$ as the stable random vertex set obtained by alternately restricting to the random ancestors of $B$ and to the districts meeting $B$; when this set is bidirected-connected, that definition calls it the \emph{intrinsic closure} of $B$.  
In particular, Lemma 4.6 of that paper gives $I_{\cG}(H)=C(H)$ for every recursive head.  Define the strict head order
\[
    H_1\prec H_2 \quad\Longleftrightarrow\quad I_{\cG}(H_1)\subsetneq I_{\cG}(H_2).
\]
For $A\subseteq V$, let $\mathfrak P_{\cG}(A)$ be the recursive-head partition of \cite[Definition 4.11]{evans2019smooth}: select the $\prec$-maximal recursive heads contained in $A$, remove their vertices, and repeat on the remainder; the recursion is well defined because the selected maximal heads are disjoint and $\prec$ is partition-suitable \citep[Propositions B.1 and B.5]{evans2019smooth}.\footnote{\cite{evans2019smooth} use $\Phi_{\cG}$ for the maximal-head selector inside this recursion.  We reserve $\Phi_{\cG}$ for the forward synthesis map and write $\mathfrak P_{\cG}$ for the resulting partition.}

For a configuration of the random vertices $R$, set
\[
    \operatorname{nc}(x_R) :=\{v\in R:x_v\in\widetilde\cX_v\},
\]
the set of coordinates taking a noncorner state. All the preceding definitions extend to a finite-state CADMG $\mathcal H=(R,W,E_{\mathcal H})$ by taking intrinsic sets and graphical relations relative to $\mathcal H$.  Thus, with the graph clear from context, we use the same notation $H(C)$, $T(C)$, $\mathfrak P_{\mathcal H}$ and $\mathcal A_C$, and form the tagged array space
\[
    \mathcal A_{\mathcal H} :=\bigsqcup_{C\in\cI(\mathcal H)}(\{C\}\times\mathcal A_C), \qquad U_{\mathcal H}:=\R^{\mathcal A_{\mathcal H}}.
\]

Corner states are required only for the random vertices.  If a reachable reduction $\mathcal H[A]$ has random set $A\subseteq R$, then $\theta|_A\in U_{\mathcal H[A]}$ denotes restriction to the tagged blocks whose intrinsic sets are contained in $A$.  For $A\subseteq R$, define the partition product
\[
    F_A^{\mathcal H}(\theta;y_A,x_R,x_W) :=\prod_{H\in\mathfrak P_{\mathcal H}(A)} \theta_{C(H)}(y_H\mid x_{T(C(H))}),
\]
with the empty product equal to one.  The CADMG forward map is
\begin{align}
    \Phi_{\mathcal H}(\theta)(x_R\mid x_W) := {}& \sum_{\operatorname{nc}(x_R)\subseteq A\subseteq R} (-1)^{|A\setminus\operatorname{nc}(x_R)|} \sum_{\substack{y_A\in\widetilde\cX_A\\ y_{\operatorname{nc}(x_R)} =x_{\operatorname{nc}(x_R)}}} F_A^{\mathcal H}(\theta;y_A,x_R,x_W). \label{eq:general-forward-map}
\end{align}
For an ADMG $\cG$, this specializes to $\Phi_{\cG}:U_{\cG}\to\R^{\cX_V}$.  Formula \eqref{eq:general-forward-map} is the general finite-state extension announced in Appendix C of \cite{evans2019smooth}; in the binary case its inner sum has a single term.  It is a generalized M\"obius synthesis map from recursive-head coordinates to a possibly signed cell array.  For a one-vertex ADMG it reduces to
\[
    \Phi_{\cG}(\theta)(x_v)
  = \begin{cases}
        \theta_{\{v\}}(x_v),&x_v\in\widetilde\cX_v,\\[2pt]
        1-\displaystyle\sum_{a\in\widetilde\cX_v} \theta_{\{v\}}(a),&x_v=k_v,
    \end{cases}
\]
so the alternating sum supplies the omitted corner cell.

A concrete four-vertex instance of the district forward map, for the cyclic-quotient graph of Section~\ref{sec:cyclic-control}, is recorded in Section~\ref{app:B-cyclic} of the Supplementary Material.

\begin{remark}[Cellwise polynomial and multiaffine structure]\label{rem:forward-map-multiaffine}
    For fixed $(x_R,x_W)$, saying that $\Phi_{\mathcal H}$ is polynomial means that
    \[
        \eta\longmapsto \Phi_{\mathcal H}(\eta)(x_R\mid x_W)
    \]
    is a real polynomial in the finitely many tagged coordinates of $U_{\mathcal H}$.  More strongly, every monomial in \eqref{eq:general-forward-map} is square-free.  Indeed, each partition $\mathfrak P_{\mathcal H}(A)$ contains distinct recursive heads; the head--intrinsic-set bijection and the tagged coordinate construction then place the corresponding factors in distinct coordinate blocks.  Thus no coordinate can occur twice in a partition product.  Finite summation preserves this property, so every cell is affine in each coordinate separately.

    For $i\in\mathcal A_{\mathcal H}$, let $e_i\in U_{\mathcal H}$ be the corresponding coordinate vector.  For every $\eta\in U_{\mathcal H}$ and $t\in\R$, the polynomial map is Fr\'echet differentiable; write $\mathbb D\Phi_{\mathcal H,\eta}$ for its differential.  Multiaffinity gives the array identity
    \[
        \Phi_{\mathcal H}(\eta+t e_i) =\Phi_{\mathcal H}(\eta) +t\{\Phi_{\mathcal H}(\eta+e_i)-\Phi_{\mathcal H}(\eta)\}, \quad \mathbb D\Phi_{\mathcal H,\eta}[e_i] =\Phi_{\mathcal H}(\eta+e_i)-\Phi_{\mathcal H}(\eta).
    \]
    These are algebraic identities on the whole ambient coordinate space; the synthesized array need not be nonnegative at either $\eta$ or $\eta+e_i$.
\end{remark}

\begin{lemma}[Sterile-vertex marginalization]\label{lem:sterile-marginalization}
    Let $\mathcal H=(R,W,E_{\mathcal H})$ be a finite-state CADMG with $R\ne\varnothing$, and let $a\in R$ be sterile among the random vertices, so $\operatorname{ch}_{\mathcal H}(a)\cap R=\varnothing$.  Put $R_a:=R\setminus\{a\}$ and $W_a:=\pa_{\mathcal H}(R_a)\setminus R_a\subseteq W$.  Then, for every $\theta\in U_{\mathcal H}$, $x_{R_a}\in\cX_{R_a}$ and $x_W\in\cX_W$,
    \begin{equation}\label{eq:sterile-marginalization}
        \sum_{x_a\in\cX_a} \Phi_{\mathcal H}(\theta)(x_{R_a},x_a\mid x_W) =\Phi_{\mathcal H[R_a]}(\theta|_{R_a}) (x_{R_a}\mid x_{W_a}).
    \end{equation}
    In particular, the left side is independent of $x_{W\setminus W_a}$.
\end{lemma}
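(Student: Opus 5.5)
The plan is to prove \eqref{eq:sterile-marginalization} directly from the forward map \eqref{eq:general-forward-map}, by splitting the sum over $x_a$ into the corner state $x_a=k_a$ and the noncorner states $x_a\in\widetilde\cX_a$, and exploiting a structural fact about recursive-head partitions of sets containing a sterile vertex.

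\textbf{Graphical input.} Fix $A\subseteq R$ with $a\in A$. Because $a$ has no random children, it is sterile in every intrinsic set containing it and is never a tail vertex of a set not containing it. I would show
\[
    \mathfrak P_{\mathcal H}(A)=\{H_a\}\cup\mathfrak P_{\mathcal H[R_a]}(A\setminus H_a),
\]
where $H_a\ni a$ is the unique head of $\mathfrak P_{\mathcal H}(A)$ containing $a$. I would also show that every head of $\mathfrak P_{\mathcal H}(A\setminus\{a\})$ is a head of $\mathcal H[R_a]$ with the same intrinsic set and tail, and that $\mathfrak P_{\mathcal H}(A\setminus\{a\})$ agrees with $\mathfrak P_{\mathcal H[R_a]}(A\setminus\{a\})$.

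These facts should follow from three ingredients. First, intrinsic sets of $\mathcal H[R_a]$ are exactly the intrinsic sets of $\mathcal H$ avoiding $a$: the vertex $a$ is fixable, since it has no descendant in its district, so fixing it first reaches $\mathcal H[R_a]$ after reduction. Second, the closure $I_{\mathcal H}(B)$ for $B\not\ni a$ never contains $a$, because $a$ is not an ancestor of anything else. Third, the order $\prec$ restricts accordingly. I expect this comparison of $\mathfrak P_{\mathcal H}$ with $\mathfrak P_{\mathcal H[R_a]}$, via the Evans--Richardson recursion (Propositions~B.1 and~B.5 and Lemma~4.6 there), to be the main obstacle.

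\textbf{Cancellation.} The key consequence is that $F_A^{\mathcal H}$ has the form $\theta_{C(H_a)}(y_{H_a}\mid x_{T})\cdot(\text{factor not involving }y_a\text{ or }x_a)$ when $a\in A$. Here $x_a$ never appears as a tail argument, since $a\notin T(C)$ for any intrinsic $C$; the reason is that $a\in\pa(C)$ would require $a$ to have a random child.

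Now sum over $x_a$. For $x_a\in\widetilde\cX_a$, we have $a\in\operatorname{nc}(x_R)$, so only sets $A\ni a$ contribute, with $y_a=x_a$. For $x_a=k_a$, each $A'\subseteq R_a$ with $A'\supseteq\operatorname{nc}(x_{R_a})$ appears twice: once as $A'$ with sign $(-1)^{|A'\setminus\mathrm{nc}|}$, and once as $A'\cup\{a\}$ with the opposite sign, summed over $y_a\in\widetilde\cX_a$. The noncorner contributions, summed over $x_a\in\widetilde\cX_a$, reproduce exactly the $A'\cup\{a\}$ terms with the sign of $A'$. These cancel the corner-state $A'\cup\{a\}$ terms term by term, because the summand depends on $x_a$ only through $y_a$.

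What remains is
\[
    \sum_{A'\supseteq \mathrm{nc}(x_{R_a})}(-1)^{|A'\setminus\mathrm{nc}|}\sum_{y_{A'}}F^{\mathcal H}_{A'}(\theta;y_{A'},x_R,x_W).
\]
By the graphical input, $F^{\mathcal H}_{A'}=F^{\mathcal H[R_a]}_{A'}(\theta|_{R_a};\cdot)$, which is the right side of \eqref{eq:sterile-marginalization}. The tails involved are tails of $\mathcal H[R_a]$, so they lie in $R_a\cup W_a$. This gives the independence from $x_{W\setminus W_a}$.

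Finally, I would check the degenerate case $R_a=\varnothing$ against the empty reduced-CADMG convention. The one-vertex computation shown after \eqref{eq:general-forward-map} confirms the base case: the corner cell is $1$ minus the noncorner mass, so the sum over $x_a$ equals $1$.
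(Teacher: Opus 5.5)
Your proposal is correct and follows the paper's proof essentially step for step: the corner/noncorner split into $S_0-S_1$ and $S_1$, the observation that $a$ lies in no tail so the two copies of the $B\cup\{a\}$ terms cancel, and the identification of the surviving $B\subseteq R_a$ terms with the forward map of $\mathcal H[R_a]$. For that identification the paper cites Lemmas~4.9 and~4.13 of \cite{evans2019smooth}. The decomposition $\mathfrak P_{\mathcal H}(A)=\{H_a\}\cup\mathfrak P_{\mathcal H[R_a]}(A\setminus H_a)$ for $A\ni a$, which you flag as the main obstacle, is true but is never used. The cancellation needs only that $x_a$ enters no tail, so the comparison of partitions is required only for subsets of $R_a$.
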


\begin{corollary}[Algebraic normalization]\label{lem:algebraic-normalization}
    For every finite-state CADMG $\mathcal H=(R,W,E_{\mathcal H})$, every $\theta\in U_{\mathcal H}$ and every $x_W\in\cX_W$,
    \begin{equation}\label{eq:algebraic-normalization}
        \sum_{x_R\in\cX_R}\Phi_{\mathcal H}(\theta)(x_R\mid x_W)=1.
    \end{equation}
\end{corollary}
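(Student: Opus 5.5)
The plan is to prove the normalization by induction on $|R|$, peeling off one sterile random vertex at a time with Lemma~\ref{lem:sterile-marginalization}. The lemma already does the real work for us, since it is an identity for arbitrary $\theta\in U_{\mathcal H}$, not only for laws.

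\emph{Base case.} When $R=\varnothing$, the sum in \eqref{eq:algebraic-normalization} has the single term $x_R=\varnothing$. In \eqref{eq:general-forward-map}, $\operatorname{nc}(x_R)=\varnothing$ and the only admissible $A$ is $\varnothing$, with sign $+1$. The inner sum over $y_\varnothing$ has one term, and $F_\varnothing^{\mathcal H}$ is the empty product, equal to one. So $\Phi_{\mathcal H}(\theta)(\varnothing\mid x_W)=1$, in agreement with the empty reduced-CADMG convention of Appendix~\ref{app:finite-state-parameterization-proof}.

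\emph{Inductive step.} Suppose $R\neq\varnothing$. The directed part of $\mathcal H$ restricted to $R$ is acyclic, so it has a sink $a\in R$, and $\operatorname{ch}_{\mathcal H}(a)\cap R=\varnothing$. Split the sum over $x_R$ as an outer sum over $x_{R_a}$ and an inner sum over $x_a$. By \eqref{eq:sterile-marginalization}, the inner sum equals $\Phi_{\mathcal H[R_a]}(\theta|_{R_a})(x_{R_a}\mid x_{W_a})$. The total therefore becomes
\[
\sum_{x_{R_a}}\Phi_{\mathcal H[R_a]}(\theta|_{R_a})(x_{R_a}\mid x_{W_a}).
\]
The reduced CADMG $\mathcal H[R_a]$ has random set $R_a$ with $|R_a|<|R|$, and $\theta|_{R_a}\in U_{\mathcal H[R_a]}$. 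The induction hypothesis, applied to this CADMG at the fixed value $x_{W_a}$, gives the value $1$.

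\emph{Main obstacle.} The only point needing care is that the induction must run over all finite-state CADMGs, not just ADMGs. We need $\mathcal H[R_a]$ to again be a finite-state CADMG to which the statement applies, with fixed set $W_a$ and the restriction $\theta|_{R_a}$ as its coordinate array. This is exactly the setting in which Lemma~\ref{lem:sterile-marginalization} is formulated, so I will state the induction claim as ``for every finite-state CADMG with $|R|=n$'' to make the hypothesis usable. If some fixed vertex of $\mathcal H[R_a]$ is childless, the lemma's reduction has already removed it, because $W_a=\pa_{\mathcal H}(R_a)\setminus R_a$.
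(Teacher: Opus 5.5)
Your proof is correct and is essentially the paper's own argument. The paper also inducts on $|R|$ uniformly over all finite-state CADMGs, coordinate vectors and fixed configurations, takes the empty product as the base case, and for $R\neq\varnothing$ applies Lemma~\ref{lem:sterile-marginalization} at a sterile random vertex $a$ before invoking the induction hypothesis on $\mathcal H[R\setminus\{a\}]$.
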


\begin{proposition}[Finite-state recursive-head parameterization]\label{prop:standing-finite-state-import}
    Let $\mathcal H=(R,W,E_{\mathcal H})$ be a reduced finite-state CADMG, with a corner state fixed for each random vertex, and let $\Phi_{\mathcal H}$ be given by \eqref{eq:general-forward-map}.

    \begin{enumerate}[label=(\roman*)]
        \item \emph{Representation.}  A strictly positive probability kernel $q_R(\cdot\mid\cdot)$ recursively factorizes according to $\mathcal H$ if and only if $q_R=\Phi_{\mathcal H}(\eta)$ for some $\eta\in U_{\mathcal H}$.
        \item \emph{Uniqueness.}  If $q_R$ is strictly positive and $\Phi_{\mathcal H}(\eta)=\Phi_{\mathcal H}(\eta')=q_R$, then $\eta=\eta'$.
        \item \emph{Identification.}  For a strictly positive recursively factorizing $q_R$, let $r_C$ be the unique recursively derived kernel on an intrinsic set $C$, and put $H=H(C)$ and $T=T(C)$.  The unique coordinate vector $\Theta_{\mathcal H}(q_R)\in U_{\mathcal H}$ is given by
        \begin{equation}\label{eq:cadmg-coordinate-extraction}
            \{\Theta_{\mathcal H}(q_R)\}_C(a_H\mid x_T) =\frac{r_C(a_H,x_{C\setminus H}\mid x_{T\setminus C})} {\displaystyle\sum_{z_H\in\cX_H} r_C(z_H,x_{C\setminus H}\mid x_{T\setminus C})}.
        \end{equation}
        If $q_R$ is a reachable kernel obtained by fixing a strictly positive nested Markov law on an ADMG, then, by Proposition~\ref{prop:er-equivalence-rrs-compatibility}(iii), the recursively derived $r_C$ equals the corresponding intrinsic fixing kernel $q_C$ in \eqref{eq:fixing-kernel-bridge}; in that setting \eqref{eq:cadmg-coordinate-extraction} is exactly \eqref{eq:coordinate-extraction}.
        \item \emph{Ambient smooth recovery.}  Fix one recursive derivation scheme for every intrinsic set and reference values for any arguments that the scheme discards on the model.  On the open positive orthant
        \[
            \mathcal O_{\mathcal H} :=(0,\infty)^{\cX_R\times\cX_W},
        \]
        the same finite sequence of marginalizations, conditionalizations and coordinate ratios defines a rational, hence $C^\infty$, map $\widetilde\Theta_{\mathcal H}:\mathcal O_{\mathcal H}\to U_{\mathcal H}$. Its restriction to strictly positive recursively factorizing kernels is $\Theta_{\mathcal H}$.  Off the model, its value may depend on the chosen derivation scheme.
        \item \emph{Inverse identities.}  The two inverse identities are
        \begin{equation}\label{eq:standing-import-inverses}
            \Phi_{\mathcal H}\{\Theta_{\mathcal H}(q_R)\}=q_R, \qquad \widetilde\Theta_{\mathcal H}\{\Phi_{\mathcal H}(\eta)\}=\eta.
        \end{equation}
        The first identity holds for every strictly positive recursively factorizing kernel; the second holds whenever $\Phi_{\mathcal H}(\eta)$ is a strictly positive probability kernel.
    \end{enumerate}
\end{proposition}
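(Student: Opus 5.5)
The plan is to obtain the five clauses from the recursive structure of $\Phi_{\mathcal H}$, by induction on $|R|$, using Lemma~\ref{lem:sterile-marginalization} and Corollary~\ref{lem:algebraic-normalization} as the basic tools. The binary case is \cite[Theorem~5.4 and Appendix~C]{evans2019smooth}, and I would follow that template. The first step is a \emph{head-slice identity}. Fix a $\prec$-maximal recursive head $H_0$ of $\mathcal H$, with intrinsic set $C_0$. Expanding \eqref{eq:general-forward-map} according to whether each $A$ contains $H_0$, the partition $\mathfrak P_{\mathcal H}(A)$ puts $H_0$ first whenever $H_0\subseteq A$. Then, for each fixed $x_{T(C_0)}$, the map $\Phi_{\mathcal H}(\theta)$ restricted to configurations with $x_{H_0}$ varying is affine in the block $\theta_{C_0}$. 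The second step is to show that its "head-conditional" at a noncorner $a_{H_0}$, once the other vertices of $H_0$ are summed out by inclusion--exclusion, reproduces $\theta_{C_0}(a_{H_0}\mid x_T)$ times a lower-order kernel built from $\theta$ restricted to the reduced CADMG obtained by fixing $H_0$ (or marginalizing sterile vertices via Lemma~\ref{lem:sterile-marginalization}). Both steps are purely algebraic and hold on all of $U_{\mathcal H}$.

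For (i), the "if" direction is handled by induction on $|R|$. Suppose $\Phi_{\mathcal H}(\eta)=q_R$ is a strictly positive kernel. Lemma~\ref{lem:sterile-marginalization} shows that its margin over any sterile vertex is $\Phi_{\mathcal H[R_a]}(\eta|_{R_a})$, and this recursively factorizes by the induction hypothesis. The head-slice identity shows that the district/fixing kernels required by \cite[Definition~3.3]{evans2019smooth} are again of forward-map form on the smaller reduced CADMGs. For the "only if" direction, I define $\eta:=\Theta_{\mathcal H}(q_R)$ by \eqref{eq:cadmg-coordinate-extraction}. I then prove $\Phi_{\mathcal H}(\eta)=q_R$, which is the first identity of (v), by Möbius inversion. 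For binary variables, $q_R$ evaluated at all-noncorner events on a set $A$ factorizes over $\mathfrak P_{\mathcal H}(A)$; this is the factorization of \cite[Theorem~4.13]{evans2019smooth}. For general finite states the same factorization holds with $y_A\in\widetilde\cX_A$, because the proof there only uses the recursive kernels and conditional independences, never binarity. Inclusion--exclusion over $A\supseteq\operatorname{nc}(x_R)$ then recovers every cell, and this is exactly \eqref{eq:general-forward-map}.

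For (ii)--(iii), I would show that the coordinates can be read off from the array. If $\Phi_{\mathcal H}(\eta)=q_R>0$, then (i) makes $q_R$ recursively factorizing. Applying the recursive derivation of $r_C$ to $\Phi_{\mathcal H}(\eta)$ and using the head-slice identity inductively along the derivation gives $r_C=\Phi_{\mathcal H[C]}(\eta|_C)$. Within $\mathcal H[C]$ the set $H(C)$ is the unique maximal head, so the ratio in \eqref{eq:cadmg-coordinate-extraction} returns $\eta_C$. This proves (iii), and uniqueness (ii) follows at once. The appeal to Proposition~\ref{prop:er-equivalence-rrs-compatibility}(iii) for reachable kernels is just a citation.

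For (iv), every operation in the chosen derivation is a finite sum, ratio or evaluation at reference arguments, with denominators that are sums of positive entries; hence the map is rational on $\mathcal O_{\mathcal H}$ and therefore $C^\infty$. On the model it agrees with $\Theta_{\mathcal H}$ by (iii). The second identity of (v) is the computation from (ii) with $\widetilde\Theta_{\mathcal H}$ in place of $\Theta_{\mathcal H}$. The main obstacle I expect is the general-cardinality factorization step in (i), namely that the all-noncorner events factorize over the head partition when heads have multiple noncorner states and the inner sum over $y_A$ has several terms. I would attack it by coarsening each $\cX_v$ into the binary partition $\{a\}$ versus the rest for each noncorner value $a$. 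Recursive factorization is preserved under such event-wise coarsenings of the head variables, so the binary argument applies cell by cell.
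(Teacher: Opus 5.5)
Your proposal has two genuine gaps, both in clause~(i), and the first also affects (ii)--(iii). \emph{First}, nothing in your argument produces the district half of recursive factorization. For \textup{(RF1)} with several districts you need two things. One is the algebraic split \eqref{eq:appendix-algebraic-district-split}, $\Phi_{\mathcal H}(\eta)=\prod_{D}\Phi_{\mathcal H[D]}(\eta|_D)$. The other, which is the crucial one, is that each factor is strictly positive whenever the product is. Only then are the factors probability kernels to which your induction hypothesis applies, since that hypothesis concerns strictly positive arrays $\Phi(\eta)$. The head-slice identity is a statement about one head at a time and supplies neither. Positivity does not follow from the fact that each factor sums to one in $x_D$, because normalized signed factors can multiply to a positive array. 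Nor can you obtain the factors as conditionals of $q_R$ along a district order, because the district quotient may be cyclic. The paper closes this with Lemma~\ref{lem:signed-factor-positivity}, which peels a sterile vertex $a$. By Lemma~\ref{lem:sterile-marginalization} and induction, every district factor of the reduced array is positive; the factors of districts not containing $a$ are unchanged; and the remaining factor is a quotient of positive arrays. Your (ii)--(iii) need the same step, because a derivation of $r_C$ also contains district reductions. Identifying the algebraic factor there with the unique \textup{(RF1)} factor (Proposition~3.6 and Corollary~3.9 of \cite{evans2019smooth}) again requires that positivity.

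\emph{Second}, your mechanism for $\Phi_{\mathcal H}\{\Theta_{\mathcal H}(q_R)\}=q_R$ is not what \eqref{eq:general-forward-map} encodes. There the heads take the noncorner values $y_A$, but the tails are read off the cell $x_R$, so vertices of $A\setminus\operatorname{nc}(x_R)$ enter tails at their corner values. Thus $F_A^{\mathcal H}$ is a mixed product, not the probability under $q_R$ of an all-noncorner event, and the formula is not a flat inclusion--exclusion over event probabilities. Take binary $u\to v$ with corner state one. The term for $A=\{u,v\}$ at the cell $(1,1)$ is $\theta_{\{u\}}(0)\,\theta_{\{v\}}(0\mid 1)=\pr(X_u=0)\,\pr(X_v=0\mid X_u=1)$, not $\pr(X_u=0,X_v=0)=\theta_{\{u\}}(0)\,\theta_{\{v\}}(0\mid 0)$: $u$ enters as a head at its noncorner value and as a tail at its corner value.

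Your coarsening remedy also fails. Coarsening a vertex that lies in some tail destroys conditional independences: in $u\to v\to w$ with ternary $X_v$, $X_u\ind X_w\mid X_v$ need not survive replacing $X_v$ by $\mathbf 1(X_v=c)$. So the coarsened law need not recursively factorize, and its coordinates no longer see the full tail argument $x_T$.

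The paper instead runs a simultaneous induction on existence, uniqueness and margin consistency:
\begin{itemize}
\item several districts are assembled through \textup{(RF1)};
\item in a single district, every non-top block is recovered from the margins on $R\setminus\{h\}$, $h\in H^\star=\operatorname{sterile}(R)$, with consistency across margins supplied by the lower-level uniqueness and consistency claims;
\item the top block is $q_{H^\star\mid T^\star}$, and cells with $x_{H^\star}$ entirely noncorner follow from the top-head partition split \eqref{eq:appendix-top-head-partition};
\item the remaining cells follow from the corner-reconstruction identity \eqref{eq:appendix-corner-reconstruction}, which $\Phi_{\mathcal H}$ also satisfies by Lemma~\ref{lem:sterile-marginalization}, by induction on the number of corner components in $H^\star$.
\end{itemize}
General cardinality enters only through the sum over $z_h\in\widetilde\cX_h$, so no coarsening is needed.

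Your treatments of \textup{(RF2)}, of (iv), of the second identity in (v), and of the final ratio extraction inside $\mathcal H[C]$ are sound once these two steps are repaired.
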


Define its strict-positivity region by
\[
    \Omega_{\cG} :=\{\theta\in U_{\cG}: \Phi_{\cG}(\theta)(x_V)>0 ~~ \text{ for every }~~  x_V\in\cX_V\}.
\]
For $\theta\in\Omega_{\cG}$, Corollary~\ref{lem:algebraic-normalization} and Proposition~\ref{prop:standing-finite-state-import} show that $p_\theta:=\Phi_{\cG}(\theta)$ is a strictly positive recursively factorizing mass function --- hence, by Proposition~\ref{prop:er-equivalence-rrs-compatibility}(i), a nested Markov one; write $P_\theta$ for its law. Theorem \ref{thm:forward-chart} will package the resulting bijection and its smooth differential consequences: under the probability-mass-function identification, $\theta\mapsto P_\theta$ maps $\Omega_{\cG}$ diffeomorphically onto $\cN_+(\cG)$, with inverse $P\mapsto\Theta_{\cG}(P)$. For a CADMG $\mathcal H$ with random vertices $R$ and fixed vertices $W$, set
\begin{equation}\label{eq:cadmg-positive-coordinate-domain}
    \Omega_{\mathcal H} :=\{\theta\in U_{\mathcal H}: \Phi_{\mathcal H}(\theta)(x_R\mid x_W)>0 \text{ for every }(x_R,x_W)\in\cX_R\times\cX_W\}.
\end{equation}

\subsection{Scores and tangent spaces}

Fix $P\in\cN_+(\cG)$ with mass function $p$, and equip functions on $\cX_V$ with
\[
    \langle f,g\rangle_P:=\E_P\{f(X_V)g(X_V)\}, \qquad \|f\|_{P,2}:=\langle f,f\rangle_P^{1/2}.
\]

Write $L_2(P)$ for the resulting finite-dimensional Hilbert space and $L_2^0(P)$ for its subspace of \emph{mean-zero} functions. Following the standard pathwise convention \citep[Section~18.1]{kosorok2008introduction}, the tangent space $\cT_P\cN(\cG)$ is the $L_2(P)$-closure of the linear span of scores of regular two-sided paths $\{P_t:|t|<\epsilon\}\subseteq\cN(\cG)$ satisfying $P_0=P$, a regular path $t\mapsto p_t$ being cellwise differentiable at zero with score $s(x_V)=\dot p_0(x_V)/p(x_V)$.  On a finite support with $p>0$, cellwise differentiability at zero is equivalent to differentiability in quadratic mean at zero with the same score, since each cell's square root is then differentiable and the quadratic-mean remainder is a finite sum of cellwise remainders; cellwise continuity places every such path inside $\cN_+(\cG)$ after its parameter interval is shortened if necessary, so $\cN(\cG)$ and $\cN_+(\cG)$ have the same tangent space at $P$, and that space is finite-dimensional, so the closure adds no further directions.

Put $\theta_0=\Theta_{\cG}(P)$. Remark~\ref{rem:forward-map-multiaffine} shows cellwise that $\Phi_{\cG}$ is polynomial, hence Fr\'echet differentiable, between finite-dimensional spaces.  Throughout, $\mathbb D$ denotes the differential of a map between finite-dimensional spaces and $\mathbb Df[u]$ its value in the direction $u$, which is the ordinary directional derivative; the letter $D$ is reserved for districts, and $D_P$ in Remark~\ref{rem:chart-not-projection} is multiplication by $p$, not a differential.  For $u\in U_{\cG}$,
\[
    \mathbb D\Phi_{\cG,\theta_0}[u] :=\left.\frac{\mathrm d}{\mathrm dt}\Phi_{\cG}(\theta_0+tu)\right|_{t=0}.
\]
Define
\[
    J_Pu(x_V) :=\frac{\mathbb D\Phi_{\cG,\theta_0}[u](x_V)}{p(x_V)}.
\]

Pointwise division by $p$ turns the derivative of the cell probabilities into a log-density derivative.  Thus $J_Pu$ is the score of the coordinate path $t\mapsto\Phi_{\cG}(\theta_0+tu)$ whenever that line remains in $\Omega_{\cG}$; Theorem~\ref{thm:forward-chart} supplies a two-sided interval for every $u$. Using the fixed identification $U_{\cG}\cong\R^d$, regard the canonical map $\iota_C:U_C\to U_{\cG}$ above as coordinate insertion into $\R^d$.  For a linear map $A$, $\ran(A)$ denotes its range.  Set
\[
    J_{C,P}:=J_P\iota_C, \qquad \cS_C(P):=\ran(J_{C,P}).
\]
Thus $J_{C,P}$ inserts a perturbation in the $C$-tagged coordinate block and maps it to its model-valid score, while $\cS_C(P)$ collects all such scores. These are the operative intrinsic-set score spaces.  Membership in them, and in $\cT_P\cN(\cG)$, is decided by the chart and is a different property from observational centering, the vanishing of a conditional mean given an observational conditioning set; the two are kept apart throughout, and Section~\ref{sec:counterexample} exhibits an observationally centered direction that is not a model score.

\section{Chart-induced tangent geometry}\label{sec:geometry}

For a finite-state CADMG $\mathcal H=(R,W,E_{\mathcal H})$, let
\begin{align*}
    \mathcal K_{\mathcal H} &:={} \left\{q\in\R^{\cX_R\times\cX_W}: \sum_{x_R}q(x_R\mid x_W)=1\text{ for every }x_W\right\},\\
    \mathcal Q_+(\mathcal H) &:={} \left\{q\in\mathcal K_{\mathcal H}:q>0 \text{ and }q\text{ recursively factorizes according to }\mathcal H \right\}.
\end{align*}
Thus $\mathcal K_{\mathcal H}$ is the affine space of normalized real kernel arrays, whereas $\mathcal Q_+(\mathcal H)$ is the set of strictly positive recursively factorizing probability kernels.

\begin{theorem}[Forward finite-state chart]\label{thm:forward-chart}
    Let $\mathcal H=(R,W,E_{\mathcal H})$ be a reduced CADMG, and suppose that every $\cX_v$, $v\in R\cup W$, is finite, with every random-variable state space having at least two elements.  Fix the derivation scheme and reference values used to construct $\widetilde\Theta_{\mathcal H}$ in Proposition~\ref{prop:standing-finite-state-import}(iv).  Then $\Omega_{\mathcal H}$ is open and
    \[
        \Phi_{\mathcal H}:\Omega_{\mathcal H}\longrightarrow \mathcal Q_+(\mathcal H)
    \]
    is a bijection whose inverse is $\Theta_{\mathcal H} =\widetilde\Theta_{\mathcal H}|_{\mathcal Q_+(\mathcal H)}$. Moreover, $\Phi_{\mathcal H}$ is a smooth embedding into $\mathcal K_{\mathcal H}$ with image $\mathcal Q_+(\mathcal H)$. Consequently, with the induced embedded-manifold structure on its image, $\Phi_{\mathcal H}$ and $\Theta_{\mathcal H}$ are mutually inverse $C^\infty$ maps.

    For $m\geq1$, $\eta_0\in\Omega_{\mathcal H}$ and $u_1,\ldots,u_m\in U_{\mathcal H}$, there is an $\epsilon>0$ such that
    \begin{equation}\label{eq:finite-direction-rectangle}
        q_{t_1,\ldots,t_m} =\Phi_{\mathcal H}\!\left(\eta_0+\sum_{j=1}^m t_ju_j\right), \qquad |t_j|<\epsilon,
    \end{equation}
    is a smooth rectangle of strictly positive recursively factorizing kernels through $\Phi_{\mathcal H}(\eta_0)$.  The same $\epsilon$ works simultaneously for every fixed-variable configuration.  If $u_1,\ldots,u_m$ are linearly independent, this rectangle is an $m$-dimensional submodel.

    In the ADMG case $W=\varnothing$, write $\mathcal H=\cG$ and let $P_\eta$ denote the law with mass function $\Phi_{\cG}(\eta)$.  Then $\eta\mapsto P_\eta$ is a diffeomorphism from $\Omega_{\cG}$ onto $\cN_+(\cG)$ under the probability-mass-function identification.  We likewise identify the kernel notation $\Theta_{\cG}(p)$ with the law notation $\Theta_{\cG}(P)$ used in Section~\ref{sec:setup}.  For every $P\in\cN_+(\cG)$, with mass function $p$ and $\theta_0=\Theta_{\cG}(P)$, the score differential
    \[
        J_P:U_{\cG}\cong\R^d\longrightarrow L_2^0(P)
    \]
    is a linear isomorphism from $U_{\cG}$ onto $\cT_P\cN(\cG)$.
\end{theorem}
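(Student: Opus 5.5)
The plan is to derive everything from Proposition~\ref{prop:standing-finite-state-import} and Corollary~\ref{lem:algebraic-normalization}, with Remark~\ref{rem:forward-map-multiaffine} supplying smoothness.

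\textbf{Openness and bijection.} Each cell of $\Phi_{\mathcal H}$ is a polynomial in $\eta$, so $\Omega_{\mathcal H}$ is a finite intersection of preimages of $(0,\infty)$ under continuous maps and is therefore open. For $\eta\in\Omega_{\mathcal H}$, Corollary~\ref{lem:algebraic-normalization} puts $\Phi_{\mathcal H}(\eta)$ in $\mathcal K_{\mathcal H}$. Positivity and Proposition~\ref{prop:standing-finite-state-import}(i) then give $\Phi_{\mathcal H}(\eta)\in\mathcal Q_+(\mathcal H)$. Surjectivity onto $\mathcal Q_+(\mathcal H)$ follows from (i) together with the first identity in \eqref{eq:standing-import-inverses}: the preimage $\Theta_{\mathcal H}(q)$ lies in $\Omega_{\mathcal H}$ because $q>0$. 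Injectivity is (ii). The inverse is $\Theta_{\mathcal H}=\widetilde\Theta_{\mathcal H}|_{\mathcal Q_+}$ by (iii)--(v).

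\textbf{Embedding.} $\Phi_{\mathcal H}$ is $C^\infty$ on $\Omega_{\mathcal H}$, and its image lies in $\mathcal O_{\mathcal H}$. The key identity is
\[
\widetilde\Theta_{\mathcal H}\circ\Phi_{\mathcal H}=\mathrm{id}_{\Omega_{\mathcal H}},
\]
which is (v). Since $\widetilde\Theta_{\mathcal H}$ is a smooth map on the open set $\mathcal O_{\mathcal H}$, the chain rule gives $\mathbb D\widetilde\Theta\circ\mathbb D\Phi_{\mathcal H,\eta}=\mathrm{id}$. Hence $\mathbb D\Phi_{\mathcal H,\eta}$ is injective at every $\eta$, so $\Phi_{\mathcal H}$ is an immersion.

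It is also a homeomorphism onto its image in the subspace topology, because its inverse is the restriction of the continuous map $\widetilde\Theta_{\mathcal H}$. An injective immersion that is a homeomorphism onto its image is a smooth embedding. The image $\mathcal Q_+(\mathcal H)$ therefore becomes an embedded submanifold of the affine space $\mathcal K_{\mathcal H}$, and $\Theta_{\mathcal H}$ is smooth on it, being the restriction of a smooth ambient map. I expect this step to be the main obstacle. The delicate point is that $\widetilde\Theta$ must be a genuine ambient smooth left inverse on a neighbourhood of the image, and this is exactly what (iv) provides. The dependence of $\widetilde\Theta$ on the derivation scheme is harmless, since only the identity on the image is used.

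\textbf{Rectangles.} Openness of $\Omega_{\mathcal H}$ gives a ball around $\eta_0$ inside $\Omega_{\mathcal H}$. Choose $\epsilon$ with $\epsilon\sum_j\|u_j\|$ smaller than its radius. This is a single condition on $\eta$, and $\Omega_{\mathcal H}$ already demands positivity at every fixed configuration $x_W$ (of which there are finitely many), so one $\epsilon$ serves all fixed-variable configurations simultaneously. Smoothness of the rectangle follows from composing with the smooth map $\Phi_{\mathcal H}$. If the $u_j$ are linearly independent, $t\mapsto\eta_0+\sum_j t_ju_j$ is an affine embedding, and composing it with the embedding $\Phi_{\mathcal H}$ yields an $m$-dimensional submodel.

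\textbf{ADMG case.} By Proposition~\ref{prop:er-equivalence-rrs-compatibility}(i), $\mathcal Q_+(\cG)=\cN_+(\cG)$, so the embedding above gives the claimed diffeomorphism. For $J_P$, I will show that it maps $U_{\cG}$ into $\cT_P\cN(\cG)$ and is injective onto it:
\begin{enumerate}[label=(\alph*)]
\item \emph{Range inside the tangent space.} By the rectangle statement with $m=1$, each $J_Pu$ is the score of the regular path $t\mapsto\Phi_{\cG}(\theta_0+tu)$, so $J_Pu\in\cT_P\cN(\cG)$. It has mean zero by differentiating the normalization identity \eqref{eq:algebraic-normalization}.
\item \emph{Injectivity.} $J_P$ is $\mathbb D\Phi_{\cG,\theta_0}$ followed by division by $p>0$, and the differential was shown injective above.
\item \emph{Every score is reached.} Take a regular path $P_t$ in $\cN(\cG)$. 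After shrinking its interval it lies in $\cN_+(\cG)\subseteq\mathcal O$. Then $\theta_t:=\widetilde\Theta(p_t)$ is differentiable at $0$, being a smooth function of a cellwise-differentiable path, and $p_t=\Phi_{\cG}(\theta_t)$. The chain rule gives the score as $J_P\dot\theta_0$.
\end{enumerate}
Hence the linear span of all path scores equals $\ran(J_P)$. This space is finite-dimensional, hence closed, so it is all of $\cT_P\cN(\cG)$, and $J_P$ is a linear isomorphism onto the tangent space.
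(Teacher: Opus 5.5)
Your proposal is correct and follows the paper's proof essentially step for step. It gets openness from the polynomial cells, bijectivity from the inverse identities of Proposition~\ref{prop:standing-finite-state-import}, the embedding from the chain rule applied to the ambient left inverse $\widetilde\Theta_{\mathcal H}$, the rectangles from a ball argument, and the tangent-space identification by pulling an arbitrary regular path back through $\widetilde\Theta_{\cG}$. The only cosmetic difference is that you take injectivity from clause~(ii), whereas the paper takes it from the second identity in \eqref{eq:standing-import-inverses} and uses (ii) only to show that the restriction of $\widetilde\Theta_{\mathcal H}$ agrees with $\Theta_{\mathcal H}$; the two routes are interchangeable.
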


Corollary~5.6 of \cite{evans2019smooth} exhibits the strictly positive recursively factorizing kernels as a curved exponential family.  Theorem~\ref{thm:forward-chart} is the chart-level form of that smoothness for the general finite-state forward map \eqref{eq:general-forward-map}; what the rest of the paper uses is its last clause, the identification of the tangent space with $\ran(J_P)$, and the next two corollaries are the standard finite-dimensional consequences of a smooth chart, recorded in the form in which they are applied.

\begin{corollary}[Fisher information as the chart pullback]\label{cor:fisher-chart-pullback}
    Fix the ordered-coordinate identification $U_{\cG}\cong\R^d$ and equip it with the Euclidean inner product $\langle u,v\rangle_{\mathrm E}:=u^\top v$.  Let $e_1,\ldots,e_d$ be its coordinate basis.  For $P=P_{\theta_0}\in\cN_+(\cG)$, let $p=p_{\theta_0}$ be its mass function and define the Fisher information matrix of the chart by
    \begin{equation}\label{eq:fisher-chart-matrix}
        \{I_{\mathrm F}(\theta_0)\}_{ij} :=\sum_{x_V\in\cX_V} \frac{\mathbb D\Phi_{\cG,\theta_0}[e_i](x_V)\, \mathbb D\Phi_{\cG,\theta_0}[e_j](x_V)}{p(x_V)} =\langle J_Pe_i,J_Pe_j\rangle_P.
    \end{equation}
    Then, for all $u,v\in U_{\cG}$,
    \[
        \langle J_Pu,J_Pv\rangle_P =u^\top I_{\mathrm F}(\theta_0)v, \qquad I_{\mathrm F}(\theta_0)=J_P^*J_P.
    \]
    Here $J_P^*:L_2^0(P)\to U_{\cG}$ is the adjoint relative to $\langle\cdot,\cdot\rangle_P$ and $\langle\cdot,\cdot\rangle_{\mathrm E}$, and the final equality uses the fixed coordinate identification.  In particular, $I_{\mathrm F}(\theta_0)$ is positive definite.
\end{corollary}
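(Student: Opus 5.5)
The plan is to use bilinearity together with the last clause of Theorem~\ref{thm:forward-chart}. First we check that the two expressions in \eqref{eq:fisher-chart-matrix} agree. By definition $J_Pe_i=\mathbb D\Phi_{\cG,\theta_0}[e_i]/p$ pointwise. Hence
\[
\langle J_Pe_i,J_Pe_j\rangle_P=\sum_{x_V}p(x_V)\,\frac{\mathbb D\Phi_{\cG,\theta_0}[e_i](x_V)\,\mathbb D\Phi_{\cG,\theta_0}[e_j](x_V)}{p(x_V)^2}.
\]
This is the displayed sum; the division is legitimate because $p>0$ on $\cN_+(\cG)$.

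Next we obtain the quadratic-form identity. $J_P$ is linear, since it is the differential of the polynomial map $\Phi_{\cG}$ (Remark~\ref{rem:forward-map-multiaffine}) followed by pointwise division by the fixed positive array $p$. Writing $u=\sum_iu_ie_i$ and $v=\sum_jv_je_j$, bilinearity of $\langle\cdot,\cdot\rangle_P$ gives $\langle J_Pu,J_Pv\rangle_P=\sum_{i,j}u_iv_j\{I_{\mathrm F}(\theta_0)\}_{ij}=u^\top I_{\mathrm F}(\theta_0)v$.

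For the adjoint statement we use that $J_P$ takes values in $L_2^0(P)$. This is the last clause of Theorem~\ref{thm:forward-chart}, and it can also be checked directly from Corollary~\ref{lem:algebraic-normalization}: $\E_PJ_Pu=\sum_{x_V}\mathbb D\Phi_{\cG,\theta_0}[u](x_V)$, which is the derivative of the constant $1$ and so equals $0$. Since $L_2^0(P)$ is finite-dimensional, the adjoint $J_P^*$ exists and is characterized by $\langle J_P^*f,v\rangle_{\mathrm E}=\langle f,J_Pv\rangle_P$. Then $\langle J_P^*J_Pu,v\rangle_{\mathrm E}=u^\top I_{\mathrm F}(\theta_0)v$ for all $u,v$. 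Under the Euclidean identification, and using the symmetry of $I_{\mathrm F}$, this forces $J_P^*J_P=I_{\mathrm F}(\theta_0)$ as matrices.

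Finally, $I_{\mathrm F}$ is positive semidefinite because $u^\top I_{\mathrm F}u=\|J_Pu\|_{P,2}^2\ge0$. For definiteness, suppose $u^\top I_{\mathrm F}u=0$. Then $J_Pu=0$ $P$-almost surely, hence on every cell since $p>0$. Injectivity of $J_P$, which is part of the linear-isomorphism clause of Theorem~\ref{thm:forward-chart}, then gives $u=0$. This injectivity is the only step that is not formal. If a self-contained argument were wanted, it would follow from the fact that $\Phi_{\cG}$ is an embedding: its differential is injective, and multiplication by $1/p$ is invertible. So there is no real obstacle beyond invoking the theorem.
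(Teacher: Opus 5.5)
Your proposal is correct and follows essentially the same route as the paper: bilinearity applied to the definition gives $\langle J_Pu,J_Pv\rangle_P=u^\top I_{\mathrm F}(\theta_0)v$, which is the matrix representation of $J_P^*J_P$. Injectivity of $J_P$ from Theorem~\ref{thm:forward-chart} then gives $u^\top I_{\mathrm F}(\theta_0)u=\|J_Pu\|_{P,2}^2>0$ for $u\ne0$, i.e.\ positive definiteness.
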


\begin{corollary}[Intrinsic-block directness]\label{cor:block-directness}
    Under Theorem \ref{thm:forward-chart},
    \[
        \cT_P\cN(\cG) =\bigoplus_{C\in\cI(\cG)}\cS_C(P), \qquad \dim\cS_C(P)=d_C.
    \]

    The sum is \emph{algebraically direct}, but its summands need \emph{not} be mutually orthogonal.
\end{corollary}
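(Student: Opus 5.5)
The corollary should follow from the last clause of Theorem~\ref{thm:forward-chart}, which says that $J_P:U_{\cG}\to L_2^0(P)$ is a linear isomorphism onto $\cT_P\cN(\cG)$, combined with the tagged direct-sum decomposition $U_{\cG}=\bigoplus_{C\in\cI(\cG)}\iota_C(U_C)$ recorded in Section~\ref{sec:setup}. A linear isomorphism carries an internal direct-sum decomposition of its domain to one of its image. So the plan is to push the tagged decomposition forward through $J_P$ and read off both the spanning property and the directness.

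\emph{Spanning.} Every $u\in U_{\cG}$ decomposes uniquely as $u=\sum_C\iota_C(u_C)$. By linearity, $J_Pu=\sum_C J_{C,P}u_C$. Since $\ran(J_P)=\cT_P\cN(\cG)$, every tangent vector is therefore a sum of elements of the $\cS_C(P)$. Conversely, each $\cS_C(P)\subseteq\ran(J_P)$, so $\sum_C\cS_C(P)=\cT_P\cN(\cG)$.

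\emph{Directness and dimensions.} Suppose $\sum_C s_C=0$ with $s_C=J_{C,P}u_C\in\cS_C(P)$. Then $J_P\bigl(\sum_C\iota_C(u_C)\bigr)=0$. Injectivity of $J_P$ gives $\sum_C\iota_C(u_C)=0$. Disjointness of the tagged supports then forces every $u_C=0$, hence every $s_C=0$. Moreover, $J_{C,P}=J_P\iota_C$ is a composition of injective linear maps, so $\dim\cS_C(P)=\dim U_C=d_C$. As a consistency check, $\sum_C d_C=d=\dim\cT_P\cN(\cG)$.

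\emph{Non-orthogonality.} The final sentence of the statement is a ``need not'' assertion, so it requires one witness. By Corollary~\ref{cor:fisher-chart-pullback}, orthogonality of $\cS_C(P)$ and $\cS_{C'}(P)$ is equivalent to vanishing of the off-diagonal block of $I_{\mathrm F}(\theta_0)$ between the $C$- and $C'$-tagged coordinates. I would take the simplest graph with two nested intrinsic sets in one district, namely the binary bidirected edge $a\leftrightarrow b$ with intrinsic sets $\{a\},\{b\},\{a,b\}$. There $p(0,0)=\theta_{ab}$, $p(0,1)=\theta_a-\theta_{ab}$, $p(1,0)=\theta_b-\theta_{ab}$, and so on, and a direct computation gives $\langle J_Pe_a,J_Pe_{ab}\rangle_P\neq0$ at generic positive laws. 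For instance, at the uniform law the covariance of the $\{a\}$- and $\{a,b\}$-scores is nonzero. The main obstacle is only bookkeeping in this explicit Fisher computation; the directness itself is immediate once Theorem~\ref{thm:forward-chart} is assumed.
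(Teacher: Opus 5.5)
Your argument is correct and matches the paper's proof: you push the tagged decomposition $U_{\cG}=\bigoplus_C\iota_C(U_C)$ through the isomorphism $J_P$, use injectivity to get directness, and get $\dim\cS_C(P)=d_C$ because $J_P\iota_C$ is injective. For the ``need not be orthogonal'' clause, the paper relies on the exact Gram computations at $P_{\mathrm{cyc}}$ in the appendix; your $a\leftrightarrow b$ witness is also valid and somewhat sharper, since there $\langle J_Pe_a,J_Pe_{ab}\rangle_P=-1/p(0,1)-1/p(1,1)<0$ at every strictly positive law, not only generic ones.
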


\subsection{District separation}

For $D\in\cD(\cG)$, define
\begin{equation}\label{eq:district-space}
    \begin{aligned}
        U_D^{\mathrm{nat}} &:=\bigoplus_{C:\,\delta(C)=D}\iota_C(U_C) \subseteq U_{\cG},\\
        d_D^{\mathrm{nat}} &:=\dim U_D^{\mathrm{nat}} =\sum_{C:\,\delta(C)=D}d_C,\\
        \cS_D^{\mathrm{nat}}(P) &:=J_P(U_D^{\mathrm{nat}}) =\bigoplus_{C:\,\delta(C)=D}\cS_C(P).
    \end{aligned}
\end{equation}
The superscript ``nat'' abbreviates \emph{native}: the grouping is by the districts of the observational graph $\cG$ itself, as opposed to the districts of an intervention-specific active graph $\mathcal H_{\mathsf I}$ introduced in Section~\ref{sec:intervention-bridge}, which may split a native district.  The same superscript is used for the coordinate space, score space, dimension, basis and Gram matrix of a native district.

\begin{lemma}[Algebraic district factorization]\label{lem:algebraic-district-factorization}
    Let $\mathcal H=(R,W,E_{\mathcal H})$ be a finite-state CADMG.  For $D\in\cD(\mathcal H)$ and $\eta\in U_{\mathcal H}$, write
    \[
        \eta_{[D]}:=\eta|_D \in U_{\mathfrak d_D(\mathcal H)}.
    \]
    Then every $\eta\in U_{\mathcal H}$, without any positivity assumption, satisfies
    \begin{equation}\label{eq:top-level-factorization}
        \Phi_{\mathcal H}(\eta)(x_R\mid x_W) =\prod_{D\in\cD(\mathcal H)} r_{D,\eta_{[D]}} \bigl(x_D\mid x_{\pa_{\mathcal H}(D)\setminus D}\bigr), \qquad r_{D,\eta_{[D]}} :=\Phi_{\mathfrak d_D(\mathcal H)}(\eta_{[D]}).
    \end{equation}

    The tagged coordinate set of the $D$ factor is exactly $\{C\in\cI(\mathcal H):C\subseteq D\}$.  Thus no recursive-head coordinate occurs in two distinct factors, although the factors may be signed.
\end{lemma}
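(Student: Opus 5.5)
The proof works directly with the forward map \eqref{eq:general-forward-map}. The key fact is that the recursive-head partition of a set $A\subseteq R$ splits along districts: $\mathfrak P_{\mathcal H}(A)=\bigsqcup_{D\in\cD(\mathcal H)}\mathfrak P_{\mathfrak d_D(\mathcal H)}(A\cap D)$. Given this, the partition product factorizes as
\[
    F_A^{\mathcal H}(\eta;y_A,x_R,x_W)=\prod_{D\in\cD(\mathcal H)}F^{\mathfrak d_D(\mathcal H)}_{A\cap D}(\eta_{[D]};y_{A\cap D},x_D,x_{\pa_{\mathcal H}(D)\setminus D}).
\]
Every index in \eqref{eq:general-forward-map} also splits: the constraint $\operatorname{nc}(x_R)\subseteq A\subseteq R$ is a product of the constraints $\operatorname{nc}(x_D)\subseteq A_D\subseteq D$. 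The sign satisfies $(-1)^{|A\setminus \operatorname{nc}(x_R)|}=\prod_D(-1)^{|A_D\setminus\operatorname{nc}(x_D)|}$. The inner sum over $y_A$ with $y_{\operatorname{nc}}$ pinned is a product of independent sums over the $y_{A_D}$. Distributing the finite sum of products therefore gives \eqref{eq:top-level-factorization} as a polynomial identity on all of $U_{\mathcal H}$. No positivity is used anywhere.

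The first preliminary step is to pin down the intrinsic sets and the coordinates. Every intrinsic set is bidirected-connected, so it lies in a unique district $D$. I will check that $\cI(\mathfrak d_D(\mathcal H))=\{C\in\cI(\mathcal H):C\subseteq D\}$, with the same heads and tails. Since $\mathfrak d_D(\mathcal H)$ keeps every edge with an arrowhead in $D$, it has the same $\pa(C)$ for $C\subseteq D$. Fixing everything outside $D$ is a valid first move in $\mathcal H$, because those vertices can be fixed district by district. Reachable sets inside $D$, and districts of their reachable graphs, then coincide in the two graphs. This also gives the second assertion, that the tagged coordinate set of the $D$ factor is exactly $\{C\subseteq D\}$. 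The claim that no coordinate appears in two factors follows because distinct districts are disjoint.

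The second step is to show that the recursive-head partition respects districts. The selection rule picks $\prec$-maximal heads contained in the current set. Every head $H$ satisfies $H\subseteq C(H)\subseteq\delta(C(H))$. The order $\prec$ compares $I_{\mathcal H}(H)$, and for heads in different districts the intrinsic closures are disjoint, so such heads are incomparable. The recursion on $A$ therefore runs independently on each $A\cap D$. It remains to check that $\prec$-maximality within $\mathcal H$ agrees with maximality within $\mathfrak d_D(\mathcal H)$. For heads in $D$, the closure $I_{\mathcal H}(H)$ computed in $\mathcal H$ equals the closure computed in $\mathfrak d_D(\mathcal H)$. This is because the alternation between random ancestors and districts meeting $H$ stays inside $D$ once the other districts are removed, and $I(H)=C(H)\subseteq D$ in both graphs by Lemma 4.6 of \cite{evans2019smooth}.

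I expect the main obstacle to be this compatibility of the partition, and the identification of the intrinsic sets, between $\mathcal H$ and $\mathfrak d_D(\mathcal H)$. It depends on the Evans--Richardson partition-suitability machinery (Propositions B.1 and B.5) rather than on any computation. Once it is settled, the product identity is routine bookkeeping of finite sums. As a sanity check, both sides sum to one over $x_R$, by Corollary~\ref{lem:algebraic-normalization} applied to each factor.
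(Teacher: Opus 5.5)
Your proof is correct and is essentially the paper's own argument: split $\mathfrak P_{\mathcal H}(A)$ over districts, factor the sign, the constraint $\operatorname{nc}(x_R)\subseteq A\subseteq R$ and the pinned inner sum, and distribute. The only difference is that the paper cites Lemmas~4.9 and~4.13--4.14 of \cite{evans2019smooth} for the two graphical facts you derive by hand. One of those derivations is incomplete: reachability of $D$ does not give the inclusion $\{C\in\cI(\mathcal H):C\subseteq D\}\subseteq\cI(\mathfrak d_D(\mathcal H))$, and you need an extra step, e.g.\ that a valid fixing sequence in $\mathcal H$ remains valid when restricted to the vertices of $D$. Your closing sanity check is also misjustified: when the district quotient has a directed cycle (as for $\cG_{\mathrm{cyc}}$), normalization of each factor does not by itself make their product sum to one, so the right side sums to one only because it equals $\Phi_{\mathcal H}(\eta)$, to which Corollary~\ref{lem:algebraic-normalization} applies directly.
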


We call \eqref{eq:top-level-factorization} the \emph{top-level} district factorization; ``top-level'' is shorthand of this paper for the outermost product over native districts in a recursive factorization and is not terminology of \cite{evans2019smooth}.

\begin{lemma}[Reachable-kernel chart realization]\label{lem:reachable-chart-realization}
    For an intrinsic set $C\in\cI(\cG)$, put
    \[
        \theta_{\downarrow C} :=(\theta_S:S\in\cI(\cG),\ S\subseteq C).
    \]
    Then, for every $\theta\in\Omega_{\cG}$,
    \begin{equation}\label{eq:reachable-chart-realization}
        \operatorname{red}_C\{\phi_{V\setminus C}(p_\theta;\cG)\} =\Phi_{\cG[C]}(\theta_{\downarrow C}), \qquad \theta_{\downarrow C}\in\Omega_{\cG[C]}.
    \end{equation}

    The equality is an identity of kernels, pointwise in every fixed-parent configuration.
\end{lemma}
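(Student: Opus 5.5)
The plan is to reduce the claim to the inverse identities of Proposition~\ref{prop:standing-finite-state-import}, applied on the reduced CADMG $\cG[C]$, together with the compatibility of the recursive and fixing formulations in Proposition~\ref{prop:er-equivalence-rrs-compatibility}.

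\emph{Step 1: the fixed kernel is a legitimate chart point.} Fix $\theta\in\Omega_{\cG}$ and write $p=p_\theta$. By Theorem~\ref{thm:forward-chart}, $p\in\cN_+(\cG)$ and $\Theta_{\cG}(P_\theta)=\theta$. By Proposition~\ref{prop:er-equivalence-rrs-compatibility}(ii), applied with $R=C$, the raw kernel $\widetilde q_C=\phi_{V\setminus C}(p;\cG)$ does not depend on the valid sequence. Its reduction $q_C=\operatorname{red}_C(\widetilde q_C)$ is strictly positive and recursively factorizes according to $\mathcal H_C$. The setup section identifies $\mathcal H_C$ with $\cG[C]$. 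Hence $q_C\in\mathcal Q_+(\cG[C])$. Proposition~\ref{prop:standing-finite-state-import}(v) then gives $q_C=\Phi_{\cG[C]}(\eta)$ with $\eta:=\Theta_{\cG[C]}(q_C)$, and $\eta\in\Omega_{\cG[C]}$ because $q_C>0$.

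\emph{Step 2: identify $\eta$ with $\theta_{\downarrow C}$, block by block.} The first task is to check that $\cI(\cG[C])=\{S\in\cI(\cG):S\subseteq C\}$, with the same heads and tails. For the head--tail part this is immediate: for $S\subseteq C$, $\pa_{\cG[C]}(S)=\pa_{\cG}(S)$, because $\cG[C]$ keeps every edge with an arrowhead in $C$. For the equality of the intrinsic-set families I would argue in two directions.
\begin{itemize}
\item If $S$ is intrinsic in $\cG[C]$, then concatenating a valid sequence reaching $C$ with a valid sequence inside $\cG[C]$ reaches $S$ as a district. So $S$ is intrinsic in $\cG$.
\item Conversely, if $S\in\cI(\cG)$ and $S\subseteq C$, I would use the characterization $I_{\cG}(H(S))=S$ from Lemma~4.6 of \cite{evans2019smooth}. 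The closure operation restricts to random ancestors and districts, and these are unchanged when passing from $\cG$ to $\cG[C]$ as long as one stays inside $C$. Hence $I_{\cG[C]}(H(S))=S$.
\end{itemize}
Given this, take any block $S\subseteq C$. By \eqref{eq:cadmg-coordinate-extraction}, $\eta_S$ is the head-conditional of the kernel $r_S$ recursively derived from $(\cG[C],q_C)$. Proposition~\ref{prop:er-equivalence-rrs-compatibility}(iii), with $(\mathcal H_0,q_{R_0})=(\cG[C],q_C)$, says that $r_S$ equals the intrinsic fixing kernel $q_S$ of $P_\theta$. Through \eqref{eq:coordinate-extraction}, $q_S$ yields $\{\Theta_{\cG}(P_\theta)\}_S=\theta_S$. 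Therefore $\eta=\theta_{\downarrow C}$, which proves \eqref{eq:reachable-chart-realization} and the membership $\theta_{\downarrow C}\in\Omega_{\cG[C]}$.

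\emph{Step 3: the identity holds in every fixed-parent configuration.} Both sides are kernels indexed by $x_{\pa_{\cG}(C)\setminus C}$. The reduction removes exactly the arguments $x_{Z_C}$, in which $\widetilde q_C$ is constant. So the identity holds for every fixed-parent configuration.

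The main obstacle is the matching of intrinsic sets and of the derivation in Step 2. Proposition~\ref{prop:er-equivalence-rrs-compatibility}(iii) is phrased for reduced reachable pairs reached by a valid sequence, so I must check that $(\cG[C],q_C)$ qualifies: it is the reduction of the pair reached by fixing $V\setminus C$. I must also check that the intrinsic sets of $\cG[C]$ coincide with the $\cG$-intrinsic subsets of $C$. If the closure argument proves delicate, my fallback is to work directly with fixing sequences. Any valid sequence in $\cG$ from $C$ down to $S$ stays inside the reachable CADMG of $C$, by Theorem~31 of \cite{richardson2023nested} on sequence-independence, and this gives the converse inclusion.
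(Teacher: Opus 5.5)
Your argument is essentially the paper's. $q_C$ is a positive recursively factorizing kernel on $\cG[C]$ by Proposition~\ref{prop:er-equivalence-rrs-compatibility}(ii), its $\cG[C]$-coordinates are the global blocks $\theta_S$, $S\subseteq C$, by sequence-invariance of fixing, and the inverse identity of Proposition~\ref{prop:standing-finite-state-import}(v) finishes. The paper cites Lemma~4.9 of \cite{evans2019smooth} for $\cI(\cG[C])=\{S\in\cI(\cG):S\subseteq C\}$ and applies Theorem~31 of \cite{richardson2023nested} directly. You instead route the block identification through Proposition~\ref{prop:er-equivalence-rrs-compatibility}(iii), which is legitimate (its proof does not use this lemma) and makes explicit the ``recursive derivation equals fixing'' step that the paper leaves implicit.

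The one step not yet proved is your converse inclusion. $I_{\cG}(H(S))$ is computed by iterating from all of $V$, and its intermediate sets can leave $C$. So ``unchanged as long as one stays inside $C$'' does not by itself give $I_{\cG[C]}(H(S))=S$. The missing ingredient is monotonicity, in three parts:
\begin{itemize}
\item each reduction step depends only on the vertex-induced graph $\cG_A$ of the current random set $A$;
\item it agrees in $\cG$ and $\cG[C]$ when $A\subseteq C$;
\item it is monotone in $A$.
\end{itemize}
Iterating from $C\subseteq V$ then gives $I_{\cG[C]}(H(S))\subseteq I_{\cG}(H(S))=S$. Since $S\subseteq C$ is itself a fixed point, iterating from $C\supseteq S$ gives the reverse inclusion. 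Alternatively, cite Lemma~4.9, as the paper does.

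Your stated fallback would not work. Theorem~31 asserts that valid sequences reaching the same set give the same fixed pair. It does not produce a valid sequence that passes through $C$ to $S$, which is what the inclusion needs.
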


\begin{proposition}[Districtwise feasibility and score locality]\label{prop:district-locality}
    For $\theta\in U_{\cG}$, write $\theta_{[D]}=(\theta_C:C\in\cI(\cG),\ \delta(C)=D)$.  Under $U_{\cG}=\bigoplus_DU_D^{\mathrm{nat}}\cong\R^d$, the exact feasible domain is
    \begin{equation}\label{eq:district-domain-product}
        \Omega_{\cG} =\prod_{D\in\cD(\cG)}\Omega_{\mathfrak d_D(\cG)}.
    \end{equation}
    For $\theta_0\in\Omega_{\cG}$, let $P=P_{\theta_0}$.  Every factor in \eqref{eq:top-level-factorization} is a strictly positive district kernel lying in $\mathcal Q_+(\mathfrak d_D(\cG))$, the image of its positive recursive-head chart, and, in its canonical parent-indexed form,
    \begin{equation}\label{eq:district-factor-as-fixing}
        r_{D,\theta_{0,[D]}} =\Phi_{\mathfrak d_D(\cG)}(\theta_{0,[D]}) =\operatorname{red}_D\{\phi_{V\setminus D}(p_{\theta_0};\cG)\}>0.
    \end{equation}

    Holding the other district blocks of $\theta_0$ fixed and changing only its $U_D^{\mathrm{nat}}$ component changes only this factor.  The resulting chart-path scores at $P$ are exactly $\cS_D^{\mathrm{nat}}(P)$, and every such score has a two-sided realizing path.
\end{proposition}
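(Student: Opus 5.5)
The proof rests on two earlier results: the algebraic district factorization (Lemma~\ref{lem:algebraic-district-factorization}) applied to $\mathcal H=\cG$, and the reachable-kernel realization (Lemma~\ref{lem:reachable-chart-realization}) applied to each district.

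\textbf{Step 1: districts are intrinsic, and the district factor is the fixing kernel.} Every district $D$ of $\cG$ is intrinsic, since it is a district of the reachable CADMG obtained with no fixing at all. The set $\{S\in\cI(\cG):S\subseteq D\}$ is exactly $\{S:\delta(S)=D\}$, because every intrinsic set lies in a unique native district. Hence $\theta_{\downarrow D}=\theta_{[D]}$. We also have $\mathfrak d_D(\cG)=\cG[D]$. Lemma~\ref{lem:reachable-chart-realization} then gives, for every $\theta\in\Omega_{\cG}$, both $\theta_{[D]}\in\Omega_{\mathfrak d_D(\cG)}$ and the identity \eqref{eq:district-factor-as-fixing}. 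Positivity of that factor follows from positivity of reduced fixing kernels (Proposition~\ref{prop:er-equivalence-rrs-compatibility}(ii)). Membership in $\mathcal Q_+(\mathfrak d_D(\cG))$ follows from Theorem~\ref{thm:forward-chart} applied to $\mathfrak d_D(\cG)$, together with Corollary~\ref{lem:algebraic-normalization}. This establishes the inclusion $\Omega_{\cG}\subseteq\prod_D\Omega_{\mathfrak d_D(\cG)}$.

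\textbf{Step 2: the reverse inclusion.} Suppose each $\theta_{[D]}\in\Omega_{\mathfrak d_D(\cG)}$. Then every factor in \eqref{eq:top-level-factorization} is strictly positive at every parent configuration. A product of positive numbers is positive, so $\Phi_{\cG}(\theta)>0$ cellwise, which means $\theta\in\Omega_{\cG}$. This step is immediate, and the two steps together give \eqref{eq:district-domain-product}.

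\textbf{Step 3: locality and scores.} By Lemma~\ref{lem:algebraic-district-factorization}, the $D$ factor depends only on the coordinates tagged by intrinsic sets inside $D$. So changing only the $U_D^{\mathrm{nat}}$ component changes only that factor. Take $u\in U_D^{\mathrm{nat}}$. The product rule, with the other factors held fixed, gives
\[
J_Pu=\frac{\mathbb D\Phi_{\mathfrak d_D(\cG)}[u_{[D]}]}{r_{D,\theta_{0,[D]}}},
\]
and by definition $J_Pu$ lies in $J_P(U_D^{\mathrm{nat}})=\cS_D^{\mathrm{nat}}(P)$. For the two-sided realizing path, Theorem~\ref{thm:forward-chart} (with $m=1$) shows that $\theta_0+tu$ stays in $\Omega_{\cG}$ for small $|t|$. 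Alternatively, openness of $\Omega_{\mathfrak d_D(\cG)}$ together with the product structure of Step~2 gives the same conclusion. The path $t\mapsto P_{\theta_0+tu}$ is therefore a two-sided regular path in $\cN_+(\cG)$ with score $J_Pu$.

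\textbf{Main obstacle.} The only delicate point is the identification in Step~1. One must check that $\theta_{\downarrow D}$ coincides with $\theta_{[D]}$ as tagged arrays. One must also check that the intrinsic sets and heads/tails computed in $\mathfrak d_D(\cG)$ agree with those in $\cG$ for every $C\subseteq D$: $T(C)=\pa_{\cG}(C)$ is unchanged because $\mathfrak d_D(\cG)$ keeps every edge with an arrowhead in $D$. Without this agreement, Lemma~\ref{lem:reachable-chart-realization} and Lemma~\ref{lem:algebraic-district-factorization} would be stated in different coordinate spaces and could not be combined. I expect to obtain this agreement from the head--intrinsic-set correspondence and the equality $\mathfrak d_D(\cG)=\cG[D]$, both recorded in Section~\ref{sec:setup}.
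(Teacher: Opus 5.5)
Your proof is correct. Three of its parts coincide with the paper's own proof:
\begin{itemize}
\item the reverse inclusion;
\item the identification \eqref{eq:district-factor-as-fixing} through Lemma~\ref{lem:reachable-chart-realization} with $C=D$, using $\theta_{\downarrow D}=\theta_{[D]}$ and $\mathfrak d_D(\cG)=\cG[D]$;
\item the locality and two-sided-path argument through Lemma~\ref{lem:algebraic-district-factorization} and Theorem~\ref{thm:forward-chart}.
\end{itemize}

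The one genuine difference is the forward inclusion $\Omega_{\cG}\subseteq\prod_D\Omega_{\mathfrak d_D(\cG)}$. The paper gets it from the purely algebraic Lemma~\ref{lem:signed-factor-positivity}: if the product of the possibly signed district factors in \eqref{eq:top-level-factorization} is strictly positive, then so is each factor. That lemma is proved by sterile-vertex marginalization, with no district order and no fixing. Only afterwards does the paper identify each positive factor with the fixing kernel. You instead read the inclusion off the clause $\theta_{\downarrow D}\in\Omega_{\cG[D]}$ of Lemma~\ref{lem:reachable-chart-realization}, which gives positivity and the fixing identification in one step.

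That route is legitimate and not circular, since that lemma precedes this proposition and does not use it. Its cost is that a statement about a polynomial map is routed through the nested Markov fixing calculus: Proposition~\ref{prop:er-equivalence-rrs-compatibility}(ii), order-invariance of fixing, and the chart inverse identity. Those proofs themselves already invoke signed-factor positivity. The paper's route keeps the domain product a purely algebraic fact, valid verbatim for any finite-state CADMG.

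Two small corrections:
\begin{itemize}
\item The agreement of intrinsic sets, heads and tails between $\mathfrak d_D(\cG)$ and $\cG$, which you flag as the main obstacle, comes from Lemma~4.9 of \cite{evans2019smooth}, not from the head--intrinsic-set bijection. It is already built into the statements of the two lemmas you cite: the no-sharing clause of Lemma~\ref{lem:algebraic-district-factorization}, and the typing $\theta_{\downarrow C}\in\Omega_{\cG[C]}$ in Lemma~\ref{lem:reachable-chart-realization}.
\item The score claim should be stated for any differentiable chart path whose velocity lies in $U_D^{\mathrm{nat}}$, not only for straight lines. The chain rule gives this at once.
\end{itemize}
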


\begin{remark}[Local compatibility is not arbitrary kernel tilting]
    Equation~\eqref{eq:district-domain-product} permits independent selection, from the district domains, of district factors lying in $\mathcal Q_+(\mathfrak d_D(\cG))=\Phi_{\mathfrak d_D(\cG)}(\Omega_{\mathfrak d_D(\cG)})$.  It does not permit independently chosen arbitrary normalized district kernels: each $r_{D,\theta_{[D]}}$ must lie in that chart image and retain all internal nested constraints.  In general, $\Omega_{\mathfrak d_D(\cG)}$ is variation dependent and need not factor over the individual intrinsic-set blocks within $D$.  Openness nevertheless gives a Euclidean ball at every interior point, and hence a rectangle for any finite collection of intrinsic- or district-block directions.
\end{remark}

\subsection{Cross-district orthogonality}

\begin{theorem}[Orthogonal district decomposition]\label{thm:district-orthogonality}
    For every $P\in\cN_+(\cG)$,
    \[
        \cT_P\cN(\cG) =\bigoplus_{D\in\cD(\cG)}^{\perp}\cS_D^{\mathrm{nat}}(P).
    \]

    No orthogonality is asserted between distinct intrinsic-set blocks inside the same district.
\end{theorem}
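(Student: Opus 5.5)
The plan is to combine the algebraic directness already available with a pairwise orthogonality computation based on the top-level district factorization. By Theorem~\ref{thm:forward-chart} and Corollary~\ref{cor:block-directness}, $\cT_P\cN(\cG)=\ran(J_P)$ and $J_P$ is injective. Since $U_{\cG}=\bigoplus_D U_D^{\mathrm{nat}}$ by the tagged construction, the space $\cT_P\cN(\cG)$ is the algebraic direct sum of the $\cS_D^{\mathrm{nat}}(P)=J_P(U_D^{\mathrm{nat}})$. It therefore suffices to show that $\langle J_Pu,J_Pv\rangle_P=0$ whenever $u\in U_D^{\mathrm{nat}}$ and $v\in U_{D'}^{\mathrm{nat}}$ with $D\neq D'$. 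Pairwise orthogonality then upgrades the direct sum to an orthogonal one.

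To compute the scores, write $P=P_{\theta_0}$ and use Lemma~\ref{lem:algebraic-district-factorization} together with Proposition~\ref{prop:district-locality}. These give
\[
p(x_V)=\prod_{D\in\cD(\cG)} r_D(x_D\mid x_{\pa_{\cG}(D)\setminus D}),
\]
with every $r_D>0$, and a perturbation in $U_D^{\mathrm{nat}}$ changes only the factor $r_D$. Differentiating the product yields
\[
J_Pu=\frac{\mathbb D r_D[u]}{r_D},
\]
a function of $(x_D,x_{\pa(D)\setminus D})$ only. By Corollary~\ref{lem:algebraic-normalization}, every perturbed factor still sums to one over $x_D$ for each parent configuration. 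Differentiating this normalization gives $\sum_{x_D}\mathbb D r_D[u](x_D\mid x_{\pa(D)\setminus D})=0$ for every fixed-parent value. The same holds for $D'$.

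For the orthogonality step, the key is a topological order on districts, which the statement says we must avoid assuming acyclic. So I would not argue through a sequential conditional-expectation argument over an order of districts. I would instead compute the expectation directly:
\[
\langle J_Pu,J_Pv\rangle_P=\sum_{x_V}\mathbb D r_D[u]\,\mathbb D r_{D'}[v]\prod_{D''\neq D,D'}r_{D''}.
\]
The plan is to sum out vertices in reverse topological order of the \emph{vertices} of $\cG$, which always exists since $\cG$ is acyclic. Each summation removes a sterile vertex, using Lemma~\ref{lem:sterile-marginalization} applied within each district factor (on the CADMG $\mathfrak d_{D''}(\cG)$). Summing out a sterile vertex $a$ touches only the factor of its own district, because $a$ is not a parent of anything remaining. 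By Lemma~\ref{lem:sterile-marginalization}, the result is the factor of the reduced CADMG, which is again multiaffine in the restricted coordinates. The same lemma applies to the derivative factor, since differentiation commutes with the finite sum. Proceeding until the whole of $D$ or of $D'$ has been removed, say $D$ is exhausted first, the factor attached to $D$ becomes the derivative of a constant: the last marginalization gives $\sum_{x_{D}}\mathbb D r_D[u]=0$ at the current parent configuration. The whole product then vanishes. This works even when the district quotient graph is cyclic, because vertices are removed one at a time and each removal affects only the district containing that vertex.

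The main obstacle is to justify that after partial marginalization the remaining $D$-factor, and its derivative, are still the reduced-CADMG forward maps $\Phi_{\mathfrak d_D(\cG)[\cdot]}$ evaluated at restricted coordinates. The derivative must satisfy the same identity, and the factor must stop depending on fixed arguments whose children have all been removed. Lemma~\ref{lem:sterile-marginalization} supplies exactly this: its last clause guarantees that dropped parents disappear. Once the lemma is applied linearly in the direction $u$, valid because it holds for every $\theta$ in the ambient space and not only positive ones, the iteration closes. Finally, the remark that intrinsic blocks inside one district need not be orthogonal requires no proof; it is only a disclaimer, and a counterexample in Section~\ref{sec:counterexample} can be cited.
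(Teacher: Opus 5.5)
Your proof is correct, but it handles the key orthogonality step differently from the paper. The paper places $u$ and $v$ on the positive two-parameter rectangle $p_{t,r}=\Phi_{\cG}(\theta_0+tu+rv)$. It observes that $\partial^2_{tr}\log p_{t,r}$ vanishes identically, because the two parameters enter different factors of \eqref{eq:top-level-factorization}. It then differentiates $\sum_{x_V}p_{t,r}(x_V)=1$ once in each parameter, which gives $0=\E_P\{s_Ds_{D'}+\partial^2_{tr}\log p_{t,r}\}=\E_P(s_Ds_{D'})$ with $s_D=J_Pu$ and $s_{D'}=J_Pv$.

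You instead write the inner product as $\sum_{x_V}\mathbb D r_D[u]\,\mathbb D r_{D'}[v]\prod_{D''\ne D,D'}r_{D''}$. You then annihilate it by summing out vertices in reverse topological order using Lemma~\ref{lem:sterile-marginalization}. Your bookkeeping is sound: the summed vertex is sterile in its current reduced district CADMG, it occurs in no other surviving factor, and the lemma is a polynomial identity valid off the positive region, so it may be differentiated.

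It is, however, more work than needed. By Lemma~\ref{lem:algebraic-district-factorization}, your sum is exactly $\partial_t\partial_r\sum_{x_V}\Phi_{\cG}(\theta_0+tu+rv)(x_V)$ at $(0,0)$. Corollary~\ref{lem:algebraic-normalization}, applied to $\cG$ itself, makes that total mass identically one on all of $U_{\cG}$, so the mixed derivative is zero in one line. Your vertex-by-vertex marginalization is the inductive proof of that corollary, unrolled with two derivative factors carried along.

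Your route buys a purely algebraic computation at the single point $\theta_0$: no rectangle, no logarithms, and no positivity beyond what is needed to define the scores. It also gives an explicit picture of why a cyclic district quotient does no harm. The paper's route buys brevity, since every order consideration, including the vertex order you introduce, is absorbed into the normalization identity. In particular, the district-order worry in your third paragraph never arises in either argument.
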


% \emph{Proof sketch.}  Pairwise orthogonality suffices, by Corollary~\ref{cor:block-directness}.  For distinct districts $D\ne D'$ and directions $u\in U_D^{\mathrm{nat}}$, $v\in U_{D'}^{\mathrm{nat}}$, Theorem~\ref{thm:forward-chart} supplies the two-parameter model rectangle $p_{t,r}=\Phi_{\cG}(\theta_0+tu+rv)$ through $P$, whose scores at $(0,0)$ are $s_D=J_Pu$ and $s_{D'}=J_Pv$.  By Lemma~\ref{lem:algebraic-district-factorization}, the $t$- and $r$-coordinates enter different factors of the top-level factorization \eqref{eq:top-level-factorization}, all strictly positive on the rectangle by \eqref{eq:district-domain-product}; so $\log p_{t,r}$ is a function of $t$ plus a function of $r$, cell by cell, and the mixed derivative $\partial^2_{tr}\log p_{t,r}$ vanishes identically.  Differentiating the normalization $\sum_{x_V}p_{t,r}(x_V)=1$ once in each parameter gives $0=\E_P\{s_Ds_{D'}+\partial^2_{tr}\log p_{t,r}(X_V)|_{(0,0)}\}=\E_P(s_Ds_{D'})$.  No district order enters; the full proof is in Section~\ref{app:proofs-geometry} of the Supplementary Material.

For the scope statements and diagnostics of this subsection and of Section~\ref{sec:counterexample}, the \emph{district quotient} of $\cG$ is the directed graph whose vertex set is $\cD(\cG)$, with an arrow $D\to D'$ whenever $D\ne D'$ and some $v\in D$ is a parent of some $v'\in D'$ in $\cG$.  Although the directed part of an ADMG is acyclic, its district quotient may contain a directed cycle; it is a diagnostic device only and enters neither the chart nor the argument above.

\begin{remark}[Cyclic-quotient diagnostic]\label{rem:cyclic-quotient-diagnostic}
    For an intrinsic set $C$, define the observationally centered head-family space
    \[
        \mathcal L_C^{\mathrm{obs}}(P) :=\left\{g(X_{H(C)\cup T(C)}): \E_P\{g\mid X_{T(C)}\}=0\right\}.
    \]
    At the law $P_{\mathrm{cyc}}$ of the cyclic-quotient control (Section~\ref{sec:cyclic-control}; specified in Section~\ref{app:B-cyclic} of the Supplementary Material), the sum of these spaces over $\cI(\cG_{\mathrm{cyc}})$ has dimension $11$ and contains the ten-dimensional tangent space $\cT_{P_{\mathrm{cyc}}}\cN(\cG_{\mathrm{cyc}})$ strictly, with an exact rank certificate in Section~\ref{app:B-cyclic}.  The two directions of Section~\ref{sec:cyclic-control} lie in such spaces for different districts and outside the tangent space, so cross-products between these raw spaces say nothing about Theorem~\ref{thm:district-orthogonality}.  This $11$-versus-$10$ comparison is distinct from the $30$-versus-$23$ comparison in Section~\ref{sec:counterexample}, where the larger space is the score space of arbitrary normalized $q_\Delta(\cdot\mid W)$ paths, centered under $q_\Delta$ separately for each $W=w$.
\end{remark}

\subsection{Explicit metric projection}

Fix $P\in\cN_+(\cG)$.  For a closed linear subspace $\mathcal S\subseteq L_2(P)$, write $\Pi_{\mathcal S}f$ for the unique $L_2(P)$-orthogonal projection of $f$ onto $\mathcal S$, equivalently the unique minimizer of $g\mapsto\|f-g\|_{P,2}$ over $g\in\mathcal S$.

For each $D\in\cD(\cG)$, choose a basis $\{e_{Dj}^{\mathrm{nat}}:1\leq j\leq d_D^{\mathrm{nat}}\}$ of $U_D^{\mathrm{nat}}$ and define the column vector of score functions
\[
    b_D^{\mathrm{nat}}(X_V) :=\bigl(J_Pe_{Dj}^{\mathrm{nat}}(X_V): 1\leq j\leq d_D^{\mathrm{nat}}\bigr)^\top, \qquad G_D^{\mathrm{nat}}(P) :=\E_P\{b_D^{\mathrm{nat}}(b_D^{\mathrm{nat}})^\top\}.
\]

\begin{proposition}[District Gram projection]\label{prop:gram-projection}
    With these choices, each $G_D^{\mathrm{nat}}(P)$ is positive definite and, for every $f\in L_2(P)$,
    \begin{equation}\label{eq:district-gram-projection}
        \Pi_{\cS_D^{\mathrm{nat}}(P)}f =(b_D^{\mathrm{nat}})^\top \{G_D^{\mathrm{nat}}(P)\}^{-1}\E_P(b_D^{\mathrm{nat}}f).
    \end{equation}
    For the full tangent space,
    \begin{equation}\label{eq:global-gram-projection}
        \Pi_{\cT_P\cN(\cG)}f =\sum_{D\in\cD(\cG)}(b_D^{\mathrm{nat}})^\top \{G_D^{\mathrm{nat}}(P)\}^{-1}\E_P(b_D^{\mathrm{nat}}f).
    \end{equation}
\end{proposition}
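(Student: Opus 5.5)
The proof has two parts: positive definiteness of each Gram matrix, which yields the districtwise formula, and the orthogonal decomposition of Theorem~\ref{thm:district-orthogonality}, which yields the global formula. For the first part, let $a\in\R^{d_D^{\mathrm{nat}}}$. Then
\[
    a^\top G_D^{\mathrm{nat}}(P)a=\Bigl\|J_P\Bigl(\sum_j a_je_{Dj}^{\mathrm{nat}}\Bigr)\Bigr\|_{P,2}^2 .
\]
By Theorem~\ref{thm:forward-chart}, $J_P$ is injective on $U_{\cG}$. The vectors $e_{Dj}^{\mathrm{nat}}$ form a basis of $U_D^{\mathrm{nat}}$, so $\sum_ja_je_{Dj}^{\mathrm{nat}}\ne0$ whenever $a\ne0$. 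Hence the quadratic form is strictly positive for $a\neq 0$. Equivalently, $G_D^{\mathrm{nat}}(P)$ is a principal-type restriction $E^\top I_{\mathrm F}(\theta_0)E$ of the positive definite Fisher matrix of Corollary~\ref{cor:fisher-chart-pullback}, where $E$ has the coordinates of the basis vectors as columns.

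For the districtwise formula, the components of $b_D^{\mathrm{nat}}$ are linearly independent in $L_2(P)$ by the same injectivity, and they span $\cS_D^{\mathrm{nat}}(P)=J_P(U_D^{\mathrm{nat}})$. This space is finite-dimensional, hence closed. Write the candidate as $g=(b_D^{\mathrm{nat}})^\top\beta$ with $\beta=\{G_D^{\mathrm{nat}}\}^{-1}\E_P(b_D^{\mathrm{nat}}f)$. Then $g\in\cS_D^{\mathrm{nat}}(P)$, and the normal equations
\[
    \E_P\{b_D^{\mathrm{nat}}(f-g)\}=\E_P(b_D^{\mathrm{nat}}f)-G_D^{\mathrm{nat}}\beta=0
\]
hold. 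So $f-g$ is orthogonal to every basis score and therefore to the whole span. By uniqueness of the orthogonal projection (the Pythagorean characterization of the minimizer), $g=\Pi_{\cS_D^{\mathrm{nat}}(P)}f$, which is \eqref{eq:district-gram-projection}. Note that $f$ need not be centered: the scores have mean zero, so projecting $f$ and projecting $f-Pf$ give the same result. Either way the argument goes through unchanged in $L_2(P)$.

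For the global formula, Theorem~\ref{thm:district-orthogonality} gives $\cT_P\cN(\cG)=\bigoplus^{\perp}_D\cS_D^{\mathrm{nat}}(P)$, a finite orthogonal direct sum of closed subspaces. The projection onto such a sum is the sum of the projections. To verify this, set $g=\sum_D\Pi_{\cS_D^{\mathrm{nat}}}f$. This $g$ lies in $\cT_P\cN(\cG)$. For $h\in\cS_{D'}^{\mathrm{nat}}$, every summand with $D\ne D'$ is orthogonal to $h$, so
\[
    \langle f-g,h\rangle_P=\langle f-\Pi_{\cS_{D'}^{\mathrm{nat}}}f,h\rangle_P=0 .
\]
Therefore $f-g$ is orthogonal to every summand and hence to their sum. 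Substituting \eqref{eq:district-gram-projection} into each term gives \eqref{eq:global-gram-projection}.

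No step is a real obstacle once Theorem~\ref{thm:district-orthogonality} is available. The only point needing care is that positive definiteness uses injectivity of $J_P$ restricted to $U_D^{\mathrm{nat}}$, which follows from the isomorphism clause of Theorem~\ref{thm:forward-chart}, and does not use any orthogonality inside a district.
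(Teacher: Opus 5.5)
Your proposal is correct and follows essentially the same route as the paper's proof. Both identify the quadratic form $a^\top G_D^{\mathrm{nat}}(P)a$ with a squared $L_2(P)$ norm and use injectivity of $J_P$ to get positive definiteness. Both then solve the normal equations for the districtwise projection and invoke Theorem~\ref{thm:district-orthogonality} to sum the district projections into the global formula.
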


The full within-district Gram matrix is, in general, required: $\Pi_{\cS_D^{\mathrm{nat}}(P)}f$ is not the sum of separate projections onto the generally nonorthogonal intrinsic-block spaces $\cS_C(P)$ with $\delta(C)=D$, because within-district cross-block Gram entries need not vanish; an explicit direction at the law $P_{\mathrm{cyc}}$ of Section~\ref{app:B-cyclic} of the Supplementary Material has different joint and blockwise projections, so the two operators differ there.

\begin{remark}[Chart synthesis is not projection]\label{rem:chart-not-projection}
    Recovering chart coordinates from a perturbed law and synthesizing a model score from them is a retraction onto the tangent space and need not equal the orthogonal projection of Proposition~\ref{prop:gram-projection}.  Let $p$ be the mass function of $P\in\cN_+(\cG)$ and put $\theta_0=\Theta_{\cG}(P)$.  Because $p>0$, multiplication by $p$ is a linear isomorphism $D_P:L_2^0(P)\to T_p\mathcal K_{\cG}$, $D_Pf:=pf$, onto the tangent space $T_p\mathcal K_{\cG}=\{a\in\R^{\cX_V}:\sum_{x_V}a(x_V)=0\}$ of the affine normalization space at $p$.  For a derivation scheme $\sigma$ --- one complete recursive derivation per intrinsic set, with reference states for the arguments the scheme discards --- write $\widetilde\Theta^{\sigma}_{\cG}$ for the ambient recovery map of Proposition~\ref{prop:standing-finite-state-import}(iv) built from $\sigma$; the selected default scheme is $\sigma_0$.  Define
    \[
        \mathcal R^{\sigma}_P :=J_P\circ \mathbb D\widetilde\Theta^{\sigma}_{\cG,p}\circ D_P: L_2^0(P)\longrightarrow\cT_P\cN(\cG);
    \]
    and write $\mathcal R_P:=\mathcal R^{\sigma_0}_P$ for its default-scheme instance.  If $f=J_Pu\in\cT_P\cN(\cG)$, then $D_Pf=\mathbb D\Phi_{\cG,\theta_0}[u]$, and the left-inverse identity \eqref{eq:left-inverse-differential} of the Supplementary Material gives $\mathbb D\widetilde\Theta^{\sigma}_{\cG,p}\mathbb D\Phi_{\cG,\theta_0}[u]=u$ for every scheme, because $\widetilde\Theta^{\sigma}_{\cG}$ restricts to $\Theta_{\cG}$ on the model (Proposition~\ref{prop:standing-finite-state-import}(iv)--(v)); hence $\mathcal R^{\sigma}_Pf=f$, so $\mathcal R^{\sigma}_P$ is idempotent with range $\cT_P\cN(\cG)$: a generally oblique, scheme-dependent retraction.  Tangent spaces and metric projections are scheme-invariant, and only the orthogonal projection \eqref{eq:global-gram-projection} is canonical for efficiency.  That the two operators differ is shown by exact rational computation at the cyclic-quotient law $P_{\mathrm{cyc}}$: for the direction $f_1$ of \eqref{eq:cyclic-raw-directions} and the derivation scheme $\sigma_*$ recorded in Section~\ref{app:B-cyclic}, $\mathcal R^{\sigma_*}_{P_{\mathrm{cyc}}}f_1\ne\Pi_{\cT_{P_{\mathrm{cyc}}}\cN(\cG_{\mathrm{cyc}})}f_1$, with the squared difference recorded there.
\end{remark}

\section{Exact non-mb-shielded counterexample}\label{sec:counterexample}

Consider the binary ADMG whose directed and bidirected edges are
\begin{equation}\label{eq:verma-counterexample-graph}
    \begin{aligned}
        \text{directed: }&W\to Z,\quad B\to Z,\quad Z\to Y,\\
        \text{bidirected: }&A\leftrightarrow B,\quad A\leftrightarrow Z,\quad B\leftrightarrow Y.
    \end{aligned}
\end{equation}

Its native districts are $\{W\}$ and $\Delta=\{A,B,Z,Y\}$, with quotient $\{W\}\to\Delta$; see Figure~\ref{fig:verma-counterexample-graph}.  With the Markov blanket $\operatorname{mb}_{\cG}(v)$ of Section~\ref{sec:setup}, an ADMG is \emph{mb-shielded} when any two vertices are adjacent whenever either belongs to the other's Markov blanket \citep[Theorem~2]{bhattacharya2022semiparametric}.  The displayed graph is not mb-shielded: $W$ and $A$ are nonadjacent although $W\in\operatorname{mb}_{\cG}(A)=\{B,Z,Y,W\}$.

On strictly positive laws, its nested Markov model imposes no restriction beyond ordinary conditional independence.  The reachable $\{B,Y\}$ kernel $q_{BY,p}(b,y\mid z,w)=p(b\mid w)\,p(y\mid b,z,w)$, obtained by fixing $A$, $Z$ and $W$, is invariant in $w$ at every model law --- equivalently $F_{BY}(p)=0$ in \eqref{eq:BY-constraint-map} of the Supplementary Material --- and that invariance holds exactly when $B\ind W$ and $Y\ind W\mid(B,Z)$; Appendix~\ref{app:prototype-ledger} shows that $\cN_+(\cG)$ is the strictly positive ordinary Markov model $\{p>0:\ W\ind(A,B),\ Y\ind W\mid(B,Z)\}$ of $\cG$, in which the joint independence $W\ind(A,B)$ is part of the model and is not implied by the kernel invariance.  No DAG on $V$ represents this model (Section~\ref{app:B-primary}).  Adding the single arrow $W\to B$ produces a genuine Verma restriction; see Remark~\ref{rem:verma-variant}.

The example matters because $\cG$ is not mb-shielded: the mb-shielded decomposition theorem \citep[Theorem~2]{bhattacharya2022semiparametric} does not apply to it, whereas Theorem~\ref{thm:district-orthogonality} does, and the observationally centered direction $h$ defined next is not a model score.

\begin{figure}[t]
    \centering
    \begin{tikzpicture}[
        -Latex, semithick,
        state/.style={circle, draw, minimum width=0.62cm, inner sep=1pt},
        bidirected/.style={Latex-Latex, dashed}]
        \node[state] (W) at (0,0)    {$W$};
        \node[state] (Z) at (2,0)    {$Z$};
        \node[state] (Y) at (4,0)    {$Y$};
        \node[state] (A) at (1,1.35) {$A$};
        \node[state] (B) at (3,1.35) {$B$};
        \draw (W) -- (Z);
        \draw (B) -- (Z);
        \draw (Z) -- (Y);
        \draw[bidirected] (A) -- (B);
        \draw[bidirected] (A) -- (Z);
        \draw[bidirected] (B) -- (Y);
    \end{tikzpicture}
    \caption{The binary ADMG of \eqref{eq:verma-counterexample-graph}.  Solid arrows are directed edges; dashed double arrows are bidirected edges.}\label{fig:verma-counterexample-graph}
\end{figure}

Let $P$ be the strictly positive binary law specified by \eqref{eq:verma-rational-sem} in Appendix~\ref{app:prototype-ledger} of the Supplementary Material, and define
\[
    h(X_V):=X_Y-\E_P(X_Y\mid X_Z).
\]

By construction, $\E_P(h\mid X_Z)=0$.

\begin{example}[Exact failure of the raw direction]\label{prop:raw-failure}
    At the law $P$, $h\notin\cT_P\cN(\cG)$ and $\|h-\Pi_{\cT_P\cN(\cG)}h\|_{P,2}^2=7.84\times10^{-6}>0$, and the residual has the closed form $h-\Pi_{\cT_P\cN(\cG)}h=0.0028\,(2W-1)(2B-1)$: at this law $\pr_P(W{=}1)=\pr_P(B{=}1)=0.5$, the restriction $B\ind W$ makes $(2W-1)(2B-1)$ orthogonal to every model score, and $\langle h,(2W-1)(2B-1)\rangle_P=0.0028\ne0$ certifies non-tangency.  The exact projection is verified in Appendix~\ref{app:proofs-counterexample} of the Supplementary Material; Section~\ref{app:B-primary} records the relative residual at $P$ and verifies non-tangency of the corresponding observational residual $Y-\E_{P'}(Y\mid Z)$ at a second law $P'$.
\end{example}

\subsection{Intrinsic-kernel versus observational centering}

The intrinsic kernel of the singleton set $\{Y\}$ is $q_{\{Y\}}(y\mid z)$, obtained by fixing the other four vertices, for instance in the valid order $A,Z,W,B$.  At the terminal stage, where $Y$ is the only random vertex, this kernel is also the fixing divisor; the divisor for fixing $Y$ in the original graph is instead $p(y\mid a,b,z,w)$.  At $P$, $q_{\{Y\}}(0\mid 0)=0.55$ and $\pr_P(Y=0\mid Z=0)=0.718$.  Both values follow from \eqref{eq:verma-rational-sem} of the Supplementary Material, as recorded in Section~\ref{app:B-primary}.  Thus, although $Y$ is a directed sink and can be placed last in a topological order, its intrinsic kernel differs from the observational conditional distribution.  In particular, the observationally centered direction $h=Y-\E_P(Y\mid Z)$ satisfies $\sum_{y=0}^1 q_{\{Y\}}(y\mid 0)\{y-\E_P(Y\mid Z=0)\}=0.45-0.282=0.168\ne0$.  This calculation shows why $P$-conditional centering cannot be transferred directly to intrinsic-kernel centering.

\subsection{The 30-versus-23 gap}

For any $P\in\cN_+(\cG)$, let $\theta_P:=\Theta_{\cG}(P)$ and $q_{\Delta,P}:=\operatorname{red}_\Delta\{\phi_{V\setminus\Delta}(p;\cG)\}$.  Define the normalized district-kernel score space
\begin{equation}\label{eq:verma-normalized-district-score-space}
    \mathcal K_\Delta(P) :=\left\{g\in L_2(P): \sum_{x_\Delta}q_{\Delta,P}(x_\Delta\mid w) g(x_\Delta,w)=0\quad\text{for every }w\right\}.
\end{equation}

We first establish the structural containment
\begin{equation}\label{eq:verma-district-score-containment}
    \cS_\Delta^{\mathrm{nat}}(P)\subseteq\mathcal K_\Delta(P).
\end{equation}

Indeed, take $g=J_Pu$ with $u\in U_\Delta^{\mathrm{nat}}$ and use the two-sided chart path $p_t=\Phi_\cG(\theta_P+tu)$ for sufficiently small $|t|$.  By Proposition~\ref{prop:district-locality}, this path holds the $\{W\}$ factor fixed and changes only the normalized district kernel:
\[
    p_t(w,x_\Delta)=p(w)q_{\Delta,t}(x_\Delta\mid w), \qquad g(x_\Delta,w) =\left.\frac{\mathrm d}{\mathrm dt}\log q_{\Delta,t}(x_\Delta\mid w)\right|_{t=0}.
\]

Differentiating $\sum_{x_\Delta}q_{\Delta,t}(x_\Delta\mid w)=1$ separately for each $w$ gives \eqref{eq:verma-district-score-containment}.  This argument uses a model-valid chart path; it does not assert that an arbitrary normalized kernel tilt lies in $\mathcal Q_+(\mathfrak d_\Delta(\cG))=\Phi_{\mathfrak d_\Delta(\cG)}(\Omega_{\mathfrak d_\Delta(\cG)})$, the image of the positive recursive-head chart of the $\Delta$ district.

Conversely, every $g\in\mathcal K_\Delta(P)$ is realized by the local kernel path $q_{\Delta,t}:=q_{\Delta,P}(1+tg)$: the centering condition in \eqref{eq:verma-normalized-district-score-space} keeps $q_{\Delta,t}$ normalized for every $w$, finiteness and strict positivity keep it positive for all sufficiently small two-sided $t$, and $(\mathrm d/\mathrm dt)\log q_{\Delta,t}|_{t=0}=g$.  Thus $\mathcal K_\Delta(P)$ is exactly the tangent space of unrestricted normalized district kernels at $q_{\Delta,P}$; it generally exceeds the tangent space of the chart image $\mathcal Q_+(\mathfrak d_\Delta(\cG))$.

The dimensions make the containment strict at every positive model law. For each of the two values of $w$, the positive kernel $q_{\Delta,P}(\cdot\mid w)$ imposes one nonzero linear constraint on the 16-dimensional space of functions of $x_\Delta$.  Therefore $\dim\mathcal K_\Delta(P)=2(2^4-1)=30$.

On the other hand, $\cG$ has nine intrinsic sets, one of which is $\{W\}$; the eight in the $\Delta$ block, with their recursive heads, tails and binary block dimensions $d_C=2^{|T(C)|}$, are tabulated in Section~\ref{app:B-primary} of the Supplementary Material, and their dimensions sum to $23$.  Thus, using the intrinsic-set-indexed definition $d_\Delta^{\mathrm{nat}}=\sum_{C:\delta(C)=\Delta}d_C$ from \eqref{eq:district-space} and injectivity of $J_P$, $\dim\cS_\Delta^{\mathrm{nat}}(P)=23$.

Combining \eqref{eq:verma-district-score-containment} with the dimension count just displayed proves, for every $P\in\cN_+(\cG)$,
\begin{equation}\label{eq:verma-district-codimension-seven}
    \cS_\Delta^{\mathrm{nat}}(P)\subsetneq\mathcal K_\Delta(P), \qquad \dim\!\left(\mathcal K_\Delta(P)/ \cS_\Delta^{\mathrm{nat}}(P)\right)=7.
\end{equation}
The restriction $W\ind(A,B)$ contributes $(2-1)(4-1)=3$ scalar constraints, and $Y\ind W\mid(B,Z)$ contributes $4(2-1)(2-1)=4$.  Their differentials are independent on the simplex tangent at every positive model law: the four conditional constraints have independent differentials along perturbations of $p(y\mid w,a,b,z)$, which leave the $(W,A,B)$ margin fixed, and the three marginal constraints have independent differentials along perturbations of that margin.  Thus $31-7=24$, the chart dimension recorded in Section~\ref{app:B-primary} of the Supplementary Material.

Hence kernel normalization is necessary for a valid district path but is not sufficient: the quotient in \eqref{eq:verma-district-codimension-seven} contains seven linearly independent normalized-kernel directions that are excluded by the linearized Markov restrictions of $\cG$ at $P$, here the seven ordinary-independence constraints $W\ind(A,B)$ and $Y\ind W\mid(B,Z)$.  This does not by itself rule out every alternative factorized nuisance representation; see the discussion closing Section~\ref{sec:targeted}.  This is a structural statement, not a feature of either rational law used below.

\subsection{What is structural and what is evidence}

The containment and codimension-seven conclusion above hold at every \(P\in\cN_+(\cG)\).  The exact calculations at the primary law $P$ and at the second active-latent law $P'$ of \eqref{eq:verma-active-latent-sem} of the Supplementary Material, recorded in Section~\ref{app:B-primary}, are finite-law corroborations of Theorems~\ref{thm:forward-chart} and~\ref{thm:district-orthogonality}, not proofs for arbitrary graphs or arbitrary positive laws; properties holding at one law but not the other are not used in the theory.

\subsection{Cyclic district quotients}\label{sec:cyclic-control}

Let $\cG_{\mathrm{cyc}}$ be the binary ADMG on $\{A,A',B,B'\}$ with bidirected edges $A\leftrightarrow A'$ and $B\leftrightarrow B'$ and directed edges $A\to B$ and $B'\to A'$: its directed vertex graph is acyclic, but its two native districts form a directed two-cycle in the district quotient (Section~\ref{app:B-cyclic} of the Supplementary Material: the edge-list display \eqref{eq:cyclic-counterexample-graph}, Figure~\ref{fig:cyclic-counterexample-graph}, and the detailed development).  At the strictly positive rational law $P_{\mathrm{cyc}}$ of \eqref{eq:cyclic-rational-coordinates}, all $25$ entries of the cross-district Gram block vanish exactly \eqref{eq:cyclic-cross-district-gram-zero}, as Theorem~\ref{thm:district-orthogonality} requires, whereas the two observationally centered directions of \eqref{eq:cyclic-raw-directions} have a nonzero inner product and strictly positive exact squared projection residuals \eqref{eq:cyclic-raw-direction-failure}; Remark~\ref{rem:cyclic-quotient-diagnostic} records the head-family comparison at the same law.  The law is thus a positive control for Theorem~\ref{thm:district-orthogonality} and a negative control for observational centering as a tangency criterion.

\subsection{A structurally misleading pairwise diagnostic}\label{sec:three-district-raw-control}

Let $\cG_3$ be the six-node control of \eqref{eq:three-district-control-graph}, whose three bidirected pairs form a directed three-cycle in the district quotient although the directed vertex graph is acyclic (Section~\ref{app:B-three} of the Supplementary Material: Figure~\ref{fig:three-district-control-graph} and the detailed development).  For the raw residuals of \eqref{eq:three-district-raw-residuals}, the pairwise orthogonality \eqref{eq:three-district-pairwise-orthogonality} is \emph{structural}: it holds at every $P\in\cN_+(\cG_3)$, through the $m$-separation global Markov property.  Non-tangency is \emph{law-specific}: at the rational law $P_3$ of \eqref{eq:three-district-rational-coordinates}, the three exact squared projection residuals \eqref{eq:three-district-projection-residuals} are strictly positive and the third moment \eqref{eq:three-district-third-moment} is nonzero.  The projection residuals, not the third moment, certify the non-tangency claims: the graph-implied conditional independences together with observational centering force the displayed pairwise inner products to vanish even though the directions are not model scores.

\section{A chart-level intervention bridge}\label{sec:intervention-bridge}

This section separates the causal identification input, which is used only at the true causal law, from the statistical geometry of Sections~\ref{sec:setup}--\ref{sec:geometry}: the same fixing functional is defined on all of $\cN_+(\cG)$ and shown to be a smooth normalized chart functional there.  Remark~\ref{rem:statistical-not-causal-extension} states the distinction precisely.

\subsection{The causal identification input}

Let $Y\subseteq V$ be the outcome set.  For a fixed node intervention, let $A_{\mathsf I}$ be the intervened vertices.  For a fixed edge intervention $\alpha$, let $A_{\mathsf I}$ be the sources of its intervened arrows; for a path-specific intervention of class (I3) below, let it be the treatment-source set. Assume throughout that $Y\cap A_{\mathsf I}=\varnothing$.  We call $\alpha$ \emph{complete-source} if, once an arrow out of $a\in A_{\mathsf I}$ is included, every directed arrow out of $a$ is included in $\alpha$.  This is the convention under which Theorem~1 of \cite{shpitser2018identification} is stated.  Put
\begin{equation}\label{eq:active-ancestral-set}
    R_{\mathsf I} :=\operatorname{an}_{\cG_{V\setminus A_{\mathsf I}}}(Y), \qquad \mathcal H_{\mathsf I}:=\cG_{R_{\mathsf I}}.
\end{equation}
For a system-wide response, $Y=V\setminus A_{\mathsf I}$ and hence $R_{\mathsf I}=V\setminus A_{\mathsf I}$.

The intervention classes considered are the following; the common configuration conditions above apply to all of them.

\begin{enumerate}[label=(I\arabic*),leftmargin=4em]
    \item A fixed-value node intervention.
    \item A complete-source fixed-value edge intervention: every source $a\in A_{\mathsf I}$ assigns one common value $a_{a,D}\in\cX_a$ to all arrows from $a$ into any one district $D\in\cD(\mathcal H_{\mathsf I})$, and values may differ across active districts.  Its causal model is stated in \textup{(H2)} and its identification hypotheses in \textup{(H3)}--\textup{(H4)}.
    \item A path-specific intervention, by one of two routes.

    \emph{Direct route.}  A standard path-specific intervention along specified proper causal paths from a treatment set to $Y$, with two fixed treatment-value vectors, treated under the causal model of \textup{(H2)} and the route-specific hypotheses of \textup{(H3)}--\textup{(H4)}; these are the cases covered directly by \cite[Theorems~3--4]{shpitser2013counterfactual}.

    \emph{Reduced route.}  A path intervention that an independently justified edge-consistent reduction turns into an intervention satisfying every condition in \textup{(I2)}, which is then treated under every \textup{(I2)} hypothesis; the requirements on its path set and assignment are stated in \textup{(H4)}, and the cited DAG results and their limits are stated immediately after the hypotheses.
\end{enumerate}

In the boundary notation below, $a_{w,D}$ means the node value $a_w$ in (I1), the common district-compatible edge value in (I2), and the active or reference value assigned to district $D$ by the path formula of (I3). In every case we require
\begin{equation}\label{eq:active-intrinsicness}
    \cD(\mathcal H_{\mathsf I})\subseteq\cI(\cG).
\end{equation}
For $D\in\cD(\mathcal H_{\mathsf I})$, define the boundary configuration $c_D^{\mathsf I}(x_{R_{\mathsf I}})$ on $\pa_{\cG}(D)\setminus D$ by
\[
    c_{D,w}^{\mathsf I}(x_{R_{\mathsf I}})
  := \begin{cases}
        x_w, & w\in R_{\mathsf I},\\
        a_{w,D}, & w\in A_{\mathsf I}.
    \end{cases}
\]
Ancestrality in \eqref{eq:active-ancestral-set} places every non-source parent of an active district in $R_{\mathsf I}$, and the class-specific assignment --- the node value in (I1), the complete-source common value in (I2), and the path-formula value in (I3) --- fixes every remaining parent, so these are the only two possibilities; Appendix~\ref{app:proofs-bridge} of the Supplementary Material records why this closure is a consequence here rather than a condition attributed to Theorem~1 of \cite{shpitser2018identification}.

\paragraph*{Hypotheses of the causal identification assertion}
The causal content of Proposition~\ref{prop:causal-identification-interface} --- that the configured product identifies a causal response --- is asserted under the following hypotheses.
\begin{enumerate}[label=(H\arabic*),leftmargin=4em]
    \item $\cG$ is the latent projection of a hidden-variable causal DAG and the observed law $P$ is strictly positive; together with the causal model in \textup{(H2)}, this places $P$ in $\cN_+(\cG)$ and makes every fixing divisor below positive.
    \item $\mathsf I$ belongs to one of the classes \textup{(I1)}--\textup{(I3)}, $Y\cap A_{\mathsf I}=\varnothing$, and $P$ is generated by the causal model attached to the class: the hidden-variable causal DAG model under which \cite[Theorem~48]{richardson2023nested} is stated for \textup{(I1)}; Pearl's functional model with mutually independent response-function families for \textup{(I2)} \citep[Theorem~1]{shpitser2018identification}; the nonparametric structural equation model with independent errors for the direct branch of \textup{(I3)} \citep[Supplement~A.2]{shpitser2013counterfactual}; the branch of \textup{(I3)} reduced to \textup{(I2)} inherits every \textup{(I2)} hypothesis.
    \item Active intrinsicness \eqref{eq:active-intrinsicness} holds.  In \textup{(I2)}, and hence in the reduced branch of \textup{(I3)}, every district of $\cG_{V\setminus A_{\mathsf I}}$ is intrinsic in $\cG$ as well, a condition that coincides with \eqref{eq:active-intrinsicness} in the system-wide case $Y=V\setminus A_{\mathsf I}$ (Appendix~\ref{app:proofs-bridge} of the Supplementary Material).  In the direct branch of \textup{(I3)}, no recanting district exists and the total-effect district factors required by the cited path-specific result are identified, which \eqref{eq:active-intrinsicness} supplies through \cite[Theorem~48]{richardson2023nested}.
    \item The class-specific compatibility holds: complete-source assignment with one common value $a_{a,D}\in\cX_a$ per source--district pair in \textup{(I2)}; the same within-district coherence and a $P$-independent value in $\cX_w$ for every excluded-source tail coordinate in \textup{(I3)}; in every class every boundary value satisfies $a_{w,D}\in\cX_w$.  In the direct branch of \textup{(I3)}, the path set is proper, live and consistent for $Y$.  In the reduced branch of \textup{(I3)}, the path set is proper, $Y$-live and $Y$-consistent with an edge-consistent assignment in the formal sense of \cite{shpitser2016causal}, the equality with the induced intervention of class \textup{(I2)} is independently justified, and every \textup{(I2)} hypothesis --- its causal model, the complete-source convention, the common assignment, and the intrinsicness of every district of $\cG_{V\setminus A_{\mathsf I}}$ --- is inherited.
\end{enumerate}

The cited results are stated on a DAG: Section~5.3 of \cite{shpitser2016causal} defines the edge-consistent path intervention, its Lemma~5.7 gives the equality with the induced edge intervention, and its Corollary~5.2 the identification result, all under the multiple-world model on a DAG; no general observed-ADMG path-to-edge reduction is inferred from those underlying-DAG statements.

The causal interpretation of Proposition~\ref{prop:causal-identification-interface} uses the class-specific identification hypotheses \textup{(H1)}--\textup{(H4)} at the causal law.  The statistical extension that follows, from Theorem~\ref{thm:normalized-intervention-bridge} onward, ranges over every law in $\cN_+(\cG)$ for the fixed graph and intervention configuration --- strict positivity remains part of its domain --- and requires neither a latent realization nor a causal reading of nearby laws (Remark~\ref{rem:statistical-not-causal-extension}).  Its operative graphical and configuration conditions are active intrinsicness \eqref{eq:active-intrinsicness}, the first clause of \textup{(H3)}, and the fixed, compatible, single-valued boundary selection $c_D^{\mathsf I}$ with $a_{w,D}\in\cX_w$, the boundary clause of \textup{(H4)}; together they define the selection map $\operatorname{Sel}_{\mathsf I}$ below.

\begin{proposition}[Configured product and identified response]\label{prop:causal-identification-interface}
    Under \textup{(H1)}--\textup{(H4)}, with $p$ the mass function of $P$ and
    \[
        q_{D,P}:=\operatorname{red}_D\{\phi_{V\setminus D}(p;\cG)\},
    \]
    the array
    \begin{equation}\label{eq:imported-intervention-product}
        p^{\mathsf I}(x_{R_{\mathsf I}}) =\prod_{D\in\cD(\mathcal H_{\mathsf I})} q_{D,P}\!\left(x_D\mid c_D^{\mathsf I}(x_{R_{\mathsf I}})\right)
    \end{equation}
    is a normalized strictly positive law on $R_{\mathsf I}$, by Theorem~\ref{thm:normalized-intervention-bridge} below, and
    \textup{(a)} its $Y$ margin, obtained by summing over $R_{\mathsf I}\setminus Y$, is the causal response identified by the cited result of the class;
    \textup{(b)} in \textup{(I1)}--\textup{(I2)} the whole array is the identified joint response law on $R_{\mathsf I}$, whereas in \textup{(I3)} only the $Y$ margin is asserted to be causal and no joint counterfactual reading of \eqref{eq:imported-intervention-product} is claimed.
    The cited identification theorems, including their target-set applications, are documented in Appendix~\ref{app:proofs-bridge} of the Supplementary Material.
\end{proposition}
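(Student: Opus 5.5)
The proposition has two parts: normalization and strict positivity of $p^{\mathsf I}$, and the causal reading of the array. I would split the proof along exactly these lines. Normalization and positivity are statistical facts, and I would take them from Theorem~\ref{thm:normalized-intervention-bridge}, stated below; this proposition only cites it. Positivity is elementary and I would note it directly. By \textup{(H1)}--\textup{(H2)}, $P\in\cN_+(\cG)$. By \eqref{eq:active-intrinsicness}, every active district $D$ is intrinsic in $\cG$, so $q_{D,P}$ is the intrinsic kernel of \eqref{eq:fixing-kernel-bridge}, which is strictly positive by the induction along a valid fixing sequence recorded before \eqref{eq:coordinate-extraction}. The boundary values $a_{w,D}\in\cX_w$ from \textup{(H4)} are legitimate arguments, so every factor in \eqref{eq:imported-intervention-product} is positive.

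For normalization, in case the forward reference is to be avoided, I would argue through the chart. Lemma~\ref{lem:reachable-chart-realization} writes each $q_{D,P}$ as $\Phi_{\cG[D]}(\theta_{\downarrow D})$. Substituting $c_D^{\mathsf I}$ selects, within each block $\theta_C$ with $C\subseteq D$, the tail-indexed entries at the configured boundary. The resulting product over $\cD(\mathcal H_{\mathsf I})$ is then $\Phi_{\mathcal H_{\mathsf I}}$ evaluated at the selected coordinates $\operatorname{Sel}_{\mathsf I}(\theta)$, by Lemma~\ref{lem:algebraic-district-factorization} applied to $\mathcal H_{\mathsf I}$. Corollary~\ref{lem:algebraic-normalization} then gives total mass one.

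The key step is checking that the intrinsic sets inside an active district $D$, with their heads and tails, coincide in $\mathcal H_{\mathsf I}$ and in $\cG$. Once source parents are converted to fixed vertices, the head partitions $\mathfrak P$ must agree. I would prove this using ancestrality of $R_{\mathsf I}$ together with the fact that $D\in\cI(\cG)$, so that $\cG[D]$ and the reduced district CADMG of $\mathcal H_{\mathsf I}$ with $A_{\mathsf I}$-parents treated as fixed share the same edges with arrowheads in $D$. This matching is the main obstacle, and I expect it to be exactly the content deferred to the theorem.

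Parts (a) and (b) are citation-level, argued class by class. For \textup{(I1)}, Theorem~48 of \cite{richardson2023nested} identifies $p(x_{R_{\mathsf I}}\mid\mathrm{do}(a))$ as the product of the intrinsic kernels of the districts of the ancestral subgraph $\mathcal H_{\mathsf I}$, evaluated at the intervened values. Since intervened parents carry a single value, $c_D^{\mathsf I}$ reproduces that evaluation, and marginalizing over $R_{\mathsf I}\setminus Y$ gives the $Y$ response. Ancestrality makes the marginalization consistent, because summing out sterile vertices removes whole factors, as in Lemma~\ref{lem:sterile-marginalization}. For \textup{(I2)}, Theorem~1 of \cite{shpitser2018identification} gives the edge-intervention joint law as a product of district kernels of $\cG_{V\setminus A_{\mathsf I}}$, with each source evaluated at its edge value. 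Complete-source assignment, with a common value per source--district pair, makes that value exactly $a_{w,D}$. Restricting from $V\setminus A_{\mathsf I}$ to the ancestral set $R_{\mathsf I}$ again only drops factors, using the intrinsicness clause of \textup{(H3)}. For \textup{(I3)}, the direct route uses Theorems~3--4 of \cite{shpitser2013counterfactual}: with no recanting district, the $Y$ margin is the product of total-effect district factors evaluated at active or reference values, which is exactly our configured product. Only the margin is claimed, as in (b). The reduced route inherits the \textup{(I2)} argument via the independently justified equality in \textup{(H4)}, so again only the $Y$ margin is asserted.
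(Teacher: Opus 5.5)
\paragraph{Overall.} Your normalization and positivity argument is the paper's own route. The proposition simply cites Theorem~\ref{thm:normalized-intervention-bridge}, whose proof is the reassembly you sketch through Lemma~\ref{lem:reachable-chart-realization}, $\operatorname{Sel}_{\mathsf I}$, Lemma~\ref{lem:algebraic-district-factorization} and Corollary~\ref{lem:algebraic-normalization}. Your imports for \textup{(I1)} and for the direct branch of \textup{(I3)} also match the paper's. The gap is in your \textup{(I2)} step, which the reduced branch of \textup{(I3)} inherits.

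\paragraph{The \textup{(I2)} step.} You apply Theorem~1 of \cite{shpitser2018identification} with target $V\setminus A_{\mathsf I}$, evaluate a product over $\cD(\cG_{V\setminus A_{\mathsf I}})$ at ``the'' common source value, and then restrict to $R_{\mathsf I}$. But \textup{(I2)} and \textup{(H4)} supply a common value $a_{a,D}$ only for districts $D\in\cD(\mathcal H_{\mathsf I})$. Arrows from a source into vertices outside $R_{\mathsf I}$ carry unconstrained values, and a district of $\cG_{V\setminus A_{\mathsf I}}$ can contain both kinds of vertex.

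A concrete failure uses the graph of Example~\ref{ex:split-boundary-nonidentification}. Include both arrows, set $A\to B$ to $0$ and $A\to B'$ to $1$, and take outcome set $\{B'\}$.
\begin{itemize}
\item Then $R_{\mathsf I}=\{B'\}$, the intervention is complete-source, and every clause of \textup{(H1)}--\textup{(H4)} holds.
\item Yet the district $\{B,B'\}$ of $\cG_{V\setminus A_{\mathsf I}}$ receives two values from one source, so the product you write is not defined.
\item The joint law of $(B(0),B'(1))$ is not identified. By independence of $A$ from the disturbances it equals $K_i(1)$, and the two models of Section~\ref{app:B-intervention} give it at total-variation distance $16/25$ from the same observed law.
\end{itemize}
So there is no identified joint law on $V\setminus A_{\mathsf I}$ to restrict, even though the law of $B'(1)$ is identified.

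Separately, even under a consistent assignment, ``restricting only drops factors'' is false. A district of $\cG_{V\setminus A_{\mathsf I}}$ that meets both $R_{\mathsf I}$ and its complement is marginalized, not dropped. Identifying the result with the product of intrinsic kernels of the finer districts of $\cG_{R_{\mathsf I}}$ needs a nested-Markov argument: fixing childless vertices is marginalization, followed by refactorization. That argument uses \eqref{eq:active-intrinsicness}, not the second clause of \textup{(H3)} that you invoke.

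\paragraph{The repair.} Do what the paper does: apply Theorem~1 directly with target set $R_{\mathsf I}$, exactly as you already did with Theorem~48 in \textup{(I1)}.
\begin{itemize}
\item Since $\operatorname{an}_{\cG_{V\setminus A_{\mathsf I}}}(R_{\mathsf I})=R_{\mathsf I}$, the theorem's pruned set is $R_{\mathsf I}$ itself.
\item Its district product therefore runs over $\cD(\mathcal H_{\mathsf I})$, which is where the common-value hypothesis and active intrinsicness are stated.
\item The second clause of \textup{(H3)} then enters only as the theorem's preamble hypothesis, namely identification of the node-intervention law.
\item The raw kernels $\phi_{V\setminus D}(p;\cG)$ that the theorem returns agree with $q_{D,P}$ once the childless fixed arguments are suppressed.
\end{itemize}

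\paragraph{A smaller point in \textup{(I1)}.} Drop the sterile-vertex remark. The vertices of $R_{\mathsf I}\setminus Y$ are ancestors of $Y$, not sterile. The $Y$ margin is the $Y$ response simply because it is a margin of the identified joint response on $R_{\mathsf I}$.
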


\begin{example}[Why a mixed source boundary is not identified]\label{ex:split-boundary-nonidentification}
    Consider $A\to B$, $A\to B'$, and $B\leftrightarrow B'$ with $A\sim\operatorname{Bernoulli}(p)$, $0<p<1$.  Section~\ref{app:B-intervention} of the Supplementary Material specifies two latent-variable models for this graph that induce the same strictly positive observed law.  Setting only the arrow $A\to B$ to zero while $A\to B'$ retains the natural value of $A$ asks for the joint law of the split responses $B(0)$ and $B'(1)$ --- $B$ with the arrow $A\to B$ set to $0$ and $B'$ with the arrow $A\to B'$ at the natural value --- given the natural value $A=1$, the counterfactual that a kernel indexed by the mixed boundary $(a_{A\to B},a_{A\to B'})=(0,1)$ would have to represent.  Write $K_i(a):=\operatorname{Law}_i\{B(0),B'(a)\mid A=a\}$ for the natural-retention response in model $i$.  The two models agree on every observed-kernel slice $\pr(B=b,B'=b'\mid A=a)$, but $K_1(1)$ and $K_2(1)$ are at total-variation distance $0.64$, while $K_1(0)=K_2(0)$; averaging over the natural draw of $A$ leaves the two split-response laws at distance $0.64p>0$.  Setting both arrows to zero gives the same law in the two models.  The split intervention deliberately violates the complete-source condition of (I2): it shows why enlarging the theorem to such mixed boundaries would be invalid.  Thus a kernel boundary that asks one source coordinate to be simultaneously natural and intervened within one district need not be a functional of the observed law.
\end{example}

\subsection{Active-coordinate selection and normalized reassembly}

Fix an intervention of one of the classes (I1)--(I3) and abbreviate $R=R_{\mathsf I}$, $\mathcal H=\mathcal H_{\mathsf I}$, and $A=A_{\mathsf I}$.  Every intrinsic set $S$ of $\mathcal H$ lies in a unique $D\in\cD(\mathcal H)$.  By \eqref{eq:active-intrinsicness}, $D$ is reachable in $\cG$.  Lemma~4.9 of \cite{evans2019smooth}, applied within $\mathcal H$, first identifies $S$ as intrinsic in $\mathfrak d_D(\mathcal H)$.  The CADMGs $\cG[D]$ and $\mathfrak d_D(\mathcal H)$ have exactly the same directed and bidirected edges among their random vertices $D$.  Fixability, intrinsic subsets, and recursive heads depend only on this random graph.  Hence $S$ is intrinsic in $\cG[D]$; applying the same lemma in $\cG$ then identifies $S$ as a global intrinsic set with the same recursive head.  Moreover,
\begin{equation}\label{eq:active-tail-restriction}
    T_{\mathcal H}(S) =T_{\cG}(S)\cap R =T_{\cG}(S)\setminus A,
\end{equation}
because any $w\in T_{\cG}(S)\setminus A$ is either internal to $S$ or has an arrow into $S$.  The internal case gives $w\in S\subseteq R$ immediately; in the external case, appending that arrow to an $S$-to-$Y$ directed path in $\cG_{V\setminus A}$ gives $w\in R$.  The reverse inclusion follows from $\mathcal H=\cG_R$. Define the linear \emph{tail-slice map} $\operatorname{Sel}_{\mathsf I}:U_{\cG}\to U_{\mathcal H}$ by
\begin{equation}\label{eq:tail-slice-map}
    \bigl(\operatorname{Sel}_{\mathsf I}\theta\bigr)_S (a_{H(S)}\mid x_{T_{\mathcal H}(S)}) :=\theta_S\!\left( a_{H(S)}\mid x_{T_{\mathcal H}(S)}, a_{T_{\cG}(S)\cap A,D} \right), \qquad S\subseteq D.
\end{equation}

District compatibility makes the selected source-tail entry single-valued. Here $U_{\mathcal H}=\R^{\mathcal A_{\mathcal H}}$ is the tagged active-chart coordinate space defined in Section~\ref{sec:setup}.  Relative to the displayed coordinate bases, $\operatorname{Sel}_{\mathsf I}$ is represented by a $0/1$ row-selection matrix: every output row contains exactly one $1$, and every input column contains at most one $1$.  Thus it selects already existing finite-state coordinate entries; it neither extrapolates a kernel nor changes a coordinate value.

\begin{lemma}[Configured factor reassembly]\label{lem:configured-factor-reassembly}
    For every $\theta\in\Omega_{\cG}$ and $D\in\cD(\mathcal H)$,
    \begin{align}
        &\operatorname{red}_D\{\phi_{V\setminus D}(p_\theta;\cG)\} \left(x_D\mid c_D^{\mathsf I}(x_R)\right) \label{eq:configured-factor-reassembly}\\
        &\qquad={} \Phi_{\mathfrak d_D(\mathcal H)} \left((\operatorname{Sel}_{\mathsf I}\theta)_{\downarrow D}\right) \left(x_D\mid x_{\pa_{\mathcal H}(D)\setminus D}\right). \nonumber
    \end{align}

    Both sides are strictly positive.
\end{lemma}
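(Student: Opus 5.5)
The plan is to reduce the left side of \eqref{eq:configured-factor-reassembly} to a chart expression through Lemma~\ref{lem:reachable-chart-realization}, and then show that substituting the boundary configuration into that chart expression is exactly the tail-slice selection. Since $D\in\cD(\mathcal H)\subseteq\cI(\cG)$ by \eqref{eq:active-intrinsicness}, Lemma~\ref{lem:reachable-chart-realization} gives, pointwise in every fixed-parent configuration,
\[
    \operatorname{red}_D\{\phi_{V\setminus D}(p_\theta;\cG)\}(x_D\mid x_{\pa_{\cG}(D)\setminus D}) =\Phi_{\cG[D]}(\theta_{\downarrow D})(x_D\mid x_{\pa_{\cG}(D)\setminus D}),
\]
with $\theta_{\downarrow D}\in\Omega_{\cG[D]}$. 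The left side is therefore strictly positive at every fixed-parent configuration, in particular at $c_D^{\mathsf I}(x_R)$; this settles positivity of the left side, and positivity of the right side will follow from the equality.

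Next I evaluate $\Phi_{\cG[D]}(\theta_{\downarrow D})$ at the fixed-parent value $c_D^{\mathsf I}(x_R)$ through the cellwise formula \eqref{eq:general-forward-map}. The random vertex set is $D$ in both $\cG[D]$ and $\mathfrak d_D(\mathcal H)$, and the two CADMGs share the same random subgraph on $D$. The argument preceding \eqref{eq:active-tail-restriction} then shows that the intrinsic sets of the two are the same sets $S\subseteq D$ with the same recursive heads. Because $\mathfrak P$ depends only on heads and on the order $\prec$, the partitions $\mathfrak P_{\cG[D]}(A')$ and $\mathfrak P_{\mathfrak d_D(\mathcal H)}(A')$ coincide for every $A'\subseteq D$, and so does $\operatorname{nc}(x_D)$. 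The sums in \eqref{eq:general-forward-map} therefore run over identical index sets, and it remains to match factors. A factor on the left is $\theta_S(y_H\mid x'_{T_{\cG}(S)})$, where $x'$ agrees with $x_R$ on $T_{\cG}(S)\cap R$ and equals $a_{w,D}$ on $T_{\cG}(S)\cap A$; the boundary definition exhausts these two cases. By \eqref{eq:active-tail-restriction} and \eqref{eq:tail-slice-map}, this factor is exactly $(\operatorname{Sel}_{\mathsf I}\theta)_S(y_H\mid x_{T_{\mathcal H}(S)})$. The internal tail coordinates lie in $D\subseteq R$ and are unchanged, and the value $a_{w,D}$ depends only on the district $D$, which contains $S$, so the selection is single-valued. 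Summing gives the identity \eqref{eq:configured-factor-reassembly}, and the fixed-parent argument on the right is $x_{\pa_{\mathcal H}(D)\setminus D}$ because $\pa_{\mathcal H}(D)\setminus D=(\pa_{\cG}(D)\setminus D)\cap R$.

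I expect the main obstacle to be the identification of graphical data between $\cG[D]$ and $\mathfrak d_D(\mathcal H)$. Three things must match: intrinsic sets, heads, and the head order $\prec$, which is defined through $I(\cdot)$ relative to each CADMG. The intrinsic sets and heads are supplied by the Lemma~4.9 argument already given. For $\prec$, I would first check that $I_{\mathcal H'}(B)$ depends only on the random subgraph, since it alternates between random ancestors and districts, both of which are determined by the random edges; the fixed vertices and their outgoing edges do not enter. If that check fails, the fallback is to invoke partition-suitability and uniqueness of the recursive-head partition from \cite[Propositions B.1, B.5]{evans2019smooth}, which would make the partition a function of the heads alone.
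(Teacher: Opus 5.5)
Your proposal is correct and follows the paper's proof essentially step for step. Both arguments realize the fixing kernel as $\Phi_{\cG[D]}(\theta_{\downarrow D})$ via Lemma~\ref{lem:reachable-chart-realization}, match the intrinsic sets, heads and partitions of $\cG[D]$ and $\mathfrak d_D(\mathcal H)$ through their common random subgraph, identify each boundary-substituted factor with a row of $\operatorname{Sel}_{\mathsf I}\theta$ by \eqref{eq:active-tail-restriction} and \eqref{eq:tail-slice-map}, and transfer positivity across the equality. Your extra worry about $\prec$ needs no fallback: by Lemma~4.6 of \cite{evans2019smooth}, $H_1\prec H_2$ is just the strict inclusion $C(H_1)\subsetneq C(H_2)$ of the shared intrinsic sets, so the recursive-head partitions coincide, which the paper asserts directly.
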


%%  Section 2 states \delta's domain for every nonempty bidirected-connected
%%  set; a district of the vertex-induced \cG_R is one, so \delta(D) is
%%  available here without invoking \eqref{eq:active-intrinsicness} in the
%%  statement.  The proof still uses that hypothesis, through
%%  Lemma~\ref{lem:configured-factor-reassembly}.

\begin{corollary}[Active-factor locality]\label{cor:active-factor-locality}
    For $D\in\cD(\mathcal H)$, let $\delta(D)$ be its native district in $\cG$, as in Section~\ref{sec:setup}.  If $u\in U_K^{\mathrm{nat}}$ for a native district $K\ne\delta(D)$, then, for every $\theta\in\Omega_{\cG}$,
    \[
        \mathbb D_\theta\!\left[ \operatorname{red}_D\{\phi_{V\setminus D}(p_\theta;\cG)\} \bigl(x_D\mid c_D^{\mathsf I}(x_R)\bigr) \right][u]=0 \quad\text{for every }x_R.
    \]

    Equivalently, the active-$D$ rows of $\operatorname{Sel}_{\mathsf I}u$ vanish.  Hence active channels may be grouped by their native district before the Gram projection.
\end{corollary}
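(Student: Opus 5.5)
The corollary follows from Lemma~\ref{lem:configured-factor-reassembly} combined with an index-bookkeeping argument about $\operatorname{Sel}_{\mathsf I}$. Fix $D\in\cD(\mathcal H)$ and $\theta\in\Omega_{\cG}$. By \eqref{eq:configured-factor-reassembly}, the configured factor equals
\[
    \Phi_{\mathfrak d_D(\mathcal H)}\bigl((\operatorname{Sel}_{\mathsf I}\theta)_{\downarrow D}\bigr)\bigl(x_D\mid x_{\pa_{\mathcal H}(D)\setminus D}\bigr)
\]
for every $x_R$. Since $\operatorname{Sel}_{\mathsf I}$ is linear and $\Phi_{\mathfrak d_D(\mathcal H)}$ is polynomial (Remark~\ref{rem:forward-map-multiaffine}), the chain rule gives, for any $u\in U_{\cG}$,
\[
    \mathbb D\Phi_{\mathfrak d_D(\mathcal H)}\bigl[(\operatorname{Sel}_{\mathsf I}u)_{\downarrow D}\bigr].
\]
This requires the identity \eqref{eq:configured-factor-reassembly} to hold on the open set $\Omega_{\cG}$, so that differentiating both sides in $\theta$ is legitimate. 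Openness is supplied by Theorem~\ref{thm:forward-chart}. It therefore suffices to show that $(\operatorname{Sel}_{\mathsf I}u)_{\downarrow D}=0$ whenever $u\in U_K^{\mathrm{nat}}$ with $K\ne\delta(D)$. This is the stated equivalent form, namely that the active-$D$ rows of $\operatorname{Sel}_{\mathsf I}u$ vanish.

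For that vanishing, I unpack \eqref{eq:tail-slice-map}. The rows of $(\operatorname{Sel}_{\mathsf I}u)_{\downarrow D}$ are indexed by intrinsic sets $S\in\cI(\mathcal H)$ with $S\subseteq D$. Each such row reads a single entry of the block $u_S$, where $S$ is regarded as a global intrinsic set in $\cI(\cG)$. This identification was established just before \eqref{eq:active-tail-restriction} using active intrinsicness \eqref{eq:active-intrinsicness} and Lemma~4.9 of \cite{evans2019smooth}. Since $S\subseteq D$ and $S$ is nonempty and bidirected-connected, we have $\delta(S)=\delta(D)$. The set $D$ is itself bidirected-connected in $\cG$, because a district of the vertex-induced $\cG_R$ only uses bidirected edges of $\cG$. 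Hence $\iota_S(U_S)\subseteq U_{\delta(D)}^{\mathrm{nat}}$. An element $u\in U_K^{\mathrm{nat}}$ with $K\ne\delta(D)$ has zero $S$-tagged block, because the tagged supports of distinct native districts are disjoint in $U_{\cG}=\bigoplus_D U_D^{\mathrm{nat}}$. So every selected entry is zero, and the derivative vanishes for every $x_R$.

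The only point needing care is confirming that the block the selection map reads for an active intrinsic $S$ is the global $\cG$-block tagged by the same set $S$, and not some other global intrinsic set with the same head. This is exactly what the head-preserving identification preceding \eqref{eq:active-tail-restriction} guarantees. The closing remark about grouping channels then follows directly: each active-district derivative depends only on the $U^{\mathrm{nat}}_{\delta(D)}$ component of the direction, so the active channels can be collected by native district before applying Proposition~\ref{prop:gram-projection}.
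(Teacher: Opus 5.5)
Your proposal is correct and matches the paper's own proof: apply Lemma~\ref{lem:configured-factor-reassembly} at $D$, note that every selected row indexed by an intrinsic $S\subseteq D$ reads the global block $u_S$ with $\delta(S)=\delta(D)\ne K$, conclude that $(\operatorname{Sel}_{\mathsf I}u)_{\downarrow D}=0$, and then differentiate the reassembly identity. Your extra remarks, on the openness of $\Omega_{\cG}$ and on $D$ being bidirected-connected in $\cG$, only spell out points the paper leaves implicit.
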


\begin{theorem}[Active nested-law closure and normalized intervention bridge]\label{thm:normalized-intervention-bridge}
    For $p_\theta=\Phi_{\cG}(\theta)$ with $\theta\in\Omega_{\cG}$, define
    \begin{equation}\label{eq:statistical-intervention-functional}
        \Gamma_{\mathsf I}(p_\theta)(x_R) :=\prod_{D\in\cD(\mathcal H)} \operatorname{red}_D\{\phi_{V\setminus D}(p_\theta;\cG)\} \left(x_D\mid c_D^{\mathsf I}(x_R)\right).
    \end{equation}
    Then the global identity
    \begin{equation}\label{eq:active-chart-bridge}
        \Gamma_{\mathsf I}(p_\theta) =\Phi_{\mathcal H}(\operatorname{Sel}_{\mathsf I}\theta)
    \end{equation}
    holds on all of $\Omega_{\cG}$, and
    \begin{equation}\label{eq:active-chart-domain}
        \operatorname{Sel}_{\mathsf I}\theta\in\Omega_{\mathcal H}, \qquad \Gamma_{\mathsf I}(p_\theta)\in\cN_+(\mathcal H).
    \end{equation}
    Thus $\Gamma_{\mathsf I}$ maps $\cN_+(\cG)$ into $\cN_+(\mathcal H)$. Consequently, $\theta\mapsto\Gamma_{\mathsf I}(p_\theta)$ is polynomial and $p\mapsto\Gamma_{\mathsf I}(p)$ is $C^\infty$ on $\cN_+(\cG)$.
\end{theorem}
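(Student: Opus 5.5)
The plan is to combine Lemma~\ref{lem:configured-factor-reassembly} factor by factor with the top-level district factorization of Lemma~\ref{lem:algebraic-district-factorization}, applied to the active CADMG $\mathcal H$. The active graph $\mathcal H=\cG_R$ is an ADMG on $R$ with $W=\varnothing$. Its districts $\cD(\mathcal H)$ partition $R$, and every intrinsic set of $\mathcal H$ lies in exactly one of them. Applied to $\eta=\operatorname{Sel}_{\mathsf I}\theta\in U_{\mathcal H}$, Lemma~\ref{lem:algebraic-district-factorization} gives, without any positivity assumption,
\[
    \Phi_{\mathcal H}(\operatorname{Sel}_{\mathsf I}\theta)(x_R) =\prod_{D\in\cD(\mathcal H)} \Phi_{\mathfrak d_D(\mathcal H)}\bigl((\operatorname{Sel}_{\mathsf I}\theta)_{[D]}\bigr) \bigl(x_D\mid x_{\pa_{\mathcal H}(D)\setminus D}\bigr).
\]

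The first check is that $(\operatorname{Sel}_{\mathsf I}\theta)_{[D]}$, the restriction to intrinsic sets of $\mathcal H$ contained in $D$, coincides with $(\operatorname{Sel}_{\mathsf I}\theta)_{\downarrow D}$ as used in Lemma~\ref{lem:configured-factor-reassembly}. Both index exactly the tagged blocks $S\in\cI(\mathcal H)$ with $S\subseteq D$. By the argument preceding \eqref{eq:tail-slice-map}, these are global intrinsic sets with the same recursive heads and with tails related by \eqref{eq:active-tail-restriction}. Once this bookkeeping is in place, substituting \eqref{eq:configured-factor-reassembly} into each factor turns the product into exactly \eqref{eq:statistical-intervention-functional}. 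This gives the global identity \eqref{eq:active-chart-bridge} on all of $\Omega_{\cG}$.

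For \eqref{eq:active-chart-domain}, each factor is strictly positive by Lemma~\ref{lem:configured-factor-reassembly}, so the product $\Phi_{\mathcal H}(\operatorname{Sel}_{\mathsf I}\theta)$ is strictly positive at every $x_R$. This is precisely the statement $\operatorname{Sel}_{\mathsf I}\theta\in\Omega_{\mathcal H}$. Corollary~\ref{lem:algebraic-normalization} then gives normalization. Proposition~\ref{prop:standing-finite-state-import}(i) shows the array is a strictly positive recursively factorizing mass function on $\mathcal H$, and Proposition~\ref{prop:er-equivalence-rrs-compatibility}(i) places it in $\cN(\mathcal H)$, hence in $\cN_+(\mathcal H)$. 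Since $\theta\mapsto p_\theta$ maps $\Omega_{\cG}$ onto $\cN_+(\cG)$ by Theorem~\ref{thm:forward-chart}, it follows that $\Gamma_{\mathsf I}$ maps $\cN_+(\cG)$ into $\cN_+(\mathcal H)$.

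For smoothness, $\theta\mapsto\Phi_{\mathcal H}(\operatorname{Sel}_{\mathsf I}\theta)$ is a polynomial map (Remark~\ref{rem:forward-map-multiaffine}) composed with a linear selection, so it is polynomial. On $\cN_+(\cG)$ we have $\Gamma_{\mathsf I}(p)=\Phi_{\mathcal H}(\operatorname{Sel}_{\mathsf I}\Theta_{\cG}(p))$, and $\Theta_{\cG}$ is $C^\infty$ on the embedded manifold $\cN_+(\cG)$ by Theorem~\ref{thm:forward-chart}; it is also the restriction of the rational map $\widetilde\Theta_{\cG}$. Hence $p\mapsto\Gamma_{\mathsf I}(p)$ is $C^\infty$.

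The main obstacle is a bookkeeping subtlety in the tail arguments. On the right of \eqref{eq:configured-factor-reassembly} the conditioning variables are $x_{\pa_{\mathcal H}(D)\setminus D}$, whereas the left evaluates at $c_D^{\mathsf I}(x_R)$ on $\pa_{\cG}(D)\setminus D$. For the product to match, these must agree on $\pa_{\cG}(D)\cap R$, and the source coordinates must enter only through the constants $a_{w,D}$. This is the closure remark after the definition of $c_D^{\mathsf I}$ together with \eqref{eq:active-tail-restriction}. With that in hand, Lemma~\ref{lem:configured-factor-reassembly} already carries the substantive content, so the theorem reduces to factorization plus these domain checks.
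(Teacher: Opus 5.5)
Your proposal is correct and matches the paper's proof essentially step for step. Like the paper, you get factorwise reassembly from Lemma~\ref{lem:configured-factor-reassembly} and identify the product with $\Phi_{\mathcal H}(\operatorname{Sel}_{\mathsf I}\theta)$ through Lemma~\ref{lem:algebraic-district-factorization}; you then obtain positivity, normalization and active nested membership, and smoothness by composing with the inverse chart $\Theta_{\cG}$. The only differences are cosmetic: you apply the factorization before the reassembly, and you cite Propositions~\ref{prop:standing-finite-state-import}(i) and~\ref{prop:er-equivalence-rrs-compatibility}(i) directly where the paper cites Theorem~\ref{thm:forward-chart}, which packages the same facts.
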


For $Q\in\cN_+(\mathcal H)$, let $\eta=\Theta_{\mathcal H}(Q)$ and let $q=\Phi_{\mathcal H}(\eta)$ be its mass function.  Define the active-chart score operator
\begin{equation}\label{eq:active-chart-score-operator}
    J_{Q,\mathcal H}:U_{\mathcal H}\longrightarrow L_2^0(Q), \qquad J_{Q,\mathcal H}v :=\frac{\mathbb D\Phi_{\mathcal H,\eta}[v]}{q}.
\end{equation}

\begin{corollary}[Active-side nested geometry]\label{cor:active-side-nested-geometry}
    For every class (I1)--(I3), the configured active law $\Gamma_{\mathsf I}(P)$ of Theorem~\ref{thm:normalized-intervention-bridge} is a strictly positive nested Markov law of the active graph $\mathcal H$.  In particular, at $P^{\mathsf I}=\Gamma_{\mathsf I}(P)$,
    \[
        \cT_{P^{\mathsf I}}\cN(\mathcal H) =\ran(J_{P^{\mathsf I},\mathcal H}),
    \]
    where the active score operator is defined in \eqref{eq:active-chart-score-operator}. Hence the complete finite-state chart geometry of Theorem~\ref{thm:forward-chart} is available on the active side.
\end{corollary}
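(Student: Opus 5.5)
The plan is to read the corollary directly off Theorem~\ref{thm:normalized-intervention-bridge} and Theorem~\ref{thm:forward-chart}, with $\mathcal H=\mathcal H_{\mathsf I}=\cG_{R}$ in the role of the ADMG.

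\emph{Step 1: membership.} Take $P\in\cN_+(\cG)$ and put $\theta=\Theta_{\cG}(P)\in\Omega_{\cG}$. By Theorem~\ref{thm:forward-chart} we have $p=p_\theta$. Theorem~\ref{thm:normalized-intervention-bridge}, through \eqref{eq:active-chart-bridge} and \eqref{eq:active-chart-domain}, then gives
\[
    P^{\mathsf I}=\Gamma_{\mathsf I}(p_\theta)=\Phi_{\mathcal H}(\operatorname{Sel}_{\mathsf I}\theta), \qquad \operatorname{Sel}_{\mathsf I}\theta\in\Omega_{\mathcal H}.
\]
Hence $P^{\mathsf I}$ is strictly positive, by the definition of $\Omega_{\mathcal H}$. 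It is also normalized, by Corollary~\ref{lem:algebraic-normalization}, and it lies in $\cN_+(\mathcal H)$. The last point follows because Proposition~\ref{prop:standing-finite-state-import}(i) makes $\Phi_{\mathcal H}(\operatorname{Sel}_{\mathsf I}\theta)$ a recursively factorizing positive law, and Proposition~\ref{prop:er-equivalence-rrs-compatibility}(i), applied to the ADMG $\mathcal H$, turns that into nested Markov membership. This argument is uniform across (I1)--(I3), since the theorem covers all three classes.

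\emph{Step 2: tangent space.} The graph $\mathcal H$ is an ordinary ADMG on the finite set $R$, and every state space has at least two elements. The ADMG clause of Theorem~\ref{thm:forward-chart} therefore applies verbatim with $\cG$ replaced by $\mathcal H$ and $P$ by $Q=P^{\mathsf I}$.

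The operator $J_{Q,\mathcal H}$ of \eqref{eq:active-chart-score-operator} is exactly the score differential $J_Q$ of Section~\ref{sec:setup}, built for $\mathcal H$ at $\eta=\Theta_{\mathcal H}(Q)$. One check is needed here: $\eta=\operatorname{Sel}_{\mathsf I}\theta$. This is the second inverse identity in \eqref{eq:standing-import-inverses}, or equivalently the uniqueness in Proposition~\ref{prop:standing-finite-state-import}(ii). Theorem~\ref{thm:forward-chart} then says $J_{Q,\mathcal H}$ is an isomorphism onto $\cT_Q\cN(\mathcal H)$, which gives the displayed equality of ranges. The Fisher pullback and block directness of Corollaries~\ref{cor:fisher-chart-pullback} and~\ref{cor:block-directness} transfer in the same way, which is what ``complete chart geometry'' refers to.

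\emph{Main obstacle.} The only delicate point is the edge case $R=\varnothing$. There the empty reduced-CADMG convention makes every clause trivial, as noted in Proposition~\ref{prop:er-equivalence-rrs-compatibility}. Apart from that, I would confirm that no hidden hypothesis of Theorem~\ref{thm:forward-chart} (reducedness, two-state spaces) fails for the vertex-induced graph $\cG_R$. An ADMG has $W=\varnothing$, so it is trivially reduced, and the state spaces are inherited from $\cG$. The substantive work was already done in Theorem~\ref{thm:normalized-intervention-bridge}.
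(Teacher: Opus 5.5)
Your proposal is correct and follows essentially the paper's route: membership in $\cN_+(\mathcal H)$ comes straight from \eqref{eq:active-chart-domain} of Theorem~\ref{thm:normalized-intervention-bridge}, and your re-derivation through Propositions~\ref{prop:standing-finite-state-import}(i) and~\ref{prop:er-equivalence-rrs-compatibility}(i) only repeats Step~4 of that theorem's proof. The tangent-space equality is then the ADMG clause of Theorem~\ref{thm:forward-chart} applied to $\mathcal H$, whose score differential is exactly \eqref{eq:active-chart-score-operator}; your extra check that $\Theta_{\mathcal H}(P^{\mathsf I})=\operatorname{Sel}_{\mathsf I}\theta$ is correct but not needed.
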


\begin{remark}[Statistical, not neighborhood-causal, extension]\label{rem:statistical-not-causal-extension}
    At the true causal law, $\Gamma_{\mathsf I}(P)$ is the configured active law of Proposition~\ref{prop:causal-identification-interface}: in classes (I1)--(I2) it is additionally identified as the joint causal response on $R_{\mathsf I}$, while in (I3) only its $Y$ margin carries the causal interpretation.  Away from that law, equation~\eqref{eq:active-chart-bridge} defines the statistical parameter on $\cN_+(\cG)$.  We do not assert that every nearby nested law admits a compatible latent realization, or that the causal identification theorem itself transports to that law.  Only the graph, factor indices, boundary values, and the linear map $\operatorname{Sel}_{\mathsf I}$ are fixed throughout the chart.  This is the distinction that prevents the latent-margin model from re-entering the operative statistical scope.
\end{remark}

\subsection{Order-free differentiation and the canonical gradient}

Let $\theta_0=\Theta_{\cG}(P)$, $\eta_0=\operatorname{Sel}_{\mathsf I}\theta_0$, and $P^{\mathsf I}=\Phi_{\mathcal H}(\eta_0)$.  For $g:\cX_Y\to\R$, define
\[
    \Psi_{\mathsf I,g}(P) :=\E_{\Gamma_{\mathsf I}(P)}\{g(X_Y)\}.
\]
For a global chart direction $u\in U_{\cG}$, put
\begin{equation}\label{eq:active-chart-score}
    h_{\mathsf I}^u :=J_{P^{\mathsf I},\mathcal H}(\operatorname{Sel}_{\mathsf I}u).
\end{equation}

Equivalently, differentiating the product in \eqref{eq:statistical-intervention-functional} gives
\begin{equation}\label{eq:active-factor-score-sum}
    h_{\mathsf I}^u(x_R) =\sum_{D\in\cD(\mathcal H)} \frac{ \mathbb D\Phi_{\cG[D],\theta_{0,\downarrow D}} [u_{\downarrow D}] \bigl(x_D\mid c_D^{\mathsf I}(x_R)\bigr)} {\Phi_{\cG[D]}(\theta_{0,\downarrow D}) \bigl(x_D\mid c_D^{\mathsf I}(x_R)\bigr)}.
\end{equation}

Equation~\eqref{eq:active-chart-score} is the structural form used in the proof, whereas \eqref{eq:active-factor-score-sum} is the computationally convenient form: it evaluates the same score through separately configured active factors.

\begin{theorem}[Direct chart derivative and finite-state EIF]\label{thm:intervention-chart-eif}
    For every regular model path $P_t$ through $P$, let $u=(\mathrm d/\mathrm dt)\Theta_{\cG}(P_t)|_{t=0}$.  Then
    \begin{equation}\label{eq:intervention-chart-derivative}
        \left.\frac{\mathrm d}{\mathrm dt}\Psi_{\mathsf I,g}(P_t)\right|_{t=0} =\E_{P^{\mathsf I}} \left[ \{g(X_Y)-\Psi_{\mathsf I,g}(P)\}h_{\mathsf I}^u(X_R) \right].
    \end{equation}
    For each native district $K\in\cD(\cG)$, use the basis $\{e_{Kj}^{\mathrm{nat}}:1\leq j\leq d_K^{\mathrm{nat}}\}$, score vector $b_K^{\mathrm{nat}}$, and Gram matrix $G_K^{\mathrm{nat}}(P)$ from Proposition~\ref{prop:gram-projection}, and define
    \begin{equation}\label{eq:intervention-coordinate-derivative}
        (a_K)_j :=\E_{P^{\mathsf I}} \left[ \{g(X_Y)-\Psi_{\mathsf I,g}(P)\} J_{P^{\mathsf I},\mathcal H} (\operatorname{Sel}_{\mathsf I}e_{Kj}^{\mathrm{nat}}) \right].
    \end{equation}

    The efficient influence function in $\cN_+(\cG)$ is
    \begin{equation}\label{eq:direct-intervention-eif}
        \phi_{P,\mathsf I,g}^{\mathrm{eff}} =\sum_{K\in\cD(\cG)} (b_K^{\mathrm{nat}})^\top \{G_K^{\mathrm{nat}}(P)\}^{-1}a_K.
    \end{equation}

    No active-district, source-aware, or observational vertex order is required. All active contributions descending from the same native district are already aggregated in $a_K$ before the Gram inverse is applied.
\end{theorem}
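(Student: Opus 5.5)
The plan is to reduce everything to the chart and then apply the standard finite-dimensional Riesz argument on the tangent space.

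\emph{Step 1: the pathwise derivative.} Let $P_t$ be a regular model path through $P$. By Theorem~\ref{thm:forward-chart}, $\theta_t:=\Theta_{\cG}(P_t)$ is differentiable at $t=0$ with derivative $u$, because $\Theta_{\cG}$ is $C^\infty$ on the embedded manifold $\cN_+(\cG)$. By Theorem~\ref{thm:normalized-intervention-bridge}, $\Gamma_{\mathsf I}(P_t)=\Phi_{\mathcal H}(\operatorname{Sel}_{\mathsf I}\theta_t)$, and $\operatorname{Sel}_{\mathsf I}$ is linear. Hence
\[
\frac{\mathrm d}{\mathrm dt}\Gamma_{\mathsf I}(P_t)\Big|_{t=0}=\mathbb D\Phi_{\mathcal H,\eta_0}[\operatorname{Sel}_{\mathsf I}u]=p^{\mathsf I}\,h^u_{\mathsf I},
\]
using \eqref{eq:active-chart-score-operator} and \eqref{eq:active-chart-score}. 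Multiplying by $g$ and summing over $\cX_R$ gives $\E_{P^{\mathsf I}}\{g\,h^u_{\mathsf I}\}$. Centering $g$ is free, since $h^u_{\mathsf I}\in L_2^0(P^{\mathsf I})$ by Corollary~\ref{lem:algebraic-normalization} applied to $\mathcal H$. This yields \eqref{eq:intervention-chart-derivative}. The equivalent form \eqref{eq:active-factor-score-sum} follows from Lemma~\ref{lem:configured-factor-reassembly} and the product rule, but it is not needed for the main proof.

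\emph{Step 2: linearity and representation on the tangent space.} The right side of \eqref{eq:intervention-chart-derivative} is a linear functional $\ell(u)$ on $U_{\cG}$. The path score is $J_Pu$, and by Theorem~\ref{thm:forward-chart} $J_P$ is an isomorphism onto $\cT_P\cN(\cG)$. So the derivative is a well-defined linear functional of the score, $s\mapsto\ell(J_P^{-1}s)$, and it is automatically bounded because the space is finite-dimensional. The canonical gradient is the unique $\phi\in\cT_P\cN(\cG)$ with $\langle\phi,J_Pu\rangle_P=\ell(u)$ for all $u$. Since every $u$ is realized by a two-sided chart line, these directions exhaust the tangent space and pathwise differentiability holds.

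\emph{Step 3: districtwise solution.} By Theorem~\ref{thm:district-orthogonality}, write $\phi=\sum_K\phi_K$ with $\phi_K\in\cS_K^{\mathrm{nat}}(P)$. Testing against $u=e^{\mathrm{nat}}_{Kj}$ kills every $\phi_{K'}$ with $K'\ne K$ by orthogonality, leaving
\[
\langle\phi_K,b^{\mathrm{nat}}_{K,j}\rangle_P=\ell(e^{\mathrm{nat}}_{Kj})=(a_K)_j .
\]
Write $\phi_K=(b^{\mathrm{nat}}_K)^\top c_K$. The test equations become $G^{\mathrm{nat}}_K c_K=a_K$, and Proposition~\ref{prop:gram-projection} gives positive definiteness, so $c_K=\{G_K^{\mathrm{nat}}\}^{-1}a_K$. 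This is \eqref{eq:direct-intervention-eif}. Uniqueness follows from Corollary~\ref{cor:block-directness} and injectivity of $J_P$.

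\emph{Order-free and aggregation claims.} No ordering enters at any point: the only ingredients are the chart identity, linearity of $\operatorname{Sel}_{\mathsf I}$, and native-district orthogonality. The aggregation claim follows because $a_K$ is computed from the full active score $J_{P^{\mathsf I},\mathcal H}(\operatorname{Sel}_{\mathsf I}e_{Kj})$, which sums over all active districts. By Corollary~\ref{cor:active-factor-locality}, only active districts $D$ with $\delta(D)=K$ contribute to this sum.

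\emph{Expected main obstacle.} The delicate step is Step 1's claim that $t\mapsto\Theta_{\cG}(P_t)$ is differentiable for an arbitrary regular path. Such a path is only cellwise differentiable in the ambient simplex, not given in chart coordinates. I would handle this with the ambient smooth recovery map $\widetilde\Theta_{\cG}$ of Proposition~\ref{prop:standing-finite-state-import}(iv). It is rational on the positive orthant and agrees with $\Theta_{\cG}$ on the model, so $\theta_t=\widetilde\Theta_{\cG}(p_t)$ is differentiable by the chain rule. The left-inverse identity then gives $J_Pu=s$, matching the chart derivative to the path score.
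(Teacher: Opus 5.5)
Your proposal is correct and follows the paper's proof essentially step for step: you differentiate through the bridge identity $\Gamma_{\mathsf I}(P_t)=\Phi_{\mathcal H}(\operatorname{Sel}_{\mathsf I}\Theta_{\cG}(P_t))$, take centering from algebraic normalization, and then solve the native-district Riesz problem using cross-district orthogonality and the positive-definite Gram blocks. The only cosmetic difference is that you derive $c_K=\{G_K^{\mathrm{nat}}\}^{-1}a_K$ by testing a decomposed representer against the basis directions, whereas the paper writes down the candidate, checks its Gram pairing, and then invokes uniqueness of the Riesz representer in the tangent space.
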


\begin{corollary}[Excluded source-only native blocks]\label{cor:excluded-source-native-block}
    If a native district $K\in\cD(\cG)$ satisfies $K\cap R=\varnothing$, then $\operatorname{Sel}_{\mathsf I}U_K^{\mathrm{nat}}=\{0\}$, $a_K=0$, and the $K$-summand in \eqref{eq:direct-intervention-eif} vanishes.  In particular, when an excluded source is a singleton native district containing no active vertex, the target derivative vanishes along its native score block, and that block contributes no term to the efficient influence function.
\end{corollary}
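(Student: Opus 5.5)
The plan is to show directly that $\operatorname{Sel}_{\mathsf I}\iota_C$ vanishes for every intrinsic set $C$ with $\delta(C)=K$, and then read off the other two claims from the definitions \eqref{eq:intervention-coordinate-derivative} and \eqref{eq:direct-intervention-eif}.

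\emph{Step 1: the selection sees only blocks inside active districts.} By \eqref{eq:tail-slice-map}, each output block of $\operatorname{Sel}_{\mathsf I}$ is indexed by an intrinsic set $S\in\cI(\mathcal H)$ and copies entries of $\theta_S$ only. Such an $S$ lies in some active district $D\in\cD(\mathcal H)$, so $S\subseteq D\subseteq R$. By the discussion preceding \eqref{eq:active-tail-restriction}, $S$ is a global intrinsic set carrying the same tag. Now take $u=\iota_C(v)$ with $C\subseteq K$. The coordinates of $u$ tagged by any $S\ne C$ are zero. Moreover $C=S$ cannot occur, because $S\subseteq R$ while $C\subseteq K$ and $K\cap R=\varnothing$. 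Hence every row of $\operatorname{Sel}_{\mathsf I}u$ is zero. Since $U_K^{\mathrm{nat}}$ is spanned by such $\iota_C(v)$, we get $\operatorname{Sel}_{\mathsf I}U_K^{\mathrm{nat}}=\{0\}$.

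This step could alternatively be deduced from Corollary~\ref{cor:active-factor-locality}. Every active $D$ satisfies $D\subseteq R$, so $\delta(D)\cap R\ne\varnothing$, and therefore $\delta(D)\ne K$. The corollary then makes the active-$D$ rows of $\operatorname{Sel}_{\mathsf I}u$ vanish for all $D$. I would mention both routes, but rely on the direct tag argument.

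\emph{Step 2: the coordinate derivative $a_K$ vanishes.} The operator $J_{P^{\mathsf I},\mathcal H}$ is linear, so it sends $0$ to $0$. Therefore each integrand in \eqref{eq:intervention-coordinate-derivative} is zero, and $a_K=0$. The $K$-summand $(b_K^{\mathrm{nat}})^\top\{G_K^{\mathrm{nat}}\}^{-1}a_K$ then vanishes; the Gram inverse exists by Proposition~\ref{prop:gram-projection}.

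\emph{Step 3: the special case.} Suppose $K=\{a\}$ is a singleton with $a\in A_{\mathsf I}$, or more generally any native district containing no active vertex. Then $K\cap R=\varnothing$ and the general claim applies. The remaining statement is that the target derivative vanishes along this block. For $u\in U_K^{\mathrm{nat}}$, Theorem~\ref{thm:forward-chart} gives a regular chart path with chart velocity $u$. Theorem~\ref{thm:intervention-chart-eif}, through \eqref{eq:intervention-chart-derivative}, expresses the derivative in terms of $h^u_{\mathsf I}=J_{P^{\mathsf I},\mathcal H}(\operatorname{Sel}_{\mathsf I}u)$, which is $0$ by Step 1, so the derivative is zero.

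The only point needing care is the tag identification in Step 1: an active intrinsic set must be recognised as the same global block $\theta_S$. This is exactly what the text preceding \eqref{eq:active-tail-restriction} establishes via Lemma~4.9 of \cite{evans2019smooth}. No other obstacle is expected.
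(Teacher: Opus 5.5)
Your proof is correct and is essentially the paper's argument. The rows of $\operatorname{Sel}_{\mathsf I}$ are indexed by intrinsic sets inside active districts, hence inside $R$, so no $K$-tagged column is ever selected, and $a_K=0$ then follows from \eqref{eq:intervention-coordinate-derivative}. Your explicit derivation of the vanishing target derivative via \eqref{eq:intervention-chart-derivative} spells out a step the paper leaves implicit.
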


\subsection{Independent-draw stochastic and source-standardized targets}

The last extension averages the configured active laws over one boundary assignment drawn afresh from a possibly population-dependent law on a fixed finite assignment set, independently of the natural source variables and of the structural disturbances, although the coordinates of the drawn assignment vector may be dependent (Definition~\ref{def:admissible-source-law}).  When the weights depend on the population, differentiating the mixture mean of \eqref{eq:independent-source-mixture-and-mean} adds the weight derivative $\alpha_K^\mu$ of Theorem~\ref{thm:independent-source-mixture-eif} to the averaged fixed-component derivative.  Every component is a configured active law with the common excluded set, active set and active graph and is differentiated through its own active chart; the mixture law itself receives no chart, since it need not belong to the nested model of the active graph, and identification of the mixture mean is distinguished below from identification of the whole mixture law.

Let $\mathcal V_{\mathsf I}$ be a fixed nonempty finite set of boundary assignments.  For every $a\in\mathcal V_{\mathsf I}$, assume that $\mathsf I(a)$ is a fixed intervention of one of the classes (I1)--(I3) satisfying Proposition~\ref{prop:causal-identification-interface}, with the same excluded set $A$, active set $R$, and active graph $\mathcal H$.  Write its selection map as $\operatorname{Sel}_{\mathsf I,a}$ and put
\[
    P_{\theta}^{\mathsf I,a} :=\Phi_{\mathcal H}(\operatorname{Sel}_{\mathsf I,a}\theta), \qquad \psi_a(\theta) :=\E_{P_{\theta}^{\mathsf I,a}}\{g(X_Y)\}.
\]
For a node intervention one may take $\mathcal V_{\mathsf I}=\cX_A$.  For an edge or path intervention, the index set instead consists only of boundary assignments satisfying the within-district compatibility condition of \textup{(H4)}.

\begin{definition}[Admissible independent-draw source law]\label{def:admissible-source-law}
    A family $\mu=(\mu_\theta:\theta\in\Omega_{\cG})$ is admissible on $\mathcal V_{\mathsf I}$ if
    \[
        \mu_\theta(a)\geq0, \qquad \sum_{a\in\mathcal V_{\mathsf I}}\mu_\theta(a)=1,
    \]
    for every $\theta\in\Omega_{\cG}$, and every map $\theta\mapsto\mu_\theta(a)$ is $C^\infty$.  The intervention draws one boundary assignment afresh from $\mu_\theta$, independently of the natural source variables and of the structural disturbances, and then applies $\mathsf I(a)$.  Coordinates of a jointly drawn source vector may be dependent under $\mu_\theta$.  The fresh independent draw is part of admissibility (Remark~\ref{rem:independent-not-natural-source}).
\end{definition}

The associated configured mixture law and mean are
\begin{equation}\label{eq:independent-source-mixture-and-mean}
\begin{aligned}
    \Gamma_{\mu}(p_\theta) &:=\sum_{a\in\mathcal V_{\mathsf I}} \mu_\theta(a)P_{\theta}^{\mathsf I,a},\\
    \Psi_{\mu,g}(p_\theta) &:=\sum_{a\in\mathcal V_{\mathsf I}} \mu_\theta(a)\psi_a(\theta).
\end{aligned}
\end{equation}

At the true causal law, this composes the configured component laws of Proposition~\ref{prop:causal-identification-interface}; no additional causal identification result is invoked.  The mean $\Psi_{\mu,g}$ is causally identified componentwise in every class (I1)--(I3), because each $\psi_a$ uses only the causally identified component functional.  The cited identification theorems justify a joint causal interpretation of $\Gamma_\mu$ when every positive-weight component is jointly identified, as in (I1)--(I2); (I3) margin identification alone does not justify such a reading.

\begin{proposition}[Positive smooth mixture]\label{prop:positive-smooth-source-mixture}
    For every admissible $\mu$, the mass function in the first line of \eqref{eq:independent-source-mixture-and-mean} is normalized and strictly positive, and both $\Gamma_\mu$ and $\Psi_{\mu,g}$ are $C^\infty$ on $\cN_+(\cG)$.  If the weights $\mu_\theta(a)$ are polynomial in $\theta$, as for a fixed exogenous law or the marginal source-standardization rule below, then both maps are polynomial in $\theta$.
\end{proposition}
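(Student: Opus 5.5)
The plan is to reduce every claim to componentwise facts already established by Theorem~\ref{thm:normalized-intervention-bridge} and then apply elementary closure properties of convex combinations and of smooth or polynomial maps. Fix $\theta\in\Omega_{\cG}$. For each $a\in\mathcal V_{\mathsf I}$, the intervention $\mathsf I(a)$ belongs to one of the classes (I1)--(I3), with the common active set $R$ and active graph $\mathcal H$. Theorem~\ref{thm:normalized-intervention-bridge} applied to $\mathsf I(a)$ therefore gives $\operatorname{Sel}_{\mathsf I,a}\theta\in\Omega_{\mathcal H}$ and $P_\theta^{\mathsf I,a}=\Gamma_{\mathsf I(a)}(p_\theta)\in\cN_+(\mathcal H)$. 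In particular each component mass function is strictly positive on $\cX_R$. By Corollary~\ref{lem:algebraic-normalization}, applied to the ADMG $\mathcal H$ (no fixed vertices), each component also sums to one.

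\textbf{Normalization and positivity.} Since $\mu_\theta\ge0$ and $\sum_a\mu_\theta(a)=1$, the mixture $\Gamma_\mu(p_\theta)$ sums to $\sum_a\mu_\theta(a)\cdot1=1$. For positivity, fix a cell $x_R$. At least one weight $\mu_\theta(a^*)$ is strictly positive, because the weights sum to one over the nonempty finite set $\mathcal V_{\mathsf I}$. Hence
\[
\Gamma_\mu(p_\theta)(x_R)\ \ge\ \mu_\theta(a^*)\,P_\theta^{\mathsf I,a^*}(x_R)\ >\ 0,
\]
since every other term is nonnegative. This is the only place where the argument must avoid asking for positivity of every weight: zero weights are allowed, and nonemptiness of $\mathcal V_{\mathsf I}$ is what is used.

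\textbf{Smoothness.} Each cell $\theta\mapsto\Phi_{\mathcal H}(\operatorname{Sel}_{\mathsf I,a}\theta)(x_R)$ is polynomial, because it is the composition of a linear selection map with the polynomial map of Remark~\ref{rem:forward-map-multiaffine}. It follows that $\psi_a(\theta)=\sum_{x_R}g(x_Y)\,\Phi_{\mathcal H}(\operatorname{Sel}_{\mathsf I,a}\theta)(x_R)$ is polynomial. Multiplying by the $C^\infty$ weights and summing over the finite set $\mathcal V_{\mathsf I}$ makes $\theta\mapsto\Gamma_\mu(p_\theta)$ and $\theta\mapsto\Psi_{\mu,g}(p_\theta)$ $C^\infty$ on the open set $\Omega_{\cG}$. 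If the weights are polynomial in $\theta$, the same finite sums of products show that both maps are polynomial.

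\textbf{Transfer to $\cN_+(\cG)$.} Smoothness on $\cN_+(\cG)$ means smoothness in its manifold structure. By Theorem~\ref{thm:forward-chart}, $\theta\mapsto P_\theta$ is a diffeomorphism from $\Omega_{\cG}$ onto $\cN_+(\cG)$ with inverse $\Theta_{\cG}$. The maps on $\cN_+(\cG)$ are therefore the compositions of the $\theta$-maps above with the $C^\infty$ map $\Theta_{\cG}$, and hence are $C^\infty$. Two points need only a check rather than real work. First, the admissibility family $\mu$ is indexed by $\theta$, so it is read on $\cN_+(\cG)$ through $\Theta_{\cG}$. Second, the polynomial clause is a statement in chart coordinates $\theta$, which is exactly how it is phrased in the proposition.
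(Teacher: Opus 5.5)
Your proposal is correct and follows essentially the same route as the paper. It obtains componentwise positivity and normalization from Theorem~\ref{thm:normalized-intervention-bridge}, uses nonnegative weights summing to one with at least one positive weight, gets smoothness from polynomiality of each selected active law and smooth weights, and transfers it through the smooth inverse chart $\Theta_{\cG}$. The only item the paper adds is the one-line check behind the ``as for'' examples: a fixed law is constant, and the marginal rule $\mu_\theta(a)=\sum_{x_V}\mathbf 1\{x_A=a\}\Phi_{\cG}(\theta)(x_V)$ is a finite sum of forward-map polynomials.
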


Unlike a fixed component, the mixture is not generally a chart law of $\mathcal H$, and membership in $\cN_+(\mathcal H)$ is not asserted.  Section~\ref{app:B-mixture} of the Supplementary Material exhibits the loss with a strictly positive two-component counterexample whose equal mixture is not Markov for its active graph.  The active chart is therefore used componentwise below, not assigned to the mixture itself.

Let $\theta_0=\Theta_{\cG}(P)$, $P^{\mathsf I,a}=P_{\theta_0}^{\mathsf I,a}$, $\psi_a=\psi_a(\theta_0)$, and $\mu_0=\mu_{\theta_0}$.  For each native district $K$ define the component and weight coordinate vectors by
\begin{equation}\label{eq:mixture-coordinate-contributions}
\begin{aligned}
    (\alpha_K^{\mathrm{cmp}})_j &:={} \sum_{a\in\mathcal V_{\mathsf I}}\mu_0(a) \E_{P^{\mathsf I,a}} \left[ \{g(X_Y)-\psi_a\} J_{P^{\mathsf I,a},\mathcal H} (\operatorname{Sel}_{\mathsf I,a}e_{Kj}^{\mathrm{nat}}) \right],\\
    (\alpha_K^{\mu})_j &:={} \sum_{a\in\mathcal V_{\mathsf I}} \mathbb D\mu_{\theta_0}[e_{Kj}^{\mathrm{nat}}](a)\,\psi_a.
\end{aligned}
\end{equation}

Since the weights sum to one, $\sum_a\mathbb D\mu_{\theta_0}[u](a)=0$; hence $\psi_a$ in the weight line may equivalently be replaced by $\psi_a-\Psi_{\mu,g}(P)$.

\begin{theorem}[EIF for an admissible finite independent-draw mixture]\label{thm:independent-source-mixture-eif}
    For every regular path $P_t$ whose chart velocity is $u=\sum_Ku_K$,
    \begin{equation}\label{eq:independent-source-mixture-derivative}
        \left.\frac{\mathrm d}{\mathrm dt}\Psi_{\mu,g}(P_t)\right|_{t=0} =\sum_{K\in\cD(\cG)} (\alpha_K^{\mathrm{cmp}}+\alpha_K^\mu)^\top u_K.
    \end{equation}

    Its efficient influence function in $\cN_+(\cG)$ is
    \begin{equation}\label{eq:independent-source-mixture-eif}
        \phi_{P,\mu,g}^{\mathrm{eff}} =\sum_{K\in\cD(\cG)} (b_K^{\mathrm{nat}})^\top\{G_K^{\mathrm{nat}}(P)\}^{-1} (\alpha_K^{\mathrm{cmp}}+\alpha_K^\mu).
    \end{equation}

    No active-graph membership or vertex ordering for the mixture is required.
\end{theorem}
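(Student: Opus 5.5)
The plan is to reduce Theorem~\ref{thm:independent-source-mixture-eif} to Theorem~\ref{thm:intervention-chart-eif} applied componentwise, add a product-rule term for the weights, and then project onto the tangent space with the district Gram formula of Proposition~\ref{prop:gram-projection}. Take a regular path $P_t$ in $\cN_+(\cG)$ with chart velocity $u=\sum_K u_K$, $u_K\in U_K^{\mathrm{nat}}$. By Theorem~\ref{thm:forward-chart}, $\theta_t:=\Theta_{\cG}(P_t)$ is differentiable at $0$ with derivative $u$. Since $\mathcal V_{\mathsf I}$ is finite, $\Psi_{\mu,g}(P_t)=\sum_a\mu_{\theta_t}(a)\psi_a(\theta_t)$. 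Proposition~\ref{prop:positive-smooth-source-mixture} together with the chain rule then gives
\[
\left.\frac{\mathrm d}{\mathrm dt}\Psi_{\mu,g}(P_t)\right|_{t=0}=\sum_a\mu_0(a)\,\mathbb D\psi_{a,\theta_0}[u]+\sum_a\mathbb D\mu_{\theta_0}[u](a)\,\psi_a .
\]
The second sum is linear in $u$. Decomposing $u=\sum_K\sum_j u_{Kj}e_{Kj}^{\mathrm{nat}}$ turns it into $\sum_K(\alpha_K^\mu)^\top u_K$ by the definition in \eqref{eq:mixture-coordinate-contributions}.

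For the first sum, apply Theorem~\ref{thm:intervention-chart-eif} (equation~\eqref{eq:intervention-chart-derivative}) to each fixed component $\mathsf I(a)$. This is legitimate because every $\mathsf I(a)$ is a fixed intervention of class (I1)--(I3) with the same active graph $\mathcal H$, and $\psi_a=\Psi_{\mathsf I(a),g}$. It yields $\mathbb D\psi_{a,\theta_0}[u]=\E_{P^{\mathsf I,a}}[\{g-\psi_a\}J_{P^{\mathsf I,a},\mathcal H}(\operatorname{Sel}_{\mathsf I,a}u)]$. Expanding $u$ in the native bases and using linearity of $\operatorname{Sel}_{\mathsf I,a}$ and of $J_{P^{\mathsf I,a},\mathcal H}$ gives $\sum_K(\alpha_K^{\mathrm{cmp}})^\top u_K$ once the $\mu_0(a)$-weighted sum is taken. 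This proves \eqref{eq:independent-source-mixture-derivative}. No chart of the mixture law itself is used, so its possible non-membership in $\cN(\mathcal H)$ is irrelevant.

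For the efficient influence function, I would first show that the candidate $\phi:=\sum_K(b_K^{\mathrm{nat}})^\top\{G_K^{\mathrm{nat}}\}^{-1}(\alpha_K^{\mathrm{cmp}}+\alpha_K^\mu)$ lies in $\cT_P\cN(\cG)$. Each summand lies in $\cS_K^{\mathrm{nat}}(P)$, and the tangent-space decomposition follows from Theorem~\ref{thm:district-orthogonality}. It remains to check that $\phi$ represents the derivative, that is, $\langle\phi,J_Pu\rangle_P=$ \eqref{eq:independent-source-mixture-derivative} for every $u$. By cross-district orthogonality (Theorem~\ref{thm:district-orthogonality}), only the $K$-summand pairs with $J_Pu_K$. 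Since $J_Pu_K=(b_K^{\mathrm{nat}})^\top u_K$ in coordinates, the pairing is $(\alpha_K^{\mathrm{cmp}}+\alpha_K^\mu)^\top\{G_K^{\mathrm{nat}}\}^{-1}G_K^{\mathrm{nat}}u_K=(\alpha_K^{\mathrm{cmp}}+\alpha_K^\mu)^\top u_K$. Finally, because $J_P$ is onto the tangent space (Theorem~\ref{thm:forward-chart}) and every tangent vector has a realizing chart path, the gradient in the tangent space is unique, so $\phi$ is the canonical gradient.

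The main obstacle is bookkeeping rather than analysis. First, I must verify that the weight term is a genuine pathwise derivative; smoothness of $\theta\mapsto\mu_\theta$ and of $\Theta_{\cG}$ along regular paths supplies this. Second, I must verify that applying Theorem~\ref{thm:intervention-chart-eif} per component does not require a common selection map; each component uses only its own $\operatorname{Sel}_{\mathsf I,a}$ and the common $\mathcal H$, which the hypotheses grant. Optionally, I can also note the centering remark after \eqref{eq:mixture-coordinate-contributions}. It shows that $\phi$ is unchanged if $\psi_a$ is replaced by $\psi_a-\Psi_{\mu,g}(P)$, which keeps the weight term's interpretation as a centered contribution.
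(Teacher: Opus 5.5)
Your proposal is correct and follows the paper's proof essentially step for step. The product rule splits the derivative into a weight term, which linearity of $\mathbb D\mu_{\theta_0}$ identifies with $\alpha_K^\mu$, and a component term, which Theorem~\ref{thm:intervention-chart-eif} applied to each fixed $\mathsf I(a)$ identifies with $\alpha_K^{\mathrm{cmp}}$. The native-block Gram representer is then checked against every score $J_Pu$ via cross-district orthogonality, and it is the canonical gradient by uniqueness of the Riesz representer in $\cT_P\cN(\cG)$.
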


\begin{corollary}[Exogenous and degenerate source laws]\label{cor:exogenous-source-mixture}
    If $\mu_\theta\equiv\mu$ is fixed, then $\alpha_K^\mu=0$ and
    \[
        \phi_{P,\mu,g}^{\mathrm{eff}} =\sum_{a\in\mathcal V_{\mathsf I}} \mu(a)\phi_{P,\mathsf I(a),g}^{\mathrm{eff}}.
    \]

    If $\mu$ is a point mass at $a$, this identity reduces exactly to Theorem~\ref{thm:intervention-chart-eif} for $\mathsf I(a)$.
\end{corollary}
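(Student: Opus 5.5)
The first clause is a direct specialization of Theorem~\ref{thm:independent-source-mixture-eif}. If $\mu_\theta\equiv\mu$ does not depend on $\theta$, then $\mathbb D\mu_{\theta_0}[e_{Kj}^{\mathrm{nat}}](a)=0$ for every $a$, $K$ and $j$. The weight line of \eqref{eq:mixture-coordinate-contributions} therefore gives $\alpha_K^\mu=0$, and \eqref{eq:independent-source-mixture-eif} reduces to $\sum_K(b_K^{\mathrm{nat}})^\top\{G_K^{\mathrm{nat}}(P)\}^{-1}\alpha_K^{\mathrm{cmp}}$.

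Next I will decompose $\alpha_K^{\mathrm{cmp}}$ by components. Comparing the component line of \eqref{eq:mixture-coordinate-contributions} with \eqref{eq:intervention-coordinate-derivative} applied to the fixed intervention $\mathsf I(a)$ gives $\alpha_K^{\mathrm{cmp}}=\sum_a\mu(a)\,a_K^{(a)}$. Here $a_K^{(a)}$ is the vector of Theorem~\ref{thm:intervention-chart-eif} for $\mathsf I(a)$, built from its active law $P^{\mathsf I,a}$, its centering constant $\psi_a=\Psi_{\mathsf I(a),g}(P)$, and its selection map $\operatorname{Sel}_{\mathsf I,a}$.

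The operator $v\mapsto(b_K^{\mathrm{nat}})^\top\{G_K^{\mathrm{nat}}(P)\}^{-1}v$ is linear, and $b_K^{\mathrm{nat}}$ and $G_K^{\mathrm{nat}}(P)$ depend only on $P$ and the chosen basis, not on $a$. The finite sum over $a$ therefore commutes with the sum over $K$ and with this operator. By \eqref{eq:direct-intervention-eif}, this yields $\sum_a\mu(a)\phi^{\mathrm{eff}}_{P,\mathsf I(a),g}$. The only check is that both theorems use the same native bases $e_{Kj}^{\mathrm{nat}}$, which is fixed in their statements.

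For the point-mass case, take $\mu=\mathbf 1_{\{a\}}$. This law is fixed, so the first clause applies, and the mixture sum collapses to the single term $\phi^{\mathrm{eff}}_{P,\mathsf I(a),g}$. The same identity holds at the level of the objects themselves. We have $\Gamma_\mu(p_\theta)=P^{\mathsf I(a)}_\theta=\Gamma_{\mathsf I(a)}(p_\theta)$ by \eqref{eq:active-chart-bridge}, and $\Psi_{\mu,g}=\Psi_{\mathsf I(a),g}$. Moreover, \eqref{eq:independent-source-mixture-derivative} coincides with \eqref{eq:intervention-chart-derivative} once $\alpha^{\mathrm{cmp}}_K=a_K$.

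None of these steps is a real obstacle. The only point needing care is the bookkeeping that the centering constant $\psi_a$ in $\alpha_K^{\mathrm{cmp}}$ equals $\Psi_{\mathsf I(a),g}(P)$ and not the mixture mean. This holds by the definition of $\psi_a$ as $\psi_a(\theta_0)$.
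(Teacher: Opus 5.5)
Your proposal is correct and follows the paper's own argument. You kill $\alpha_K^\mu$ because $\mathbb D\mu_{\theta_0}=0$, identify $\alpha_K^{\mathrm{cmp}}=\sum_a\mu(a)a_K^{(a)}$, and use that $b_K^{\mathrm{nat}}$ and $G_K^{\mathrm{nat}}(P)$ do not depend on $a$ to pass the finite sum through the linear Gram operator, with the point mass collapsing to its single component; your extra check that the centering constant is $\psi_a=\Psi_{\mathsf I(a),g}(P)$ is bookkeeping the paper leaves implicit.
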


\begin{corollary}[Source-standardized independent redraw]\label{cor:source-standardized-eif}
    Suppose $\mathcal V_{\mathsf I}=\cX_A$ and
    \begin{equation}\label{eq:source-standardization-law}
        \mu_\theta(a)=\pr_\theta(X_A=a).
    \end{equation}

    Define the observed-data function $\psi_{X_A}:=\sum_a\psi_a\mathbf 1\{X_A=a\}$. Then
    \begin{equation}\label{eq:source-standardized-weight-coordinate}
        \alpha_K^\mu =\E_P\!\left[ \{\psi_{X_A}-\Psi_{\mu,g}(P)\}b_K^{\mathrm{nat}} \right],
    \end{equation}
    and, writing $\Pi_{\cT_P\cN(\cG)}$ for the $L_2(P)$ projection onto the model tangent space,
    \begin{equation}\label{eq:source-standardized-eif}
        \phi_{P,\mu,g}^{\mathrm{eff}} =\sum_a \pr_P(X_A=a)\, \phi_{P,\mathsf I(a),g}^{\mathrm{eff}} +\Pi_{\cT_P\cN(\cG)} \{\psi_{X_A}-\Psi_{\mu,g}(P)\}.
    \end{equation}
\end{corollary}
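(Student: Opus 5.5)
The plan is to specialise Theorem~\ref{thm:independent-source-mixture-eif} to the weight $\mu_\theta(a)=\pr_\theta(X_A=a)$. First I check admissibility in the sense of Definition~\ref{def:admissible-source-law}. The map $\theta\mapsto\pr_\theta(X_A=a)=\sum_{x_{V\setminus A}}\Phi_{\cG}(\theta)(a,x_{V\setminus A})$ is polynomial by Remark~\ref{rem:forward-map-multiaffine}. It is positive and sums to one on $\Omega_{\cG}$, and the index set $\mathcal V_{\mathsf I}=\cX_A$ is the node-intervention choice allowed there. Theorem~\ref{thm:independent-source-mixture-eif} then applies, and the proof reduces to computing $\alpha_K^\mu$ and regrouping the resulting sum.

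For \eqref{eq:source-standardized-weight-coordinate} I differentiate the weight along the coordinate direction $e_{Kj}^{\mathrm{nat}}$. Write $p_t=\Phi_{\cG}(\theta_0+te_{Kj}^{\mathrm{nat}})$, which is a two-sided model path by Theorem~\ref{thm:forward-chart}, with score $J_Pe_{Kj}^{\mathrm{nat}}=(b_K^{\mathrm{nat}})_j$. The derivative of the weight is
\[
\mathbb D\mu_{\theta_0}[e_{Kj}^{\mathrm{nat}}](a)=\sum_{x_V}\mathbf 1\{x_A=a\}\,\mathbb D\Phi_{\cG,\theta_0}[e_{Kj}^{\mathrm{nat}}](x_V)=\E_P\bigl[\mathbf 1\{X_A=a\}(b_K^{\mathrm{nat}})_j\bigr].
\]
Summing against $\psi_a$ gives $\E_P[\psi_{X_A}(b_K^{\mathrm{nat}})_j]$. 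Because scores are mean zero, I may subtract the constant $\Psi_{\mu,g}(P)$; this matches the remark after \eqref{eq:mixture-coordinate-contributions}.

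For \eqref{eq:source-standardized-eif} I split $\alpha_K^{\mathrm{cmp}}+\alpha_K^\mu$ in \eqref{eq:independent-source-mixture-eif} and handle the two pieces separately.
\begin{itemize}
\item \emph{Component piece.} In \eqref{eq:mixture-coordinate-contributions}, $\alpha_K^{\mathrm{cmp}}$ equals $\sum_a\mu_0(a)a_K^{(a)}$, where $a_K^{(a)}$ is the vector \eqref{eq:intervention-coordinate-derivative} for $\mathsf I(a)$. The map $\alpha\mapsto\sum_K(b_K^{\mathrm{nat}})^\top\{G_K^{\mathrm{nat}}\}^{-1}\alpha_K$ is linear, so this piece gives $\sum_a\pr_P(X_A=a)\phi^{\mathrm{eff}}_{P,\mathsf I(a),g}$ by Theorem~\ref{thm:intervention-chart-eif}.
\item \emph{Weight piece.} Set $f=\psi_{X_A}-\Psi_{\mu,g}(P)$. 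Then $\alpha_K^\mu=\E_P(b_K^{\mathrm{nat}}f)$, so this piece is exactly the right side of \eqref{eq:global-gram-projection} applied to $f$. Proposition~\ref{prop:gram-projection} identifies it as $\Pi_{\cT_P\cN(\cG)}f$.
\end{itemize}

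There is no real obstacle here. The only point needing care is that the derivative of $\mu_\theta$ is taken inside the model. The chart path keeps $\sum_a\mu_\theta(a)=1$, and Theorem~\ref{thm:forward-chart} guarantees that the coordinate line stays in $\Omega_{\cG}$. Together these justify the score identity used in the weight derivative.
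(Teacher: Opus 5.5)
Your proposal is correct and follows the paper's own proof essentially step for step. The paper likewise differentiates the marginal weight along the native chart path to get $\E_P\{\mathbf 1(X_A=a)b_{Kj}^{\mathrm{nat}}\}$, centres using mean-zero scores, obtains the component term by the linearity argument of Corollary~\ref{cor:exogenous-source-mixture}, and identifies the weight term through the Gram formula of Proposition~\ref{prop:gram-projection}. Your explicit admissibility check is a harmless addition.
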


\begin{remark}[Independent redraw is not natural-value retention]\label{rem:independent-not-natural-source}
    Equation~\eqref{eq:source-standardization-law} draws a fresh joint source vector independently from its observed marginal.  It does not set the response to the unit's naturally realized source vector, which retains the natural source--disturbance coupling and is not represented in general by the first line of \eqref{eq:independent-source-mixture-and-mean}.  The latter natural-value construction is outside the present theorem.
\end{remark}

\begin{corollary}[Complementary blocks for a root singleton source]\label{cor:source-standardized-complementary-blocks}
    Let $\mu$ be either a fixed exogenous law or the marginal source-standardization rule \eqref{eq:source-standardization-law}, and suppose $A=\{w\}$, $\pa_{\cG}(w)=\varnothing$, and $\dis_{\cG}(w)=\{w\}$.  For the native source district $K_w=\{w\}$,
    \[
        \alpha_{K_w}^{\mathrm{cmp}}=0, \qquad \alpha_K^\mu=0\quad(K\ne K_w).
    \]
    Thus a fixed or exogenous intervention has no $K_w$ contribution, whereas under \eqref{eq:source-standardization-law} every $K_w$ contribution comes from the weight term, which is nonzero whenever $w\mapsto\psi_w$ is nonconstant.  For a general admissible $P$-dependent rule, $\alpha_K^\mu$ need not vanish for $K\ne K_w$; see Remark~\ref{rem:p-dependent-source-law}.
\end{corollary}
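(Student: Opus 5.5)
The proof handles the two displayed vanishing claims separately and then reads off the final sentence from Corollary~\ref{cor:source-standardized-eif}.

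\emph{Component term.} Since $w\in A$ and $A\cap R=\varnothing$, the native district $K_w=\{w\}$ satisfies $K_w\cap R=\varnothing$. Each component intervention $\mathsf I(a)$, $a\in\mathcal V_{\mathsf I}$, has the same excluded set $A$ and active set $R$. So Corollary~\ref{cor:excluded-source-native-block}, applied to every $\mathsf I(a)$, gives $\operatorname{Sel}_{\mathsf I,a}e_{K_wj}^{\mathrm{nat}}=0$ for every $j$ and $a$. Then every active score $J_{P^{\mathsf I,a},\mathcal H}(\operatorname{Sel}_{\mathsf I,a}e_{K_wj}^{\mathrm{nat}})$ in the first line of \eqref{eq:mixture-coordinate-contributions} vanishes, whence $\alpha_{K_w}^{\mathrm{cmp}}=0$. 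This holds for either choice of $\mu$.

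\emph{Weight term.} If $\mu$ is fixed, $\mathbb D\mu_{\theta_0}=0$ and $\alpha_K^\mu=0$ for every $K$. Under \eqref{eq:source-standardization-law}, the plan is to show that $\theta\mapsto\pr_\theta(X_w=a)$ depends only on the block $\theta_{\{w\}}$.

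Because $\pa_{\cG}(w)=\varnothing$ and $\dis_{\cG}(w)=\{w\}$, the set $\{w\}$ is a district, and it is intrinsic with $T(\{w\})=\varnothing$. Its district CADMG $\mathfrak d_{\{w\}}(\cG)$ has no fixed vertices. The key step is to marginalize $p_\theta=\Phi_{\cG}(\theta)$ over $X_{V\setminus\{w\}}$ by repeated application of Lemma~\ref{lem:sterile-marginalization}. At each stage, take a vertex other than $w$ that is sterile in the current random set.

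Such a vertex exists whenever the current random set $S\ni w$ has $|S|\ge2$. The induced directed graph is acyclic, so it has at least one sink. If $w$ were the only sink, every other vertex of $S$ would have a directed path to $w$, contradicting $\pa_{\cG}(w)=\varnothing$. Hence a sink other than $w$ exists.

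Each step replaces $(\cG,\theta)$ by a reduced CADMG and $\theta$ restricted to intrinsic sets inside the surviving random set. The block $\theta_{\{w\}}$ is always retained, since $\{w\}$ remains intrinsic. After all other vertices are removed, the left side is $\Phi_{\cG[\{w\}]}(\theta_{\{w\}})(x_w)$. This is the one-vertex formula: it depends only on $\theta_{\{w\}}$ and on no other native block.

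The step needing most care is to check that each intermediate reduction $\cG[S]$ still has $\{w\}$ as an intrinsic set with empty tail, so that no other block re-enters. I would verify this using the head–intrinsic-set correspondence and the fact that $w$ remains parentless and alone in its district in every $\cG_S$. Granting this, $\mathbb D\mu_{\theta_0}[e_{Kj}^{\mathrm{nat}}]=0$ for all $K\ne K_w$, hence $\alpha_K^\mu=0$ for $K\ne K_w$.

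\emph{Nonvanishing.} By the two parts above, the only possibly nonzero $K_w$ contribution is $\alpha_{K_w}^\mu$. By \eqref{eq:source-standardized-weight-coordinate},
\[
    \alpha_{K_w}^\mu=\E_P\!\left[\{\psi_{X_w}-\Psi_{\mu,g}(P)\}\,b_{K_w}^{\mathrm{nat}}\right].
\]
By the top-level factorization \eqref{eq:top-level-factorization}, the entries of $b_{K_w}^{\mathrm{nat}}$ are scores of $X_w$ alone. There are $d_{\{w\}}=|\cX_w|-1$ of them, and they are linearly independent by injectivity of $J_P$ (Theorem~\ref{thm:forward-chart}). Therefore they span the mean-zero functions of $X_w$.

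The function $\psi_{X_w}-\Psi_{\mu,g}(P)$ is itself a mean-zero function of $X_w$ under $P$, because $\mu_0(a)=\pr_P(X_w=a)$. It is nonzero exactly when $w\mapsto\psi_w$ is nonconstant. In that case its inner products with a spanning set of the space it lies in cannot all vanish, so $\alpha_{K_w}^\mu\neq0$.
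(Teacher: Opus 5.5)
Your proposal is correct. The component identity and the nonvanishing claim follow the paper's proof. The paper also obtains $\alpha_{K_w}^{\mathrm{cmp}}=0$ from Corollary~\ref{cor:excluded-source-native-block}. For nonvanishing, it uses that $\cS_{K_w}^{\mathrm{nat}}(P)$ is the space of mean-zero functions of $X_w$, together with positive definiteness of $G_{K_w}^{\mathrm{nat}}(P)$; this is your spanning argument.

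You differ in proving $\alpha_K^\mu=0$ for $K\ne K_w$ under \eqref{eq:source-standardization-law}. The paper stays in $L_2(P)$: it notes $\psi_{X_w}-\Psi_{\mu,g}(P)\in\cS_{K_w}^{\mathrm{nat}}(P)$ and applies Theorem~\ref{thm:district-orthogonality} to \eqref{eq:source-standardized-weight-coordinate}. One identification of the source score block thus gives both the vanishing and the nonvanishing.

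You work in the chart instead. Peeling sinks other than $w$ with Lemma~\ref{lem:sterile-marginalization} shows that $\pr_\theta(X_w=a)=\Phi_{\cG[\{w\}]}(\theta_{\{w\}})(a)$ for every $\theta\in U_{\cG}$; this is the ancestral peeling identity \eqref{eq:appendix-ancestral-peeling} with $A_R=\{w\}$. Hence $\mathbb D\mu_{\theta_0}$ annihilates every direction outside $U_{K_w}^{\mathrm{nat}}=\iota_{\{w\}}(U_{\{w\}})$, and the weight line of \eqref{eq:mixture-coordinate-contributions} vanishes directly.

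What your route buys: it avoids the orthogonality theorem and yields a stronger statement, global in $\theta$ and free of positivity. The same source-marginal locality is used, without detail, in Proposition~\ref{prop:source-block-drift-cancellation}. The price is a second argument that partly duplicates the score-block identification you need anyway for nonvanishing. The paper's route reuses that one identification for both claims.

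The step you flag as delicate is automatic. Whenever $w\in S$, the restriction $\theta|_S$ keeps the $\{w\}$-tagged block by definition. At $S=\{w\}$ the only intrinsic set is $\{w\}$, with empty tail because $\pa_{\cG}(w)=\varnothing$.
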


\begin{remark}[The source-law chain-rule term]\label{rem:p-dependent-source-law}
    For a general admissible $P$-dependent law, the correct weight coordinate is the weight line of \eqref{eq:mixture-coordinate-contributions}; the observed-data formula \eqref{eq:source-standardized-weight-coordinate} is specific to marginal source standardization.  Dropping $\alpha_K^\mu$ is a chain-rule error and in general destroys the Riesz identity on the native directions on which the source-law derivative is nonzero.  For instance, take binary $W,Y,Z$ in the DAG $W\to Y$ with $Z$ isolated.  The admissible rule $\mu_\theta(1)=\pr_\theta(Z=1)$, with $\mu_\theta(0)=1-\mu_\theta(1)$, gives $\alpha^\mu_{\{Z\}}=\mathbb D\mu_\theta[e_{\{Z\}}](1)(\psi_1-\psi_0)\ne0$ whenever $\psi_1\ne\psi_0$, where $e_{\{Z\}}$ is the singleton native coordinate direction.
\end{remark}

\section{Consequences for efficient and targeted learning}\label{sec:targeted}

Let $\Psi:\cN_+(\cG)\to\R$ be pathwise differentiable, and suppose an ambient gradient $\widetilde\phi_P\in L_2^0(P)$ satisfies
\[
    \left.\frac{\mathrm d}{\mathrm dt}\Psi(P_t)\right|_{t=0} =\E_P(\widetilde\phi_P s)
\]
for every regular model score $s$.  The efficient influence function in the nested model is
\begin{equation}\label{eq:eif-projection}
    \phi_P^{\mathrm{eff}} =\Pi_{\cT_P\cN(\cG)}\widetilde\phi_P.
\end{equation}

\begin{corollary}[Explicit finite-state EIF]\label{cor:explicit-eif}
    Under Theorem~\ref{thm:district-orthogonality},
    \[
        \phi_P^{\mathrm{eff}} =\sum_{D\in\cD(\cG)} (b_D^{\mathrm{nat}})^\top\{G_D^{\mathrm{nat}}(P)\}^{-1} \E_P\{b_D^{\mathrm{nat}}\widetilde\phi_P\}.
    \]

    If several intervention-induced active channels descend from one native district, their coordinate derivative vectors are summed within that district and the same native Gram inverse is applied to every channel.  By linearity, summation may precede or follow this operation; treating the channels as separate orthogonal blocks with their own metrics is not valid.
\end{corollary}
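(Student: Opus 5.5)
The plan has two steps. First, the standard semiparametric fact identifies the efficient influence function as the projection \eqref{eq:eif-projection}. Second, the projection is evaluated through the district Gram formula \eqref{eq:global-gram-projection} of Proposition~\ref{prop:gram-projection}, which rests on Theorem~\ref{thm:district-orthogonality}.

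For the first step, I check that $\Pi_{\cT_P\cN(\cG)}\widetilde\phi_P$ is a gradient lying in the tangent space. Its residual $\widetilde\phi_P-\Pi\widetilde\phi_P$ is orthogonal to every regular model score $s$, and each such score lies in $\cT_P\cN(\cG)$ by Theorem~\ref{thm:forward-chart}. Hence $\E_P(\Pi\widetilde\phi_P\,s)=\E_P(\widetilde\phi_P s)$, so the projection is still a gradient. Uniqueness of a gradient inside the finite-dimensional tangent space makes it the canonical gradient. It is also mean zero, since $\cT_P\cN(\cG)\subseteq L_2^0(P)$.

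For the second step, I substitute $f=\widetilde\phi_P$ into \eqref{eq:global-gram-projection}, which gives the displayed sum over districts. Behind that formula is the following. Theorem~\ref{thm:district-orthogonality} gives $\cT_P\cN(\cG)=\bigoplus^\perp_D\cS_D^{\mathrm{nat}}(P)$, so $\Pi_{\cT}=\sum_D\Pi_{\cS_D^{\mathrm{nat}}}$. On each district, $b_D^{\mathrm{nat}}$ is a basis of $\cS_D^{\mathrm{nat}}(P)$ because $J_P$ is injective. The Gram matrix $G_D^{\mathrm{nat}}$ is then positive definite, being a principal submatrix of $I_{\mathrm F}$ in a changed basis (Corollary~\ref{cor:fisher-chart-pullback}). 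The normal equations $\E_P\{b_D^{\mathrm{nat}}(f-b^\top c)\}=0$ then give $c=G^{-1}\E_P(b_D^{\mathrm{nat}} f)$.

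The remaining clause concerns several active channels. Suppose the ambient gradient splits as $\widetilde\phi_P=\sum_k\widetilde\phi^{(k)}$ over channels, or more generally the coordinate derivative vector in native district $K$ is a sum $a_K=\sum_k a_K^{(k)}$. The map $a\mapsto (b_K^{\mathrm{nat}})^\top G_K^{-1}a$ is linear, so summing before or after applying $G_K^{-1}$ gives the same result. The metric $G_K^{\mathrm{nat}}$ belongs to the native district alone, not to any channel.

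For the invalidity of separate orthogonal blocks, I would argue as follows. Two channels descending from the same native $K$ act on the same coordinate space $U_K^{\mathrm{nat}}$, as guaranteed by Corollary~\ref{cor:active-factor-locality}. Their score spaces therefore coincide rather than being orthogonal. Treating them as distinct orthogonal summands would apply $G_K^{-1}$ in a larger, fictitious Gram structure, and in general this double counts. A single exact instance of failure suffices; I would take one from a graph with two active districts splitting one native district, and treat it only as illustration.

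The main obstacle is only bookkeeping: making sure the ambient gradient is tested against all model scores, not just chart-coordinate scores. This is settled by the tangent-space identification in Theorem~\ref{thm:forward-chart}.
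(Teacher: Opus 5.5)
Your proposal is correct and follows the paper's route. Like the paper, you use the projection identity \eqref{eq:eif-projection} (which you also verify, whereas the paper takes it as given), the orthogonal district sum of Theorem~\ref{thm:district-orthogonality} together with the district Gram formula \eqref{eq:district-gram-projection}, and linearity to aggregate the channels.

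One correction concerns your heuristic for the final clause. Nothing is double counted, because applying the full $\{G_K^{\mathrm{nat}}(P)\}^{-1}$ to each channel and summing is exactly right by linearity. The failure arises only when each channel gets its own metric. Channels from distinct active districts $D_1,D_2\subseteq K$ have derivative vectors supported on the disjoint coordinate blocks of intrinsic sets inside $D_1$ and inside $D_2$. Their score subspaces are therefore distinct, generally oblique subspaces of $\cS_K^{\mathrm{nat}}(P)$, not coincident ones. Separate sub-block Gram inverses then discard the generally nonzero cross-block entries of $G_K^{\mathrm{nat}}(P)$, which is the within-district obliqueness noted after Proposition~\ref{prop:gram-projection}.
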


For the fixed-node, complete-source edge, and path-specific functionals of the classes (I1)--(I3), Theorem~\ref{thm:normalized-intervention-bridge} supplies the identification-to-chart bridge on the entire positive coordinate domain. Theorem~\ref{thm:intervention-chart-eif} then differentiates the active chart and solves the finite-dimensional Riesz problem directly.  Thus no ordered change-of-measure argument or separately chosen saturated-model gradient is needed for the abstract EIF.  A saturated off-model extension would be noncanonical unless specified; the canonical gradient in \eqref{eq:direct-intervention-eif} is intrinsic to the operative nested model.

This order-free result does not by itself validate an anchored or sequential observed-data representation based on a single vertex order; Section~\ref{sec:sequential-union} states what such a construction additionally requires.  Admissible finite independent-draw mixtures of the configured component laws of those classes are covered by Theorem~\ref{thm:independent-source-mixture-eif}, which adds the source-law weight derivative to the averaged fixed-component derivative, and by its source-standardized specialization, Corollary~\ref{cor:source-standardized-eif}, within the scope fixed in Remark~\ref{rem:independent-not-natural-source}.

Applied to the direction $h$ of Section~\ref{sec:counterexample}, which is observationally centered but not a model score, the projection removes a nonzero component $h-\Pi_{\cT_P\cN(\cG)}h$, whose squared relative norm $\|h-\Pi_{\cT_P\cN(\cG)}h\|_{P,2}^2/\|h\|_{P,2}^2$ is strictly positive at the law of that section (exact value in Section~\ref{app:B-primary} of the Supplementary Material).  That component is a projection diagnostic, not an efficiency correction: $h$ is a diagnostic direction, not an influence function identified for a specified target, and describing it as one would require first exhibiting a functional whose derivative it represents.

\subsection{Coherent chart plug-in and local stability}

On fixed finite support the chart information matrix is positive definite at every interior law (Corollary~\ref{cor:fisher-chart-pullback}), so a regular interior nested-model maximum likelihood estimator is root-$n$ and its plug-in target is efficient by the delta method: efficiency itself needs no targeting in this idealization.  The constructions of this section use coherent chart laws as initial estimators, including regularized, cross-fitted, or approximately solved fits that are not interior full-chart likelihood solutions.  Proposition~\ref{prop:modular-eif-remainder} additionally allows approximations of the Gram and derivative coefficients at a single such law.  Fits assembled from mutually incompatible modules, an estimated score basis, or a separately estimated target are not analysed.  The coherent one-step estimator and the chart-flow TMLE differ in their exact guarantees; that comparison is made in Section~\ref{subsec:one-step-tmle-contrast}, and the null-update facts for the two estimators at an interior likelihood fit are recorded in the remarks following Corollaries~\ref{cor:same-sample-one-step} and~\ref{cor:tmle-substitution}.

For the remainder of this section, $\Psi$ is either a fixed-response mean of the classes (I1)--(I3) from Theorem~\ref{thm:intervention-chart-eif} or an admissible independent-draw mixture mean from Theorem~\ref{thm:independent-source-mixture-eif}.  The same arguments apply to any functional whose chart representation and canonical-gradient map have the smoothness asserted below.

Throughout this section, the graph, vertex state spaces, chart dimension, finite intervention-assignment set, and (when used) number of folds are fixed.  We equip $U_{\cG}$ with the Euclidean norm induced by its fixed coordinate ordering and write it as $\|\cdot\|$; because $d$ is fixed, any other norm on $U_{\cG}$ gives the same rate conditions.  None of the arguments below covers a growing-support or growing-dimension regime.

Write $F(\theta)=\Psi(P_\theta)$.  For the fixed coordinate basis $\{e_{Kj}^{\mathrm{nat}}:1\leq j\leq d_K^{\mathrm{nat}}\}$ of a native district $K$, let $b_{K,\theta}^{\mathrm{nat}}$ and $G_K^{\mathrm{nat}}(\theta)$ denote the score vector and Gram matrix at $P_\theta$.  Define the coordinate-gradient vector
\[
    \begin{aligned}
        \gamma_K(\theta) &:=\bigl(\mathbb DF(\theta)[e_{Kj}^{\mathrm{nat}}]\bigr)_{j=1}^{d_K^{\mathrm{nat}}},\\
        \gamma_K(\theta) &=\begin{cases} a_K(\theta),&\text{for a fixed intervention},\\ \alpha_K^{\mathrm{cmp}}(\theta)+\alpha_K^\mu(\theta), &\text{for an admissible independent-draw mixture}, \end{cases}
    \end{aligned}
\]
where the second line uses \eqref{eq:mixture-coordinate-contributions} evaluated at $P_\theta$.  We call the two summands the \emph{component contribution} and the \emph{weight contribution}, respectively.  Put
\begin{equation}\label{eq:coherent-eif-map}
    \beta_K(\theta):=\{G_K^{\mathrm{nat}}(\theta)\}^{-1}\gamma_K(\theta), \qquad \phi_\theta:=\sum_{K\in\cD(\cG)} (b_{K,\theta}^{\mathrm{nat}})^\top\beta_K(\theta).
\end{equation}
Thus $\phi_\theta$ is the exact canonical gradient at the single fitted law $P_\theta$; the target, basis, Gram matrix, component means, source law, and source-law derivative are not fitted at mutually incompatible laws.

\begin{proposition}[Local stability of the coordinate EIF]\label{prop:estimated-eif-stability}
    Fix $\theta_0\in\Omega_{\cG}$.  There is a closed convex neighborhood $\mathcal K\Subset\Omega_{\cG}$ of $\theta_0$ and constants $c,\kappa,C>0$ such that, for every $\theta,\eta\in\mathcal K$,
    \begin{equation}\label{eq:local-eif-stability-bounds}
\begin{aligned}
        \min_{x_V}p_\theta(x_V)&\geq c, &\lambda_{\min}\{G_K^{\mathrm{nat}}(\theta)\}&\geq\kappa \quad(K\in\cD(\cG)),\\
        \max_{x_V}|\phi_\theta(x_V)-\phi_\eta(x_V)| &\leq C\|\theta-\eta\|.
    \end{aligned}
\end{equation}

    Every cell probability of every configured active law is also uniformly bounded away from zero on $\mathcal K$. The maps $b_{K,\theta}^{\mathrm{nat}}$, $G_K^{\mathrm{nat}}(\theta)$, $\{G_K^{\mathrm{nat}}(\theta)\}^{-1}$, $\gamma_K(\theta)$, $\beta_K(\theta)$, and $\phi_\theta$ are smooth on $\mathcal K$.  These conclusions hold for every admissible $C^\infty$ source-law rule, whether or not that rule is polynomial. More generally, the same conclusions and bounds hold, with set-dependent constants, on every compact convex subset of $\Omega_{\cG}$ on which the target is defined.
\end{proposition}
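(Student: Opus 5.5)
The argument is compactness plus continuity: every quantity in \eqref{eq:coherent-eif-map} is built from polynomial maps of $\theta$ by division by quantities that stay positive on a neighborhood of $\theta_0$. Since $\Omega_{\cG}$ is open (Theorem~\ref{thm:forward-chart}), choose $r>0$ with the closed Euclidean ball $\overline B(\theta_0,r)\subseteq\Omega_{\cG}$, and set $\mathcal K:=\overline B(\theta_0,r)$. This set is compact, convex, and compactly contained in $\Omega_{\cG}$.

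First, the cellwise map $\theta\mapsto p_\theta(x_V)=\Phi_{\cG}(\theta)(x_V)$ is polynomial (Remark~\ref{rem:forward-map-multiaffine}) and positive on $\Omega_{\cG}$. The minimum over the finitely many cells is therefore continuous and positive, and it attains a positive minimum $c$ on $\mathcal K$.

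For the configured active laws, Theorem~\ref{thm:normalized-intervention-bridge} gives $\operatorname{Sel}_{\mathsf I,a}\theta\in\Omega_{\mathcal H}$ for every $\theta\in\Omega_{\cG}$. Each active cell $\Phi_{\mathcal H}(\operatorname{Sel}_{\mathsf I,a}\theta)$ is thus a positive polynomial in $\theta$. The assignment set $\mathcal V_{\mathsf I}$ is finite and the active support is finite, so compactness again gives a uniform positive lower bound on $\mathcal K$.

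Second, the score basis is
\[
b_{K,\theta}^{\mathrm{nat}}(x_V)=\mathbb D\Phi_{\cG,\theta}[e_{Kj}^{\mathrm{nat}}](x_V)/p_\theta(x_V).
\]
This is a rational function with denominator bounded below on $\mathcal K$, hence $C^\infty$ there. The Gram matrix $G_K^{\mathrm{nat}}(\theta)=\sum_{x_V}p_\theta\, b b^\top$ is a finite sum of such terms, so it is also smooth. It is positive definite at every $\theta\in\Omega_{\cG}$: by Proposition~\ref{prop:gram-projection}, or equivalently because it is a principal block of the positive definite Fisher information of Corollary~\ref{cor:fisher-chart-pullback}. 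The map $\theta\mapsto\lambda_{\min}\{G_K^{\mathrm{nat}}(\theta)\}$ is continuous (eigenvalues of symmetric matrices are Lipschitz in the entries) and positive, so it has a positive minimum $\kappa$ on $\mathcal K$. The inverse is smooth by Cramer's rule, since the determinant is bounded below by $\kappa^{d_K^{\mathrm{nat}}}$.

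For $\gamma_K$, the component part $a_K(\theta)$, or $\alpha_K^{\mathrm{cmp}}(\theta)$, is built from the active laws, their chart scores $J_{P^{\mathsf I,a}_\theta,\mathcal H}(\operatorname{Sel}_{\mathsf I,a}e)$, and $\psi_a(\theta)$. All of these are rational in $\theta$ with active denominators bounded below, so they are smooth. The weight part $\alpha_K^\mu(\theta)$ involves only $\mathbb D\mu_\theta$ and $\psi_a(\theta)$, which are smooth because $\mu$ is $C^\infty$ by Definition~\ref{def:admissible-source-law}; polynomiality of $\mu$ is never used. Alternatively, $\gamma_K(\theta)=\mathbb DF(\theta)[e_{Kj}^{\mathrm{nat}}]$, and $F$ is $C^\infty$ by Theorem~\ref{thm:normalized-intervention-bridge} and Proposition~\ref{prop:positive-smooth-source-mixture}. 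Hence $\beta_K$ and $\phi_\theta(x_V)$ are smooth on a neighborhood of $\mathcal K$ for each of the finitely many cells $x_V$.

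Third, the Lipschitz bound comes from the mean value theorem along segments, which stay in $\mathcal K$ by convexity:
\[
|\phi_\theta(x_V)-\phi_\eta(x_V)|\leq\sup_{\xi\in\mathcal K}\|\nabla_\xi\phi_\xi(x_V)\|\,\|\theta-\eta\|.
\]
The supremum is finite by continuity of the gradient on the compact set $\mathcal K$. Taking the maximum over cells gives $C$.

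The same argument applies verbatim to any compact convex subset of $\Omega_{\cG}$ on which the target is defined, with constants depending on that set. The only step needing care is uniform positivity of the Gram eigenvalues and of the active cells. This rests on positive definiteness holding pointwise on all of $\Omega_{\cG}$, and on Theorem~\ref{thm:normalized-intervention-bridge} guaranteeing that the active cells are positive throughout $\Omega_{\cG}$, not only at $\theta_0$. Once both facts hold pointwise, compactness converts them into uniform bounds.
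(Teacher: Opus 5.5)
Your proposal is correct and follows the paper's proof essentially step for step. Both arguments take a closed ball in the open domain $\Omega_{\cG}$ and use compactness to bound the observed and configured active cells away from zero. Both get positive definiteness of each native Gram block from injectivity of the chart differential, and a uniform $\kappa$ from continuity of $\lambda_{\min}$. Both derive smoothness of $\gamma_K$ from the polynomial bridge and the $C^\infty$ source-law rule, and obtain the Lipschitz bound from the mean-value theorem along segments in the convex set.
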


The coherent construction uses the model-based finite sum
\[
    \widehat G_K^{\mathrm{nat}} :=G_K^{\mathrm{nat}}(\widehat\theta) =\sum_{x_V}p_{\widehat\theta}(x_V) b_{K,\widehat\theta}^{\mathrm{nat}}(x_V) \{b_{K,\widehat\theta}^{\mathrm{nat}}(x_V)\}^\top,
\]
not an evaluation-sample empirical Gram matrix.  In the fixed-state model this quantity is exactly computable once $\widehat\theta$ is fitted.

\begin{lemma}[Mixture error bookkeeping]\label{lem:mixture-error-bookkeeping}
    For an admissible mixture, define
    \[
        d_{K,a}(\theta) :=\bigl(\mathbb D\psi_{a,\theta}[e_{Kj}^{\mathrm{nat}}]\bigr) _{j=1}^{d_K^{\mathrm{nat}}}, \qquad m_{K,a}(\theta) :=\bigl(\mathbb D\mu_\theta[e_{Kj}^{\mathrm{nat}}](a)\bigr) _{j=1}^{d_K^{\mathrm{nat}}}.
    \]
    Then
    \[
        \alpha_K^{\mathrm{cmp}}(\theta)=\sum_a\mu_\theta(a)d_{K,a}(\theta), \qquad \alpha_K^\mu(\theta)=\sum_a m_{K,a}(\theta)\psi_a(\theta).
    \]
    For $\eta,\theta\in\mathcal K$, let $\Delta$ denote evaluation at $\eta$ minus evaluation at $\theta$.  The exact identities
    \begin{equation}\label{eq:mixture-error-decompositions}
\begin{aligned}
        \Delta\alpha_K^{\mathrm{cmp}} &=\sum_a\{\mu_\theta(a)\Delta d_{K,a} +d_{K,a}(\theta)\Delta\mu(a) +\Delta\mu(a)\Delta d_{K,a}\},\\
        \Delta\alpha_K^\mu &=\sum_a\{m_{K,a}(\theta)\Delta\psi_a +\psi_a(\theta)\Delta m_{K,a} +\Delta m_{K,a}\Delta\psi_a\},\\
        F(\eta)-F(\theta) &=\sum_a\{\mu_\theta(a)\Delta\psi_a +\psi_a(\theta)\Delta\mu(a) +\Delta\mu(a)\Delta\psi_a\}
    \end{aligned}
\end{equation}
    hold.  In particular, the bilinear term in the value line of \eqref{eq:mixture-error-decompositions} is distinct from the $\Delta m_{K,a}\Delta\psi_a$ term in the estimated weight contribution. Moreover, for every direction $u$,
    \begin{equation}\label{eq:smooth-mixture-second-derivative}
        \mathbb D^2F_\theta[u,u] =\sum_a\left[ \mu_\theta(a)\mathbb D^2\psi_{a,\theta}[u,u] +2\mathbb D\mu_\theta[u](a)\mathbb D\psi_{a,\theta}[u] +\mathbb D^2\mu_\theta[u,u](a)\psi_a(\theta) \right].
    \end{equation}
\end{lemma}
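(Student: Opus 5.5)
The plan is to reduce everything to the elementary algebra of a product of two smooth functions. The only results invoked are the product and chain rules and the definitions in \eqref{eq:independent-source-mixture-and-mean} and \eqref{eq:mixture-coordinate-contributions}.

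First, the representation identities. Fix $K$ and $j$ and differentiate $\psi_a(\theta)=\E_{P_\theta^{\mathsf I,a}}\{g(X_Y)\}$ along $e_{Kj}^{\mathrm{nat}}$. The component law is $\Phi_{\mathcal H}(\operatorname{Sel}_{\mathsf I,a}\theta)$, and $\operatorname{Sel}_{\mathsf I,a}$ is linear. Hence
\[
\mathbb D\psi_{a,\theta}[e_{Kj}^{\mathrm{nat}}]=\E_{P_\theta^{\mathsf I,a}}\bigl[g\,J_{P_\theta^{\mathsf I,a},\mathcal H}(\operatorname{Sel}_{\mathsf I,a}e_{Kj}^{\mathrm{nat}})\bigr].
\]
The active score has mean zero under $P_\theta^{\mathsf I,a}$, by Corollary~\ref{lem:algebraic-normalization} applied to $\mathcal H$. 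So $g$ may be replaced by $g-\psi_a(\theta)$, which gives exactly the summand of $(\alpha_K^{\mathrm{cmp}})_j$ evaluated at $\theta$, as in the proof of Theorem~\ref{thm:intervention-chart-eif}. Summing with weights $\mu_\theta(a)$ yields $\alpha_K^{\mathrm{cmp}}(\theta)=\sum_a\mu_\theta(a)d_{K,a}(\theta)$. The weight line is literally $\sum_a m_{K,a}(\theta)\psi_a(\theta)$ by definition of $m_{K,a}$.

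Second, the three exact identities. Each line of \eqref{eq:mixture-error-decompositions} has the form $\Delta(\sum_a x_a y_a)$ with $(x_a,y_a)$ equal to $(\mu(a),d_{K,a})$, $(m_{K,a},\psi_a)$ and $(\mu(a),\psi_a)$ respectively. For each I will use the polarization identity
\[
x(\eta)y(\eta)-x(\theta)y(\theta)=x(\theta)\Delta y+y(\theta)\Delta x+\Delta x\,\Delta y,
\]
which holds exactly for reals or vectors, with scalar $x$. Summing over the finite set $\mathcal V_{\mathsf I}$ gives the three lines. The value line uses $F(\theta)=\sum_a\mu_\theta(a)\psi_a(\theta)$ from \eqref{eq:independent-source-mixture-and-mean}. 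The remark distinguishing the bilinear terms is then a matter of inspection: the value line contains $\Delta\mu\,\Delta\psi_a$, while the weight-contribution line contains $\Delta m_{K,a}\,\Delta\psi_a$, and these are different factors. All quantities are well defined on $\mathcal K$ by Proposition~\ref{prop:estimated-eif-stability}.

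Third, the second derivative \eqref{eq:smooth-mixture-second-derivative}. Every $\psi_a$ is polynomial in $\theta$, since it is a finite sum of $g$ times cells of $\Phi_{\mathcal H}\circ\operatorname{Sel}_{\mathsf I,a}$. Every $\mu_\cdot(a)$ is $C^\infty$ by admissibility. So $t\mapsto F(\theta+tu)=\sum_a\mu_{\theta+tu}(a)\psi_a(\theta+tu)$ is $C^\infty$, and the Leibniz rule applied twice at $t=0$ gives the three displayed terms, with the cross term appearing twice.

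There is no genuine obstacle. The only point needing care is the centering step in the first identity, namely that the active score integrates to zero under each component law. This follows from normalization of $\Phi_{\mathcal H}$ on the whole of $U_{\mathcal H}$.
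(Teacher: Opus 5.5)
Your proposal is correct and matches the paper's proof: each line of \eqref{eq:mixture-error-decompositions} is the exact expansion $(x+\Delta x)(y+\Delta y)-xy=x\,\Delta y+y\,\Delta x+\Delta x\,\Delta y$, applied to $\mu_\theta(a)d_{K,a}$, $m_{K,a}\psi_a$ and $\mu_\theta(a)\psi_a$ and summed over the finite assignment set, and \eqref{eq:smooth-mixture-second-derivative} follows from two applications of the product rule to $F=\sum_a\mu_\theta(a)\psi_a$. Your only addition is the explicit check, via the mean-zero active score, that $\alpha_K^{\mathrm{cmp}}(\theta)=\sum_a\mu_\theta(a)d_{K,a}(\theta)$; the paper treats this as immediate from Theorem~\ref{thm:intervention-chart-eif} applied componentwise.
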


The cross-products in \eqref{eq:mixture-error-decompositions} are second order for a coherent chart plug-in.  They do not establish double robustness: the two pure-curvature terms in \eqref{eq:smooth-mixture-second-derivative}, together with score, Gram, and coordinate errors, remain in general.

\subsection{Exact one-step remainder and limit theory}

For any $\eta,\theta_0\in\Omega_{\cG}$ at which the target is defined, set
\begin{equation}\label{eq:coherent-second-order-remainder}
    R_2(\eta,\theta_0) :=F(\eta)-F(\theta_0)+\E_{P_{\theta_0}}(\phi_\eta).
\end{equation}

The next lemma works on an arbitrary compact convex subset of the positive chart domain.  The compact set supplies a uniform local bound; the definition itself is global on that domain.

\begin{lemma}[Exact remainder identity and quadratic bound]\label{lem:exact-one-step-remainder}
    Let $\mathcal K_\star\Subset\Omega_{\cG}$ be compact and convex, and suppose the target is defined on an open neighborhood of $\mathcal K_\star$.  For $\theta_0,\eta\in\mathcal K_\star$, let $\delta=\eta-\theta_0$ and $\theta_s=\theta_0+s\delta$.  Then
    \begin{equation}\label{eq:exact-one-step-remainder}
        R_2(\eta,\theta_0) =\int_0^1 \E_{P_{\theta_s}}\!\left[ \{\phi_{\theta_s}-\phi_\eta\} J_{P_{\theta_s}}\delta \right]\mathrm ds.
    \end{equation}

    Consequently,
    \begin{equation}\label{eq:quadratic-one-step-bound}
        |R_2(\eta,\theta_0)|\leq C_{\mathcal K_\star} \|\eta-\theta_0\|^2 \qquad(\theta_0,\eta\in\mathcal K_\star).
    \end{equation}
\end{lemma}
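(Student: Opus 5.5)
The plan is the fundamental theorem of calculus along the segment $\theta_s=\theta_0+s\delta$, which stays in $\mathcal K_\star$ by convexity. Put $f(s):=F(\theta_s)$; it is $C^\infty$ on $[0,1]$ because the target is smooth on a neighborhood of $\mathcal K_\star$ (Theorem~\ref{thm:normalized-intervention-bridge} and Proposition~\ref{prop:positive-smooth-source-mixture}). Then $F(\eta)-F(\theta_0)=\int_0^1 f'(s)\,\mathrm ds$.

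The first key step is the Riesz representation of $f'(s)$ at the law $P_{\theta_s}$. By construction, $\phi_{\theta_s}$ is the canonical gradient at $P_{\theta_s}$: Theorems~\ref{thm:intervention-chart-eif} and~\ref{thm:independent-source-mixture-eif}, with $\gamma_K=\mathbb DF[e^{\mathrm{nat}}_{Kj}]$ and $\beta_K=G_K^{-1}\gamma_K$, give $\mathbb DF(\theta)[u]=\E_{P_\theta}\{\phi_\theta J_{P_\theta}u\}$ for every $u\in U_{\cG}$. I would check this on basis vectors $e^{\mathrm{nat}}_{Kj}$ using Theorem~\ref{thm:district-orthogonality}: the cross-district inner products vanish, and within district $K$ the Gram identity returns $\gamma_K$. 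Hence $f'(s)=\E_{P_{\theta_s}}\{\phi_{\theta_s}J_{P_{\theta_s}}\delta\}$.

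The second key step rewrites $\E_{P_{\theta_0}}(\phi_\eta)$ as an integral. Set $g(s):=\E_{P_{\theta_s}}(\phi_\eta)=\sum_{x}p_{\theta_s}(x)\phi_\eta(x)$, with $\phi_\eta$ held fixed. Since $J_{P_\theta}\delta$ is the score of the chart line, $\frac{\mathrm d}{\mathrm ds}p_{\theta_s}=p_{\theta_s}J_{P_{\theta_s}}\delta$, so $g'(s)=\E_{P_{\theta_s}}\{\phi_\eta J_{P_{\theta_s}}\delta\}$. Also $g(1)=\E_{P_\eta}\phi_\eta=0$ because $\phi_\eta\in L_2^0(P_\eta)$. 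Therefore $\E_{P_{\theta_0}}(\phi_\eta)=g(0)=-\int_0^1 g'(s)\,\mathrm ds$. Adding this to the first step gives \eqref{eq:exact-one-step-remainder}. This sign and centering bookkeeping is the step I would check most carefully, since it is the only place the exactness could slip.

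For the bound \eqref{eq:quadratic-one-step-bound}, I would apply Proposition~\ref{prop:estimated-eif-stability} in its compact-convex form on $\mathcal K_\star$. It gives $\max_x|\phi_{\theta_s}-\phi_\eta|\le C\|\theta_s-\eta\|=C(1-s)\|\delta\|$. Next, $|J_{P_{\theta_s}}\delta(x)|\le C'\|\delta\|$ uniformly on $\mathcal K_\star$, because $\mathbb D\Phi$ is polynomial and hence bounded on the compact set, and $p_{\theta_s}\ge c>0$. Bounding the integrand by the product of these sup norms yields $|R_2|\le CC'\|\delta\|^2/2$. The main obstacle is securing these uniform constants; they come directly from the stability proposition, with compactness and positivity of the domain supplying them.
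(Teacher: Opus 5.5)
Your proposal is correct and matches the paper's proof essentially step for step. The paper likewise integrates the target derivative along the segment using the Riesz identity at each $P_{\theta_s}$. It integrates $s\mapsto P_{\theta_s}\phi_\eta$ with $\phi_\eta$ held fixed, using $P_\eta\phi_\eta=0$, and adds the two identities. It then obtains the quadratic bound from the sup-norm Lipschitz line of Proposition~\ref{prop:estimated-eif-stability}, giving the factor $(1-s)\|\delta\|$, together with a uniform bound on $J_{P_\theta}$ over the compact set.
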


\begin{proposition}[Exact source-block cancellation of the coherent one-step drift]\label{prop:source-block-drift-cancellation}
    Suppose the target uses marginal source standardization as in Corollary~\ref{cor:source-standardized-eif}, with $A=\{w\}$, $\pa_{\cG}(w)=\varnothing$, and $\dis_{\cG}(w)=\{w\}$.  Let $K_w=\{w\}$ be the native source district.  If $\eta,\theta_0\in\Omega_{\cG}$ satisfy
    \[
        \eta-\theta_0\in U_{K_w}^{\mathrm{nat}},
    \]
    then
    \begin{equation}\label{eq:exact-source-block-cancellation}
        R_2(\eta,\theta_0)=0, \qquad\text{equivalently}\qquad F(\eta)+\E_{P_{\theta_0}}(\phi_\eta)=F(\theta_0).
    \end{equation}
    Thus the coherent population one-step correction removes an arbitrary interior error in the source marginal exactly, provided every complementary chart block is correct.
\end{proposition}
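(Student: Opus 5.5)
The plan is to compute both sides of \eqref{eq:coherent-second-order-remainder} explicitly, using the product structure of the chart at a root singleton district. Write $\delta=\eta-\theta_0\in U_{K_w}^{\mathrm{nat}}$. Since $w$ has no parents and $\dis_{\cG}(w)=\{w\}$, the district $K_w=\{w\}$ has the single intrinsic set $\{w\}$ with empty tail. By Lemma~\ref{lem:algebraic-district-factorization} and Proposition~\ref{prop:district-locality}, every $\theta\in\Omega_{\cG}$ therefore satisfies
\[
p_\theta(x_V)=\mu_\theta(x_w)\,\rho_\theta(x_{V\setminus\{w\}}\mid x_w),
\]
where $\mu_\theta(a)=\pr_\theta(X_w=a)$ depends only on $\theta_{\{w\}}$, and $\rho_\theta$ is the product of the remaining district factors. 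That product is a normalized conditional kernel given $x_w$, by summing the factorization over $x_{V\setminus\{w\}}$. Because only the $K_w$ block moves, $\rho_\eta=\rho_{\theta_0}$. Moreover, $w\in A$ gives $K_w\cap R=\varnothing$, so Corollary~\ref{cor:excluded-source-native-block} yields $\operatorname{Sel}_{\mathsf I,a}\delta=0$. Hence $\psi_a(\eta)=\psi_a(\theta_0)=:\psi_a$ for every $a$. It follows that
\[
F(\eta)-F(\theta_0)=\sum_a\{\mu_\eta(a)-\mu_{\theta_0}(a)\}\psi_a .
\]

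Next I would evaluate $\E_{P_{\theta_0}}(\phi_\eta)$ district by district using \eqref{eq:coherent-eif-map}. Take first $K\ne K_w$. The scores $b_{K,\eta}^{\mathrm{nat}}$ are log-derivatives of the factor $\rho$ along directions that leave $\mu$ unchanged. Differentiating the normalization of $\rho$ conditionally on $x_w$ gives $\E_{\rho_\eta}\{b_{K,\eta}^{\mathrm{nat}}\mid X_w\}=0$. Since $\rho_\eta=\rho_{\theta_0}$, the same conditional centering holds under $P_{\theta_0}$. Therefore $\E_{P_{\theta_0}}\{(b_{K,\eta}^{\mathrm{nat}})^\top\beta_K(\eta)\}=0$, whatever the value of the constant vector $\beta_K(\eta)$. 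This step is what makes the argument insensitive to the fact that the Gram matrices and $\gamma_K$ for $K\ne K_w$ are evaluated at $P_\eta$ rather than $P_{\theta_0}$.

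For $K=K_w$, Corollary~\ref{cor:source-standardized-complementary-blocks} gives $\alpha_{K_w}^{\mathrm{cmp}}(\eta)=0$. Corollary~\ref{cor:source-standardized-eif}, applied at $P_\eta$, gives
\[
\alpha_{K_w}^\mu(\eta)=\E_{P_\eta}\bigl[\{\psi_{X_w}-F(\eta)\}b_{K_w,\eta}^{\mathrm{nat}}\bigr].
\]
Thus the $K_w$ summand is the $L_2(P_\eta)$ projection of $\psi_{X_w}-F(\eta)$ onto $\cS_{K_w}^{\mathrm{nat}}(P_\eta)$, by Proposition~\ref{prop:gram-projection}. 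The key identification is that this score space is exactly the space of $P_\eta$-centered functions of $X_w$. It consists of functions of $x_w$ only, it is centered, and it has dimension $d_{\{w\}}=|\cX_w|-1$ by Corollary~\ref{cor:block-directness}, so it fills the whole centered space. The function $\psi_{X_w}-F(\eta)=\psi_{X_w}-\sum_a\mu_\eta(a)\psi_a$ already lies in that space, so the projection returns it unchanged. Its $P_{\theta_0}$-mean is $\sum_a\mu_{\theta_0}(a)\psi_a-\sum_a\mu_\eta(a)\psi_a$.

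Adding the three pieces into \eqref{eq:coherent-second-order-remainder} gives
\[
R_2(\eta,\theta_0)=\sum_a(\mu_\eta-\mu_{\theta_0})(a)\psi_a+\sum_a(\mu_{\theta_0}-\mu_\eta)(a)\psi_a=0,
\]
which is \eqref{eq:exact-source-block-cancellation}. I expect the main obstacle to be the careful justification of the two structural facts: that $\rho_\theta$ is a normalized conditional kernel unchanged by $K_w$ moves, and that $\cS_{K_w}^{\mathrm{nat}}(P_\eta)$ is the full centered space of functions of $X_w$. Both should follow from the top-level factorization, with the corner-state one-vertex chart $\Phi_{\{w\}}$ making the second fact explicit.
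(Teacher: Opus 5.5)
Your proposal is correct, but it follows a different route from the paper.

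\textbf{Your route.} You evaluate $\E_{P_{\theta_0}}(\phi_\eta)$ summand by summand and rebuild the structure of the coherent gradient at $P_\eta$. The complementary native summands drop out because their scores are conditionally centered given $X_w$ under the factor $\rho$, which $P_\eta$ and $P_{\theta_0}$ share. The $K_w$ summand is, by Corollaries~\ref{cor:source-standardized-eif} and~\ref{cor:source-standardized-complementary-blocks} and Proposition~\ref{prop:gram-projection} at $P_\eta$, exactly $\psi_{X_w}-F(\eta)$. Its $P_{\theta_0}$-mean then cancels the value error. The one fact you leave implicit is that the $\{w\}$ factor is the $X_w$ marginal, which you need for the conditional normalization of $\rho_\theta$. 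It follows by iterating Lemma~\ref{lem:sterile-marginalization} down to the ancestral set $\{w\}$, so that step is fine.

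\textbf{The paper's route.} It never opens up $\phi_\eta$. It notes that $p_\theta=m_\kappa(x_w)h_\lambda$ and $F(\kappa,\lambda)=\sum_a m_\kappa(a)\psi_a(\lambda)$ are both affine in the source block. Hence
\[
p_{\theta_0}-p_\eta=\mathbb D\Phi_{\cG,\eta}[\theta_0-\eta],\qquad F(\theta_0)-F(\eta)=\mathbb DF(\eta)[\theta_0-\eta]
\]
hold exactly. The Riesz identity at $P_\eta$ and $\E_{P_\eta}(\phi_\eta)=0$ then give
\[
F(\theta_0)-F(\eta)=\sum_{x_V}\phi_\eta(x_V)\{p_{\theta_0}(x_V)-p_\eta(x_V)\}=\E_{P_{\theta_0}}(\phi_\eta).
\]

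\textbf{What each buys.} The paper's argument is shorter and more general: any $P_\eta$-centered gradient would do, whenever law and target are jointly affine along the error direction. It needs neither the Gram-projection identification of the $K_w$ summand nor conditional normalization of the complementary factor. Your argument costs those structural facts but explains the mechanism. The $K_w$ component absorbs the entire value error, and the complementary components vanish only because $\rho_\eta=\rho_{\theta_0}$. This makes transparent why the cancellation is one-sided and fails once a complementary block is misspecified.
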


\begin{remark}[One-sided population robustness, not double robustness]
    Proposition~\ref{prop:source-block-drift-cancellation} is a global identity on the positive source-block slice, not merely a local quadratic bound.  There is no reverse robustness statement: correctness of the source marginal alone does not force $R_2=0$, and simultaneous errors in complementary outcome-side coordinates can leave a nonzero drift.  The result is therefore one-sided source-block robustness of the coherent population one-step correction, not double or multiple robustness.

    Nor does the proposition establish robustness of the chart-flow TMLE under a fixed, arbitrarily misspecified source marginal.  The targeting field in \eqref{eq:targeting-coordinate-field} generally has nonzero complementary coordinates, so its integral curve need not remain on the source-only slice; its updated-law remainder is $R_2(\theta^\star,\theta_0)$, not $R_2(\eta,\theta_0)$.  The local chart-TMLE theorem remains valid under its stated $o_p(n^{-1/4})$ full-chart rate, but no union-model TMLE claim is made.
\end{remark}

Let $P=P_{\theta_0}$, and let $X_1,\ldots,X_n$ be i.i.d. from $P$. Partition the sample, independently of the observations, into a fixed number $L$ of folds $I_v$, with $n_v/n\to\rho_v\in(0,1)$; throughout, condition on the realized fold assignment. Let $\widehat\theta_{-v}$ use only observations outside $I_v$, and compute every object in \eqref{eq:coherent-eif-map} at the single fitted law $\widehat P_{-v}=P_{\widehat\theta_{-v}}$.  Write $\mathbb P_{n,v}$ for the empirical law on $I_v$.  The compact set $\mathcal K$ in Proposition \ref{prop:estimated-eif-stability} is convex.  Thus every segment joining $\theta_0$ to an admitted $\widehat\theta_{-v}\in\mathcal K$ remains in $\mathcal K$, as required by Lemma~\ref{lem:exact-one-step-remainder}; a proposed step that leaves this compact positive-chart neighborhood is not covered by the rate theorem.

\begin{theorem}[Coherent cross-fitted one-step estimator]\label{thm:coherent-one-step}
    Assume that, with probability tending to one, $\widehat\theta_{-v}\in\mathcal K$ for every $v$, and
    \begin{equation}\label{eq:one-step-initial-rate}
        \max_{1\leq v\leq L} \|\widehat\theta_{-v}-\theta_0\|=o_p(n^{-1/4}).
    \end{equation}

    Define
    \begin{equation}\label{eq:cross-fitted-one-step}
        \widehat\Psi_{\mathrm{cf}} :=\sum_{v=1}^L\frac{n_v}{n} \left\{F(\widehat\theta_{-v}) +\mathbb P_{n,v}\phi_{\widehat\theta_{-v}}\right\}.
    \end{equation}
    Then the exact expansion
    \begin{align}
        \widehat\Psi_{\mathrm{cf}}-F(\theta_0) =(\mathbb P_n-P)\phi_{\theta_0} +\sum_{v=1}^L\frac{n_v}{n} \Bigl[R_2(\widehat\theta_{-v},\theta_0) +(\mathbb P_{n,v}-P)\{\phi_{\widehat\theta_{-v}}-\phi_{\theta_0}\}\Bigr] \label{eq:cross-fitted-exact-expansion}
    \end{align}
    holds, and
    \begin{equation}\label{eq:one-step-asymptotic-linearity}
        \sqrt n\{\widehat\Psi_{\mathrm{cf}}-F(\theta_0)\} =\frac1{\sqrt n}\sum_{i=1}^n\phi_{\theta_0}(X_i)+o_p(1).
    \end{equation}

    If $\sigma^2=\E_P(\phi_{\theta_0}^2)>0$, the limit distribution is $N(0,\sigma^2)$.  In particular, with
    \[
        \overline\phi_{\mathrm{cf}} :=\sum_{v=1}^L\frac{n_v}{n} \mathbb P_{n,v}\phi_{\widehat\theta_{-v}}, \qquad \widehat\sigma^2_{\mathrm{cf}} :=\sum_{v=1}^L\frac{n_v}{n} \mathbb P_{n,v} \{\phi_{\widehat\theta_{-v}}-\overline\phi_{\mathrm{cf}}\}^2,
    \]

    $\widehat\sigma^2_{\mathrm{cf}}\to_p\sigma^2$.
\end{theorem}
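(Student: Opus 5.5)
The plan is to derive the exact expansion \eqref{eq:cross-fitted-exact-expansion} algebraically, then control each of its three pieces with Lemma~\ref{lem:exact-one-step-remainder}, Proposition~\ref{prop:estimated-eif-stability} and a conditional-variance argument.

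\textbf{Exact expansion.} For each fold, write
\[
F(\widehat\theta_{-v})+\mathbb P_{n,v}\phi_{\widehat\theta_{-v}}-F(\theta_0).
\]
Add and subtract $P\phi_{\widehat\theta_{-v}}$, where $P=P_{\theta_0}$. By definition \eqref{eq:coherent-second-order-remainder}, $F(\widehat\theta_{-v})-F(\theta_0)+P\phi_{\widehat\theta_{-v}}=R_2(\widehat\theta_{-v},\theta_0)$. The leftover term is $(\mathbb P_{n,v}-P)\phi_{\widehat\theta_{-v}}$, which splits as
\[
(\mathbb P_{n,v}-P)\phi_{\theta_0}+(\mathbb P_{n,v}-P)\{\phi_{\widehat\theta_{-v}}-\phi_{\theta_0}\}.
\]
Weight by $n_v/n$ and sum. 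Since the folds partition the sample, $\sum_v (n_v/n)\mathbb P_{n,v}=\mathbb P_n$. This gives \eqref{eq:cross-fitted-exact-expansion} identically, on every sample where the fitted laws lie in $\Omega_{\cG}$. The only point to check is that $\phi_{\theta_0}$ is the canonical gradient, so that $P\phi_{\theta_0}=0$ is not even needed in the algebra.

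\textbf{Negligibility.} Work on the event $E_n$ that all $\widehat\theta_{-v}\in\mathcal K$; its probability tends to one, so it suffices to bound the terms there.
\begin{itemize}
\item \emph{Quadratic remainder.} Convexity of $\mathcal K$ lets us apply Lemma~\ref{lem:exact-one-step-remainder} with $\mathcal K_\star=\mathcal K$. This gives $|R_2|\le C\|\widehat\theta_{-v}-\theta_0\|^2=o_p(n^{-1/2})$ by \eqref{eq:one-step-initial-rate}.
\item \emph{Empirical-process term.} Condition on the out-of-fold data and the fold assignment. Then $\phi_{\widehat\theta_{-v}}-\phi_{\theta_0}$ is a fixed function, and the term has conditional mean zero. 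Its conditional variance is at most $n_v^{-1}\max_x|\phi_{\widehat\theta_{-v}}-\phi_{\theta_0}|^2\le n_v^{-1}C^2\|\widehat\theta_{-v}-\theta_0\|^2$, by the Lipschitz bound in Proposition~\ref{prop:estimated-eif-stability}. Conditional Chebyshev together with $n_v\asymp n$ gives $o_p(n^{-1/2})$. Only consistency is needed here, not the $n^{-1/4}$ rate.
\end{itemize}
There are finitely many folds, so the sums of these terms are $o_p(n^{-1/2})$. This yields \eqref{eq:one-step-asymptotic-linearity}.

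\textbf{Normal limit.} The function $\phi_{\theta_0}$ is bounded on the finite support and has mean zero, being the EIF in $L_2^0(P)$. The CLT then gives $N(0,\sigma^2)$.

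\textbf{Variance consistency.} Expand
\[
\mathbb P_{n,v}\phi_{\widehat\theta_{-v}}^2=\mathbb P_{n,v}\phi_{\theta_0}^2+\mathbb P_{n,v}\bigl(\phi_{\widehat\theta_{-v}}^2-\phi_{\theta_0}^2\bigr).
\]
On $\mathcal K$ the functions $\phi_\theta$ are uniformly bounded, and the Lipschitz bound makes the sup-norm difference $O_p(\|\widehat\theta_{-v}-\theta_0\|)=o_p(1)$. Hence the second term is $o_p(1)$, and the weighted sum of the first terms is $\mathbb P_n\phi_{\theta_0}^2\to_p\sigma^2$ by the LLN. The same argument gives $\overline\phi_{\mathrm{cf}}=\mathbb P_n\phi_{\theta_0}+o_p(1)\to_p0$.

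The main obstacle is mild: keeping every statement restricted to $E_n$. Off $E_n$, $F$ or $\phi$ may be undefined or unbounded, and the conditioning has to respect cross-fitting independence. Everything else follows from the earlier lemmas.
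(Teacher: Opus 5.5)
Your proposal is correct and follows the paper's proof essentially step for step: the exact fold decomposition via the definition of $R_2$, Lemma~\ref{lem:exact-one-step-remainder} on the convex set $\mathcal K$ for the quadratic remainder, conditional Chebyshev with the Lipschitz bound of Proposition~\ref{prop:estimated-eif-stability} for the cross-fitted empirical term (where, as you note, consistency alone suffices), the finite-support CLT, and the same expansion for variance consistency. Two small tidy-ups are worth making. First, restrict fold by fold to $\{\widehat\theta_{-v}\in\mathcal K\}$, which is measurable with respect to the out-of-fold data, rather than to the joint event over all folds. Second, state explicitly that $\widehat\sigma^2_{\mathrm{cf}}$ equals the pooled second moment minus $\overline\phi_{\mathrm{cf}}^2$; that identity is what combines your two limits.
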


\begin{corollary}[Exact foldwise unbiasedness on the source-only slice]\label{cor:source-slice-foldwise-unbiased}
    Assume the conditions of Proposition \ref{prop:source-block-drift-cancellation}.  For the fixed, data-independent fold partition above, let $\mathcal F_{-v}:=\sigma(X_i:i\notin I_v)$, and suppose $\widehat\theta_{-v}$ is $\mathcal F_{-v}$-measurable.  Suppose also that there is a fixed compact set $\mathcal K_s\Subset\Omega_{\cG}$ such that
    \[
        \widehat\theta_{-v}\in\mathcal K_s, \qquad \widehat\theta_{-v}-\theta_0\in U_{K_w}^{\mathrm{nat}} \quad\text{almost surely}.
    \]
    Then, without a rate condition on the fitted source block,
    \begin{equation}\label{eq:source-slice-foldwise-unbiasedness}
        \E\!\left[ F(\widehat\theta_{-v}) +\mathbb P_{n,v}\phi_{\widehat\theta_{-v}} \given \mathcal F_{-v} \right] =F(\theta_0) \qquad(v=1,\ldots,L).
    \end{equation}

    Consequently, the cross-fitted estimator in \eqref{eq:cross-fitted-one-step}, formed with the exact coherent gradients $\phi_{\widehat\theta_{-v}}$, obeys the exact representation
    \begin{equation}\label{eq:source-slice-cross-fitted-representation}
        \widehat\Psi_{\mathrm{cf}}-F(\theta_0) =\sum_{v=1}^L\frac{n_v}{n} (\mathbb P_{n,v}-P)\phi_{\widehat\theta_{-v}}.
    \end{equation}

    It is therefore exactly unbiased for every $n$ and, under the standing fixed $L$ and fold-proportion conditions,
    \begin{equation}\label{eq:source-slice-mean-square-rate}
        \E\bigl[\{\widehat\Psi_{\mathrm{cf}}-F(\theta_0)\}^2\bigr]=O(n^{-1}).
    \end{equation}

    No convergence of the fitted source blocks is required, although every complementary chart block is assumed exactly equal to its truth.
\end{corollary}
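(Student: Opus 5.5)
The plan is to condition on $\mathcal F_{-v}$ and use the independence of the evaluation fold from the training folds. Because the fold partition is fixed and data-independent, and $\widehat\theta_{-v}$ is $\mathcal F_{-v}$-measurable, the observations $X_i$, $i\in I_v$, are i.i.d.\ from $P=P_{\theta_0}$ and independent of $\mathcal F_{-v}$. Hence
\[
    \E[\mathbb P_{n,v}\phi_{\widehat\theta_{-v}}\mid\mathcal F_{-v}]=\E_{P_{\theta_0}}(\phi_\eta)\big|_{\eta=\widehat\theta_{-v}},
\]
and $F(\widehat\theta_{-v})$ passes through the conditional expectation. The conditional mean of the foldwise term is therefore $F(\eta)+\E_{P_{\theta_0}}(\phi_\eta)$ evaluated at $\eta=\widehat\theta_{-v}$.

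The almost-sure hypothesis $\widehat\theta_{-v}-\theta_0\in U_{K_w}^{\mathrm{nat}}$, together with $\widehat\theta_{-v}\in\mathcal K_s\subseteq\Omega_{\cG}$, places $\eta$ exactly in the setting of Proposition~\ref{prop:source-block-drift-cancellation}. That proposition yields $F(\eta)+\E_{P_{\theta_0}}(\phi_\eta)=F(\theta_0)$, which is \eqref{eq:source-slice-foldwise-unbiasedness}. Measurability and integrability need a short check. Here $\eta\mapsto F(\eta)$ and $\eta\mapsto\phi_\eta$ are continuous on $\Omega_{\cG}$ by Proposition~\ref{prop:estimated-eif-stability}, extended to the compact set $\mathcal K_s$. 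Because the support is finite, both are therefore uniformly bounded on $\mathcal K_s$, so every conditional expectation exists, and the substitution ``freeze $\eta$, then average'' is justified by the independence (freezing) lemma.

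For the representation \eqref{eq:source-slice-cross-fitted-representation}, I apply Proposition~\ref{prop:source-block-drift-cancellation} pointwise rather than in expectation. Write $F(\widehat\theta_{-v})+\mathbb P_{n,v}\phi_{\widehat\theta_{-v}}-F(\theta_0)$ as
\[
    \{F(\widehat\theta_{-v})+P\phi_{\widehat\theta_{-v}}-F(\theta_0)\}+(\mathbb P_{n,v}-P)\phi_{\widehat\theta_{-v}}.
\]
The first brace is $R_2(\widehat\theta_{-v},\theta_0)=0$ almost surely. Summing with weights $n_v/n$, which total one, gives the display.

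Exact unbiasedness then follows because each $(\mathbb P_{n,v}-P)\phi_{\widehat\theta_{-v}}$ has conditional mean zero. For the mean-square rate, set $\xi_v:=(\mathbb P_{n,v}-P)\phi_{\widehat\theta_{-v}}$. By Cauchy--Schwarz over the fixed $L$ folds,
\[
    \E[(\textstyle\sum_v\tfrac{n_v}{n}\xi_v)^2]\le L\sum_v\E\xi_v^2 .
\]
Conditional on $\mathcal F_{-v}$, $\xi_v$ is an average of $n_v$ i.i.d.\ centered bounded variables. Its conditional variance is therefore at most $M^2/n_v$, where $M:=\sup_{\eta\in\mathcal K_s}\max_{x_V}|\phi_\eta(x_V)|<\infty$. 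Since $n_v/n\to\rho_v>0$, this is $O(n^{-1})$, which proves \eqref{eq:source-slice-mean-square-rate}. No rate on the source block enters, only the uniform bound $M$.

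The only delicate step is securing that uniform bound. Proposition~\ref{prop:estimated-eif-stability} states its bounds on compact convex subsets, and $\mathcal K_s$ need not be convex. However, only continuity of $\phi_\eta$ is needed, and continuity is local: cover $\mathcal K_s$ by finitely many convex compact balls inside $\Omega_{\cG}$ and take the maximum of the resulting bounds. The target must also be defined near $\mathcal K_s$, which holds because the classes considered are defined on all of $\Omega_{\cG}$.
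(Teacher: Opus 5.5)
Your proposal is correct and follows the paper's proof essentially step for step. You condition on $\mathcal F_{-v}$ so that the validation fold is i.i.d.\ from $P$, apply Proposition~\ref{prop:source-block-drift-cancellation} at $\eta=\widehat\theta_{-v}$, obtain the exact representation foldwise, and bound the mean square through a conditional variance of order $n_v^{-1}$, using a uniform bound on $\phi_\eta$ over the compact set $\mathcal K_s$. The only differences are cosmetic. You combine the folds via $(\sum_v a_v)^2\le L\sum_v a_v^2$, where the paper bounds the cross-fold covariances by products of standard deviations. You also spell out, through a finite cover by convex balls, the uniform bound that the paper attributes simply to compactness and continuity.
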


\begin{remark}
    Corollary~\ref{cor:source-slice-foldwise-unbiased} is an exact finite-sample statement conditional on every complementary chart block being fixed at its true value.  It is an oracle comparison that isolates error in the source marginal.  When the complementary blocks are estimated, their error is governed by Theorem~\ref{thm:coherent-one-step} and Proposition~\ref{prop:modular-eif-remainder} under the rate conditions stated there, rather than by the exact identity.  Without convergence of the fitted source blocks the corollary does not by itself provide asymptotic normality, the efficient limit law, or a stable asymptotic variance in Theorem~\ref{thm:coherent-one-step}.  The conclusion does not extend automatically to same-sample fitting or to a modular approximation of the canonical gradient.
\end{remark}

\begin{corollary}[Cross-fitting is optional on fixed finite support]\label{cor:same-sample-one-step}
    Let $\widehat\theta$ be a same-sample estimator with $\widehat\theta\in\Omega_{\cG}$ almost surely, so that the display below is defined on every sample, and suppose $\|\widehat\theta-\theta_0\|=o_p(n^{-1/4})$.  Because $\mathcal K$ is a neighborhood of $\theta_0$, $\widehat\theta\in\mathcal K$ with probability tending to one, and the bounds of Proposition~\ref{prop:estimated-eif-stability} are used only there.  Then
    \[
        \widehat\Psi_{\mathrm{os}} :=F(\widehat\theta)+\mathbb P_n\phi_{\widehat\theta}
    \]
    obeys \eqref{eq:one-step-asymptotic-linearity}.
\end{corollary}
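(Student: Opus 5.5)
The plan is to write the same-sample analogue of the exact expansion \eqref{eq:cross-fitted-exact-expansion} and then control its empirical-process term by a finite-support argument in place of sample splitting. By the definition \eqref{eq:coherent-second-order-remainder} of $R_2$, on every sample,
\[
    \widehat\Psi_{\mathrm{os}}-F(\theta_0) =(\mathbb P_n-P)\phi_{\theta_0} +R_2(\widehat\theta,\theta_0) +(\mathbb P_n-P)\{\phi_{\widehat\theta}-\phi_{\theta_0}\}.
\]
This is an algebraic identity: add and subtract $P\phi_{\widehat\theta}$, and use $P\phi_{\theta_0}=0$, since $\phi_{\theta_0}$ is a canonical gradient in $L_2^0(P)$. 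It only needs $\widehat\theta\in\Omega_{\cG}$, which holds almost surely, so that $F(\widehat\theta)$ and $\phi_{\widehat\theta}$ are defined.

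Next, I would work on the event $E_n=\{\widehat\theta\in\mathcal K\}$. Because $\mathcal K$ is a neighbourhood of $\theta_0$ and $\widehat\theta\to_p\theta_0$, we have $\pr(E_n)\to1$. It therefore suffices to show that each of the last two terms, multiplied by $\mathbf 1_{E_n}$, is $o_p(n^{-1/2})$.

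For the remainder term, take $\mathcal K_\star=\mathcal K$ in Lemma~\ref{lem:exact-one-step-remainder}; this is legitimate because $\mathcal K$ is compact and convex and the target is defined on a neighbourhood of it. The quadratic bound \eqref{eq:quadratic-one-step-bound} then gives
\[
    |R_2(\widehat\theta,\theta_0)|\mathbf 1_{E_n}\le C\|\widehat\theta-\theta_0\|^2=o_p(n^{-1/2}).
\]

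The empirical-process term is the step that usually requires cross-fitting or a Donsker condition; this is the main obstacle. On fixed finite support it becomes elementary. Write
\[
    (\mathbb P_n-P)f=\sum_{x_V\in\cX_V}f(x_V)\{\mathbb P_n(\{x_V\})-p(x_V)\}.
\]
Then, for any random $f$,
\[
    |(\mathbb P_n-P)f|\le \max_{x_V}|f(x_V)|\sum_{x_V}|\mathbb P_n(\{x_V\})-p(x_V)|.
\]
The sum on the right is $O_p(n^{-1/2})$ by the multinomial central limit theorem, since $|\cX_V|$ is fixed. The Lipschitz bound of Proposition~\ref{prop:estimated-eif-stability} gives, on $E_n$,
\[
    \max_{x_V}|\phi_{\widehat\theta}-\phi_{\theta_0}|\le C\|\widehat\theta-\theta_0\|=o_p(1).
\]
Hence the product is $o_p(n^{-1/2})$. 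No independence between $\widehat\theta$ and the evaluation sample is needed, because the bound is uniform over all functions on $\cX_V$.

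Combining the three terms and multiplying by $\sqrt n$ yields \eqref{eq:one-step-asymptotic-linearity}, with the off-event contribution vanishing in probability. Note that only consistency is used for the empirical term; the $o_p(n^{-1/4})$ rate is needed only for $R_2$.
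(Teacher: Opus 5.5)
Your proposal is correct and follows the paper's proof essentially step for step: the same exact same-sample decomposition, the quadratic bound of Lemma~\ref{lem:exact-one-step-remainder} for $R_2(\widehat\theta,\theta_0)$, and the deterministic finite-support bound $\|\mathbb P_n-P\|_1\,\|\phi_{\widehat\theta}-\phi_{\theta_0}\|_\infty$ in place of cross-fitting or a Donsker condition. Your remark that consistency alone suffices for the empirical term is a harmless refinement. The paper instead uses the $o_p(n^{-1/4})$ rate there as well, which gives an $o_p(n^{-3/4})$ bound for that term.
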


\begin{remark}[The finite-dimensional likelihood benchmark]
    Under the usual regular likelihood expansion, any consistent interior local solution $\widehat\theta$ of the likelihood score equations in a neighborhood of $\theta_0$ satisfies \eqref{eq:one-step-initial-rate}, and its same-sample score equations give $\mathbb P_n b_{K,\widehat\theta}^{\mathrm{nat}}=0$ for every $K$, so the coherent one-step correction $\mathbb P_n\phi_{\widehat\theta}$ is exactly zero and $\widehat\Psi_{\mathrm{os}}$ is the likelihood plug-in.  Corollary~\ref{cor:same-sample-one-step} thus contains the efficient plug-in as the case of a null correction; its content lies in the non-likelihood initial fits described at the start of this section.
\end{remark}

\begin{proposition}[Centered modular approximations]\label{prop:modular-eif-remainder}
    For a fold-specific fitted law $Q=P_\eta$ with $\eta\in\mathcal K$, retain the exact chart basis $b_{K,\eta}^{\mathrm{nat}}$ but replace $G_K^{\mathrm{nat}}(\eta)$ and $\gamma_K(\eta)$ by training-sample quantities $\widetilde G_K^{\mathrm{nat}}$ and $\widetilde\gamma_K$, with each $\widetilde G_K^{\mathrm{nat}}$ symmetric.  Let $E_\kappa$ be the eigenvalue event on which $\lambda_{\min}(\widetilde G_K^{\mathrm{nat}})\geq\kappa/2$ for every $K$, and assume $\pr(E_\kappa)\to1$.  Set
    \[
        \widetilde\beta_K :=(\widetilde G_K^{\mathrm{nat}})^{-1}\widetilde\gamma_K \text{ on }E_\kappa, \quad \widetilde\beta_K :=\beta_K(\eta) \text{ on }E_\kappa^c, \qquad \phi_\eta^{\mathrm{mod}} :=\sum_K(b_{K,\eta}^{\mathrm{nat}})^\top\widetilde\beta_K,
    \]
    so that $\phi_\eta^{\mathrm{mod}}$ is defined on every sample and coincides with the exact coherent gradient $\phi_\eta$ off $E_\kappa$.  Then $Q\phi_\eta^{\mathrm{mod}}=0$, and on $E_\kappa$ the exact coefficient identity
    \begin{equation}\label{eq:modular-coefficient-identity}
        \widetilde\beta_K-\beta_K(\eta) =(\widetilde G_K^{\mathrm{nat}})^{-1}\left[ \widetilde\gamma_K-\gamma_K(\eta) +\{G_K^{\mathrm{nat}}(\eta)-\widetilde G_K^{\mathrm{nat}}\} \beta_K(\eta) \right]
    \end{equation}
    holds.  If
    \[
        \epsilon_n :=\max_K\{\|\widetilde G_K^{\mathrm{nat}}-G_K^{\mathrm{nat}}(\eta)\|_{\mathrm{op}} +\|\widetilde\gamma_K-\gamma_K(\eta)\|\},
    \]
    then $\|\phi_\eta^{\mathrm{mod}}-\phi_\eta\|_{L_2(P)}\leq C\epsilon_n$ on every sample: the bound is proved on $E_\kappa$, and off $E_\kappa$ its left side is zero. Write $(P-Q)(f):=Pf-Qf$ for any two probability or empirical measures.  Replacing the coherent fold-specific EIF in \eqref{eq:cross-fitted-one-step} by $\phi_\eta^{\mathrm{mod}}$ adds exactly
    \begin{equation}\label{eq:modular-eif-extra-remainder}
        (\mathbb P_{n,v}-P)(\phi_\eta^{\mathrm{mod}}-\phi_\eta) +(P-Q)(\phi_\eta^{\mathrm{mod}}-\phi_\eta)
    \end{equation}
    in that fold.  In conjunction with \eqref{eq:one-step-initial-rate}, the additional modular term is $o_p(n^{-1/2})$ if, uniformly over the fixed number of folds,
    \begin{equation}\label{eq:modular-eif-rate}
        \epsilon_n=o_p(1), \qquad \|\eta-\theta_0\|\epsilon_n=o_p(n^{-1/2}).
    \end{equation}

    If the estimated direction is not centered under $Q$, the additional term $Q\phi_\eta^{\mathrm{mod}}$ must be added to \eqref{eq:modular-eif-extra-remainder}; it is generally first order.
\end{proposition}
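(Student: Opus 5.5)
The statement to prove is Proposition~\ref{prop:modular-eif-remainder}. The plan is to treat its claims in the order stated, using only the structure of \eqref{eq:coherent-eif-map} and the bounds of Proposition~\ref{prop:estimated-eif-stability}.

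\emph{Centering.} Each coordinate of $b_{K,\eta}^{\mathrm{nat}}$ is a chart score $J_{P_\eta}e_{Kj}^{\mathrm{nat}}$. By Theorem~\ref{thm:forward-chart} it lies in $L_2^0(P_\eta)$, because differentiating the normalization \eqref{eq:algebraic-normalization} along a coordinate line gives $\sum_{x_V}\mathbb D\Phi_{\cG,\eta}[e]=0$. Hence $Q\phi_\eta^{\mathrm{mod}}=\sum_K(Qb_{K,\eta}^{\mathrm{nat}})^\top\widetilde\beta_K=0$ for any coefficient vectors $\widetilde\beta_K$ whatever, since they are constants given the training sample. This holds on every sample.

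\emph{Coefficient identity.} On $E_\kappa$, write $\widetilde\gamma_K=\widetilde G_K^{\mathrm{nat}}\widetilde\beta_K$ and $\gamma_K(\eta)=G_K^{\mathrm{nat}}(\eta)\beta_K(\eta)$. Subtracting gives
\[
\widetilde G_K^{\mathrm{nat}}\{\widetilde\beta_K-\beta_K(\eta)\}=\widetilde\gamma_K-\gamma_K(\eta)+\{G_K^{\mathrm{nat}}(\eta)-\widetilde G_K^{\mathrm{nat}}\}\beta_K(\eta),
\]
and inverting $\widetilde G_K^{\mathrm{nat}}$ yields \eqref{eq:modular-coefficient-identity}.

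\emph{$L_2$ bound.} On $E_\kappa$ we have $\|(\widetilde G_K^{\mathrm{nat}})^{-1}\|_{\mathrm{op}}\le2/\kappa$. Also $\|\beta_K(\eta)\|$ is bounded uniformly on $\mathcal K$ by smoothness and compactness (Proposition~\ref{prop:estimated-eif-stability}). So $\|\widetilde\beta_K-\beta_K(\eta)\|\le C'\epsilon_n$. Next,
\[
\phi_\eta^{\mathrm{mod}}-\phi_\eta=\sum_K(b_{K,\eta}^{\mathrm{nat}})^\top(\widetilde\beta_K-\beta_K(\eta)).
\]
The entries of $b_{K,\eta}^{\mathrm{nat}}$ are bounded uniformly in $\eta\in\mathcal K$ and $x_V$, since $p_\eta\ge c$ and $\mathbb D\Phi$ is polynomial on a compact set. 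Cauchy--Schwarz on each fixed-dimensional block, summed over the finitely many districts, then gives the sup-norm bound $\max_{x_V}|\phi_\eta^{\mathrm{mod}}-\phi_\eta|\le C\epsilon_n$, and hence the $L_2(P)$ bound. Off $E_\kappa$ the difference is identically zero by construction.

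\emph{Extra remainder.} The fold contribution with $\phi_\eta^{\mathrm{mod}}$ minus the one with $\phi_\eta$ is $\mathbb P_{n,v}(\phi_\eta^{\mathrm{mod}}-\phi_\eta)$. Adding and subtracting $P(\cdot)$ gives the first term of \eqref{eq:modular-eif-extra-remainder}, plus $P(\phi_\eta^{\mathrm{mod}}-\phi_\eta)$. Since $Q\phi_\eta^{\mathrm{mod}}=Q\phi_\eta=0$, this last quantity equals $(P-Q)(\phi_\eta^{\mathrm{mod}}-\phi_\eta)$. If $\phi_\eta^{\mathrm{mod}}$ is not $Q$-centered, the same algebra leaves the additional term $Q\phi_\eta^{\mathrm{mod}}$. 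That term involves no difference in $\eta$, which is why it is first order.

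\emph{Rates.} This is the main step. For the empirical-process term, condition on the training data. The function $\phi_\eta^{\mathrm{mod}}-\phi_\eta$ is then fixed with sup-norm at most $C\epsilon_n$. By Chebyshev, its conditional variance is $O(\epsilon_n^2/n_v)$, so the term is $O_p(\epsilon_n n^{-1/2})=o_p(n^{-1/2})$ when $\epsilon_n=o_p(1)$. Cross-fitting independence makes the conditional argument valid, and the conversion from a conditional $o_p$ to an unconditional one is the standard lemma.

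For the bias term, write
\[
(P-Q)f=\sum_{x_V}\{p_{\theta_0}(x_V)-p_\eta(x_V)\}f(x_V),
\]
and use Lipschitz continuity of $\theta\mapsto p_\theta$ on $\mathcal K$ (polynomial in $\theta$). This bounds it by $C''\|\eta-\theta_0\|\,\max_{x_V}|f|\le C\|\eta-\theta_0\|\epsilon_n$, which is $o_p(n^{-1/2})$ under \eqref{eq:modular-eif-rate}. The admission event $\eta\in\mathcal K$ has probability tending to one by \eqref{eq:one-step-initial-rate}, and outside it nothing needs to be controlled. The step needing most care is keeping the bounds uniform in $\eta$ on $\mathcal K$; compactness and Proposition~\ref{prop:estimated-eif-stability} supply this uniformity.
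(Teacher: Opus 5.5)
Your proposal is correct and matches the paper's proof step for step. It gets centering from the $Q$-mean-zero chart scores and the coefficient identity by subtracting the two normal equations. It uses the $2/\kappa$ operator bound with uniform boundedness of $\beta_K(\eta)$ and the score basis on $\mathcal K$ for the sup-norm and $L_2(P)$ bounds, $Q$-centering for the exact extra remainder, and conditional Chebyshev with $\|P-Q\|_1\le C\|\eta-\theta_0\|$ for the rates. The paper adds only a measurability-free alternative for the empirical term, $|(\mathbb P_{n,v}-P)(\phi_\eta^{\mathrm{mod}}-\phi_\eta)|\le\|\mathbb P_{n,v}-P\|_1\|\phi_\eta^{\mathrm{mod}}-\phi_\eta\|_\infty=O_p(n^{-1/2})\epsilon_n$; your conditional argument does not need it, because the modular quantities are training-sample measurable by hypothesis.
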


An empirical Gram matrix may be used as $\widetilde G_K^{\mathrm{nat}}$ provided its error is carried through \eqref{eq:modular-eif-rate}.  The conditional argument in the proof computes it on the training sample; on fixed finite support the same conclusion holds for validation-dependent or same-sample coefficients, because $|(\mathbb P_{n,v}-P)(\phi_\eta^{\mathrm{mod}}-\phi_\eta)|\leq\|\mathbb P_{n,v}-P\|_1\,\|\phi_\eta^{\mathrm{mod}}-\phi_\eta\|_\infty=O_p(n^{-1/2})\,\epsilon_n$ needs no independence between the two factors.  Likewise, a data-adaptively learned source-law rule defines a random target and is not covered by the theorem above.

\subsection{Model-valid targeting through the forward chart}

A targeted update must remain inside $\cN_+(\cG)$.  Directly tilting a normalized district kernel does not guarantee this: in the example of Section~\ref{sec:counterexample} the unrestricted normalized-kernel score space has dimension $30$, whereas the district score space $\cS_\Delta^{\mathrm{nat}}(P)$, the image of the district chart's differential, has dimension $23$.  We instead lift the canonical gradient to the recursive-head coordinate chart.

Relative to the fixed native basis, define the aggregate synthesis map
\[
    \iota_K^{\mathrm{nat}}:\R^{d_K^{\mathrm{nat}}}\longrightarrow U_{\cG}, \qquad \iota_K^{\mathrm{nat}}c :=\sum_{j=1}^{d_K^{\mathrm{nat}}}c_j e_{Kj}^{\mathrm{nat}}.
\]
Thus $\iota_K^{\mathrm{nat}}$ is distinct from the intrinsic-block insertion $\iota_C:U_C\to U_{\cG}$.  Define the \emph{targeting coordinate field}
\begin{equation}\label{eq:targeting-coordinate-field}
    \zeta(\theta) :=\sum_{K\in\cD(\cG)}\iota_K^{\mathrm{nat}}\beta_K(\theta) =\sum_{K\in\cD(\cG)} \iota_K^{\mathrm{nat}} [\{G_K^{\mathrm{nat}}(\theta)\}^{-1}\gamma_K(\theta)].
\end{equation}
For a mixture target, $\gamma_K=\alpha_K^{\mathrm{cmp}}+\alpha_K^\mu$; thus both the component and weight contributions enter the targeting field.

\begin{proposition}[Coordinate lift of the canonical gradient]\label{prop:targeting-coordinate-lift}
    For every $\theta\in\Omega_{\cG}$ at which the target is defined,
    \[
        \phi_\theta=J_{P_\theta}\zeta(\theta).
    \]

    The map $\zeta$ is $C^\infty$ on an open neighborhood of every compact subset of $\Omega_{\cG}$ on which the target is defined; in particular the open-set hypothesis of Theorem~\ref{thm:least-favorable-chart-flow} holds automatically for the fixed-response and admissible mixture targets.
\end{proposition}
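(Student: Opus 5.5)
The identity $\phi_\theta=J_{P_\theta}\zeta(\theta)$ follows from linearity of $J_{P_\theta}$ together with the definitions of the score vectors; smoothness will come from Proposition~\ref{prop:estimated-eif-stability} and openness of the positive domain.

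\emph{Step 1: the identity.} Fix $\theta$ in the domain of the target. By definition, the $j$th entry of $b_{K,\theta}^{\mathrm{nat}}$ is $J_{P_\theta}e_{Kj}^{\mathrm{nat}}$, so for any $c\in\R^{d_K^{\mathrm{nat}}}$,
\[
    (b_{K,\theta}^{\mathrm{nat}})^\top c
    =\sum_j c_j\,J_{P_\theta}e_{Kj}^{\mathrm{nat}}
    =J_{P_\theta}\Bigl(\sum_j c_je_{Kj}^{\mathrm{nat}}\Bigr)
    =J_{P_\theta}\iota_K^{\mathrm{nat}}c .
\]
The middle equality uses linearity of the differential $\mathbb D\Phi_{\cG,\theta}$ and of pointwise division by $p_\theta$. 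Now take $c=\beta_K(\theta)$ and sum over $K\in\cD(\cG)$. Definition \eqref{eq:coherent-eif-map} then gives $\phi_\theta=\sum_K J_{P_\theta}\iota_K^{\mathrm{nat}}\beta_K(\theta)=J_{P_\theta}\zeta(\theta)$, again by linearity. The inverse $\{G_K^{\mathrm{nat}}(\theta)\}^{-1}$ exists because $G_K^{\mathrm{nat}}(\theta)$ is positive definite (Proposition~\ref{prop:gram-projection}), so $\beta_K(\theta)$ is well defined.

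\emph{Step 2: smoothness.} Each map $\iota_K^{\mathrm{nat}}$ is a fixed linear map, so it suffices to show that every $\beta_K$ is $C^\infty$ near a given compact set $\mathcal K_\star\subset\Omega_{\cG}$ on which the target is defined.

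The Gram matrix is a finite sum:
\[
    G_K^{\mathrm{nat}}(\theta)=\sum_{x_V}\frac{\mathbb D\Phi_{\cG,\theta}[e_{Ki}](x_V)\,\mathbb D\Phi_{\cG,\theta}[e_{Kj}](x_V)}{p_\theta(x_V)} .
\]
Its numerator is polynomial in $\theta$ (Remark~\ref{rem:forward-map-multiaffine}) and $p_\theta>0$ on the open set $\Omega_{\cG}$, so $G_K^{\mathrm{nat}}(\theta)$ is rational with nonvanishing denominator there. By Corollary~\ref{cor:fisher-chart-pullback}, $G_K^{\mathrm{nat}}(\theta)$ is a principal block of $I_{\mathrm F}(\theta)$, which is positive definite, so $G_K^{\mathrm{nat}}(\theta)$ is positive definite on all of $\Omega_{\cG}$. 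Matrix inversion is smooth on invertible matrices, so $\{G_K^{\mathrm{nat}}(\theta)\}^{-1}$ is smooth on $\Omega_{\cG}$.

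\emph{Step 3: the coordinate gradient.} This is the only substantive point. We need $\gamma_K=\mathbb DF[e_{Kj}^{\mathrm{nat}}]$ to be smooth on an open set around $\mathcal K_\star$, not merely on $\mathcal K_\star$ itself.
\begin{itemize}
    \item For a fixed-response target, $F(\theta)=\sum_{x}g\,\Phi_{\mathcal H}(\operatorname{Sel}_{\mathsf I}\theta)$ is polynomial in $\theta$ (Theorem~\ref{thm:normalized-intervention-bridge}) and is defined on all of $\Omega_{\cG}$.
    \item For an admissible mixture, $F=\sum_a\mu_\theta(a)\psi_a(\theta)$, where each $\psi_a$ is polynomial and each $\mu_\cdot(a)$ is $C^\infty$ on $\Omega_{\cG}$ by Definition~\ref{def:admissible-source-law}.
\end{itemize}
In either case $F$ is $C^\infty$ on the open set $\Omega_{\cG}$, which contains $\mathcal K_\star$. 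Hence $\gamma_K$ is smooth there, and so are $\beta_K$ and $\zeta$. Openness of $\Omega_{\cG}$ (Theorem~\ref{thm:forward-chart}) supplies the open neighborhood demanded by Theorem~\ref{thm:least-favorable-chart-flow}.

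The main care point is the mixture case. One must confirm that the admissibility hypothesis gives $C^\infty$ weights on the whole of $\Omega_{\cG}$ rather than only at the true law; once that is checked, the remaining arguments are routine.
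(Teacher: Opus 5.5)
Your proof is correct and follows the paper's route: Step~1 is the same linearity identity $J_{P_\theta}\iota_K^{\mathrm{nat}}c=(b_{K,\theta}^{\mathrm{nat}})^\top c$ summed over native districts. Steps~2--3 write out the smoothness argument that the paper instead obtains by citing Proposition~\ref{prop:estimated-eif-stability} (positive $p_\theta$, positive-definite Gram blocks, smooth matrix inversion, and a smooth $F$ given the $C^\infty$ source law); because you carry it out on all of $\Omega_{\cG}$ rather than on compact convex sets, the open-neighborhood clause follows immediately.
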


\begin{theorem}[Universal least-favorable chart flow]\label{thm:least-favorable-chart-flow}
    Let $\mathcal K_0$ and $\mathcal K_1$ be compact sets satisfying
    \[
        \mathcal K_0\Subset\operatorname{int}(\mathcal K_1), \qquad \mathcal K_1\Subset\Omega_{\cG},
    \]
    with $\mathcal K_1$ convex.  Assume further that there is an open set $O$ with $\mathcal K_1\Subset O\subset\Omega_{\cG}$ on which the target is defined and $\zeta\in C^1(O;U_{\cG})$; the fixed-response and admissible independent-draw mixture targets satisfy this automatically.  There is $\epsilon_0>0$, independent of $\eta\in\mathcal K_0$, such that the initial-value problem
    \begin{equation}\label{eq:least-favorable-chart-ode}
        \dot\vartheta_\eta(\epsilon) =\zeta\{\vartheta_\eta(\epsilon)\}, \qquad \vartheta_\eta(0)=\eta,
    \end{equation}
    has a unique solution for $|\epsilon|<\epsilon_0$, and that solution remains in $\mathcal K_1$.  Throughout this interval,
    \begin{equation}\label{eq:least-favorable-score-identity}
        p_{\vartheta_\eta(\epsilon)}\in\cN_+(\cG), \qquad \frac{\mathrm d}{\mathrm d\epsilon} \log p_{\vartheta_\eta(\epsilon)}(x_V) =\phi_{\vartheta_\eta(\epsilon)}(x_V) \quad(x_V\in\cX_V).
    \end{equation}
\end{theorem}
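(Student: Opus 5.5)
The plan is to treat \eqref{eq:least-favorable-chart-ode} as an autonomous ODE on the open set $O\subset\Omega_{\cG}$ with a $C^1$ vector field. Then existence, uniqueness and a uniform existence time follow from Picard--Lindel\"of on compacts. The score identity is a direct chain-rule computation combined with Proposition~\ref{prop:targeting-coordinate-lift}.

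\emph{Uniform time.} Since $\mathcal K_0\Subset\operatorname{int}(\mathcal K_1)$ and both sets are compact, the distance $r:=\operatorname{dist}(\mathcal K_0,U_{\cG}\setminus\operatorname{int}\mathcal K_1)$ is strictly positive. Because $\zeta\in C^1(O)$ and $\mathcal K_1\Subset O$ is compact, we have $M:=\sup_{\mathcal K_1}\|\zeta\|<\infty$, and $\zeta$ is Lipschitz on the convex set $\mathcal K_1$ with constant $L:=\sup_{\mathcal K_1}\|\mathbb D\zeta\|$, by the mean value inequality along segments. Set $\epsilon_0:=r/(M+1)$. For $\eta\in\mathcal K_0$, the closed ball $\bar B(\eta,r')$ with $r'<r$ lies in $\mathcal K_1$, and Picard--Lindel\"of on this ball yields a unique $C^1$ solution on $|\epsilon|<\epsilon_0$. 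The a priori estimate $\|\vartheta_\eta(\epsilon)-\eta\|\le M|\epsilon|<r$ shows the trajectory never reaches the boundary of the ball. Running this as a continuation argument, the maximal solution in $O$ exists on $(-\epsilon_0,\epsilon_0)$ and stays in $\mathcal K_1$. Uniqueness follows from Gr\"onwall with Lipschitz constant $L$: any two solutions both remain in $\mathcal K_1$ on the interval by the same estimate.

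\emph{Score identity.} Since $\vartheta_\eta(\epsilon)\in\mathcal K_1\subset\Omega_{\cG}$, Theorem~\ref{thm:forward-chart} gives $p_{\vartheta_\eta(\epsilon)}=\Phi_{\cG}(\vartheta_\eta(\epsilon))\in\cN_+(\cG)$, strictly positive cell by cell. Because $\Phi_{\cG}$ is polynomial (Remark~\ref{rem:forward-map-multiaffine}), the chain rule gives
\[
\frac{\mathrm d}{\mathrm d\epsilon}p_{\vartheta_\eta(\epsilon)}=\mathbb D\Phi_{\cG,\vartheta_\eta(\epsilon)}\bigl[\zeta\{\vartheta_\eta(\epsilon)\}\bigr].
\]
Dividing by the positive mass function and using the definition of $J_P$ at $P=P_{\vartheta_\eta(\epsilon)}$, we get
\[
\frac{\mathrm d}{\mathrm d\epsilon}\log p_{\vartheta_\eta(\epsilon)}=J_{P_{\vartheta_\eta(\epsilon)}}\zeta\{\vartheta_\eta(\epsilon)\}.
\]
By Proposition~\ref{prop:targeting-coordinate-lift}, the right-hand side equals $\phi_{\vartheta_\eta(\epsilon)}$ pointwise in $x_V$. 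This step requires the target to be defined at $\vartheta_\eta(\epsilon)$, which holds because $\mathcal K_1\subset O$.

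\emph{Main obstacle.} The only delicate point is uniformity of $\epsilon_0$ over $\eta\in\mathcal K_0$, which must be independent of $\eta$. It is secured by the margin $r$ together with the global bound $M$ on $\mathcal K_1$. The remaining clause, that fixed-response and mixture targets satisfy the $C^1$ hypothesis automatically, I take directly from Proposition~\ref{prop:targeting-coordinate-lift}: one chooses $O$ as the open neighborhood of $\mathcal K_1$ provided there, shrunk if necessary so that it lies inside $\Omega_{\cG}$.
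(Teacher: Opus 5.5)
Your proposal is correct and follows essentially the same route as the paper: the positive margin $r$ and the velocity bound $M$ on $\mathcal K_1$ drive a first-exit/continuation argument around Picard--Lindel\"of that gives an $\eta$-independent $\epsilon_0$, and the score identity follows from the chain rule together with $\phi_\theta=J_{P_\theta}\zeta(\theta)$ from Proposition~\ref{prop:targeting-coordinate-lift}. Your choice $\epsilon_0=r/(M+1)$, which avoids a case split on $M=0$, and your Lipschitz constant from the mean-value inequality on the convex set $\mathcal K_1$ are only cosmetic variants of the paper's case split and its compactness-based Lipschitz constant.
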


The path in Theorem~\ref{thm:least-favorable-chart-flow} is the finite-dimensional chart realization of a universal least-favorable one-dimensional submodel: its score equals the canonical gradient at every point, not only at the initial law \citep[Definition~1 and Appendix]{vanderlaan2016universal}.

\begin{corollary}[Exact empirical EIF equation]\label{cor:exact-empirical-eif-equation}
    For an initial coordinate $\eta\in\mathcal K_0$, set
    \[
        \ell_n^\eta(\epsilon) :=\mathbb P_n\log p_{\vartheta_\eta(\epsilon)}.
    \]

    If $\widehat\epsilon$ is an interior stationary point---in particular, an interior local maximizer---of $\ell_n^\eta$, and $\theta^\star=\vartheta_\eta(\widehat\epsilon)$, then
    \begin{equation}\label{eq:exact-empirical-eif-equation}
        \mathbb P_n\phi_{\theta^\star}=0
    \end{equation}
    exactly.
\end{corollary}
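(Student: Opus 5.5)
The corollary follows by differentiating the one-dimensional log-likelihood along the chart flow and invoking the score identity \eqref{eq:least-favorable-score-identity} of Theorem~\ref{thm:least-favorable-chart-flow}.

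First, for $\eta\in\mathcal K_0$, Theorem~\ref{thm:least-favorable-chart-flow} gives a unique solution $\vartheta_\eta$ on $(-\epsilon_0,\epsilon_0)$. This solution stays in $\mathcal K_1\Subset\Omega_{\cG}$, so $p_{\vartheta_\eta(\epsilon)}$ is strictly positive in every cell. Since $\zeta\in C^1$, the curve $\epsilon\mapsto\vartheta_\eta(\epsilon)$ is $C^1$. The map $\theta\mapsto p_\theta(x_V)=\Phi_{\cG}(\theta)(x_V)$ is polynomial by Remark~\ref{rem:forward-map-multiaffine}. Hence each $\epsilon\mapsto\log p_{\vartheta_\eta(\epsilon)}(x_V)$ is $C^1$ on the interval.

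Second, $\mathbb P_n$ is a finite weighted sum over the finitely many cells $x_V\in\cX_V$. Therefore $\ell_n^\eta$ is differentiable, and differentiation commutes with $\mathbb P_n$:
\[
    \frac{\mathrm d}{\mathrm d\epsilon}\ell_n^\eta(\epsilon)
    =\mathbb P_n\Bigl\{\frac{\mathrm d}{\mathrm d\epsilon}\log p_{\vartheta_\eta(\epsilon)}\Bigr\}
    =\mathbb P_n\phi_{\vartheta_\eta(\epsilon)}.
\]
The second equality is the cellwise identity \eqref{eq:least-favorable-score-identity}. This identity holds at every $\epsilon$ in the interval, not only at $\epsilon=0$, which is the universal least-favorable property.

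Third, an interior stationary point $\widehat\epsilon$ is one where this derivative vanishes. Evaluating the display at $\widehat\epsilon$ then gives $\mathbb P_n\phi_{\theta^\star}=0$ with $\theta^\star=\vartheta_\eta(\widehat\epsilon)$. An interior local maximizer of a differentiable function on an open interval is such a stationary point, by Fermat's theorem.

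There is no real obstacle here. The only care needed is to confirm that "interior" means $|\widehat\epsilon|<\epsilon_0$, so that the flow and the score identity are available at $\widehat\epsilon$. One should also note that $\theta^\star\in\mathcal K_1$ keeps $\phi_{\theta^\star}$ well defined through \eqref{eq:coherent-eif-map}.
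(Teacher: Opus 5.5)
Your proposal is correct and follows the paper's own proof: both read off $(\ell_n^\eta)'(\epsilon)=\mathbb P_n\phi_{\vartheta_\eta(\epsilon)}$ from the score identity \eqref{eq:least-favorable-score-identity} and evaluate it at the interior stationary point. Your extra remarks spell out what the paper leaves implicit, namely positivity along the flow, $C^1$ regularity, and interchanging $\mathrm d/\mathrm d\epsilon$ with the finite empirical sum.
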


The corollary is conditional on an interior local solution.  The next theorem shows that the relevant root exists near zero with probability tending to one; no global concavity of the likelihood along the flow is asserted.

\begin{theorem}[Chart TMLE: local root, limit law, and one-step equivalence]\label{thm:chart-tmle}
    Let $P=P_{\theta_0}$ and suppose $\theta_0\in\operatorname{int}(\mathcal K_0)$ and
    \[
        \sigma^2:=P\phi_{\theta_0}^2>0.
    \]

    Let $\widehat\theta$ be a same-sample initial estimator with $\widehat\theta\in\Omega_{\cG}$ almost surely and, with probability tending to one,
    \[
        \widehat\theta\in\mathcal K_0, \qquad \|\widehat\theta-\theta_0\|=o_p(n^{-1/4}).
    \]

    Fix a sufficiently small deterministic $\delta\in(0,\epsilon_0)$.  Let $E_n$ be the event that $\widehat\theta\in\mathcal K_0$ and the score $\epsilon\mapsto\mathbb P_n\phi_{\vartheta_{\widehat\theta}(\epsilon)}$ has strictly negative derivative on $[-\delta,\delta]$, is positive at $-\delta$, and is negative at $\delta$.  This event has probability tending to one.  On $E_n$, let $\widehat\epsilon$ be its unique root in $(-\delta,\delta)$; set $\widehat\epsilon=0$ on $E_n^c$.  The root on $E_n$ is an interior strict local maximizer of $\ell_n^{\widehat\theta}$, and
    \begin{equation}\label{eq:tmle-root-rate}
        |\widehat\epsilon| =O_p\{n^{-1/2}+\|\widehat\theta-\theta_0\|\}.
    \end{equation}
    For a fixed $\theta_{\mathrm{ref}}\in\mathcal K_0$, define
    \[
        \theta^\star :=
        \begin{cases}
            \vartheta_{\widehat\theta}(\widehat\epsilon),&\text{on }E_n,\\
            \widehat\theta,&\text{on }E_n^c\text{ if }\widehat\theta\in\mathcal K_0,\\
            \theta_{\mathrm{ref}},&\text{otherwise},
        \end{cases}
        \qquad \widehat\Psi_{\mathrm{tmle}}:=F(\theta^\star).
    \]
    The empirical EIF equation \eqref{eq:exact-empirical-eif-equation} holds on $E_n$.  The estimator is defined on every sample, with $p_{\theta^\star}\in\cN_+(\cG)$, and satisfies
    \begin{align}
        \|\theta^\star-\theta_0\|&=o_p(n^{-1/4}), \label{eq:tmle-updated-rate}\\
        \sqrt n\{\widehat\Psi_{\mathrm{tmle}}-F(\theta_0)\} &=\frac1{\sqrt n}\sum_{i=1}^n\phi_{\theta_0}(X_i)+o_p(1). \label{eq:tmle-asymptotic-linearity}
    \end{align}

    Consequently,
    \begin{equation}\label{eq:tmle-one-step-equivalence}
        \widehat\Psi_{\mathrm{tmle}}-\widehat\Psi_{\mathrm{os}} =o_p(n^{-1/2}),
    \end{equation}
    where $\widehat\Psi_{\mathrm{os}}$ is the same-sample one-step estimator in Corollary~\ref{cor:same-sample-one-step}, formed from the same initial $\widehat\theta$ and, by the almost-sure chart requirement there, defined on every sample.  Moreover, $\widehat\sigma^2_{\mathrm{tmle}}:=\mathbb P_n\phi_{\theta^\star}^2$ converges in probability to $\sigma^2$.
\end{theorem}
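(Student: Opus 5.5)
The plan is to reduce everything to a scalar estimating-equation analysis along the universal least-favorable flow. By Theorem~\ref{thm:least-favorable-chart-flow}, for $\eta\in\mathcal K_0$ and $|\epsilon|<\epsilon_0$ the curve $\vartheta_\eta(\epsilon)$ stays in $\mathcal K_1$ and has score $\phi_{\vartheta_\eta(\epsilon)}$. Hence $\frac{\mathrm d}{\mathrm d\epsilon}\ell_n^\eta(\epsilon)=\mathbb P_n\phi_{\vartheta_\eta(\epsilon)}=:S_n(\epsilon;\eta)$. Differentiating once more along the flow gives $S_n'(\epsilon;\eta)=\mathbb P_n\{\mathbb D\phi_{\vartheta}[\zeta(\vartheta)]\}$. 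Its population counterpart at $\vartheta=\theta_0$ is $P\{\mathbb D\phi_{\theta_0}[\zeta(\theta_0)]\}$. Differentiating the identity $P_\theta\phi_\theta=0$ (Proposition~\ref{prop:targeting-coordinate-lift} and $\phi_\theta\in L_2^0(P_\theta)$) in direction $\zeta(\theta_0)$ shows this equals $-P(\phi_{\theta_0}J_{P}\zeta(\theta_0))=-P\phi_{\theta_0}^2=-\sigma^2$. Proposition~\ref{prop:estimated-eif-stability} and $C^1$ smoothness of $\zeta$ give uniform Lipschitz control of $\phi_\vartheta$ and $\mathbb D\phi_\vartheta$ on $\mathcal K_1$. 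Since $\vartheta_\eta(\epsilon)-\theta_0=O(|\epsilon|+\|\eta-\theta_0\|)$, we get $\sup_{|\epsilon|\le\delta}|S_n'(\epsilon;\widehat\theta)+\sigma^2|\le C(\delta+\|\widehat\theta-\theta_0\|)+\|\mathbb P_n-P\|_1 C$. Choosing $\delta$ small makes the derivative below $-\sigma^2/2$ with probability tending to one.

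Next I would show $S_n(0;\widehat\theta)=O_p(n^{-1/2}+\|\widehat\theta-\theta_0\|)$. This follows from $\mathbb P_n\phi_{\widehat\theta}=(\mathbb P_n-P)\phi_{\theta_0}+(\mathbb P_n-P)(\phi_{\widehat\theta}-\phi_{\theta_0})+P\phi_{\widehat\theta}$ together with $|P\phi_{\widehat\theta}|=|(P-P_{\widehat\theta})\phi_{\widehat\theta}|\le C\|\widehat\theta-\theta_0\|$. Since this is $o_p(1)$ and the slope is at most $-\sigma^2/2$, the score is positive at $-\delta$ and negative at $\delta$ with probability tending to one, so $\pr(E_n)\to1$. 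Monotonicity gives a unique root, and the mean value theorem gives $|\widehat\epsilon|\le (2/\sigma^2)|S_n(0)|$, which is \eqref{eq:tmle-root-rate}. Negative second derivative of $\ell_n$ makes the root a strict local maximizer. Corollary~\ref{cor:exact-empirical-eif-equation} then yields $\mathbb P_n\phi_{\theta^\star}=0$ on $E_n$. Membership $p_{\theta^\star}\in\cN_+(\cG)$ holds on every sample by the three-case definition.

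For the rates, I would use $\|\theta^\star-\widehat\theta\|\le \sup\|\zeta\|\,|\widehat\epsilon|$. This bound is $O_p(n^{-1/2})+O_p(\|\widehat\theta-\theta_0\|)$, so $\|\theta^\star-\theta_0\|=o_p(n^{-1/4})$. I expect this to be the main subtlety: the root rate only inherits the initial error, not $n^{-1/2}$, but $o_p(n^{-1/4})$ is exactly what the remainder needs. On $E_n$, the definition \eqref{eq:coherent-second-order-remainder} together with $\mathbb P_n\phi_{\theta^\star}=0$ gives
\[
F(\theta^\star)-F(\theta_0)=(\mathbb P_n-P)\phi_{\theta_0}+(\mathbb P_n-P)(\phi_{\theta^\star}-\phi_{\theta_0})+R_2(\theta^\star,\theta_0).
\]
Lemma~\ref{lem:exact-one-step-remainder} on the convex $\mathcal K_1$ bounds $R_2$ by $C\|\theta^\star-\theta_0\|^2=o_p(n^{-1/2})$. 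The empirical-process term is at most $\|\mathbb P_n-P\|_1\sup|\phi_{\theta^\star}-\phi_{\theta_0}|=O_p(n^{-1/2})o_p(1)$ by the Lipschitz bound on fixed finite support. No Donsker or independence argument is needed, which is why same-sample fitting suffices. Because $E_n^c$ has vanishing probability, this gives \eqref{eq:tmle-asymptotic-linearity}.

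Corollary~\ref{cor:same-sample-one-step} gives the same expansion for $\widehat\Psi_{\mathrm{os}}$, and subtracting the two yields \eqref{eq:tmle-one-step-equivalence}. Finally, $\mathbb P_n\phi_{\theta^\star}^2\to_p\sigma^2$ follows from uniform continuity of $\theta\mapsto\phi_\theta^2$ on $\mathcal K_1$, consistency of $\theta^\star$, and the law of large numbers for $\mathbb P_n\phi_{\theta_0}^2$.
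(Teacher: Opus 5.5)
Your proposal is correct and follows the paper's proof essentially step for step. The shared steps are the population slope identity $P\,\mathbb D\phi_{\theta_0}[\zeta(\theta_0)]=-\sigma^2$, a uniform empirical slope bound on $[-\delta,\delta]$, and the $O_p(n^{-1/2}+\|\widehat\theta-\theta_0\|)$ bound on the score at zero, which gives the sign bracket and the root rate. The proof then uses the bounded flow velocity to get the rate for $\theta^\star$, and the exact von Mises identity with Lemma~\ref{lem:exact-one-step-remainder} on the convex $\mathcal K_1$ together with the $\|\mathbb P_n-P\|_1$ sup-norm bound. The only minor differences are these: you control the slope by a Lipschitz comparison with $-\sigma^2$, where the paper uses continuity of $h_0'$ and continuous dependence of the flow on its initial point, and you omit the measurability of the selected root, which the paper settles through the strictly monotone sign bracket.
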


\begin{corollary}[Substitution and range preservation]\label{cor:tmle-substitution}
    With the fallback convention in Theorem~\ref{thm:chart-tmle}, $p_{\theta^\star}\in\cN_+(\cG)$ on every sample, and $\widehat\Psi_{\mathrm{tmle}}$ is the target functional evaluated at that law.  If the response transform satisfies $a\leq g\leq b$, then
    \[
        a\leq\widehat\Psi_{\mathrm{tmle}}\leq b.
    \]
    In particular, an indicator-response TMLE lies in $[0,1]$.
\end{corollary}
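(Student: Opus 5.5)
The statement is Corollary~\ref{cor:tmle-substitution}. The plan is to treat it as a bookkeeping consequence of the case-by-case definition of $\theta^\star$ in Theorem~\ref{thm:chart-tmle}, followed by the positivity and normalization of the configured active law from Theorem~\ref{thm:normalized-intervention-bridge}. First I will show $\theta^\star\in\Omega_{\cG}$ on every sample by splitting into the three branches of the definition.

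On $E_n$ we have $\widehat\theta\in\mathcal K_0$ and $|\widehat\epsilon|<\delta<\epsilon_0$, so Theorem~\ref{thm:least-favorable-chart-flow} places $\vartheta_{\widehat\theta}(\widehat\epsilon)$ in $\mathcal K_1\subset\Omega_{\cG}$. On $E_n^c$ with $\widehat\theta\in\mathcal K_0$ we have $\theta^\star=\widehat\theta\in\mathcal K_0\subset\Omega_{\cG}$. In the remaining case $\theta^\star=\theta_{\mathrm{ref}}\in\mathcal K_0$. Theorem~\ref{thm:forward-chart} then gives $p_{\theta^\star}=\Phi_{\cG}(\theta^\star)\in\cN_+(\cG)$ in every branch.

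The substitution claim holds by definition, since $\widehat\Psi_{\mathrm{tmle}}=F(\theta^\star)=\Psi(P_{\theta^\star})$. It only needs the target to be defined at $P_{\theta^\star}$. For fixed-response and admissible mixture targets the target is defined on all of $\cN_+(\cG)$, by Theorem~\ref{thm:normalized-intervention-bridge} and Proposition~\ref{prop:positive-smooth-source-mixture}.

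For range preservation I will use the fact that $\Psi$ is an expectation of $g$ under a genuine probability law. In the fixed-intervention case, Theorem~\ref{thm:normalized-intervention-bridge} gives $\Gamma_{\mathsf I}(p_{\theta^\star})\in\cN_+(\mathcal H)$. This law is normalized and strictly positive, so its $Y$ margin is a probability mass function, and $a\le g\le b$ gives $a\le \E_{\Gamma_{\mathsf I}(P_{\theta^\star})}g\le b$. In the mixture case, each $\psi_a(\theta^\star)$ lies in $[a,b]$ by the same argument. The weights $\mu_{\theta^\star}(a)$ are nonnegative and sum to one by Definition~\ref{def:admissible-source-law}, so the convex combination also lies in $[a,b]$. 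The indicator case follows by taking $a=0$ and $b=1$.

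The only step needing care is the $E_n$ branch. There the claim that the flow stays in the positive domain relies on the solution existing and remaining in $\mathcal K_1$ uniformly for $|\epsilon|<\epsilon_0$, not merely for small $\epsilon$. This is exactly what Theorem~\ref{thm:least-favorable-chart-flow} supplies, because $\epsilon_0$ is independent of the initial point in $\mathcal K_0$. Everything else is immediate.
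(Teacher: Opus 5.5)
Your proposal is correct and follows the paper's proof essentially step for step: you place $\theta^\star$ in $\Omega_{\cG}$ branch by branch (the flow stays in $\mathcal K_1$ on $E_n$, and the fallback lies in $\mathcal K_0$ otherwise), and you get range preservation because the target is an expectation of $g$ under the normalized positive configured active law of Theorem~\ref{thm:normalized-intervention-bridge}, or a convex combination of such expectations for a mixture. The only blemish is notational: you use $a$ both for the lower bound on $g$ and for the assignment index in $\psi_a(\theta^\star)$, so one of them should be renamed.
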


\begin{remark}[Null update at an interior likelihood fit]
    If $\widehat\theta$ solves the full interior likelihood score equations, then $\mathbb P_n b_{K,\widehat\theta}^{\mathrm{nat}}=0$ for every $K$ and hence $\mathbb P_n\phi_{\widehat\theta}=0$.  Thus zero is exactly the targeting score root.  Under the local strictness in Theorem~\ref{thm:chart-tmle}, the near-zero root selected there is $\widehat\epsilon=0$, and the TMLE equals the MLE plug-in.  Targeting is therefore not needed for efficiency in the regular fixed-dimensional MLE benchmark.  Its roles here are to update coherent non-MLE, regularized, or approximately solved initial fits, to preserve the substitution property, and to provide a model-valid template for later regimes in which the MLE benchmark disappears.
\end{remark}

\begin{remark}[Straight-line and numerical implementations]\label{rem:straight-line-numerical-targeting}
    The fixed-direction path $\eta+\epsilon\zeta(\eta)$ is model-valid for sufficiently small $\epsilon$, and its score at zero equals $\phi_\eta$.  Away from zero its score is $J_{P_{\eta+\epsilon\zeta(\eta)}}\zeta(\eta)$, not generally the canonical gradient at the updated law.  Iterative targeting must therefore recompute $\zeta$ after each update; convergence to an exact EIF root requires its own numerical argument.

    Likewise, a discretized solution of \eqref{eq:least-favorable-chart-ode} solves the exact empirical equation only up to numerical error.  To inherit \eqref{eq:tmle-asymptotic-linearity}, a terminal score residual by itself is not sufficient: it does not control numerical error transverse to the exact flow.  It is sufficient, for example, to make the terminal coordinate error relative to $\theta^\star$ $o_p(n^{-1/2})$.  Equivalently, one may require a numerical endpoint $\widetilde\theta$ to be $o_p(n^{-1/2})$-close to a point $\vartheta_{\widehat\theta}(\widetilde\epsilon)$ on the exact local flow with $|\widetilde\epsilon|\leq\delta$ with probability tending to one, and also require $\mathbb P_n\phi_{\widetilde\theta}=o_p(n^{-1/2})$.  Writing $H_n(\epsilon):=\mathbb P_n\phi_{\vartheta_{\widehat\theta}(\epsilon)}$, the sup-norm line of \eqref{eq:local-eif-stability-bounds} gives $|H_n(\widetilde\epsilon)|\leq|\mathbb P_n\phi_{\widetilde\theta}|+C\|\widetilde\theta-\vartheta_{\widehat\theta}(\widetilde\epsilon)\|=o_p(n^{-1/2})$, and the slope bound in \eqref{eq:uniform-targeting-slope} of the Supplementary Material then gives $|\widetilde\epsilon-\widehat\epsilon|\leq4|H_n(\widetilde\epsilon)|/\sigma^2=o_p(n^{-1/2})$, after which bounded flow velocity completes the comparison with $\theta^\star$.  The restriction $|\widetilde\epsilon|\leq\delta$ is needed: the flow exists on $(-\epsilon_0,\epsilon_0)$, whereas the negative-slope bound is established only on the smaller interval $[-\delta,\delta]$, so a root outside it is not excluded.  Replacing $G_K^{\mathrm{nat}}$ or $\gamma_K$ by modular approximations similarly targets the modular score rather than $\phi_\theta$; the errors in Proposition \ref{prop:modular-eif-remainder} must then be carried into the targeting equation.
\end{remark}

\begin{remark}[Robustness boundary]
    Neither the metric projection, the empirical EIF equation, nor the mixture product expansions imply double or multiple robustness: such a claim requires a separately proved drift factorization into products of errors of separately estimable nuisance functionals, which the exact remainder \eqref{eq:exact-one-step-remainder} is not, and the pure-curvature terms in \eqref{eq:smooth-mixture-second-derivative} remain.  Section \ref{sec:sequential-union} proves such a factorization for a different, source-isolated fixed-node sequential construction; it does not strengthen the order-free one-step or TMLE claims of this section.
\end{remark}

\begin{example}[The universal flow need not preserve the source-only slice]\label{ex:tmle-source-slice-failure}
    Consider the binary DAG $W\to X$ and $W\to Z$, with no edge between $X$ and $Z$.  Source-standardize the intervention on $W$ and take $g(X,Z)=XZ$. Write
    \[
        \begin{gathered}
            \pi=\pr_\theta(W=1),\qquad r_w=\pr_\theta(X=1\mid W=w),\\
            s_w=\pr_\theta(Z=1\mid W=w),\qquad m_w=r_ws_w.
        \end{gathered}
    \]
    Then
    \[
        F(\theta)=(1-\pi)m_0+\pi m_1, \qquad \phi_\theta =m_W-F(\theta)+s_W(X-r_W)+r_W(Z-s_W).
    \]
    In these probability coordinates, the targeting field is
    \[
        \dot\pi=\pi(1-\pi)(m_1-m_0),\qquad \dot r_w=r_w(1-r_w)s_w, \qquad \dot s_w=s_w(1-s_w)r_w.
    \]

    Let $\theta_0$ be interior, write $r_{0w}:=r_w(\theta_0)$ and $s_{0w}:=s_w(\theta_0)$, and let $\eta$ differ from it only through $\pi_\eta\ne\pi_0$, with $m_1\ne m_0$, where $\pi_\eta=\pr_\eta(W=1)$, $\pi_0=\pr_{\theta_0}(W=1)$, and $P_0:=P_{\theta_0}$.  Although Proposition \ref{prop:source-block-drift-cancellation} gives $R_2(\eta,\theta_0)=0$, one has
    \[
        P_0\phi_\eta=(\pi_0-\pi_\eta)(m_1-m_0)\ne0.
    \]

    Because $m_1\ne m_0$ and $\theta_0$ is interior, $P_0\phi_{\theta_0}^2>0$.  Continuity and the implicit-function theorem therefore give, for $\eta$ sufficiently close to $\theta_0$, a nonzero local population EIF root along the universal flow.  Both $r_w$ and $s_w$ move off their true values in the same direction, for either sign of the root time, because $\dot r_w=r_w(1-r_w)s_w>0$ and $\dot s_w=s_w(1-s_w)r_w>0$ at interior coordinates.  At the resulting population root $\theta^\dagger$, writing $r_w^\dagger:=r_w(\theta^\dagger)$ and $s_w^\dagger:=s_w(\theta^\dagger)$, the root equation gives
    \[
        R_2(\theta^\dagger,\theta_0) =F(\theta^\dagger)-F(\theta_0) =-\sum_w\pr_{\theta_0}(W=w) (r_w^\dagger-r_{0w})(s_w^\dagger-s_{0w})<0.
    \]
    Every factor pair has the same sign, so each summand is positive.
    Thus exact cancellation at the initial source-only error does not imply exact robustness after universal-flow targeting.  Under Theorem \ref{thm:chart-tmle}'s local full-chart rate the displayed drift is still second order; arbitrary fixed source misspecification is outside that theorem.
\end{example}

\subsection{One-step correction and chart targeting: different exact guarantees}\label{subsec:one-step-tmle-contrast}

The coherent one-step estimator and chart-flow TMLE use the same order-free canonical gradient.  Under their common local regime, the chart-flow TMLE and the same-sample coherent one-step estimator formed from the same initial law are asymptotically equivalent; both one-step versions have the same efficient first-order expansion.  Thus there is no first-order efficiency trade-off between them.  Their exact guarantees nevertheless differ, as summarized in Table~\ref{tab:one-step-tmle-contrast}.

\begin{table}[!t]
    \centering
    \caption{Proved comparison of the coherent one-step estimator and the exact chart-flow TMLE.}\label{tab:one-step-tmle-contrast}
    \small
    \begin{tabular}{@{}
        >{\raggedright\arraybackslash}p{0.29\textwidth}
        >{\raggedright\arraybackslash}p{0.31\textwidth}
        >{\raggedright\arraybackslash}p{0.28\textwidth}@{}}
    \toprule
        Property & Coherent one-step & Chart-flow TMLE \\
    \midrule
        Graph-order requirement
        & No active-block chain or sequential vertex order for the classes (I1)--(I3)
        & No active-block chain or sequential vertex order for the classes (I1)--(I3) \\
        First-order efficiency
        & Yes, under the local full-chart conditions of
        Theorem~\ref{thm:coherent-one-step}
        & Yes, under the local root conditions of
        Theorem~\ref{thm:chart-tmle} \\
        Exact marginal-source-block property
        & Population cancellation; for the cross-fitted coherent version, exact
        foldwise conditional and unconditional unbiasedness under the source-only
        conditions of Proposition
        \ref{prop:source-block-drift-cancellation} and Corollary
        \ref{cor:source-slice-foldwise-unbiased}
        & No general identity; the flow can leave the source-only slice by Example
        \ref{ex:tmle-source-slice-failure} \\
        Exact empirical EIF equation
        & The additive correction constructs no updated law and does not enforce
        this equation; at an interior full-chart likelihood fit, the initial law
        already solves it and the correction is zero
        & Yes at an interior stationary point of the exact flow, by Corollary
        \ref{cor:exact-empirical-eif-equation} \\
        Substitution and response-range preservation
        & Not guaranteed by the additive correction; it holds in null-update cases
        such as an interior full-chart likelihood fit
        & Yes, including the fallback, by Corollary
        \ref{cor:tmle-substitution} \\
    \bottomrule
    \end{tabular}
\end{table}

The efficiency and source-block entries concern different regimes: efficiency requires full-chart convergence to $\theta_0$ (Theorems~\ref{thm:coherent-one-step} and~\ref{thm:chart-tmle}), whereas the exact cancellation allows an arbitrary interior source-block error only because every complementary block is held at its true value (Proposition~\ref{prop:source-block-drift-cancellation} and the remark following it).  Neither full-chart efficiency under fixed source misspecification nor an exact source-robust TMLE follows: the exact chart flow enforces model membership and the inherited response range, but its nonlinear update can leave the source-only slice (Example~\ref{ex:tmle-source-slice-failure}).  This is a limited contrast between one-sided drift cancellation and substitution, not an efficiency-versus-double- or multiple-robustness theorem.

\begin{remark}[Comparison with existing doubly robust ADMG estimators]
    Under primal fixability, Theorem~7 of \cite{bhattacharya2022semiparametric} establishes variational independence of the portions of the observed-data law used by the primal and dual IPW representations, and \cite[Theorem~9]{bhattacharya2022semiparametric} establishes double robustness of augmented primal IPW under the associated union model; neither result requires mb-shieldedness.  Mb-shieldedness has a different role in their analysis: it makes the equality restrictions DAG-like and supplies the vertexwise tangent decomposition used for the constrained-model efficient projection \citep[Theorem~2, Lemma~3, and Theorem~12]{bhattacharya2022semiparametric}, and it is not the source of their double robustness.  Extending the efficiency geometry beyond mb-shielded ADMGs therefore neither forecloses a doubly robust construction nor confers one on the order-free estimators studied here.

    The two contributions are complementary but logically distinct.  Those authors establish union-model double robustness for their augmented primal IPW estimator in the primal-fixable setting; we establish an order-free canonical gradient and model-valid efficient inference in the strictly positive finite-state nested model for the intervention classes (I1)--(I3) on arbitrary ADMGs.  Neither result implies the other, and we prove no double- or multiple-robustness guarantee for that full class: the transitionwise union result of Section~\ref{sec:sequential-union} holds only under Assumption~\ref{ass:mr-sequential-transport}, and Proposition~\ref{prop:source-block-drift-cancellation} is the exact one-sided source-block result available here, with the scope stated in the remark following it, not an analogue of the primal/dual union model.  Such robustness is not needed for first-order efficiency in the likelihood benchmark described at the start of this section, but it remains meaningful for regularized or approximately solved fits, for which it requires a drift factorization and rate conditions appropriate to those fits.  The 30-versus-23 dimension gap in Section~\ref{sec:counterexample} rules out arbitrary normalized district-kernel tilting, not every alternative nuisance decomposition or factorized drift; recursive-head-compatible factorized drifts over the full non-mb-shielded class are not characterized here.
\end{remark}

\section{A restricted sequential union-model alternative}\label{sec:sequential-union}

The order-free estimator in Section~\ref{sec:targeted} is built from the canonical gradient and applies to every intervention covered there. This section constructs a second estimator for a narrower fixed-node subclass. Its purpose is different: it preserves a transitionwise factorization of the population drift and is consequently consistent on a union of nuisance correctness regimes.  At the all-correct intersection its influence function is generally not canonical.  Thus the comparison below is between two specific constructions---an efficient recursive-head estimator and a restricted union-model sequential estimator; what that comparison does and does not establish is stated after Proposition~\ref{prop:sequential-strict-variance-gap}.

The restrictions stated below---source isolation, an acyclic literal block order with source precedence, and source-tail coverage---are the conditions this ordinary-propensity construction uses; they are not claimed to be necessary for every possible robust estimator.  The next subsection shows by example that three natural shortcuts fail, and the examples motivate the conditions that follow; the full-history transport identity is then proved and every estimation claim is derived from it.

\subsection{Three shortcuts that fail}\label{sec:shortcuts}

The first shortcut deletes earlier source values from the histories of later propensities.  The product of the resulting inverse propensities need not have mean one, so it need not define a change of probability.  At the exact binary law for $A_1\prec A_2$, the inverse-probability product anchored at $(1,1)$, using the two marginal denominators, has expectation $1.6$, whereas keeping $A_1=1$ in the second denominator replaces $0.5$ by $0.8$ and makes the expectation one.

The second shortcut allows an intervention source inside a transition block by restricting the order only among active vertices.  The graph is $L\leftrightarrow Y$ with $L\to A\to Y$ under $\operatorname{do}(A=1)$.  The active graph has the single district $\{L,Y\}$, so the literal block-order condition is vacuous after restricting to the active vertices.  Nevertheless every observational topological order has $L\prec A\prec Y$: the source lies inside the active block.  A candidate weight may then depend on $L$, part of the current transition, and the true district residual need not be centered against that weight.

The third shortcut substitutes conditioning for fixing at a source that is bidirected-confounded with its district, even when the source precedes the block.  In the graph $A\leftrightarrow Y$, fixing $A$ under a node intervention gives the kernel $q_Y(y)=p(y)$, whereas conditioning on $A$ gives $p(y\mid a)$.  At a strictly positive binary law the two probabilities of $Y=1$ are $0.5$ and $0.8$, respectively, so the ordinary weight $\mathbf 1(A=1)/\pr(A=1)$ transports to the wrong law.  Placing the source before the block does not exclude this example: what removes $A$ from a bidirected district is the fixing divisor $p(a\mid y)$, not a propensity measurable with respect to the past.  Section~\ref{app:B-shortcuts} of the Supplementary Material states the three exact laws and carries the examples in full: the anchored expectation $1.6$ rather than one, the drift $0.05625$ rather than zero, and the fixed-against-conditioned probabilities $0.5$ and $0.8$.

The examples motivate the three conditions imposed below: propensities conditioned on the full strict past, precedence of every source relevant to a transition over that transition, and, for the present theorem, intervention sources that are singleton observational districts.  The last condition is sufficient rather than necessary; removing it requires a fixing-based or primal--dual weighting theory and is outside the present paper.

\subsection{The sequential scope conditions}

Fix a \emph{system-wide fixed-node intervention} $\mathsf I$ from (I1), put $A=A_{\mathsf I}$, $V^*=V\setminus A$, and let $\mathcal H=\cG_{V^*}$.  Write $\beta\in\cX_A$ for the intervention's assigned value vector, with coordinates $\beta_a$, $a\in A$.

\paragraph*{Response set and margins in this section}
Throughout this section the response set of Section~\ref{sec:intervention-bridge} is $Y=V^*$, so $R_{\mathsf I}=V^*$, $\mathcal H_{\mathsf I}=\mathcal H$, and $P^{\mathsf I}_\theta=\Gamma_{\mathsf I}(P_\theta)$ is the configured law on all of $V^*$.  A target $g$ that depends only on the coordinates in a subset $Y_0\subseteq V^*$ is read as a function on $\cX_{V^*}$ through coordinate projection, so that $\Psi_{\mathsf I,g}(P)=\E_{\Gamma_{\mathsf I}(P)}\{g(X_{Y_0})\}$.  This agrees with the ancestry-restricted convention of Section~\ref{sec:intervention-bridge}: with $R_0:=\operatorname{an}_{\mathcal H}(Y_0)$, the $R_0$ margin of $\Gamma_{\mathsf I}(P)$ equals, at every $P\in\cN_+(\cG)$, the configured law obtained by applying Theorem~\ref{thm:normalized-intervention-bridge} to the same node intervention with response set $Y_0$.  Consequently the target functional and its canonical gradient in Theorem~\ref{thm:intervention-chart-eif} are the same under the two conventions; the proof is in Section~\ref{app:proofs-sequential} of the Supplementary Material.

The first restriction is
\begin{equation}\label{eq:source-isolation}
    \dis_{\cG}(a)=\{a\} \qquad(a\in A).
\end{equation}
For every active district $D\in\cD(\mathcal H)$, choose $A_D\subseteq A$ so that the following \emph{source-tail coverage} condition holds:
\begin{equation}\label{eq:source-tail-coverage}
    \pa_{\cG}(D)\setminus V^*\subseteq A_D.
\end{equation}
The minimal choice is $A_D=\pa_{\cG}(D)\setminus V^*$, the set of excluded parents of $D$: exactly the sources occupying an external tail slot of the native $D$-factor.  Larger source sets are permitted.  The node intervention assigns the value sub-vector $\beta_D:=(\beta_a)_{a\in A_D}$ to the sources in $A_D$.

To make the order restriction checkable, form a directed block graph whose vertices are
\begin{equation}\label{eq:mr-block-partition}
    \mathcal B_{\mathsf I} :=\cD(\mathcal H)\cup\bigl\{\{a\}:a\in A\bigr\}.
\end{equation}

Under source isolation no bidirected edge meets $A$, so the active districts are the districts of $\cG$ other than the singletons $\{a\}$, and $\mathcal B_{\mathsf I}$ is the district partition of $\cG$ itself.  Insert every observational directed arrow between distinct blocks and, as a formal precedence constraint, insert $\{a\}\to D$ whenever $a\in A_D$.  With the minimal source sets every precedence arrow duplicates an observational arrow, so the block graph is the district quotient of $\cG$ (Section~\ref{sec:geometry}) and its acyclicity is acyclicity of that quotient; each additional source placed in some $A_D$ adds one precedence constraint, which is the reason for the general formulation.  The precedence arrows constrain an order; they are not causal edges.

\begin{proposition}[MR-compatible block-order screen]\label{prop:mr-block-order-screen}
    The directed block graph in \eqref{eq:mr-block-partition} is acyclic if and only if the observational directed graph has a topological order $\prec$ in which every block in $\mathcal B_{\mathsf I}$ is a literal interval and every $a\in A_D$ precedes every vertex of $D$.
\end{proposition}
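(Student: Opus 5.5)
The argument will be a standard quotient/refinement equivalence for a partition of a DAG's vertex set. Write $\mathcal B=\mathcal B_{\mathsf I}$ and note two facts. Under source isolation \eqref{eq:source-isolation}, $\mathcal B$ is a partition of $V$. Every arrow of the block graph is either an observational arrow between distinct blocks or a precedence arrow $\{a\}\to D$ with $a\in A_D$.

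\emph{Necessity of acyclicity.} Suppose $\prec$ is a topological order of the directed part of $\cG$ in which every block is an interval and every $a\in A_D$ precedes all of $D$. Since the blocks are disjoint intervals, $\prec$ induces a strict linear order on $\mathcal B$: say $B<B'$ when every vertex of $B$ precedes every vertex of $B'$. Each block-graph arrow respects this order.
\begin{itemize}
\item An observational arrow $v\to v'$ with $v\in B$, $v'\in B'$, $B\ne B'$, gives $v\prec v'$. Because both blocks are intervals and disjoint, this forces $B<B'$.
\item A precedence arrow $\{a\}\to D$ gives $\{a\}<D$ directly by the hypothesis on $A_D$.
\end{itemize}
A directed cycle in the block graph would therefore produce $B<B$, which is impossible.

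\emph{Sufficiency.} Suppose the block graph is acyclic, and fix a topological order $B_1,\dots,B_k$ of it. Inside each $B_i$, choose a topological order of the induced directed subgraph $\cG_{B_i}$; this exists because the directed part of $\cG$ is acyclic. Concatenate these orders in block order to obtain a linear order $\prec$ on $V$.
\begin{itemize}
\item Each block is a literal interval by construction.
\item $\prec$ is topological. An arrow $v\to v'$ inside one block is respected by the within-block order. An arrow between blocks $B_i\ni v$ and $B_j\ni v'$ is a block-graph arrow, so $i<j$.
\item Every $a\in A_D$ precedes all of $D$, because $\{a\}\to D$ is a block-graph arrow.
\end{itemize}

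The only point needing care is that $\mathcal B$ really is a partition of $V$. This is where source isolation enters. Each $\{a\}$ is a native district, and the districts of $\mathcal H=\cG_{V^*}$ are the remaining native districts. The reason is that no bidirected edge meets $A$, so deleting $A$ neither splits nor merges any other district. The text preceding the proposition already records this fact.

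I expect no real obstacle: the whole proof is bookkeeping about disjoint intervals.
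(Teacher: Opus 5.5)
Your proof is correct and is essentially the paper's argument: necessity by contracting the literal blocks, so that every cross-block observational arrow and every precedence arrow respects the induced block order; sufficiency by concatenating within-block topological orders along a topological order of the block graph. One small correction: $\mathcal B_{\mathsf I}$ partitions $V$ without source isolation, because the districts of $\cG_{V^*}$ partition $V^*$ and the singletons partition $A$, so source isolation only identifies $\mathcal B_{\mathsf I}$ with $\cD(\cG)$, and the screen holds without that hypothesis.
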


\begin{assumption}[MR-compatible sequential scope]\label{ass:mr-sequential-transport}
    The intervention obeys source isolation \eqref{eq:source-isolation} and source-tail coverage \eqref{eq:source-tail-coverage}.  The directed block graph in \eqref{eq:mr-block-partition} is acyclic.  Fix an observational topological order $\prec$ supplied by Proposition~\ref{prop:mr-block-order-screen}; thus every block is a literal interval and every $a\in A_D$ precedes every vertex of $D$.
\end{assumption}

For the remainder of this section work under Assumption~\ref{ass:mr-sequential-transport} and write the active districts in their induced order as $D_1,\ldots,D_m$.  Put
\[
    B_0:=\varnothing, \qquad B_k:=\bigcup_{j=1}^kD_j.
\]
For $v\in V$, let $\operatorname{Past}(v)=\{u:u\prec v\}$.  If $A_k:=A_{D_k}=\{a_{k1}\prec\cdots\prec a_{kr_k}\}$, put $\beta_{kj}:=\beta_{a_{kj}}$ and $\beta_k:=(\beta_{k1},\ldots,\beta_{kr_k})$, and define the full-history propensities and ladder
\begin{equation}\label{eq:full-history-propensity-ladder}
\begin{aligned}
    e_{kj,P}(X_{\operatorname{Past}(a_{kj})}) &:=\pr_P\!\left( X_{a_{kj}}=\beta_{kj} \mid X_{\operatorname{Past}(a_{kj})} \right),\\
    W_{k,P}^{\mathrm{pre}} &:=\prod_{j=1}^{r_k} \frac{\mathbf 1\{X_{a_{kj}}=\beta_{kj}\}} {e_{kj,P}(X_{\operatorname{Past}(a_{kj})})}.
\end{aligned}
\end{equation}
Thus earlier selected sources remain in every later strict past.

The \emph{full history} of the $k$th transition is $\operatorname{Past}(D_k):=\bigcap_{v\in D_k}\operatorname{Past}(v)$, the set of vertices preceding the entire block $D_k$; let $\mathcal F_k^-:=\sigma(X_{\operatorname{Past}(D_k)})$.  By Proposition~\ref{prop:mr-block-order-screen}, $A_k\subseteq\operatorname{Past}(D_k)$ and $B_{k-1}\subseteq\operatorname{Past}(D_k)$, so $W_{k,P}^{\mathrm{pre}}$ and $X_{B_{k-1}}$ are $\mathcal F_k^-$-measurable.  A history configuration $x_{\operatorname{Past}(D_k)}$ is \emph{compatible with the intervention} when $x_{A_k}=\beta_k$.  The next lemma proves, rather than assumes, the graphical-to-statistical transition identity on which the sequential theory depends.

\begin{lemma}[Source-isolated active-block slice]\label{lem:source-isolated-active-block-slice}
    Under Assumption~\ref{ass:mr-sequential-transport}, let $K_{k,\theta}^{\mathsf I}$ be the $k$th conditional kernel of $P_\theta^{\mathsf I}=\Gamma_{\mathsf I}(P_\theta)$ in the active-district order.  Then, for every $\theta\in\Omega_{\cG}$, every $x_{D_k}$, and every history configuration $x_{\operatorname{Past}(D_k)}$ compatible with the intervention,
    \begin{equation}\label{eq:mr-sequential-kernel-interface}
        \pr_\theta\!\left( X_{D_k}=x_{D_k} \mid X_{\operatorname{Past}(D_k)}=x_{\operatorname{Past}(D_k)} \right) =K_{k,\theta}^{\mathsf I} (x_{D_k}\mid x_{B_{k-1}}).
    \end{equation}
    The identity is pointwise in all displayed arguments and global on the positive chart, not merely local at $P_0$.
\end{lemma}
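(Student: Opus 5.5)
The plan is to compute both sides of \eqref{eq:mr-sequential-kernel-interface} from the top-level district factorization. We then show that each side reduces to the same configured native factor $q_{D_k,P_\theta}(x_{D_k}\mid c^{\mathsf I}_{D_k}(x_{V^*}))$.

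\emph{Step 1 (block structure).} Under source isolation \eqref{eq:source-isolation} the native districts of $\cG$ are exactly the blocks $\mathcal B_{\mathsf I}$: the active districts $D_1,\dots,D_m$ and the singletons $\{a\}$, $a\in A$. By Lemma~\ref{lem:algebraic-district-factorization} and Proposition~\ref{prop:district-locality}, specifically \eqref{eq:district-factor-as-fixing}, we get for every $\theta\in\Omega_{\cG}$
\[
p_\theta(x_V)=\prod_{B\in\mathcal B_{\mathsf I}} r_B\bigl(x_B\mid x_{\pa_{\cG}(B)\setminus B}\bigr), \qquad r_B=\operatorname{red}_B\{\phi_{V\setminus B}(p_\theta;\cG)\}>0.
\]
Each factor is normalized in $x_B$ by Corollary~\ref{lem:algebraic-normalization} applied to $\mathfrak d_B(\cG)$.

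\emph{Step 2 (marginalize the future).} Since every block is a literal interval of the topological order $\prec$, the set $\operatorname{Past}(D_k)\cup D_k$ is a union of blocks, namely an initial segment of the block order. Every parent of a block in this segment lies in the segment. The plan is to sum out the remaining blocks one at a time, starting from the $\prec$-last block. That block has no children in any other block, because the order is topological, so its variables appear only in its own factor, and that factor sums to one. Iterating gives
\[
\pr_\theta\bigl(X_{\operatorname{Past}(D_k)\cup D_k}=x\bigr)=\prod_{B\subseteq \operatorname{Past}(D_k)\cup D_k} r_B\bigl(x_B\mid x_{\pa_{\cG}(B)\setminus B}\bigr).
\]
Applying the same argument once more with $D_k$ removed, and dividing (which is legitimate by strict positivity), cancels every factor except the one for $D_k$. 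The conditional probability on the left of \eqref{eq:mr-sequential-kernel-interface} is therefore $r_{D_k}(x_{D_k}\mid x_{\pa_{\cG}(D_k)\setminus D_k})$.

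\emph{Step 3 (identify the boundary).} Each $w\in\pa_{\cG}(D_k)\setminus D_k$ is of one of two kinds. Either it is a source, in which case $w\in A_k$ by source-tail coverage \eqref{eq:source-tail-coverage}, and compatibility gives $x_w=\beta_w=a_{w,D_k}$. Or it is an active vertex preceding $D_k$; since blocks are intervals, it then lies in $B_{k-1}$. Hence, for a compatible history, $x_{\pa_{\cG}(D_k)\setminus D_k}=c^{\mathsf I}_{D_k}(x_{V^*})$, and this configuration depends only on $x_{B_{k-1}}$.

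\emph{Step 4 (the right side).} By Theorem~\ref{thm:normalized-intervention-bridge}, $\Gamma_{\mathsf I}(p_\theta)$ is the product \eqref{eq:statistical-intervention-functional} of the configured factors $r_{D_j}(\cdot\mid c^{\mathsf I}_{D_j})$. Each such factor is a normalized positive kernel in $x_{D_j}$ by Lemma~\ref{lem:configured-factor-reassembly} and Corollary~\ref{lem:algebraic-normalization}, and its arguments lie in $B_j$. Repeating the backward summation of Step~2 over the active order $D_m,\dots,D_{k+1}$, and then conditioning on $x_{B_{k-1}}$, shows that the $k$th conditional kernel $K^{\mathsf I}_{k,\theta}$ equals $r_{D_k}(x_{D_k}\mid c^{\mathsf I}_{D_k})$. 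Combining with Steps 2 and 3 gives \eqref{eq:mr-sequential-kernel-interface}. Nothing in the argument depends on $P_0$, so the identity holds globally on $\Omega_{\cG}$.

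The main obstacle is Step 2, together with the bookkeeping in Step 3. One must verify carefully that the interval property and source precedence guarantee two things: no factor retained after marginalization has an argument outside $\operatorname{Past}(D_k)\cup D_k$, and no parent of $D_k$ lies in a later block or is an uncovered source. This is exactly where the shortcuts of Section~\ref{sec:shortcuts} fail. Accordingly, I would prove a small auxiliary claim first: for any initial segment $\mathcal S$ of the block order, the marginal of $X_{\cup\mathcal S}$ is the product of the factors of the blocks in $\mathcal S$. I would prove it by induction on the number of blocks removed, using Proposition~\ref{prop:mr-block-order-screen}.
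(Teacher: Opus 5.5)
Your proposal is correct and follows essentially the same route as the paper's proof. In both, you identify the blocks of $\mathcal B_{\mathsf I}$ with the native districts, remove future blocks by successive normalization and cancel the preceding ones to obtain the native $D_k$ factor, configure its tail through source-tail coverage and the block order, and repeat the successive-normalization argument on the active side via Theorem~\ref{thm:normalized-intervention-bridge}. Your initial-segment auxiliary claim and your use of the interval property (rather than the block-graph arrow) to place active parents in $B_{k-1}$ are only more explicit versions of the paper's steps.
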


This lemma, rather than acyclicity alone, is the transport statement on which the sequential theory rests; the three examples of the previous subsection show that full strict-past propensities, source precedence, and source isolation cannot simply be omitted from this ordinary-propensity construction.

Strict positivity on the finite support makes every propensity in the first line of \eqref{eq:full-history-propensity-ladder} positive.  Uniform bounds needed for estimation follow after restricting fitted laws to a compact subset of the positive chart, as in Proposition~\ref{prop:estimated-eif-stability}.

By backward conditioning in the ladder line of \eqref{eq:full-history-propensity-ladder},
\begin{equation}\label{eq:corrected-ladder-normalization}
    P W_{k,P}^{\mathrm{pre}}=1.
\end{equation}

The normalization \eqref{eq:corrected-ladder-normalization} is not yet a transport statement.  Reweighting by $W_{k,P}^{\mathrm{pre}}$ enforces the assigned values $\beta_k$ of the sources relevant to the $k$th transition, but it can leave the preceding active variables $X_{B_{k-1}}$ under a marginal different from $P^{\mathsf I}_{B_{k-1}}$.  Because $W_{k,P}^{\mathrm{pre}}$ is $\mathcal F_k^-$-measurable, reweighting by it does not change the conditional law of $X_{D_k}$ given the full history at any history configuration compatible with the intervention, and Lemma~\ref{lem:source-isolated-active-block-slice} identifies that conditional with the target kernel $K_{k,\theta}^{\mathsf I}(\cdot\mid x_{B_{k-1}})$.  What remains to be corrected is the marginal of $X_{B_{k-1}}$; this is the role of the prefix ratio defined next, which replaces that marginal by $P^{\mathsf I}_{B_{k-1}}$ and equals one for the first transition.

Let $\widetilde P_{k,P}$ denote the resulting probability law, $\widetilde P_{k,P}f:=P(W_{k,P}^{\mathrm{pre}}f)$.  On the fixed finite support, strict positivity implies mutual absolute continuity of its $B_{k-1}$ marginal and $P_{B_{k-1}}^{\mathsf I}$.  Define
\begin{equation}\label{eq:sequential-prefix-bridge}
    \Pi_{k-1,P} :=\frac{\mathrm dP_{B_{k-1}}^{\mathsf I}} {\mathrm d\widetilde P_{k,P,B_{k-1}}}(X_{B_{k-1}}), \qquad H_{k,P}:=W_{k,P}^{\mathrm{pre}}\Pi_{k-1,P},
\end{equation}
with $\Pi_{0,P}\equiv1$.

\begin{lemma}[Transport and robust centering]\label{lem:sequential-transport-centering}
    Under Assumption~\ref{ass:mr-sequential-transport}, for every bounded $f(X_{B_k})$,
    \begin{equation}\label{eq:sequential-prefix-transport}
        P\{H_{k,P}f(X_{B_k})\}=P^{\mathsf I}f(X_{B_k}).
    \end{equation}

    Define $Q_{m,P}:=g(X_Y)$ and, recursively,
    \begin{equation}\label{eq:sequential-outcome-recursion}
        Q_{k-1,P} :=\E_{P^{\mathsf I}}(Q_{k,P}\mid X_{B_{k-1}}), \qquad R_{k,P}:=Q_{k,P}-Q_{k-1,P}.
    \end{equation}

    If
    \begin{equation}\label{eq:admissible-candidate-weight}
        \bar H_k=\mathbf 1\{X_{A_k}=\beta_k\}\bar G_k, \qquad \bar G_k\ \text{is }\mathcal F_k^-\text{-measurable},
    \end{equation}
    then
    \begin{equation}\label{eq:robust-transition-centering}
        P(\bar H_kR_{k,P})=0.
    \end{equation}
    In particular, the true $H_{k,P}$ is an admissible candidate weight.
\end{lemma}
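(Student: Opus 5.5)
We prove the two displays in turn. Both rest on the measurability facts already recorded: $W_{k,P}^{\mathrm{pre}}$, $X_{A_k}$ and $X_{B_{k-1}}$ are $\mathcal F_k^-$-measurable, and Lemma~\ref{lem:source-isolated-active-block-slice} identifies the conditional law of $X_{D_k}$ given the full history with $K_{k,\theta}^{\mathsf I}(\cdot\mid x_{B_{k-1}})$ at compatible histories.

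\emph{Transport \eqref{eq:sequential-prefix-transport}.} Let $f=f(X_{B_k})$ be bounded with $B_k=B_{k-1}\cup D_k$. Condition on $\mathcal F_k^-$. The weight $H_{k,P}=W_{k,P}^{\mathrm{pre}}\Pi_{k-1,P}$ is $\mathcal F_k^-$-measurable, so
\[
P\{H_{k,P}f\}=P\bigl[H_{k,P}\,\E_P\{f(X_{B_k})\mid\mathcal F_k^-\}\bigr].
\]
The factor $W_{k,P}^{\mathrm{pre}}$ vanishes unless $X_{A_k}=\beta_k$. Hence only compatible histories contribute, and on those Lemma~\ref{lem:source-isolated-active-block-slice} gives
\[
\E_P\{f\mid\mathcal F_k^-\}=\sum_{x_{D_k}}K_k^{\mathsf I}(x_{D_k}\mid X_{B_{k-1}})\,f(X_{B_{k-1}},x_{D_k})=:\bar f(X_{B_{k-1}}).
\]
Therefore
\[
P\{H_{k,P}f\}=P\{W_{k,P}^{\mathrm{pre}}\Pi_{k-1,P}\bar f\}=\widetilde P_{k,P}\{\Pi_{k-1,P}\bar f(X_{B_{k-1}})\}.
\]
By the definition \eqref{eq:sequential-prefix-bridge} of the Radon--Nikodym derivative on the $B_{k-1}$ margin, the last expression equals $P^{\mathsf I}_{B_{k-1}}\bar f$. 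Here $\widetilde P_{k,P}$ is a probability law by \eqref{eq:corrected-ladder-normalization}, and finiteness with strict positivity gives mutual absolute continuity. Since $K_k^{\mathsf I}$ is the $k$th conditional kernel of $P^{\mathsf I}$ in the active-district order, $P^{\mathsf I}_{B_{k-1}}\bar f=P^{\mathsf I}f(X_{B_k})$.

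There is one thing to check: the $k$th kernel of $P^{\mathsf I}$ given $X_{B_{k-1}}$ must be the conditional law under $P^{\mathsf I}$ of $X_{D_k}$ given $X_{B_{k-1}}$. We read this from the product form \eqref{eq:statistical-intervention-functional}, in which the order $D_1,\dots,D_m$ is topological for $\mathcal H$. For $k=1$, $\Pi_0\equiv1$ and the same argument applies with an empty prefix.

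\emph{Centering \eqref{eq:robust-transition-centering}.} By the recursion \eqref{eq:sequential-outcome-recursion}, $Q_{k,P}$ is a function of $X_{B_k}$ and $Q_{k-1,P}$ is a function of $X_{B_{k-1}}$. Moreover $Q_{k-1,P}(x_{B_{k-1}})=\sum_{x_{D_k}}K_k^{\mathsf I}(x_{D_k}\mid x_{B_{k-1}})Q_{k,P}(x_{B_{k-1}},x_{D_k})$; this again uses that the $k$th kernel of $P^{\mathsf I}$ is its conditional given $B_{k-1}$, and $Q_{k,P}$ depends only on $X_{B_k}$. Conditioning on $\mathcal F_k^-$ and using the measurability of $\bar H_k$,
\[
P(\bar H_kR_{k,P})=P\bigl[\bar H_k\,\E_P(R_{k,P}\mid\mathcal F_k^-)\bigr].
\]
On $\{X_{A_k}=\beta_k\}$ the history is compatible, so Lemma~\ref{lem:source-isolated-active-block-slice} gives $\E_P(Q_{k,P}\mid\mathcal F_k^-)=Q_{k-1,P}$, and the conditional mean of $R_{k,P}$ vanishes. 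Off that event $\bar H_k=0$. Hence $P(\bar H_kR_{k,P})=0$. Finally, $H_{k,P}=\mathbf 1\{X_{A_k}=\beta_k\}\bar G_k$ with
\[
\bar G_k=\Pi_{k-1,P}\prod_j e_{kj,P}^{-1},
\]
which is $\mathcal F_k^-$-measurable because every $\operatorname{Past}(a_{kj})\subseteq\operatorname{Past}(D_k)$ by Proposition~\ref{prop:mr-block-order-screen}.

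\emph{Main obstacle.} The only nonroutine step is justifying that the conditional of $P^{\mathsf I}$ on $X_{D_k}$ given $X_{B_{k-1}}$ equals $K_k^{\mathsf I}$ and depends on the prefix only through $X_{B_{k-1}}$. We intend to obtain this by summing out later districts in the product \eqref{eq:statistical-intervention-functional}: each later factor is normalized in its own $x_D$ for every boundary configuration, and its boundary only involves earlier variables or fixed sources. We then define $K_k^{\mathsf I}$ accordingly, consistent with Lemma~\ref{lem:source-isolated-active-block-slice}.
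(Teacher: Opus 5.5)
Your proof is correct and follows the paper's route: condition on $\mathcal F_k^-$, use Lemma~\ref{lem:source-isolated-active-block-slice} on the compatible support of the weight to integrate $f$ (or $Q_{k,P}$) over the $k$th transition kernel, and let $\Pi_{k-1,P}$ replace the $\widetilde P_{k,P}$ prefix marginal by $P^{\mathsf I}_{B_{k-1}}$. Centering and admissibility then follow from the same conditional identity, as in the paper. The step you flag as the main obstacle is not actually open: $K^{\mathsf I}_{k,\theta}$ is defined as the $k$th conditional kernel of $P^{\mathsf I}_\theta$ in the active-district order, so it is the $P^{\mathsf I}$-conditional of $X_{D_k}$ given $X_{B_{k-1}}$ by definition, and the successive-normalization argument you sketch is the one already used in the proof of that lemma.
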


\subsection{Exact drift and transitionwise union model}

Let $P_0\in\cN_+(\cG)$ denote the true law, with probability mass function $p_0$; as above, $P_0f=\E_{P_0}f$.

For arbitrary $B_k$-measurable candidates $\bar Q_k$, impose only $\bar Q_m=g(X_Y)$ and let $\bar R_k=\bar Q_k-\bar Q_{k-1}$; $\bar Q_0$ is a constant.  Candidate weights always obey \eqref{eq:admissible-candidate-weight}.

\begin{theorem}[Exact factorized sequential drift]\label{thm:exact-sequential-drift}
    At $P_0$, write $H_{0k}=H_{k,P_0}$, $R_{0k}=R_{k,P_0}$, $Q_{k,0}:=Q_{k,P_0}$, and $\psi_0=P_0^{\mathsf I}g$.  Then
    \begin{equation}\label{eq:exact-sequential-drift}
        P_0\!\left[ \bar Q_0-\psi_0+\sum_{k=1}^m\bar H_k\bar R_k \right] =\sum_{k=1}^m P_0\{(\bar H_k-H_{0k})(\bar R_k-R_{0k})\}.
    \end{equation}
\end{theorem}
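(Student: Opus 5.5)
The plan is the standard telescoping argument for sequential doubly robust representations, with Lemma~\ref{lem:sequential-transport-centering} supplying the two facts that make the cross terms vanish. The first step is a representation of $\bar Q_0-\psi_0$ as a telescoping sum along the transitions. Since $Q_{m,0}=g(X_Y)=\bar Q_m$, the transport identity \eqref{eq:sequential-prefix-transport} applied to $f=\bar Q_k$ and to $f=\bar Q_{k-1}$ will give, transition by transition,
\[
    \bar Q_0-\psi_0 = -\sum_{k=1}^m P_0^{\mathsf I}(\bar Q_k-\bar Q_{k-1}) + P_0^{\mathsf I}(\bar Q_m-Q_{m,0}).
\]
The last term vanishes, and $\bar Q_0$ is a constant, so $P_0^{\mathsf I}\bar Q_0=\bar Q_0$. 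Then \eqref{eq:sequential-prefix-transport} at $P_0$ rewrites each $P_0^{\mathsf I}\bar R_k$ as $P_0(H_{0k}\bar R_k)$, because $\bar R_k$ is $B_k$-measurable and bounded on the finite support. This yields
\[
    \bar Q_0-\psi_0=-\sum_{k=1}^m P_0(H_{0k}\bar R_k).
\]

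Substituting this into the left side of \eqref{eq:exact-sequential-drift} gives $\sum_k P_0\{(\bar H_k-H_{0k})\bar R_k\}$. It remains to subtract $\sum_k P_0\{(\bar H_k-H_{0k})R_{0k}\}$ at no cost, i.e. to show that this quantity is zero. By the robust centering \eqref{eq:robust-transition-centering}, $P_0(\bar H_kR_{0k})=0$ for every admissible candidate $\bar H_k$ of the form \eqref{eq:admissible-candidate-weight}. The same display, applied with the true weight $H_{0k}$, which the lemma certifies as admissible, gives $P_0(H_{0k}R_{0k})=0$. Subtracting the two yields the product form $\sum_k P_0\{(\bar H_k-H_{0k})(\bar R_k-R_{0k})\}$.

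The main point to check carefully is the first step's bookkeeping. The transport identity is applied at the true law $P_0$ to candidate functions $\bar Q_k$ that are arbitrary rather than the true regressions, so I must verify that \eqref{eq:sequential-prefix-transport} holds for every bounded $B_k$-measurable $f$, which it does as stated. I must also handle the $k=1$ term, where $B_0=\varnothing$ and $H_{01}=W^{\mathrm{pre}}_{1,P_0}$ with $\Pi_0\equiv1$, so that the constant $\bar Q_0$ is transported correctly through \eqref{eq:corrected-ladder-normalization}. No positivity or compactness beyond the finite strictly positive support is needed, since every expectation is a finite sum.
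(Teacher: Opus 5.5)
Your proof is correct and follows the paper's argument. The paper likewise uses Lemma~\ref{lem:sequential-transport-centering} to obtain $P_0(\bar H_kR_{0k})=P_0(H_{0k}R_{0k})=0$ and $P_0(H_{0k}\bar R_k)=P_0^{\mathsf I}\bar R_k$, telescopes $\sum_kP_0^{\mathsf I}\bar R_k=\psi_0-\bar Q_0$, and expands the products. Two small remarks: your first display is pure telescoping with $\bar Q_m=Q_{m,0}=g(X_Y)$ and needs no transport, and the $k=1$ concern is moot, since $\bar R_1$ is $B_1$-measurable and is covered directly by \eqref{eq:sequential-prefix-transport}.
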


\paragraph*{Sampling and candidate-nuisance conditions}
Let $X_1,\ldots,X_n$ be i.i.d. from $P_0$ on the fixed finite support.  Let $I_1,\ldots,I_L$ be fixed, data-independent validation folds, with $L$ fixed and $n_\ell/n\to\rho_\ell\in(0,1)$, and let every fitted nuisance with superscript $(-\ell)$ be measurable with respect to the observations outside $I_\ell$.  Each $\widehat Q_k^{(-\ell)}$ is $B_k$-measurable, with $\widehat Q_0^{(-\ell)}$ constant and $\widehat Q_m^{(-\ell)}=g(X_Y)$; each fitted weight satisfies \eqref{eq:admissible-candidate-weight} foldwise; and
\[
    \max_{1\leq\ell\leq L}\Bigl\{\max_{1\leq k\leq m}\|\widehat H_k^{(-\ell)}\|_\infty +\max_{0\leq k\leq m}\|\widehat Q_k^{(-\ell)}\|_\infty\Bigr\} =O_p(1).
\]
Write $\widehat R_k^{(-\ell)}:=\widehat Q_k^{(-\ell)}-\widehat Q_{k-1}^{(-\ell)}$ and define
\begin{equation}\label{eq:sequential-cross-fitted-estimator}
    \widehat\psi_{\mathrm{seq}} :=\sum_{\ell=1}^L\frac{n_\ell}{n}\, \mathbb P_{n,\ell}\!\left[ \widehat Q_0^{(-\ell)} +\sum_{k=1}^m \widehat H_k^{(-\ell)} \{\widehat Q_k^{(-\ell)}-\widehat Q_{k-1}^{(-\ell)}\} \right].
\end{equation}

For $\omega=(\omega_1,\ldots,\omega_m)\in\{H,R\}^m$, define the population correctness regime
\begin{equation}\label{eq:transition-union-regime}
    \mathcal U_\omega
 :=\bigcap_{k=1}^m \begin{cases}
        \{\bar H_k=H_{0k}\},&\omega_k=H,\\
        \{\bar R_k=R_{0k}\},&\omega_k=R.
    \end{cases}
\end{equation}

The exponent in \eqref{eq:transition-union-regime} counts \emph{choice vectors}: for each of the $m$ active-district transitions, either the weight or the outcome residual is nominated as correct, where weight correctness concerns the complete transported weight $H_{0k}=W_{k,P_0}^{\mathrm{pre}}\Pi_{k-1,P_0}$, prefix ratio included.  Because $\bar R_k$ and $\bar R_{k+1}$ both involve $\bar Q_k$, the regimes need not be distinct, so $2^m$ is an upper bound on their number; they are nuisance-correctness regimes for candidate functions, not necessarily distinct or variation-independent law-indexed submodels of $\cN_+(\cG)$.  The count is available because the drift \eqref{eq:exact-sequential-drift} is a sum of per-transition products with no term coupling distinct transitions, which Theorem~\ref{thm:exact-sequential-drift} obtains from Assumption~\ref{ass:mr-sequential-transport} together with Lemmas~\ref{lem:source-isolated-active-block-slice} and~\ref{lem:sequential-transport-centering} and the admissible candidate structure \eqref{eq:admissible-candidate-weight}; it does not follow from the product identification formula \eqref{eq:imported-intervention-product} alone.  Throughout, \emph{union model} is shorthand for the union $\bigcup_{\omega\in\{H,R\}^m}\mathcal U_\omega$ of these regimes; it is not a law-indexed statistical submodel of $\cN_+(\cG)$.

Correctness of a residual is correctness of a difference: $\bar R_k=R_{0k}$ holds exactly when the errors $\bar Q_k-Q_{k,0}$ and $\bar Q_{k-1}-Q_{k-1,0}$ coincide.  Adjacent residuals share a candidate function; each residual-correctness clause constrains $\bar Q_k-\bar Q_{k-1}$, and in general neither adjacent regression need be individually correct.  For $m=1$, $R_{01}=g(X_Y)-\psi_0$, and correctness of the residual is the scalar condition $\bar Q_0=\psi_0$.  Three things are kept distinct: the algebraic regimes $\mathcal U_\omega$, which concern population candidates; the estimation conditions of Theorem~\ref{thm:sequential-multiple-robustness}, which concern the fitted nuisances; and a fitting procedure producing candidates that lie in a regime, which this paper does not specify.  The fixtures of Section~\ref{app:B-sequential} of the Supplementary Material verify the algebra of the regimes at an exact law by offsetting the true nuisances; they do not demonstrate an implemented misspecification-robust estimator.

\begin{theorem}[Transitionwise multiple robustness and intersection theory]\label{thm:sequential-multiple-robustness}
    Under the sampling and candidate-nuisance conditions above, the following hold.
    \begin{enumerate}[label=(\roman*)]
        \item \emph{Consistency on the union.}  If, uniformly over folds,
        \begin{equation}\label{eq:sequential-consistency-product}
            \sum_{k=1}^m \|\widehat H_k-H_{0k}\|_{P_0,2} \|\widehat R_k-R_{0k}\|_{P_0,2}=o_p(1),
        \end{equation}
        then \eqref{eq:sequential-cross-fitted-estimator} is consistent.  In particular, this holds when, for every transition, one nuisance is consistent and the other remains bounded; exact membership in any $\mathcal U_\omega$ makes the population drift zero.
        \item \emph{Inference at the all-correct intersection.}  Define
        \begin{equation}\label{eq:sequential-influence-function}
            \phi_{\mathrm{seq}} :=Q_{0,0}-\psi_0+\sum_{k=1}^mH_{0k}R_{0k}
        \end{equation}
        and the foldwise estimated summand $\widehat M^{(-\ell)} :=\widehat Q_0^{(-\ell)} +\sum_{k=1}^m\widehat H_k^{(-\ell)}\widehat R_k^{(-\ell)}$.  If, in addition to \eqref{eq:sequential-consistency-product}, $\widehat M^{(-\ell)}$ converges in $L_2(P_0)$ to $\psi_0+\phi_{\mathrm{seq}}$ uniformly over folds and the left-hand side of \eqref{eq:sequential-consistency-product} is $o_p(n^{-1/2})$, then
        \begin{equation}\label{eq:sequential-asymptotic-linearity}
            \widehat\psi_{\mathrm{seq}}-\psi_0 = (\mathbb P_n-P_0)\phi_{\mathrm{seq}}+o_p(n^{-1/2}),
        \end{equation}
        and, writing $\ell(i)$ for the index of the fold containing observation $i$, the variance estimator
        \[
            \widehat\sigma^2_{\mathrm{seq}} :=\frac1n\sum_{i=1}^n \bigl\{\widehat M^{(-\ell(i))}(X_i) -\widehat\psi_{\mathrm{seq}}\bigr\}^2
        \]
        consistently estimates $P_0\phi_{\mathrm{seq}}^2$.
    \end{enumerate}
\end{theorem}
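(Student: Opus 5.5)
Both parts follow from the exact drift identity of Theorem~\ref{thm:exact-sequential-drift}, applied foldwise conditionally on the training sample, plus a standard empirical-process split. For each fold $\ell$, condition on the observations outside $I_\ell$. The fitted nuisances are then fixed functions satisfying \eqref{eq:admissible-candidate-weight}, with $\widehat Q_m^{(-\ell)}=g(X_Y)$ and $\widehat Q_0^{(-\ell)}$ constant. Write
\[
    \mathbb P_{n,\ell}\widehat M^{(-\ell)}-\psi_0
    =(\mathbb P_{n,\ell}-P_0)\widehat M^{(-\ell)}
    +P_0\widehat M^{(-\ell)}-\psi_0 .
\]
By \eqref{eq:exact-sequential-drift}, the second term equals the foldwise drift $\sum_k P_0\{(\widehat H_k^{(-\ell)}-H_{0k})(\widehat R_k^{(-\ell)}-R_{0k})\}$. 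Cauchy--Schwarz bounds its absolute value by the left side of \eqref{eq:sequential-consistency-product}.

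\textbf{Part (i).} The drift bound is $o_p(1)$ by hypothesis. For the centred empirical term, conditional independence of fold $I_\ell$ and the $O_p(1)$ sup-norm bounds give conditional variance $O_p(1)/n_\ell$. Hence $(\mathbb P_{n,\ell}-P_0)\widehat M^{(-\ell)}=O_p(n^{-1/2})$ by conditional Chebyshev, which transfers to an unconditional $O_p$ statement. Summing over the fixed number $L$ of folds with weights $n_\ell/n$ gives consistency. The stated sufficient case follows because, when one factor tends to zero in $L_2(P_0)$ and the other is bounded in sup norm (hence in $L_2$, since $H_{0k}$, $R_{0k}$ are bounded on finite support), each product is $o_p(1)$. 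Exact membership in $\mathcal U_\omega$ zeroes every product in \eqref{eq:exact-sequential-drift}.

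\textbf{Part (ii).} Note that $P_0\phi_{\mathrm{seq}}=0$, using Lemma~\ref{lem:sequential-transport-centering}: \eqref{eq:robust-transition-centering} gives $P_0(H_{0k}R_{0k})=0$, and $Q_{0,0}=\psi_0$. Then decompose
\[
    (\mathbb P_{n,\ell}-P_0)\widehat M^{(-\ell)}
    =(\mathbb P_{n,\ell}-P_0)\phi_{\mathrm{seq}}
    +(\mathbb P_{n,\ell}-P_0)\{\widehat M^{(-\ell)}-\psi_0-\phi_{\mathrm{seq}}\}.
\]
The constant $\psi_0$ is annihilated. The last term has conditional second moment at most $\|\widehat M^{(-\ell)}-\psi_0-\phi_{\mathrm{seq}}\|_{P_0,2}^2/n_\ell=o_p(1)/n$, so it is $o_p(n^{-1/2})$. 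The drift is $o_p(n^{-1/2})$ by the strengthened rate. Summing over folds with $\sum_\ell(n_\ell/n)\mathbb P_{n,\ell}=\mathbb P_n$ yields \eqref{eq:sequential-asymptotic-linearity}.

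\textbf{Variance estimator.} Expand
\[
    \widehat M^{(-\ell(i))}(X_i)-\widehat\psi_{\mathrm{seq}}
    =\phi_{\mathrm{seq}}(X_i)+e_i+(\psi_0-\widehat\psi_{\mathrm{seq}}),
    \qquad e_i:=\widehat M^{(-\ell(i))}(X_i)-\psi_0-\phi_{\mathrm{seq}}(X_i).
\]
The term $\mathbb P_n\phi_{\mathrm{seq}}^2\to P_0\phi_{\mathrm{seq}}^2$ by the law of large numbers. The average of $e_i^2$ is $o_p(1)$, since each fold's conditional expectation is the vanishing $L_2$ error and Markov's inequality applies. The last term is $o_p(1)$ by part (i). Cross terms are controlled by Cauchy--Schwarz.

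\textbf{Main obstacle.} The only delicate point is justifying the conditional-to-unconditional passage uniformly over folds: using random, training-measurable nuisances inside the population identity \eqref{eq:exact-sequential-drift}, which is deterministic in the candidates. I would handle this by applying the identity pathwise for each realization of the training data, and by invoking the standard lemma that a conditional $o_p$ bound implies an unconditional one.
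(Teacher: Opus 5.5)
Your proposal is correct and follows essentially the same route as the paper's proof. Both arguments split each fold into a centered empirical term and a population drift, identify the drift via Theorem~\ref{thm:exact-sequential-drift} conditionally on the training sample and bound it by Cauchy--Schwarz, control the empirical terms by conditional Chebyshev (against $\psi_0+\phi_{\mathrm{seq}}$ at the intersection), and obtain the variance estimator from a conditional second-moment argument together with the consistency in part~(i).
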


For $m=1$ the terminal function is the known response, $\bar Q_1=g(X_Y)$, so the union statement in (i) has two regimes: correctness of the weight ($\bar H_1=H_{01}$), which is consistency of inverse-probability weighting, and correctness of the residual $\bar R_1=g(X_Y)-\bar Q_0$, which is exactly the equality $\bar Q_0=\psi_0$ of the scalar initial value with the target.  No covariate-dependent fitted outcome regression enters at $m=1$; the only fitted outcome quantity is that scalar, and its population correctness is a statement about the candidate, distinct from the estimation and rate conditions of the theorem.  The exact two-transition example in \eqref{eq:two-transition-rational-sem}--\eqref{eq:two-transition-both-wrong-fixture} of Section~\ref{app:B-sequential} of the Supplementary Material has both sides of the drift identity equal, and nonzero, under a state-dependent joint misspecification, with the drift carried entirely by the second transition; under the same law the telescoping-constant construction verifies all four correctness regimes, and the three controls in \eqref{eq:two-transition-control-fixtures} separately activate the first drift summand, the second, and both.  That section states every offset and weight scale explicitly, so that each calculation is reproducible from the paper.

Part (ii) is intentionally an intersection result.  Under fixed misspecification of the unneeded side of a union component, consistency may remain true, but a root-$n$ limit requires separate rates and its influence function generally depends on the nuisance probability limits.  Union-model consistency must not be reported as automatically efficient or as having the all-correct influence function \eqref{eq:sequential-influence-function}.

\subsection{Projection and the efficiency comparison}

\begin{theorem}[Canonical projection of the sequential influence function]\label{thm:sequential-projection-comparison}
    Assume that the transition identity \eqref{eq:mr-sequential-kernel-interface} holds throughout a positive chart neighborhood of $P_0$, as it does under Assumption~\ref{ass:mr-sequential-transport} by Lemma~\ref{lem:source-isolated-active-block-slice}.  Then $\phi_{\mathrm{seq}}$ in \eqref{eq:sequential-influence-function} is an influence function for the fixed-response mean in the nested Markov model, and the order-free canonical gradient from Theorem~\ref{thm:intervention-chart-eif} satisfies
    \begin{equation}\label{eq:sequential-canonical-projection}
        \phi_{\mathrm{eff}} =\Pi_{\cT_{P_0}\cN(\cG)}\phi_{\mathrm{seq}} =\sum_{K\in\cD(\cG)} (b_K^{\mathrm{nat}})^\top\{G_K^{\mathrm{nat}}(P_0)\}^{-1} P_0(b_K^{\mathrm{nat}}\phi_{\mathrm{seq}}).
    \end{equation}

    Consequently,
    \begin{equation}\label{eq:sequential-pythagoras}
        P_0\phi_{\mathrm{seq}}^2 =P_0\phi_{\mathrm{eff}}^2 +\|\phi_{\mathrm{seq}}-\phi_{\mathrm{eff}}\|_{P_0,2}^2.
    \end{equation}

    Equality with the efficiency bound holds if and only if $\phi_{\mathrm{seq}}\in\cT_{P_0}\cN(\cG)$.
\end{theorem}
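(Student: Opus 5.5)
The plan is to prove first that $\phi_{\mathrm{seq}}$ is a gradient of $\Psi_{\mathsf I,g}$ along every model path. Once that holds, \eqref{eq:sequential-canonical-projection} follows from the projection characterization \eqref{eq:eif-projection} together with the explicit district Gram formula of Proposition~\ref{prop:gram-projection} (equivalently Corollary~\ref{cor:explicit-eif}).

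\textbf{Centering.} Start with $P_0\phi_{\mathrm{seq}}=0$. Lemma~\ref{lem:sequential-transport-centering} gives $P_0(H_{0k}R_{0k})=0$ for each $k$, since $H_{0k}$ is itself an admissible candidate weight. Also $Q_{0,0}=P_0^{\mathsf I}g=\psi_0$, which follows by iterating the recursion \eqref{eq:sequential-outcome-recursion}. Hence $\phi_{\mathrm{seq}}\in L_2^0(P_0)$.

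\textbf{Pathwise derivative.} Fix a regular model path $P_t$. By Theorem~\ref{thm:forward-chart} it has a chart velocity $u$ and score $s=J_{P_0}u$, so it suffices to treat chart lines $\theta_0+tu$ inside $\Omega_{\cG}$. Along such a line, Lemma~\ref{lem:source-isolated-active-block-slice} holds at every $\theta$. This gives the global sequential representation
\[
  \Psi_{\mathsf I,g}(P_\theta)=\sum_{x_{V^*}}g\prod_{k=1}^m\pr_\theta(X_{D_k}=x_{D_k}\mid X_{\operatorname{Past}(D_k)}=x^\beta_{\operatorname{Past}(D_k)}),
\]
where each history is evaluated at an intervention-compatible configuration. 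Differentiating the product in $t$ gives a sum over $k$. The $k$th term is $P_t^{\mathsf I}$-expected $Q_{k,0}$ times the derivative of the $k$th observational conditional kernel at compatible histories. The standard conditional-score calculation writes this as
\[
  E_{P_0^{\mathsf I}}\bigl[\{Q_{k,0}-Q_{k-1,0}\}\,s_k(X_{D_k}\mid X_{\operatorname{Past}(D_k)})\bigr],
\]
where $s_k$ is the conditional score of $X_{D_k}$ given its full history. The transport identity \eqref{eq:sequential-prefix-transport} then rewrites the $P_0^{\mathsf I}$-expectation as $P_0\{H_{0k}R_{0k}s_k\}$. Because $H_{0k}R_{0k}$ is centered given the full history by Lemma~\ref{lem:sequential-transport-centering}, this equals $P_0\{H_{0k}R_{0k}s\}$. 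Only the $D_k$-conditional part of $s$ survives, because the remaining score components are functions of $\mathcal F_k^-$, which integrate against the centered residual to zero, or are later conditional scores, which are centered given a $\sigma$-field containing $B_k$. The constant $Q_{0,0}-\psi_0$ contributes nothing. Summing over $k$ gives $\tfrac{d}{dt}\Psi=P_0(\phi_{\mathrm{seq}}s)$, so $\phi_{\mathrm{seq}}$ is an influence function.

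\textbf{Projection and variance.} By \eqref{eq:eif-projection}, the canonical gradient is $\Pi_{\cT_{P_0}\cN(\cG)}\phi_{\mathrm{seq}}$. This agrees with \eqref{eq:direct-intervention-eif}, because both are the unique tangent-space Riesz representer of the same derivative. Theorem~\ref{thm:district-orthogonality} and Proposition~\ref{prop:gram-projection} then yield the displayed districtwise sum. The identity \eqref{eq:sequential-pythagoras} is Pythagoras, since $\phi_{\mathrm{seq}}-\phi_{\mathrm{eff}}\perp\cT_{P_0}\cN(\cG)$. Equality holds if and only if that residual vanishes, that is, if and only if $\phi_{\mathrm{seq}}$ lies in the tangent space.

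\textbf{Main obstacle.} The hardest step is the derivative computation. One must make sure that differentiating the observational conditional kernels at compatible histories, rather than the configured chart factors, is legitimate along the path. This requires \eqref{eq:mr-sequential-kernel-interface} to hold identically in a neighborhood of $P_0$, which is exactly the stated hypothesis. One must also check the bookkeeping showing that the prefix ratio in $H_{0k}$ exactly converts $P_0$-weighting to $P_0^{\mathsf I}$-weighting on the $B_{k-1}$ margin.
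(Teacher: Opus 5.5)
Your proof is correct, but it reaches the influence-function claim by a different route from the paper.

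\textbf{The paper's route.} The paper never differentiates the sequential g-formula. It takes the true nuisances $H_{k,P_t}$ and $Q_{k,P_t}$ of a chart path as candidates in the exact drift identity \eqref{eq:exact-sequential-drift} at $P_0$. Smoothness of the propensities, conditional expectations and prefix Radon--Nikodym ratios in chart coordinates makes both nuisance errors $O(t)$, which gives $\Psi(P_t)-\Psi(P_0)+P_0\phi_{\mathrm{seq},t}=O(t^2)$. It then differentiates this together with the centering identity $P_t\phi_{\mathrm{seq},t}=0$, and obtains $\frac{\mathrm d}{\mathrm dt}\Psi(P_t)|_{t=0}=P_0(\phi_{\mathrm{seq}}s)$. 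The neighborhood form of \eqref{eq:mr-sequential-kernel-interface} enters only through that centering at $P_t$, via Lemma~\ref{lem:sequential-transport-centering}.

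\textbf{Your route.} You instead differentiate the product of observational block kernels at compatible histories. You convert each term with the transport identity \eqref{eq:sequential-prefix-transport}. You then remove the other block scores of $s$ in two ways: earlier ones by centering of $H_{0k}R_{0k}$ given the full history, and later ones because their conditional scores are centered given a past containing $\operatorname{Past}(D_k)\cup D_k$.

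\textbf{What each buys.} The paper's argument is shorter and reuses Theorem~\ref{thm:exact-sequential-drift} as a von Mises expansion, so no score bookkeeping is needed. Yours needs no differentiability of the weights $H_{k,P_t}$, only of the block kernels. It also shows explicitly that $\phi_{\mathrm{seq}}$ pairs with each full observational block conditional score, which is the structural reason it need not lie in the nested tangent space.

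\textbf{The one step to state explicitly.} Your route depends on the compatible-history step. Because \eqref{eq:mr-sequential-kernel-interface} holds identically in $\theta$ near $\theta_0$, the $D_k$ conditional score at any compatible history is a function of $X_{B_k}$ alone, even when $\operatorname{Past}(D_k)$ contains sources outside $A_k$. That fact is what lets you apply \eqref{eq:sequential-prefix-transport} to $R_{0k}s_k$. It also lets you replace the configured score by the observed-history score on $\{X_{A_k}=\beta_k\}$. You flag this as the main obstacle; it should be stated as a step of the proof.

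The projection, Gram-formula and Pythagoras steps coincide with the paper's.
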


\begin{proposition}[A strict finite-state variance gap]\label{prop:sequential-strict-variance-gap}
    For the binary graph of Section~\ref{sec:counterexample},
    \[
        W\to Z\leftarrow B, \qquad Z\to Y, \qquad A\leftrightarrow B, \quad A\leftrightarrow Z, \quad B\leftrightarrow Y,
    \]
    the fixed intervention $\operatorname{do}(W=0)$ satisfies the restricted sequential conditions.  At the strictly positive rational law specified by \eqref{eq:verma-rational-sem} in Appendix~\ref{app:prototype-ledger} of the Supplementary Material, with $g(Y)=Y$,
    the target and the sequential and efficient influence-function variances are $\psi_0=0.54725$, $P_0\phi_{\mathrm{seq}}^2\approx0.49553$ and $P_0\phi_{\mathrm{eff}}^2\approx0.28803$.  The squared projection residual is $\|\phi_{\mathrm{seq}}-\phi_{\mathrm{eff}}\|_{P_0,2}^2\approx0.20750>0$; the exact rational constants are established in the proof of this proposition, in Section~\ref{app:proofs-sequential} of the Supplementary Material.
    Here
    \[
        \phi_{\mathrm{seq}} =\frac{\mathbf 1(W=0)}{\pr_{P_0}(W=0)}(Y-\psi_0)
    \]
    satisfies all $24$ recursive-head coordinate Riesz equations but lies outside the nested-model tangent space.  The canonical gradient is its strict metric projection.
\end{proposition}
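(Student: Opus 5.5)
The plan is to reduce the statement to the case $m=1$ of Section~\ref{sec:sequential-union}, where $\phi_{\mathrm{seq}}$ takes the displayed closed form. The variance gap then becomes a finite projection computation, and I will do that computation in the seven-dimensional orthogonal complement of the tangent space rather than in the $24$-dimensional chart.

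\emph{Scope and form of $\phi_{\mathrm{seq}}$.} First I check Assumption~\ref{ass:mr-sequential-transport} for $\operatorname{do}(W=0)$.
\begin{enumerate}[label=(\alph*)]
\item The vertex $W$ meets no bidirected edge, so $\dis_{\cG}(W)=\{W\}$ and source isolation \eqref{eq:source-isolation} holds.
\item The active graph $\mathcal H=\cG_{\{A,B,Z,Y\}}$ has the single district $\Delta$, which is intrinsic (Section~\ref{app:B-primary}). The minimal coverage set is $A_\Delta=\pa_{\cG}(\Delta)\setminus V^*=\{W\}$.
\item The block graph \eqref{eq:mr-block-partition} is $\{W\}\to\Delta$, which is acyclic. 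Proposition~\ref{prop:mr-block-order-screen} then supplies an order placing $W$ first.
\end{enumerate}
Hence $m=1$, $\operatorname{Past}(\Delta)=\{W\}$, $\Pi_{0}\equiv1$ and $H_{01}=\mathbf 1(W=0)/\pr_{P_0}(W=0)$. Also $Q_{1,0}=Y$, $Q_{0,0}=\psi_0$ and $R_{01}=Y-\psi_0$, so \eqref{eq:sequential-influence-function} reduces to the displayed $\phi_{\mathrm{seq}}$.

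\emph{Target and Riesz equations.} Lemma~\ref{lem:source-isolated-active-block-slice} with $k=1$ gives $\psi_0=\E_{P_0}(Y\mid W=0)$. I evaluate this exactly from \eqref{eq:verma-rational-sem} and expect the value $0.54725$. Theorem~\ref{thm:sequential-projection-comparison} applies because the transition identity is global on the chart. It yields two facts:
\begin{enumerate}[label=(\alph*)]
\item $\phi_{\mathrm{seq}}$ is an influence function, which is exactly the statement that $P_0\{(J_{P_0}e)\phi_{\mathrm{seq}}\}$ equals the coordinate derivative of the target for each of the $24$ basis directions $e$ (the Riesz equations);
\item $\phi_{\mathrm{eff}}=\Pi_{\cT}\phi_{\mathrm{seq}}$, and the Pythagorean identity \eqref{eq:sequential-pythagoras} holds.
\end{enumerate}
So the only remaining quantities are $P_0\phi_{\mathrm{seq}}^2$ and the residual $\|\phi_{\mathrm{seq}}-\Pi_{\cT}\phi_{\mathrm{seq}}\|^2$. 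The first is elementary:
\[
P_0\phi_{\mathrm{seq}}^2=\frac{\E_{P_0}\{(Y-\psi_0)^2\mid W=0\}}{\pr_{P_0}(W=0)}.
\]
I expect this to equal $0.495\ldots$.

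\emph{Residual via the normal space.} Inverting the $23\times23$ Gram matrix of the $\Delta$ block directly is the main obstacle. I would avoid it using Appendix~\ref{app:prototype-ledger}, which shows that $\cN_+(\cG)$ is the positive ordinary Markov model defined by $W\ind(A,B)$ and $Y\ind W\mid(B,Z)$. Consequently $\cT_{P_0}\cN(\cG)$ is the orthogonal complement in $L_2^0(P_0)$ of the seven constraint normals, matching the count $31-24=7$ in Section~\ref{sec:counterexample}. Concretely, $\cT^\perp$ is spanned by:
\begin{enumerate}[label=(\alph*)]
\item the three functions $(W-\pi)\{f(A,B)-\E f\}$, with $f$ ranging over the nonconstant indicators of $(A,B)$;
\item the four functions
\[
\mathbf 1\{B=b,Z=z\}\,\{W-\E(W\mid b,z)\}\,\{Y-\E(Y\mid b,z)\}\quad\text{divided by }p(b,z).
\]
\end{enumerate}
Each of these is orthogonal to every score of the factorization $p(w,a,b)\,p(z\mid w,a,b)\,p(y\mid a,b,z,w)$ restricted to the constraints. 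That orthogonality is the usual conditional-independence computation, and I would verify it directly.

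With this basis $N$, the residual is
\[
\phi_{\mathrm{seq}}-\phi_{\mathrm{eff}}=N^\top\{P_0(NN^\top)\}^{-1}P_0(N\phi_{\mathrm{seq}}),
\]
which needs only a $7\times7$ rational Gram matrix. I expect the squared norm to be about $0.2075$, and then $P_0\phi_{\mathrm{eff}}^2\approx0.28803$ follows from \eqref{eq:sequential-pythagoras}.

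Strictness needs only one nonzero pairing with a normal direction. The natural first candidate is the $(W,B)$ cross term, as in Example~\ref{prop:raw-failure}: $\phi_{\mathrm{seq}}$ depends on $W$ through the indicator and on $B$ through $Y$, and its inner product with $(2W-1)(2B-1)$ should not vanish. As an independent cross-check I would recompute the residual through the chart Gram projection of Proposition~\ref{prop:gram-projection}, in exact arithmetic.
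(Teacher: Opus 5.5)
Your proposal is correct, and it takes a genuinely different route from the paper.

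\textbf{How the two routes differ.} The paper never works in the normal space. It reads an explicit three-term candidate for $\phi_{\mathrm{eff}}$ off the factorization $p(w)p(b)p(z\mid b,w)p(y\mid b,z)$ of the $(W,B,Z,Y)$ margin. It then proves that this candidate represents the derivative along every model path, and proves tangent membership by explicit factor tilts; that step uses the law-specific fact $A\ind(W,B,Z,Y)$ at $P_0$. The constants come out as three orthogonal finite sums. You instead take the representer property and $\phi_{\mathrm{eff}}=\Pi_{\cT}\phi_{\mathrm{seq}}$ from Theorem~\ref{thm:sequential-projection-comparison}, and compute the residual directly as the projection onto the seven constraint gradients.

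\textbf{What each route buys.} Your route needs no guess for $\phi_{\mathrm{eff}}$. It also gives strictness in one line. With $h(b)=\E_{P_0}(Y\mid B=b,W=0)$, you have $(2W-1)(2B-1)\perp\cT_{P_0}\cN(\cG)$ while $\langle\phi_{\mathrm{seq}},(2W-1)(2B-1)\rangle_{P_0}=-\{h(1)-h(0)\}/2=-0.33075\ne0$. The paper's route buys a closed-form canonical gradient that can be read factor by factor.

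\textbf{Two steps to make explicit.} First, your spanning claim needs the seven normals to be linearly independent. This is supplied by the codimension-seven argument of Section~\ref{sec:counterexample}. Second, show orthogonality of the four conditional normals by differentiating $\mathrm{Cov}_{P_t}(W,Y\mid B=b,Z=z)\equiv0$ along a model path; the derivative at zero is exactly your normal paired with the path score. Checking it factor by factor does not work, because $Y\ind W\mid(B,Z)$ constrains the $(W,B,Z,Y)$ margin rather than any single factor of your saturated factorization $p(w,a,b)p(z\mid w,a,b)p(y\mid a,b,z,w)$.

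\textbf{The $7\times7$ solve.} It is lighter than you expect. At $P_0$ the Gram matrix is diagonal in the basis consisting of $(W-\tfrac12)(A-\tfrac12)$, $(W-\tfrac12)(B-\tfrac12)$, $(W-\tfrac12)(A-\tfrac12)(B-\tfrac12)$ and your four conditional normals. Write $\mu(b,z)=\E_{P_0}(Y\mid B=b,Z=z)$. The normal component of $\phi_{\mathrm{seq}}$ is then $-2\{W-\E_{P_0}(W\mid B,Z)\}\{Y-\mu(B,Z)\}-\tfrac12\{h(1)-h(0)\}(2W-1)(2B-1)$, and its two pieces are orthogonal. Their squared norms are $4\sum_{b,z}\pr_{P_0}(B=b,Z=z)\operatorname{Var}_{P_0}(W\mid b,z)\operatorname{Var}_{P_0}(Y\mid b,z)=0.0981078125$ and $\{h(1)-h(0)\}^2/4=0.1093955625$. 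They sum to the paper's $1660027/8000000$.
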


In this one-transition example $\phi_{\mathrm{seq}}$ is a pure inverse-probability-weighted residual: since $B_0=\varnothing$, it contains no covariate-dependent outcome regression, although the estimator remains augmented through $\widehat Q_0$ and keeps the two regimes of Theorem~\ref{thm:sequential-multiple-robustness}(i), which for $m=1$ are weight correctness and the scalar condition $\bar Q_0=\psi_0$ noted after that theorem.  Its canonical projection uses the recursive-head restrictions and removes the orthogonal residual recorded in Proposition~\ref{prop:sequential-strict-variance-gap}; by \eqref{eq:sequential-pythagoras}, equal-level asymptotic Wald intervals are wider for the sequential estimator by the factor $(P_0\phi_{\mathrm{seq}}^2/P_0\phi_{\mathrm{eff}}^2)^{1/2}=1.3116\ldots$, about $31\%$ (Section~\ref{sec:finite-sample}).  Two sources of gain are distinct.  Here the nested and ordinary Markov models coincide (Section~\ref{sec:counterexample}), so the gain comes from ordinary conditional independences in a graph that is not mb-shielded; in the one-edge variant of Remark~\ref{rem:verma-variant}, the additional Verma restriction alone reduces the efficiency bound at the law $P^{+}$ by approximately $22.45\%$, a reduction of variance rather than of interval width (width factor approximately $1.1356$).

The proposition does not say that every sequential influence function is inefficient, that no efficient estimator can be robust on a special submodel, or that efficiency and robustness can never coexist: the comparison is between two specific constructions, and no impossibility theorem is asserted.  When the observed-data model is nonparametric the influence function is unique, so an estimator that is regular with respect to the full nonparametric observed-data model and asymptotically linear at the all-correct law necessarily has the canonical gradient, and no projection gap can arise; regularity within one nuisance regime does not suffice for this conclusion.  On the graph of Section~\ref{sec:counterexample} the operative model $\cN_+(\cG)$ is a strict submodel of the observed-data law space, influence functions are correspondingly nonunique, and Theorem~\ref{thm:sequential-projection-comparison} gives only the inequality $P_0\phi_{\mathrm{seq}}^2\geq P_0\phi_{\mathrm{eff}}^2$ together with its equality condition; strictness is established at the law of Proposition~\ref{prop:sequential-strict-variance-gap} and is not asserted for every graph in the scope of this paper.  The two-transition calculation in Section~\ref{app:B-sequential} of the Supplementary Material separately checks all four union regimes and a second strict projection gap at a law generated through a latent SEM rather than from recursive-head coordinates.

\begin{table}[tbp]
    \centering
    \caption{Proved comparison between the order-free canonical construction and the restricted sequential alternative.}\label{tab:efficient-sequential-comparison}
    \small
    \setlength{\tabcolsep}{4pt}
    \begin{tabular}{>{\raggedright\arraybackslash}p{0.20\linewidth}
        >{\raggedright\arraybackslash}p{0.36\linewidth}
        >{\raggedright\arraybackslash}p{0.36\linewidth}}
    \toprule
        Property & Canonical one-step/TMLE & Sequential union estimator \\
    \midrule
        Graph and intervention scope
        & Every ADMG; fixed interventions of the classes (I1)--(I3) and admissible independent-draw mixtures
        & Source-isolated system-wide fixed-node subclass with the acyclic literal
        block order, source precedence, and source-tail coverage in Assumption
        \ref{ass:mr-sequential-transport} \\
        Population drift
        & Exact chart-integral remainder for one-step and for TMLE on its root
        event; source-only cancellation for the coherent one-step
        (Proposition \ref{prop:source-block-drift-cancellation});
        the flow can leave the slice (Example \ref{ex:tmle-source-slice-failure})
        & Transitionwise product in \eqref{eq:exact-sequential-drift} \\
        Robustness guarantee
        & Local efficiency; no general union model
        & Consistency on up to $2^m$ transitionwise correctness regimes \\
        All-correct influence function
        & Canonical gradient $\phi_{\mathrm{eff}}$
        & Ambient representer $\phi_{\mathrm{seq}}$ \\
        Variance at the intersection
        & Nested-model efficiency bound
        & Efficiency bound plus the squared projection residual in
        \eqref{eq:sequential-pythagoras} \\
        Substitution
        & Exact-flow TMLE only
        & Not automatic for the additive sequential estimator \\
    \bottomrule
    \end{tabular}
\end{table}

%%  NUMBERING GUARD.  main.tex has \newtheorem{remark}[theorem]{Remark}, so a
%%  remark placed anywhere before Theorem 7.7 would take the number 7.7 and
%%  push Theorem 7.7 -> 7.8 and Proposition 7.8 -> 7.9; both are hard-coded in
%%  the seminar deck and in proof_T7.7_projection_gap.html.  The remark below
%%  therefore stays at the end of the file, where it is Remark 7.9 and nothing
%%  moves.

\begin{remark}[Reading the count $2^m$]\label{rem:robustness-count-comparison}
    Counts reported elsewhere index different objects and do not order robustness.  For a generalized mediation functional with $K_{\mathrm{med}}$ causally ordered mediators, \cite{zhou2022semiparametric} obtains estimators that are $K_{\mathrm{med}}+2$ robust in the sense that consistency follows if any one of $K_{\mathrm{med}}+2$ \emph{specified sets} of nuisance functions is correctly specified and consistently estimated, and cautions (Section~1 there, writing $K$ for its mediator count) that this convention ``does not imply that a `$K+2$-robust' estimator is necessarily more robust than, for example, a `$K+1$-robust' estimator'', since the estimators may correspond to different estimands that require modeling different parts of the likelihood.  That result, like Theorem~\ref{thm:sequential-multiple-robustness}, concerns a single component functional rather than a contrast.  We make no comparison of robustness strength between the two constructions and no impossibility claim for either; the count that the present representation reports for a mediation graph is recorded in Section~\ref{app:B-sequential} of the Supplementary Material.
\end{remark}

\begin{remark}[A one-edge Verma variant]\label{rem:verma-variant}
    The nested model of the graph in Proposition~\ref{prop:sequential-strict-variance-gap} coincides with its ordinary Markov model (Section~\ref{sec:counterexample} and Appendix~\ref{app:prototype-ledger} of the Supplementary Material), so the gap in Proposition~\ref{prop:sequential-strict-variance-gap} is produced by ordinary conditional independences.  Adding the arrow $W\to B$ gives a graph $\cG^{+}$ with the same districts and the same root singleton source, in which the reachable $\{B,Y\}$ kernel is no longer constrained to be invariant in $w$, while its $Y$-margin, obtained by summing over $B$, is constrained to be invariant: $\sum_b p(b\mid w)p(y\mid b,z,w)$ does not depend on $w$.  This is a Verma restriction; the ordinary Markov model of $\cG^{+}$ is $\{W\ind A\}$, and the chart dimensions are $28<30<31$.  At the strictly positive law $P^{+}$ of \eqref{eq:verma-variant-rational-sem} in Appendix~\ref{app:prototype-ledger}, $\operatorname{do}(W=0)$ satisfies Assumption~\ref{ass:mr-sequential-transport} with $m=1$, $\psi_0=0.542$ and $P^{+}\phi_{\mathrm{seq}}^2=0.496472$.  At this law, imposing the ordinary Markov restriction leaves the efficiency bound unchanged, whereas the additional Verma restriction reduces it by approximately $22.45\%$, from $0.496472$ to approximately $0.38500$ (exact values in \eqref{eq:verma-variant-values}); the corresponding interval widths differ by the factor approximately $1.1356$.
\end{remark}

\section{Finite-sample illustration of the variance comparison}\label{sec:finite-sample}

We illustrate the strict comparison in Proposition~\ref{prop:sequential-strict-variance-gap} at its rational law.  The target is $\psi_0=\E_{P_0}\{Y\mid\operatorname{do}(W=0)\}=0.54725$; the samples are i.i.d.\ draws of size $n=1000$, $2500$, and $5000$, with $500$ Monte Carlo replicates at each size and fixed seed $20260722$, and every estimator is evaluated on every replicate.  Three estimators are compared: the same-sample one-step estimator $F(\widetilde\theta)+\mathbb P_n\phi_{\widetilde\theta}$ of Corollary~\ref{cor:same-sample-one-step}, the practical iterated chart-targeted estimator of Remark~\ref{rem:straight-line-numerical-targeting}, and the sequential estimator in its same-sample pooled-ratio form, whose influence function at $P_0$ is the all-correct sequential influence function $\phi_{\mathrm{seq}}$ of Proposition~\ref{prop:sequential-strict-variance-gap}; Section~\ref{app:B-finite-sample} of the Supplementary Material records the pooled-ratio identity and its distinction from the cross-fitted estimator \eqref{eq:sequential-cross-fitted-estimator}, the standard errors, the initialization, targeting and fallback settings, and the complete results.
The influence-function variance constants are exact rationals, established in the proof of Proposition~\ref{prop:sequential-strict-variance-gap} in Section~\ref{app:proofs-sequential} of the Supplementary Material.  To five decimals, and so that $v/n$ is the leading asymptotic variance of the corresponding estimator, $v_{\mathrm{eff}}\approx0.28803$ and $v_{\mathrm{seq}}\approx0.49553$, with $(v_{\mathrm{seq}}/v_{\mathrm{eff}})^{1/2}=1.311647\ldots$.
Thus the exact limit theory predicts equal-level asymptotic Wald intervals about $31\%$ wider for the sequential estimator.

The observed sequential interval widths (Table~\ref{tab:finite-sample-variance-comparison} of the Supplementary Material) exceed those of the one-step and targeted estimators by approximately $31.2\%$ to $31.4\%$ across the three sample sizes, computed from the unrounded replicate summaries recorded in the supplement, in agreement with the exact ratio above; the coverage values are descriptive rather than acceptance criteria.

This experiment illustrates the efficiency gap under correct specification; it does not test nuisance misspecification or establish the sequential union model, whose two-transition bookkeeping and four correctness regimes are checked exactly in Section~\ref{app:B-sequential} of the Supplementary Material.  Code, replicate-level results, and all diagnostics are included in the computational supplement.

\needspace{6\baselineskip}%% keep the heading, subheading and first lines together
\section{Discussion}\label{sec:discussion}

\subsection*{What the theory establishes}

Three notions concerning a direction $f\in L_2(P)$ are kept apart throughout: observational centering, membership in the tangent space $\cT_P\cN(\cG)$, and orthogonal projection onto it.  The first does not imply the second: the observationally centered direction of Example~\ref{prop:raw-failure}, in a graph that is not mb-shielded, is not a model score, and its projection residual is a diagnostic, not an efficiency correction.  Three conclusions follow.  The forward recursive-head chart settles which directions are model scores (Theorem~\ref{thm:forward-chart}), and because native-district score spaces are orthogonal at every positive law (Theorem~\ref{thm:district-orthogonality}) the efficient projection is computed district by district, with no district order and no acyclicity condition on the district quotient.  For the interventions of the classes (I1)--(I3) the canonical gradient is a district-by-district Riesz solution (Theorems~\ref{thm:normalized-intervention-bridge} and~\ref{thm:intervention-chart-eif}), extended to independent-draw mixtures by a product rule; the coherent one-step and chart-flow targeted estimators are first-order efficient under the local conditions of Theorems~\ref{thm:coherent-one-step} and~\ref{thm:chart-tmle}, and neither carries a union-model guarantee for the full class (I1)--(I3) (Table~\ref{tab:one-step-tmle-contrast}).  The double robustness of \cite{bhattacharya2022semiparametric} comes from the primal--dual structure under primal fixability, not from mb-shieldedness.  The restricted sequential construction is transitionwise multiply robust only under Assumption~\ref{ass:mr-sequential-transport} (Theorem~\ref{thm:sequential-multiple-robustness})---intervention sources that are singleton observational districts, a literal block order in which each relevant source precedes its district, and source-tail coverage, whose separate necessity is not asserted---and its all-correct influence function has variance at least the efficiency bound, strictly larger at the law of Proposition~\ref{prop:sequential-strict-variance-gap} (Theorem~\ref{thm:sequential-projection-comparison}).  No general trade-off between efficiency and robustness is asserted (Remark~\ref{rem:robustness-count-comparison}).

\subsection*{Limits and extensions}

The finite-state restriction is what makes the chart a finite-dimensional smooth manifold, the paths explicit, and the projection a matrix computation; it is also what makes the full-chart likelihood benchmark available.  Boundary laws, vanishing fixing divisors, and nonregular targets lie outside the interior theory.  Three questions are left open.  First, extending this route beyond fixed finite support would need a smooth parameterization with an explicit tangent space, a computable projection onto it, and a replacement for strict positivity; whether other routes need the same ingredients is not asserted, and the benchmark of Section~\ref{sec:targeted}, under which a regular interior maximum likelihood plug-in is efficient without targeting, would no longer be available.  Second, the efficient projection separates across native districts, but within a district the full Gram matrix across its intrinsic blocks is in general required (Section~\ref{sec:geometry}), so scalable computation of the projection at larger finite state spaces is open; the chart dimension alone is not an established complexity bound.  Third, the order-free estimators lack a constructive nuisance decomposition supporting robustness: one would need a factorized drift compatible with recursive-head coordinates, which Section~\ref{sec:targeted} does not characterize and which Section~\ref{sec:sequential-union} obtains only under Assumption~\ref{ass:mr-sequential-transport}.

The intervention classes are a matter of identification scope rather than of support.  Natural-value source retention, partially mixed edge boundaries (Example~\ref{ex:split-boundary-nonidentification}), and general observed-ADMG path-to-edge reductions are outside the theorems already on fixed finite support, and are not unfinished cases of them.

%%%%%%%%%%%%%%%%%%%%%%%%%%%%%%%%%%%%%%%%%%%%%%
%% Single Appendix:                         %%
%%%%%%%%%%%%%%%%%%%%%%%%%%%%%%%%%%%%%%%%%%%%%%
%\begin{appendix}
%\section*{???}%% if no title is needed, leave empty \section*{}.
%\end{appendix}
%%%%%%%%%%%%%%%%%%%%%%%%%%%%%%%%%%%%%%%%%%%%%%
%% Multiple Appendixes:                     %%
%%%%%%%%%%%%%%%%%%%%%%%%%%%%%%%%%%%%%%%%%%%%%%
%\begin{appendix}
%\section{???}
%
%\section{???}
%
%\end{appendix}

%%%%%%%%%%%%%%%%%%%%%%%%%%%%%%%%%%%%%%%%%%%%%%
%% Support information, if any,             %%
%% should be provided in the                %%
%% Acknowledgements section.                %%
%%%%%%%%%%%%%%%%%%%%%%%%%%%%%%%%%%%%%%%%%%%%%%
%\begin{acks}[Acknowledgments]
% The authors would like to thank ...
%\end{acks}
%%%%%%%%%%%%%%%%%%%%%%%%%%%%%%%%%%%%%%%%%%%%%%
%% Funding information, if any,             %%
%% should be provided in the                %%
%% funding section.                         %%
%%%%%%%%%%%%%%%%%%%%%%%%%%%%%%%%%%%%%%%%%%%%%%
%\begin{funding}
% The first author was supported by ...
%
% The second author was supported in part by ...
%\end{funding}

%%%%%%%%%%%%%%%%%%%%%%%%%%%%%%%%%%%%%%%%%%%%%%
%% Supplementary Material, including data   %%
%% sets and code, should be provided in     %%
%% {supplement} environment with title      %%
%% and short description. It cannot be      %%
%% available exclusively as external link.  %%
%% All Supplementary Material must be       %%
%% available to the reader on Project       %%
%% Euclid with the published article.       %%
%%%%%%%%%%%%%%%%%%%%%%%%%%%%%%%%%%%%%%%%%%%%%%

\appendix
\section{Supporting theory and proofs}

\subsection{Summary of established results and scope}\label{app:results-scope}

This subsection maps the results to their operative scope; it adds no assumption.  Proofs are in Sections~\ref{app:finite-state-parameterization-proof} and~\ref{app:proofs}, the sourced portions being clause~(i) of Proposition~\ref{prop:er-equivalence-rrs-compatibility} and the import of Proposition~\ref{prop:causal-identification-interface}; Appendix~\ref{app:prototype-ledger} records the exact worked examples and computational checks.

\begin{table}[H]
    \centering
    \setlength{\aboverulesep}{0pt}\setlength{\belowrulesep}{0pt}\renewcommand{\arraystretch}{1.15}
    \caption{Established results and their operative scope.}\label{tab:results-scope-summary}
    \begin{tabular}{>{\raggedright\arraybackslash}m{0.23\linewidth}
        |>{\raggedright\arraybackslash}m{0.47\linewidth}
        |>{\raggedright\arraybackslash}m{0.20\linewidth}}
    \toprule
        Result & Scope & Location \\
    \midrule
        Forward chart and tangent characterization
        & Every strictly positive nested Markov law on a fixed finite state space
        and arbitrary ADMG
        & Sections~\ref{sec:setup}--\ref{sec:geometry} \\
        District orthogonality and Gram projection
        & The same class; no mb-shieldedness, district order, or acyclic district
        quotient
        & Section~\ref{sec:geometry} \\
        Section~\ref{sec:counterexample} diagnostics
        & Example~\ref{prop:raw-failure}: law-specific at the law of Section~\ref{app:B-primary}; the codimension-seven gap \eqref{eq:verma-district-codimension-seven}: graph-structural for every $P\in\cN_+(\cG)$; the cyclic and three-district controls: exact finite-law diagnostics, not arbitrary-graph proofs
        & Section~\ref{sec:counterexample}; synopses in Sections~\ref{sec:cyclic-control}--\ref{sec:three-district-raw-control}, full controls and certificates in Sections~\ref{app:B-primary}--\ref{app:B-three} \\
        Fixed-intervention chart bridge and EIF
        & The node, complete-source edge, and compatible path-specific classes in
        Proposition~\ref{prop:causal-identification-interface}
        & Section~\ref{sec:intervention-bridge} \\
        Independent-draw mixtures
        & Admissible finite independent mixtures of the configured component
        laws, their means causally identified componentwise, including marginal
        source standardization
        & Section~\ref{sec:intervention-bridge} \\
        One-step inference and exact-flow TMLE
        & Fixed support, compact positive chart neighborhoods, and the stated chart
        rate and nondegeneracy conditions
        & Section~\ref{sec:targeted} \\
        Source-block drift cancellation
        & Marginal standardization at a parentless singleton source, with every
        complementary chart block correct
        & Section~\ref{sec:targeted} \\
        Sequential union-model result
        & Source-isolated, MR-compatible, system-wide fixed-node interventions; estimation additionally under fixed finite support, fixed data-independent folds, uniform sup-norm boundedness, and the stated product-rate and intersection conditions
        & Section~\ref{sec:sequential-union} \\
    \bottomrule
    \end{tabular}
\end{table}

The geometry in the first two rows is graph-wide within the finite-state model.  The causal rows are conditional on identification of the target at the true causal law; their extension to nearby laws is statistical, as Remark~\ref{rem:statistical-not-causal-extension} states.  Section~\ref{sec:finite-sample} is an illustration at one law, not a scope claim.  The boundaries of the theorems are fixed at their defining results: partially mixed edge boundaries by Example~\ref{ex:split-boundary-nonidentification}, the independent redraw as distinct from natural-value retention by Remark~\ref{rem:independent-not-natural-source}, and the sequential subclass by Assumption~\ref{ass:mr-sequential-transport} with the exact failures preceding it; Section~\ref{sec:discussion} discusses extensions beyond fixed finite support and beyond that subclass.
%% A.1 results and scope
%%%%%%%%%%%%%%%%%%%%%%%%%%%%%%%%%%%%%%%%%%%%%%%%%%%%%%%%%%%%%%%%%%%%%%%%%%%%%%
%%  SELECTED NOTATION
%%
%%  PLACEMENT.  Second subsection (A.2) of Appendix A: \input from main.tex
%%  after A_results_scope (A.1) and before A_finite_state_parameterization_proof
%%  (A.3).  The heading is a numbered \subsection under the single Appendix-A
%%  \section in main.tex; the appendix theorem counter therefore starts at A.1
%%  in subsection A.3.
%%
%%  PREAMBLE REQUIREMENT.  \RequirePackage{longtable} in main.tex.  array and
%%  booktabs are already loaded.  No new macro is defined.
%%
%%  This is a selected index, not a complete symbol audit.  It deliberately
%%  omits the estimator-side notation of Sections 6-8 (\widehat\theta_{-v},
%%  \mathbb P_{n,v}, \widehat\Psi_{\mathrm{cf}}, the fold indices) and other
%%  notation introduced locally and used only where it is introduced.
%%
%%%%%%%%%%%%%%%%%%%%%%%%%%%%%%%%%%%%%%%%%%%%%%%%%%%%%%%%%%%%%%%%%%%%%%%%%%%%%%

\subsection{Selected notation}\label{app:notation}

The index below collects the symbols that recur across sections, with the place at which each is defined.  It is a reading aid: nothing is introduced here for the first time, and no scope condition stated at the point of definition is weakened here.

\begingroup \small \setlength{\LTpre}{6pt}\setlength{\LTpost}{6pt}
\setlength{\aboverulesep}{0pt}\setlength{\belowrulesep}{0pt}\renewcommand{\arraystretch}{1.12}
%%  The uncaptioned longtable below consumes a table number (longtable.sty opens
%%  \LT@array with \refstepcounter{table}), so hyperref would emit destination
%%  table.2 here (Table S1, tab:results-scope-summary, precedes this file) and
%%  collide with the genuine Table S2 (tab:verma-exact-computational-checks, Appendix B).
%%  Group-local repair: give this phantom slot its own destination namespace; the
%%  printed-number decrement at the end of the group is unchanged.
\renewcommand*{\theHtable}{notation.\arabic{table}}

\begin{longtable}{@{}>{\raggedright\arraybackslash}m{0.20\linewidth}|>{\raggedright\arraybackslash}m{0.47\linewidth}|>{\raggedright\arraybackslash}m{0.26\linewidth}@{}}
    \toprule
    Symbol & Meaning, with scope & Defined \\
    \midrule
    \endfirsthead
    \toprule
    Symbol & Meaning, with scope & Defined \\
    \midrule
    \endhead
    \midrule
    \multicolumn{3}{r@{}}{\footnotesize\itshape continued overleaf}\\
    \endfoot
    \bottomrule
    \endlastfoot
    \multicolumn{3}{@{}l}{\textit{Graphs, vertices, fixing}}\\*[2pt]
    $\cG=(V,E)$ & acyclic directed mixed graph, finite vertex set & Section~\ref{sec:setup}\\
    $\mathcal H=(R,W,E_{\mathcal H})$ & conditional ADMG: random vertices $R$, fixed vertices $W$ & Section~\ref{sec:setup}; \cite[Def.~2.1]{evans2019smooth}\\
    $\cX_v$, $\cX_A$ & finite state space of $v$; product space of $A$ & Section~\ref{sec:setup}\\
    $k_v$, $\widetilde\cX_v$ & corner state of $v$; $\cX_v\setminus\{k_v\}$ & Section~\ref{sec:setup}\\
    $\pa_{\cG}$, $\dis_{\cG}$, $\operatorname{an}_{\cG}$ & parents, district of a vertex, ancestors; applied disjunctively to sets & Section~\ref{sec:setup} ($\dis$ in the fixability and Markov-blanket definitions); $\operatorname{an}_{\cG}$ at \eqref{eq:active-ancestral-set}\\
    $\cD(\cG)$ & districts of $\cG$ & Section~\ref{sec:setup}; \cite[Def.~3]{richardson2023nested}\\
    $\cI(\cG)$ & intrinsic sets: a district of a reachable CADMG & Section~\ref{sec:setup}; \cite[Def.~33]{richardson2023nested}\\
    $\delta(S)$ & the native district of $\cG$ containing a nonempty bidirected-connected $S\subseteq V$ (including intrinsic sets and active districts) & Section~\ref{sec:setup}\\
    $H(C)$, $T(C)$ & recursive head $C\setminus\pa_{\cG}(C)$ and tail $\pa_{\cG}(C)$; written for $C$ intrinsic, where they index a coordinate block & Section~\ref{sec:setup}\\
    $\cG_A$ & ordinary vertex-induced ADMG on $A$: vertex set $A$, retaining exactly the edges with both endpoints in $A$ & Section~\ref{sec:setup}\\
    $\cG[C]$ & reduced CADMG: random $C$, fixed $\pa_{\cG}(C)\setminus C$ & Section~\ref{sec:setup}; \cite[Def.~2.8]{evans2019smooth}\\
    $\mathfrak d_D(\cG)$ & district CADMG of $D$; equals $\cG[D]$ & Section~\ref{sec:setup}; \cite[Def.~2.5]{evans2019smooth}\\
    $\phi_w(\cG)$, $\phi_w(p;\cG)$;\newline $\phi_S(\cG)$, $\phi_S(p;\cG)$ & one-step graph and kernel fixing at a fixable $w$; the iterated form along a valid sequence for $S$, as in $\phi_{V\setminus C}(p;\cG)$ & Section~\ref{sec:setup}; \cite[Defs.~17 and~19, and the paragraph preceding Def.~26]{richardson2023nested}\\
    $\operatorname{red}(\widetilde{\mathcal H}_R,\widetilde q_R)$; $\operatorname{red}_R$ & reduction of a raw pair: delete the childless (hence isolated) fixed vertices and suppress the corresponding kernel arguments; $\operatorname{red}_R(\widetilde q_R)$ its kernel component, $\operatorname{red}_S$ the instance with random set $S$ & Section~\ref{sec:setup}\\
    $\widetilde\cG_C$ & for $C\in\cI(\cG)$: the unreduced fixing graph $\phi_{V\setminus C}(\cG)$ & Section~\ref{sec:setup}; \cite[Thm.~31]{richardson2023nested}\\
    $\widetilde q_C$ & for $C\in\cI(\cG)$ and $P\in\cN_+(\cG)$: the unreduced fixing kernel $\phi_{V\setminus C}(p;\cG)$, sequence-invariant by Prop.~\ref{prop:er-equivalence-rrs-compatibility}(ii) & Section~\ref{sec:setup}; \cite[Thm.~31]{richardson2023nested}\\
    $q_C$ & for $C\in\cI(\cG)$ and $P\in\cN_+(\cG)$: the intrinsic kernel $\operatorname{red}_C(\widetilde q_C)$ of $C$, parent-indexed on $\cG[C]$ & \eqref{eq:fixing-kernel-bridge}\\
    $Z_C$ & the childless fixed vertices $(V\setminus C)\setminus\pa_{\cG}(C)$ & Section~\ref{sec:setup}\\
    \midrule
    \multicolumn{3}{@{}l}{\textit{Coordinates and the forward chart}}\\*[2pt]
    $\theta_C(a_H\mid x_T)$ & recursive-head coordinate of the block $C\in\cI(\cG)$ & \eqref{eq:general-coordinate-block}, \eqref{eq:coordinate-extraction}\\
    $\mathcal A_C$, $d_C$ & index set $\widetilde\cX_{H(C)}\times\cX_{T(C)}$ and its size & Section~\ref{sec:setup}\\
    $U_C:=\R^{\mathcal A_C}$ & array space of the single block $C$ & Section~\ref{sec:setup}\\
    $\mathcal A_{\cG}$, $U_{\cG}\cong\R^d$ & tagged disjoint union of the $\mathcal A_C$; the coordinate space & Section~\ref{sec:setup}\\
    $\iota_C:U_C\to U_{\cG}$ & extension by zero outside the $C$-tagged coordinates & Section~\ref{sec:setup}\\
    $\theta|_A$, $\eta|_A$ & restriction to the tagged blocks whose intrinsic sets lie in $A$, for $A$ the random set of a reachable reduction & Section~\ref{sec:setup}\\
    $\eta_{[D]}:=\eta|_D$ & the same restriction at a district $D\in\cD(\mathcal H)$ & Lemma~\ref{lem:algebraic-district-factorization}\\
    $\theta_{\downarrow C}$ & $(\theta_S:S\in\cI(\cG),\,S\subseteq C)$, for $C$ intrinsic & Lemma~\ref{lem:reachable-chart-realization}\\
    $\operatorname{nc}(x_R)$ & the coordinates of $x_R$ taking a noncorner state & Section~\ref{sec:setup}\\
    $\mathfrak P_{\mathcal H}(A)$ & recursive-head partition of $A$ & Section~\ref{sec:setup}; \cite[Def.~4.11]{evans2019smooth}\\
    $C(H)$ & the intrinsic set whose recursive head is $H$ & Section~\ref{sec:setup}\\
    $F_A^{\mathcal H}$ & partition product & Section~\ref{sec:setup}\\
    $\Phi_{\mathcal H}$ & forward map; cellwise polynomial in $\eta$ on the ambient space and affine in each coordinate separately & \eqref{eq:general-forward-map}; Rem.~\ref{rem:forward-map-multiaffine}\\
    $\Theta_{\mathcal H}$ & inverse chart, on strictly positive recursively factorizing kernels only & Section~\ref{sec:setup}; Prop.~\ref{prop:standing-finite-state-import}(iii)\\
    $\widetilde\Theta_{\mathcal H}$ & ambient recovery map; agrees with $\Theta_{\mathcal H}$ on the model and may depend on the derivation scheme off the model & Prop.~\ref{prop:standing-finite-state-import}(iv)\\
    $\mathcal O_{\mathcal H}$ & the open positive orthant $(0,\infty)^{\cX_R\times\cX_W}$ & Prop.~\ref{prop:standing-finite-state-import}(iv)\\
    $\Omega_{\mathcal H}$ & strict-positivity region in coordinates; open, by continuity of the polynomial forward map & \eqref{eq:cadmg-positive-coordinate-domain}; openness in Thm.~\ref{thm:forward-chart}\\
    $\mathcal K_{\mathcal H}$, $\mathcal Q_+(\mathcal H)$ & normalized real kernel arrays; its strictly positive recursively factorizing subset & Section~\ref{sec:geometry}\\
    $\mathbb D$, $\mathbb Df[u]$ & differential of a map between finite-dimensional spaces; its value in the direction $u$.  The letter $D$ otherwise denotes a district; $D_P$ below is the one exception & Section~\ref{sec:setup}\\
    $\mathbb D\Phi_{\mathcal H,\eta}$ & differential of the forward map at $\eta$ & Rem.~\ref{rem:forward-map-multiaffine}\\
    $\mathbb D\widetilde\Theta_{\mathcal H,q}$ & differential of the ambient recovery map at $q\in\mathcal O_{\mathcal H}$; at a model point a left inverse of $\mathbb D\Phi$ for every admissible scheme & \eqref{eq:left-inverse-differential}, Thm.~\ref{thm:forward-chart}\\
    $r_{D,\eta_{[D]}}$ & the $D$ factor $\Phi_{\mathfrak d_D(\mathcal H)}(\eta_{[D]})$; may be signed off-model & \eqref{eq:top-level-factorization}\\
    \midrule
    \multicolumn{3}{@{}l}{\textit{Scores, tangent space, projection}}\\*[2pt]
    $\cN(\cG)$, $\cN_+(\cG)$ & nested Markov model; its strictly positive part & Section~\ref{sec:setup}; \cite[Def.~3.1]{evans2018margins}\\
    $P$, $p$; $Pf$ & a law and its mass function; the integral shorthand $Pf=\E_Pf$ & Section~\ref{sec:setup}\\
    $\pr$, $\pr_Q$, $\pr_\theta$ & probability of an event under a generic law, under the named law $Q$, and under the chart law $P_\theta$, with $\pr_\theta=\pr_{P_\theta}$ & Section~\ref{sec:setup}\\
    $L_2(P)$, $L_2^0(P)$ & square-integrable functions; mean-zero subspace & Section~\ref{sec:setup}\\
    $\cT_P\cN(\cG)$ & tangent space at $P\in\cN_+(\cG)$ & Section~\ref{sec:setup}\\
    $J_Pu$ & chart score $\mathbb D\Phi_{\cG,\theta_0}[u]/p$ & Section~\ref{sec:setup}\\
    $J_{C,P}$, $\cS_C(P)$ & block score map $J_P\iota_C$ and its range & Section~\ref{sec:setup}\\
    $U_D^{\mathrm{nat}}$, $\cS_D^{\mathrm{nat}}(P)$ & native-district coordinate space and its score space $J_P(U_D^{\mathrm{nat}})$ & \eqref{eq:district-space}\\
    $I_{\mathrm F}(\theta_0)$ & Fisher information matrix of the chart: the matrix of $J_P^*J_P$ on $U_{\cG}$ in the fixed coordinate basis & \eqref{eq:fisher-chart-matrix}\\
    $b_D^{\mathrm{nat}}$, $G_D^{\mathrm{nat}}(P)$ & district score vector and its Gram matrix & Section~\ref{sec:geometry}\\
    $\Pi_{\mathcal S}$ & $L_2(P)$-orthogonal projection onto a closed subspace $\mathcal S$ & Section~\ref{sec:geometry}\\
    $D_P$ & multiplication by $p$, as a matrix $\operatorname{diag}\{p(x_V)\}$ --- the one exception to the district reservation of the letter $D$; the $L_2(P)$ weight in a Gram system and the isomorphism $L_2^0(P)\to T_p\mathcal K_{\cG}$ & Rem.~\ref{rem:chart-not-projection}\\
    $\mathcal R^{\sigma}_P$; $\mathcal R_P$ & scheme-indexed chart retraction $J_P\circ \mathbb D\widetilde\Theta^{\sigma}_{\cG,p}\circ D_P$, idempotent onto the tangent space and generally oblique; $\mathcal R_P$ its instance for the default scheme $\sigma_0$ & Rem.~\ref{rem:chart-not-projection}\\
    $\mathcal L_C^{\mathrm{obs}}(P)$ & observationally centered head-family space & Rem.~\ref{rem:cyclic-quotient-diagnostic}\\
    $\mathcal K_\Delta(P)$ & normalized district-kernel score space, for the $\Delta$ block of the graph of Section~\ref{sec:counterexample} & \eqref{eq:verma-normalized-district-score-space}\\
    \midrule
    \multicolumn{3}{@{}l}{\textit{Interventions and inference}}\\*[2pt]
    $\mathsf I$, $A_{\mathsf I}$ & a fixed intervention of the classes (I1)--(I3); its intervened vertices: the node set in (I1), the arrow sources in (I2), the treatment-source set in (I3) & Section~\ref{sec:intervention-bridge}\\
    $R_{\mathsf I}$, $\mathcal H_{\mathsf I}$ & active vertices $\operatorname{an}_{\cG_{V\setminus A_{\mathsf I}}}(Y)$; active graph $\cG_{R_{\mathsf I}}$ & \eqref{eq:active-ancestral-set}\\
    $c_D^{\mathsf I}$ & boundary configuration on $\pa_{\cG}(D)\setminus D$ & Section~\ref{sec:intervention-bridge}\\
    $\operatorname{Sel}_{\mathsf I}$ & tail-slice map, a $0/1$ row-selection matrix & \eqref{eq:tail-slice-map}\\
    $\Gamma_{\mathsf I}$ & statistical response map of a fixed intervention; lands in $\cN_+(\mathcal H)$ & \eqref{eq:statistical-intervention-functional}; Cor.~\ref{cor:active-side-nested-geometry}\\
    $\Psi_{\mathsf I,g}$, $h_{\mathsf I}^u$ & response mean; active-chart score & Section~\ref{sec:intervention-bridge}; \eqref{eq:active-chart-score}\\
    $\mu_\theta$, $\Gamma_\mu$ & admissible independent-draw source law; the mixture response it defines & Def.~\ref{def:admissible-source-law}; $\Gamma_\mu$ at \eqref{eq:independent-source-mixture-and-mean}\\
    $\phi_{P,\mathsf I,g}^{\mathrm{eff}}$ & canonical gradient of the fixed-response mean & \eqref{eq:direct-intervention-eif}\\
    $\phi_\theta$ & coherent canonical gradient of the target at the fitted law $P_\theta$ & \eqref{eq:coherent-eif-map}\\
    $R_2(\eta,\theta_0)$ & one-step remainder; exact integral identity and quadratic bound & \eqref{eq:coherent-second-order-remainder}, \eqref{eq:exact-one-step-remainder}\\
    $\zeta$, $\vartheta_\eta(\epsilon)$ & targeting coordinate field, with $J_{P_\theta}\zeta(\theta)=\phi_\theta$ (Prop.~\ref{prop:targeting-coordinate-lift}); the least-favorable chart flow & \eqref{eq:targeting-coordinate-field}, \eqref{eq:least-favorable-chart-ode}\\
    $D_1,\dots,D_m$, $B_k$ & active districts in the induced order and their prefixes; defined only under Assumption~\ref{ass:mr-sequential-transport} & Section~\ref{sec:sequential-union}\\
    $W_{k,P}^{\mathrm{pre}}$, $H_{k,P}$, $R_{k,P}$ & full-history source ladder, transition weight, outcome residual; all under Assumption~\ref{ass:mr-sequential-transport} & Section~\ref{sec:sequential-union}\\
    $\phi_{\mathrm{seq}}$ & all-correct sequential representer; an influence function, generally not canonical & \eqref{eq:sequential-influence-function}; Thm.~\ref{thm:sequential-projection-comparison}\\
    $\mathcal U_\omega$ & transitionwise correctness regime, $\omega\in\{H,R\}^m$; at most $2^m$ of them, not necessarily distinct & \eqref{eq:transition-union-regime}\\
\end{longtable}

%%  longtable.sty opens \LT@array with \refstepcounter{table}, so an
%%  UNCAPTIONED longtable still consumes a table number.  The next line restores
%%  the counter to 1 after the phantom slot: Table S1 precedes this index, so the
%%  subsequent captioned table is Table S2 rather than S3.

\addtocounter{table}{-1} \endgroup
%% A.2 selected notation
\subsection{Finite-state recursive-head representation and recovery}\label{app:finite-state-parameterization-proof}

This subsection proves Proposition~\ref{prop:standing-finite-state-import}, and then clauses (ii) and (iii) of Proposition~\ref{prop:er-equivalence-rrs-compatibility}.  The proof separates material assembled from \cite{evans2019smooth} and the two manuscript-specific steps: the identification with fixing kernels and the choice of an off-model ambient recovery map.

\subsubsection{Recursive factorization and the forward representation}

We first record exactly what recursive factorization requires.  In the notation of Definition~3.3 of \cite{evans2019smooth}, a probability kernel $q_R(x_R\mid x_W)$ recursively factorizes according to a CADMG $\mathcal H=(R,W,E_{\mathcal H})$ if $|R|=1$, or, when $|R|\ge2$, both of the following conditions hold.

\begin{enumerate}[label=(\textup{RF}\arabic*)]
    \item If $D_1,\ldots,D_k$ are the districts of $\mathcal H$, then
    \[
        q_R(x_R\mid x_W) =\prod_{j=1}^k r_j(x_{D_j}\mid x_{\pa_{\mathcal H}(D_j)\setminus D_j}),
    \]
    where each $r_j$ is a probability kernel and, when $k\ge2$, recursively factorizes according to $\mathcal H[D_j]=\mathfrak d_{D_j}(\mathcal H)$.
    \item For every ancestral set $A\subseteq R\cup W$ with $R\setminus A\ne\varnothing$, the margin
    \begin{equation}\label{eq:appendix-rf-margin}
        q_{R\cap A\mid W}(x_{R\cap A}\mid x_W) :=\sum_{x_{R\setminus A}}q_R(x_R\mid x_W)
    \end{equation}
    is independent of $x_{W\setminus A}$ and, viewed as a kernel given $x_{W\cap A}$, recursively factorizes according to $\mathcal H[R\cap A]$.
\end{enumerate}

For $R=\varnothing$ we use the natural extension: the unique empty kernel and the empty forward product are both one.
We record this as the \emph{empty reduced-CADMG convention}:
\[
    \mathcal H_\varnothing=(\varnothing,\varnothing,\varnothing), \qquad \cX_\varnothing=\{\ast\}, \qquad U_{\mathcal H_\varnothing}=\R^0=\{0\},
\]
\[
    q_\varnothing(\ast)=1, \qquad \Phi_{\mathcal H_\varnothing}(0)=q_\varnothing ,
\]
$\Phi_{\mathcal H_\varnothing}$ being a map and $q_\varnothing$ its value at the unique coordinate vector $0$.
By convention $q_\varnothing$ recursively factorizes, its Tian factorization is the empty product, and consequently $q_\varnothing\in\mathcal P^c_f(\mathcal H_\varnothing)=\mathcal P^c(\mathcal H_\varnothing)$, in the notation recalled at the cross-framework proof below.
The qualifier \emph{reduced} matters: $W=\varnothing$ is forced only after reduction, because the standing convention makes every displayed fixed vertex have a child in $R$; a raw terminal fixing graph may have $R=\varnothing$ and a nonempty childless fixed set, and the reduction of Section~\ref{sec:setup} is what removes it.
Every later appeal to the empty case cites this convention.  The independence clause in \eqref{eq:appendix-rf-margin} is part of the definition; it is not implicit in the phrase random-ancestral margin.

To use one quantifier throughout the induction, define the class of retained random sets
\[
    \mathfrak A_R(\mathcal H) :=\{R\cap A:A\subseteq R\cup W \text{ is ancestral in }\mathcal H\}.
\]
For $A_R\in\mathfrak A_R(\mathcal H)$, put $W_{A_R}:=\pa_{\mathcal H}(A_R)\setminus A_R$.  The independence clause in \textup{(RF2)}, together with the usual deletion of childless fixed vertices and suppression of their irrelevant arguments, makes the margin in \eqref{eq:appendix-rf-margin} a canonical kernel on $\mathcal H[A_R]$ with fixed set $W_{A_R}$.  Below, a retained ancestral margin always means this kernel.  The passage from the arguments $x_{W\cap A}$ to $x_{W_{A_R}}$ is itself an instance of \textup{(RF2)}: the set $A':=A_R\cup W_{A_R}$ is ancestral, because the random parents of $A_R$ lie in $A\cap R=A_R$ and fixed vertices have no parents, and \textup{(RF2)} applied to $A'$ makes the margin independent of $x_{W\setminus W_{A_R}}$ whenever $A_R\ne R$; when $A_R=R$, reducedness gives $W=W_R$ and nothing is removed.

\medskip\noindent\textbf{Algebraic district splitting.}\quad Lemma~5.2 of \cite{evans2019smooth}, together with the inner-sum split in Appendix~C of the same paper, gives, for every $\eta\in U_{\mathcal H}$,
\begin{equation}\label{eq:appendix-algebraic-district-split}
    \Phi_{\mathcal H}(\eta)(x_R\mid x_W) =\prod_{D\in\cD(\mathcal H)} \Phi_{\mathcal H[D]}(\eta|_D) (x_D\mid x_{\pa_{\mathcal H}(D)\setminus D}).
\end{equation}

This identity is purely algebraic and permits signed factors.  Lemma~4.14 of that paper supplies the partition split; Definition~4.1 and Lemma~4.6 ensure that every intrinsic-set block belongs to exactly one district.

The following positivity consequence must not be inferred from normalization alone.

\begin{lemma}[Signed-factor positivity]\label{lem:signed-factor-positivity}
    Let $\mathcal H=(R,W,E_{\mathcal H})$ be a finite-state CADMG and let $\eta\in U_{\mathcal H}$.  If $\Phi_{\mathcal H}(\eta)$ is strictly positive, then every factor on the right of \eqref{eq:appendix-algebraic-district-split} is strictly positive. Consequently, Corollary~\ref{lem:algebraic-normalization} makes each such factor a probability kernel.
\end{lemma}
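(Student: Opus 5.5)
The plan is an induction on $|R|$ that peels off a sterile vertex with Lemma~\ref{lem:sterile-marginalization}, applies the induction hypothesis to the marginal, and then reads off the remaining factor by division. This avoids any district order, which matters because the district quotient may be cyclic.

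\emph{Base cases.} If $\mathcal H$ has a single district (in particular when $|R|=1$), the right side of \eqref{eq:appendix-algebraic-district-split} has one factor. That factor equals $\Phi_{\mathcal H}(\eta)$, so the claim is immediate. The empty case follows from the empty reduced-CADMG convention.

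\emph{Inductive step.} Suppose $|R|\ge2$ and the claim holds for all CADMGs with fewer random vertices. Because the directed part is acyclic, there is a vertex $a\in R$ with $\operatorname{ch}_{\mathcal H}(a)\cap R=\varnothing$. Let $D_a\in\cD(\mathcal H)$ be its district, and put $R_a=R\setminus\{a\}$.
\begin{enumerate}
    \item Summing strictly positive cells over $x_a$ gives a strictly positive result. By Lemma~\ref{lem:sterile-marginalization}, that sum is $\Phi_{\mathcal H[R_a]}(\eta|_{R_a})$. Hence $\Phi_{\mathcal H[R_a]}(\eta|_{R_a})$ is strictly positive.
    \item The induction hypothesis applied to $\mathcal H[R_a]$ makes every district factor of $\mathcal H[R_a]$ strictly positive.
    \item Every district $D\ne D_a$ of $\mathcal H$ is still a district of $\mathcal H[R_a]$. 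Deleting $a$ can only split $D_a\setminus\{a\}$; it cannot affect a bidirected component disjoint from $D_a$.
    \item For such a $D$, the reduced CADMG $\mathcal H[R_a][D]$ coincides with $\mathcal H[D]$. Its random vertices and the edges with arrowheads in $D$ are the same. Since $a$ is sterile, $a\notin\pa_{\mathcal H}(D)$, so the fixed set $\pa(D)\setminus D$ is also the same.
    \item The coordinates entering the $D$ factor are $\eta|_D$ in both graphs, because intrinsic sets of $\mathcal H[D]$ are the same whether computed in $\mathcal H$ or in $\mathcal H[R_a]$.
    \item Combining steps 3--5, the factor $\Phi_{\mathcal H[D]}(\eta|_D)$ of \eqref{eq:appendix-algebraic-district-split} is literally one of the factors handled in step 2. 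So it is strictly positive for every $D\ne D_a$, at every argument.
\end{enumerate}

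\emph{The remaining factor.} The only factor not yet covered is the one for $D_a$. By \eqref{eq:appendix-algebraic-district-split}, at each cell
\[
    \Phi_{\mathcal H[D_a]}(\eta|_{D_a})
    =\Phi_{\mathcal H}(\eta)(x_R\mid x_W)\Big/\prod_{D\ne D_a}\Phi_{\mathcal H[D]}(\eta|_D).
\]
This is a ratio of a strictly positive number by a product of strictly positive numbers, so it is strictly positive. Every argument of this factor, $(x_{D_a},x_{\pa(D_a)\setminus D_a})$, arises from some full configuration $(x_R,x_W)$, so positivity holds for all its arguments.

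\emph{Conclusion.} With every factor strictly positive, Corollary~\ref{lem:algebraic-normalization} applied to each $\mathcal H[D]$ shows each factor sums to one over $x_D$. Hence each factor is a probability kernel.

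\emph{Main obstacle.} The delicate step is the bookkeeping in steps 3--5. One must check that the district factors of $\mathcal H$ other than $D_a$ coincide exactly, as kernels and with the same coordinates, with district factors of the reduced graph $\mathcal H[R_a]$. This uses sterility of $a$ to keep parent sets unchanged, and the fact that intrinsic sets and heads within a district depend only on the district CADMG.
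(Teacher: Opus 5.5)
Your proposal is correct and follows essentially the same route as the paper's proof. Both argue by induction on $|R|$, remove a sterile vertex $a$ via Lemma~\ref{lem:sterile-marginalization}, apply the induction hypothesis on $\mathcal H[R\setminus\{a\}]$ to every factor except that of the district of $a$, and recover that last factor as a positive quotient, with no district order. Your steps 3--5 just make explicit what the paper states in one sentence: sterility keeps $a$ out of the other districts' tails, and $a$ is not bidirected-connected to those districts.
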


\begin{proof}
    Induct simultaneously over all finite-state CADMGs and coordinate vectors, ordered by $|R|$.  The claim is vacuous for $R=\varnothing$ and immediate for one random vertex.  For the induction step, choose a sterile random vertex $a$, which exists because the directed random subgraph is acyclic, and let $D^\star$ be its district.  By Lemma~\ref{lem:sterile-marginalization},
    \[
        q^-(x_{R\setminus\{a\}}\mid x_{W_a}) :=\sum_{x_a}\Phi_{\mathcal H}(\eta)(x_R\mid x_W) =\Phi_{\mathcal H[R\setminus\{a\}]}(\eta|_{R\setminus\{a\}}) (x_{R\setminus\{a\}}\mid x_{W_a})
    \]
    is strictly positive.  The induction hypothesis makes every district factor of this reduced forward array strictly positive.  For an original district $D\ne D^\star$, deletion of $a$ changes neither $\mathcal H[D]$ nor its factor: sterility precludes $a$ from being a parent-tail argument, and $a$ is not bidirected-connected to $D$.  Hence all original factors except possibly the $D^\star$ factor are strictly positive.  Algebraic district splitting now gives that last factor as the positive quotient
    \[
        \Phi_{\mathcal H[D^\star]}(\eta|_{D^\star}) =\frac{\Phi_{\mathcal H}(\eta)} {\displaystyle\prod_{D\ne D^\star} \Phi_{\mathcal H[D]}(\eta|_D)}.
    \]

    This proves the lemma without ordering the districts; in particular, it does not assume that the district quotient is acyclic.
\end{proof}

\medskip\noindent\textbf{Parameterized kernel implies recursive factorization.}\quad Suppose that $q_R=\Phi_{\mathcal H}(\eta)>0$ is a probability kernel.  We prove by induction on $|R|$ that it satisfies \textup{(RF1)} and \textup{(RF2)}.  The one-vertex case is the base case of Definition~3.3.

For \textup{(RF1)}, use \eqref{eq:appendix-algebraic-district-split}.  The signed-factor positivity lemma and Corollary~\ref{lem:algebraic-normalization} show that the displayed factors are positive probability kernels.  If there is more than one district, every district has fewer than $|R|$ random vertices, so the induction hypothesis makes each factor recursively factorize according to its district CADMG.  If there is one district, the required factorization is the tautology $q_R=q_R$.

For \textup{(RF2)}, fix an ancestral $A\subseteq R\cup W$ with $R\setminus A\ne\varnothing$ and put $A_R:=R\cap A$.  No random vertex outside $A_R$ can have a directed path into $A_R$.  Repeatedly choose, from the currently remaining part of $R\setminus A_R$, a vertex maximal in a topological ordering.  Ancestrality excludes a child in $A_R$, so this vertex is sterile in the current CADMG.  Iterating Lemma~\ref{lem:sterile-marginalization} yields
\begin{equation}\label{eq:appendix-ancestral-peeling}
    \sum_{x_{R\setminus A_R}}\Phi_{\mathcal H}(\eta)(x_R\mid x_W) =\Phi_{\mathcal H[A_R]}(\eta|_{A_R}) (x_{A_R}\mid x_{W_{A_R}}).
\end{equation}

The right side is independent of every discarded fixed argument; because $A$ is ancestral, $W_{A_R}\subseteq W\cap A$.  It is also strictly positive, as a margin of $q_R$, and has fewer random vertices.  The induction hypothesis therefore gives recursive factorization according to $\mathcal H[A_R]$. This proves \textup{(RF2)}, including its fixed-argument independence clause.

\medskip\noindent\textbf{Recursively factorizing kernel implies parameterization.}\quad Now suppose that $q_R>0$ recursively factorizes according to $\mathcal H$. We use the strengthened induction in the proof of Theorem~5.4 of \cite{evans2019smooth}.  For every CADMG with $m$ random vertices and every positive recursively factorizing kernel $q_R$, we prove the following three claims simultaneously:
\[
    \begin{array}
        {ll} (\mathrm E_m) & q_R=\Phi_{\mathcal H}(\eta^{(R)}) \text{ for a vector }\eta^{(R)}\in U_{\mathcal H} \text{ produced by the construction};\\[2pt]
        (\mathrm U_m) & \Phi_{\mathcal H}(\eta)=q_R,\ \eta\in U_{\mathcal H} \ \Longrightarrow\ \eta=\eta^{(R)};\\[2pt]
        (\mathrm C_m) & \eta^{(A_R)}=\eta^{(R)}|_{A_R} \text{ for every proper }A_R\in\mathfrak A_R(\mathcal H).
    \end{array}
\]
Here $\eta^{(A_R)}$ is the vector recovered at the lower induction level from the canonical \textup{(RF2)} margin on $\mathcal H[A_R]$; for $A_R=\varnothing$ it is the empty vector.  At level $m$, assume all three claims at smaller levels, prove $(\mathrm E_m)$ and then $(\mathrm C_m)$, and finally prove $(\mathrm U_m)$.  Thus existence never invokes uniqueness at the same level. Appendix~C of the cited paper announces the finite-state adaptation; the details below make the noncorner-state repetition and the corner-state reconstruction explicit.

\medskip\noindent\emph{Proof of $(\mathrm E_m)$.} Write $\eta=\eta^{(R)}$ for the vector constructed at the current level. For $m=0$ the empty vector gives the unique empty kernel.  For $m=1$, write $R=\{v\}$ and $T=\pa_{\mathcal H}(v)$.  For every $a\in\widetilde\cX_v$, set
\[
    \eta_{\{v\}}(a\mid x_T):=q_v(a\mid x_T).
\]

Normalization forces the corner cell, so the one-vertex display in Section~\ref{sec:setup} gives $q_v=\Phi_{\mathcal H}(\eta)$.  This proves $(\mathrm E_1)$; for the remainder of the proof of $(\mathrm E_m)$, assume $m\geq2$.

If $\mathcal H$ has at least two districts, \textup{(RF1)} gives $q_R=\prod_D r_D$, with every $r_D>0$ recursively factorizing according to $\mathcal H[D]$.  By induction, $r_D=\Phi_{\mathcal H[D]}(\eta^{(D)})$.  No coordinate block is shared by two districts, and the general-state form of Lemma~5.2 therefore assembles the vectors $\eta^{(D)}$ into $\eta\in U_{\mathcal H}$ satisfying $q_R=\Phi_{\mathcal H}(\eta)$.

It remains to treat a one-district CADMG.  Put
\[
    H^\star:=\operatorname{sterile}_{\mathcal H}(R), \qquad T^\star:=T(R)=\pa_{\mathcal H}(R).
\]
In this reduced CADMG, $T^\star=(R\setminus H^\star)\cup W$: every random vertex outside $H^\star$ has a child in $R$, and every displayed fixed vertex has a child by the convention following Definition~2.1 of \cite{evans2019smooth}. For every $h\in H^\star$, the set $R\setminus\{h\}$ belongs to $\mathfrak A_R(\mathcal H)$. Condition \textup{(RF2)} and the induction hypothesis therefore parameterize the margin $q_{R\setminus\{h\}\mid W}$.  Every recursive head other than $H^\star$ occurs in at least one of these margins.  Indeed, let $H$ have intrinsic set $C$.  If $H^\star\subseteq H$, then Lemma~4.6 applied first to $R$ and then to $C$ gives $I_{\mathcal H}(H)=R=C$, so head--intrinsic-set injectivity forces $H=H^\star$.  Otherwise choose $h\in H^\star\setminus H$.  Because $h$ is sterile in $R$, membership $h\in C$ would imply $h\in H(C)=H$; hence $C\subseteq R\setminus\{h\}$, and Lemma~4.9 retains $H$ in that margin.  If a head occurs in several margins, their finite intersection remains in $\mathfrak A_R(\mathcal H)$; repeated application of Corollary~4.10 of \cite{evans2019smooth} retains that head in the intersection.  Marginalizing either larger margin to this intersection gives the same margin of $q_R$.  By the lower-level consistency assertion $(\mathrm C_{m'})$, $m'<m$, applied in each margin $\mathcal H[R\setminus\{h\}]$ containing the head, the vector recovered there restricts on the common intersection to the vector recovered from that intersection, which $(\mathrm U_{m''})$ at the level $m''<m'$ of the intersection makes unique; all copies of the shared block therefore coincide.  Thus all blocks except the top block indexed by the intrinsic set $R$ have been recovered consistently.

For each joint noncorner configuration $a_{H^\star}\in\widetilde\cX_{H^\star}$, define the remaining coordinate by
\begin{equation}\label{eq:appendix-top-head-coordinate}
    \eta_R(a_{H^\star}\mid x_{T^\star}) :=q_{H^\star\mid T^\star} (a_{H^\star}\mid x_{T^\star}).
\end{equation}

Strict positivity makes this conditional uniquely defined.  When no component of $x_{H^\star}$ is at its corner, the defining conditional identity and the already parameterized margin on $R\setminus H^\star$ give
\[
    q_R(a_{H^\star},x_{R\setminus H^\star}\mid x_W) = \eta_R(a_{H^\star}\mid x_{T^\star})\, q_{R\setminus H^\star\mid W}(x_{R\setminus H^\star}\mid x_W).
\]

The set $R\setminus H^\star=\bigcap_{h\in H^\star}(R\setminus\{h\})$ is in $\mathfrak A_R(\mathcal H)$, so $\mathcal H[R\setminus H^\star]$ is reachable and Lemma~4.13 applies to it.  To see the forward equality explicitly, $R$ is intrinsic and $H^\star$ is its unique maximal recursive head.  Hence, whenever $H^\star\subseteq B\subseteq R$,
\begin{equation}\label{eq:appendix-top-head-partition}
    \mathfrak P_{\mathcal H}(B) =\{H^\star\}\mathbin{\dot\cup} \mathfrak P_{\mathcal H[R\setminus H^\star]} (B\setminus H^\star);
\end{equation}

Lemma~4.13 of \cite{evans2019smooth} equivalently identifies the second partition with $\mathfrak P_{\mathcal H}(B\setminus H^\star)$.  If no component of $x_{H^\star}$ is at its corner, every outer set in \eqref{eq:general-forward-map} contains $H^\star$, the inner constraint fixes $y_{H^\star}=a_{H^\star}$, and the sign is unchanged after removing $H^\star$.  Therefore
\begin{align*}
    &\Phi_{\mathcal H}(\eta) (a_{H^\star},x_{R\setminus H^\star}\mid x_W)\\
    &\qquad= \eta_R(a_{H^\star}\mid x_{T^\star})\, \Phi_{\mathcal H[R\setminus H^\star]} (\eta|_{R\setminus H^\star}) (x_{R\setminus H^\star}\mid x_W),
\end{align*}
where irrelevant fixed arguments are suppressed.  The induction representation of the margin turns the last factor into $q_{R\setminus H^\star\mid W}$, proving equality with the preceding display.

For cells having corner components in $H^\star$, induct on their number. Choose $h\in H^\star$ with $x_h=k_h$.  The elementary finite-state identity
\begin{align}
    q_R(x_{R\setminus\{h\}},k_h\mid x_W) ={}&q_{R\setminus\{h\}\mid W}(x_{R\setminus\{h\}}\mid x_W)\notag\\
    &-\sum_{z_h\in\widetilde\cX_h} q_R(x_{R\setminus\{h\}},z_h\mid x_W) \label{eq:appendix-corner-reconstruction}
\end{align}
expresses the target cell through the already represented margin and cells with one fewer corner component.  The same identity holds for $\Phi_{\mathcal H}(\eta)$ by Lemma~\ref{lem:sterile-marginalization}.  The secondary induction therefore proves equality at every cell.  This is the general-state version of the last induction in the proof of Theorem~5.4; the sum over all $z_h\in\widetilde\cX_h$ is the step suppressed by the binary notation.

\medskip\noindent\emph{Proof of $(\mathrm C_m)$.} Let $A_R\in\mathfrak A_R(\mathcal H)$ be proper.  Iterating Lemma~\ref{lem:sterile-marginalization} in reverse topological order gives
\[
    q_{A_R\mid W_{A_R}} =\Phi_{\mathcal H[A_R]}(\eta^{(R)}|_{A_R}),
\]
with irrelevant fixed arguments suppressed.  The induction uniqueness hypothesis for this smaller kernel makes $\eta^{(R)}|_{A_R}$ its recovered vector $\eta^{(A_R)}$.  This proves $(\mathrm C_m)$.

Combining the two directions proves clause~(i) of Proposition~\ref{prop:standing-finite-state-import} on the strictly positive domain used in the manuscript.  The corresponding boundary-kernel equivalence is given in \cite[Theorem~5.4 and Appendix~C]{evans2019smooth}.  At the boundary, however, coordinates on a zero tail-margin stratum are versions and may be changed without altering $q_R$; an algebraic district factor formed from an arbitrary such version need not itself be nonnegative on that stratum.  This is why neither the proposition nor the argument above attributes factor nonnegativity to algebraic normalization.

\subsubsection{Uniqueness and coordinate identification}

\medskip\noindent\emph{Proof of $(\mathrm U_m)$.} Let $\eta^{(R)}$ be the vector constructed in $(\mathrm E_m)$, and suppose that $\Phi_{\mathcal H}(\eta)=q_R$ for another $\eta\in U_{\mathcal H}$. At a strictly positive recursively factorizing kernel, Proposition~3.6, Lemma~3.8 and Corollary~3.9 of \cite{evans2019smooth} show that every district factor and every kernel obtained by the recursive reductions is uniquely defined and strictly positive.  Apply the preceding induction to $q_R=\Phi_{\mathcal H}(\eta)>0$.  In the multiple-district case, the signed-factor positivity lemma and the forward induction make the algebraic factors from $\eta$ positive recursively factorizing kernels. Proposition~3.6 and Corollary~3.9 make them the unique district factors, and the induction hypothesis fixes their disjoint coordinate blocks.  In the one-district case, Lemma~\ref{lem:sterile-marginalization} makes every proper retained ancestral margin from $\eta$ equal the corresponding margin of $q_R$. The induction hypothesis therefore fixes every non-top block; the explicit all-noncorner identity following \eqref{eq:appendix-top-head-partition} then forces the top block to equal the conditional ratio in \eqref{eq:appendix-top-head-coordinate}.

Thus the induction determines each non-top block from a common ancestral margin and each top block from its conditional ratio.  Equivalently, for every intrinsic $C$, with $H=H(C)$ and $T=T(C)$, it gives
\[
    \eta_C(a_H\mid x_T) =\frac{r_C(a_H,x_{C\setminus H}\mid x_{T\setminus C})} {\displaystyle\sum_{z_H\in\cX_H} r_C(z_H,x_{C\setminus H}\mid x_{T\setminus C})},
\]
where $r_C$ is the unique recursively derived kernel on $C$.  Every denominator is positive.  Hence $\eta=\eta^{(R)}$, proving $(\mathrm U_m)$ and completing the simultaneous induction.  This proves clause~(ii) and the freestanding-CADMG part of clause~(iii).

\subsubsection{Compatibility with RRS fixing}\label{app:er-rrs-compatibility}

This subsubsection proves clauses \textup{(ii)}--\textup{(iii)} of Proposition~\ref{prop:er-equivalence-rrs-compatibility} in the full form of Proposition~\ref{prop:er-rrs-compatibility-full} below; clause \textup{(i)} is by citation.

The proofs below cite Corollary~\ref{lem:algebraic-normalization}, Proposition~\ref{prop:standing-finite-state-import}(i) and Lemma~\ref{lem:algebraic-district-factorization}, whose own proofs use only the chart side; they do not cite the cross-framework clause of Proposition~\ref{prop:standing-finite-state-import}(iii), which is downstream of Proposition~\ref{prop:er-equivalence-rrs-compatibility}.  Two source conventions are used by the intrinsic kernel \eqref{eq:fixing-kernel-bridge} of Section~\ref{sec:setup}.  The standing convention immediately following Definition 2.1 of \cite{evans2019smooth}, used in the proof of its Lemma 2.9, together with that paper's Remark 3.2, is what licenses the deletion of $Z_C$ and the argument suppression that $\operatorname{red}_C$ performs in \eqref{eq:fixing-kernel-bridge}.  Lemma 2.9 of \cite{evans2019smooth} concerns reachability under that paper's own reduction operations and is not used here to justify Richardson-style fixing reachability.

\medskip\noindent\emph{Intrinsic sets as terminal random sets.}\quad Every intrinsic $C\in\cI(\mathcal H)$ is reachable as the entire random set of a terminal CADMG whose single district is $C$: by \cite[Definition~33]{richardson2023nested}, $C$ is a district in some reachable graph; \cite[Proposition~18]{richardson2023nested} supplies a fixable vertex in every other nonempty district; and \cite[Proposition~24]{richardson2023nested} shows that fixing such a vertex neither merges nor splits the districts it does not meet, so fixing those vertices in succession leaves $C$ a district, and concatenating the sequences reaches random set $C$.  Section~\ref{sec:setup} uses this fact to define the intrinsic kernel, and the proof of Lemma~\ref{lem:intrinsic-set-bridge} uses it below.

\medskip\noindent\emph{Graph side of the intrinsic kernel \eqref{eq:fixing-kernel-bridge}.}\quad For $C\in\cI(\cG)$ put $\widetilde\cG_C:=\phi_{V\setminus C}(\cG)$.  Direct inspection of the graph-fixing operation shows that $\widetilde\cG_C$ has random set $C$, fixed set $V\setminus C$, and precisely the original edges whose arrowheads lie in $C$; deleting the childless fixed vertices $Z_C:=(V\setminus C)\setminus\pa_{\cG}(C)$ leaves exactly $\cG[C]$: its fixed set is $\pa_{\cG}(C)\setminus C$, its bidirected edges are internal to $C$, and its directed edges have heads in $C$.  For every district $D\in\cD(\cG)$ the district CADMG and the reduced CADMG agree, $\mathfrak d_D(\cG)=\cG[D]$: at $C=D$, Definitions 2.5 and 2.8 of \cite{evans2019smooth} specify the same random set $D$, the same fixed set $\pa_{\cG}(D)\setminus D$, the same bidirected edges (both endpoints in $D$) and the same directed edges (arrowhead in $D$).

\begin{lemma}[Two notions of intrinsic set coincide]\label{lem:intrinsic-set-bridge}
Let $\mathcal H=(R,W,E_{\mathcal H})$ be a reduced CADMG; raw fixing graphs are compared after deleting their childless, hence isolated, fixed vertices, which changes neither the subgraph induced on the random vertices nor any district.  A set $C\subseteq R$ is intrinsic in the sense of \cite[Definition~33]{richardson2023nested} if and only if it is intrinsic in the sense of \cite[Definition~4.1]{evans2019smooth}; in that case $I_{\mathcal H}(C)=C$, and in both senses its recursive head is the sterile subset of $C$ and its tail is $\pa_{\mathcal H}(C)$.
\end{lemma}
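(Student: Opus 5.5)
The plan is to prove the two implications separately, each by comparing one elementary step of one calculus with a block of steps of the other. I will work on the random subgraph, which is legitimate because fixability, districts and the Evans--Richardson reduction operations depend only on the directed and bidirected edges among random vertices. Deleting childless fixed vertices affects none of these, so raw and reduced fixing graphs may be identified throughout. I will use the characterization behind \cite[Definition~4.1 and Lemma~4.6]{evans2019smooth}: a set $C$ is intrinsic in their sense when it is bidirected-connected and stable under the closure $I_{\mathcal H}$. That closure alternately restricts to random ancestors and to the district meeting the current set.

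\emph{Evans--Richardson $\Rightarrow$ fixing.} Suppose $I_{\mathcal H}(C)=C$ with $C$ bidirected-connected. I will realize each step of the closure computation by valid fixings.
\begin{enumerate}
\item \emph{Ancestral restriction.} To pass from a random set $S$ to $S'=\operatorname{an}(B)\cap S$, repeatedly fix a vertex of $S\setminus S'$ that is last in a topological order of the current random set. Such a vertex has no random descendant other than itself, because ancestrality of $S'$ excludes descendants in $S'$ and maximality excludes the rest. Hence $\operatorname{de}\cap\dis=\{v\}$, and the vertex is fixable.
\item \emph{District restriction.} To pass from $S$ to the district of the current graph containing $B$, use \cite[Proposition~18]{richardson2023nested} to find a fixable vertex in every other nonempty district. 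By \cite[Proposition~24]{richardson2023nested}, fixing it neither merges nor splits the districts it does not meet. This is the same argument already used above for terminal random sets.
\end{enumerate}
In both cases fixing deletes only edges with arrowheads at the fixed vertex, so the induced random graph after fixing $S\setminus S'$ is exactly the induced subgraph on $S'$, as the Evans--Richardson operations require. Concatenating these sequences reaches random set $C$ with $C$ a single district, so $C$ is intrinsic in the sense of \cite[Definition~33]{richardson2023nested}.

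\emph{Fixing $\Rightarrow$ Evans--Richardson.} Let $C$ be a district of a reachable CADMG. By the terminal-set fact above, some valid sequence $v_1,\ldots,v_r$ reaches random set exactly $C$. $C$ is bidirected-connected, and $C\subseteq I_{\mathcal H}(C)$ always holds, so it remains to show $I_{\mathcal H}(C)\subseteq C$. I will induct along the sequence. Write $R_j$ for the random set after fixing $v_1,\ldots,v_j$. The claim is that the closure computed in $\mathcal H[R_j]$ equals the closure computed in $\mathcal H$, provided both contain $C$, and that $I_{\mathcal H[R_{j-1}]}(C)$ never contains the fixed vertex $v_j$. The key input is fixability of $v_j$ in the current graph: no other vertex of $\dis(v_j)$ is a descendant of $v_j$. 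Suppose, for contradiction, that $v_j$ survived an ancestral restriction toward $C\subseteq R_j$. Then it would be an ancestor of $C$. If it also survived the subsequent district restriction, it would be bidirected-connected to $C$ within the current set. Following the directed path from $v_j$ into $C$, one obtains a district-mate of $v_j$ that is a proper descendant of $v_j$, contradicting fixability. This is where I expect the main obstacle. The district relevant to fixability is the district in $\mathcal H[R_{j-1}]$, whereas the closure works with districts of progressively smaller induced subgraphs, so the relevant district may shrink as restrictions proceed. I will first try the monotonicity argument: districts and ancestor sets can only shrink under restriction to subsets. Hence if $v_j$ is bidirected-connected to $C$ inside some sub-set, it is so in $\mathcal H[R_{j-1}]$, and if it is an ancestor inside a sub-set, it is one in $\mathcal H[R_{j-1}]$. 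Granting this, the contradiction goes through verbatim. Ending at $R_r=C$ then gives $I_{\mathcal H}(C)=C$.

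Finally, the head and tail statements follow from the definitions. In both frameworks the recursive head of an intrinsic $C$ is $C\setminus\pa_{\mathcal H}(C)$, i.e.\ $\operatorname{sterile}_{\mathcal H}(C)$, by \cite[Definition~4.1]{evans2019smooth} and Section~\ref{sec:setup}, and the tail is $\pa_{\mathcal H}(C)$. Both are computed from the same graph $\mathcal H$ and the same set $C$, so once the two notions of intrinsicness agree, the heads and tails agree as well.
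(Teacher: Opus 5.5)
Your ($\Rightarrow$) induction does not close as written. Its invariant is $I_{\mathcal H[R_j]}(C)=I_{\mathcal H}(C)$, but you only argue that $v_j\notin I_{\mathcal H[R_{j-1}]}(C)$. The step from there to $I_{\mathcal H[R_j]}(C)=I_{\mathcal H[R_{j-1}]}(C)$ is never justified. The monotonicity you invoke (ancestor sets and districts can only shrink when the ambient set shrinks) yields only $I_{\mathcal H[R_j]}(C)\subseteq I_{\mathcal H[R_{j-1}]}(C)$, which is the wrong inclusion for concluding $I_{\mathcal H}(C)\subseteq R_r=C$. What you need is that the closure does not shrink when a vertex outside it is removed from the starting set. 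That is a fixed-point fact, not a monotonicity fact: the closure is stable under both restrictions, so every restriction applied to a superset of it returns a superset of it.

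The obstacle you flag is also not the real one. A path inside a subset of $R_{j-1}$ is trivially a path in $\mathcal H[R_{j-1}]$. The genuine danger is that the paths witnessing membership in a closure computed in $\mathcal H$ may pass through the already-fixed $v_1,\dots,v_{j-1}$, which are no longer random when $v_j$ is fixed.

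The paper avoids recomputing the closure at each step. Put $S:=I_{\mathcal H}(C)$ once; stability gives that $S$ is bidirected-connected in $\mathcal H_S$ and that every $s\in S$ has a directed path to $C$ inside $\mathcal H_S$. If $S\ne C$, let $v$ be the first vertex of $S\setminus C$ fixed along the sequence. At that moment all of $S$ is still random, so these edges are present in the current graph. Hence some $c\in C$ lies in $\operatorname{de}(v)\cap\operatorname{dis}(v)$ with $c\ne v$, contradicting fixability. Your induction goes through essentially verbatim if its invariant is $S\subseteq R_j$ rather than equality of recomputed closures.

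The same stability fact is silently used in your ($\Leftarrow$) direction. It starts from $I_{\mathcal H}(C)=C$, and so takes for granted that an Evans--Richardson intrinsic set is closed under $I_{\mathcal H}$, which is part of what the lemma asserts. The paper sidesteps this by applying your two fixing constructions directly to an arbitrary Evans--Richardson reduction sequence ending in $C$. It then obtains $I_{\mathcal H}(C)=C$ as a by-product of the ($\Rightarrow$) argument.
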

\begin{proof}
Both statements concern graphs only; no kernel is involved.
($\Leftarrow$)  A recursively intrinsic $C$ is a district of a graph obtained from $\mathcal H$ by finitely many random-ancestral and district reductions.  A random-ancestral reduction to $A$ is realized by fixing the vertices of $R\setminus A$ one at a time in reverse topological order: each is childless in the current graph, hence fixable.  A district reduction to $D$ is realized by fixing, one at a time, a vertex that is maximal, within a district other than $D$, in a topological order of the current directed graph: such a vertex has no descendant in its own district, hence is fixable, and by \cite[Proposition~24]{richardson2023nested} $D$ remains a district.  Fixing deletes only edges with an arrowhead at the fixed vertex, so after every step the subgraph induced on the random vertices is the vertex-induced subgraph of $\mathcal H$ on that set, exactly as after the corresponding recursive reduction; the districts of the two constructions therefore agree at every stage, and the concatenated sequence is valid with $C$ a district of its terminal graph.
($\Rightarrow$)  Let $C$ be a district of $\phi_\omega(\mathcal H)$ for a valid $\omega$, extended so that $C$ is the entire random set, as shown above.  Since fixing deletes no edge between random vertices, $C$ is bidirected-connected in $\mathcal H$, and it remains so in every vertex-induced subgraph of $\mathcal H$ that retains $C$; consequently each district reduction in the construction of the intrinsic closure retains the single district containing $C$.  Put $S:=I_{\mathcal H}(C)\supseteq C$ \citep[Definition~4.3]{evans2019smooth}.  Stability of $S$ under both reductions then gives that $S$ is bidirected-connected in $\mathcal H_S$ and that every $s\in S$ has a directed path to $C$ inside $\mathcal H_S$.  If $S\ne C$, let $v$ be the first vertex of $S\setminus C$ fixed by $\omega$.  When $v$ is fixed every vertex of $S$ is still random, so the bidirected edges inside $S$ and a directed path from $v$ to some $c\in C$ inside $\mathcal H_S$ are present; hence $c\in\operatorname{de}(v)\cap\operatorname{dis}(v)$ with $c\ne v$, contradicting fixability.  Thus $S=C$; since $S$ is produced by recursive reductions and is bidirected-connected, $C$ is recursively intrinsic.  Heads and tails are determined by $C$ and the edges of $\mathcal H$.
\end{proof}

Throughout, $\mathcal P^c(\cG)$ is the Markov class of \cite[Theorem~16]{richardson2023nested}, which states $\mathcal P^c_f(\cG)=\mathcal P^c_l(\cG,\prec)=\mathcal P^c_m(\cG)=\mathcal P^c_a(\cG)$; through the Tian factorization characterization $\mathcal P^c_f$, membership says that for every ancestral $A$ the margin factorizes as $\prod_{D\in\cD(\cG_A)}q_D(x_D\mid x_{\pa(D)\setminus D})$.
The induction below carries recursive factorization, not $\mathcal P^c$: the latter is not universally closed under arbitrary fixing of arbitrary Markov kernels --- if it were, every ordinarily Markov kernel would be nested Markov, which is exactly what Verma constraints deny.

\medskip\noindent\emph{Reassembly.}\quad Let $\mathcal H$ be a finite-state \emph{reduced} CADMG and, for each $D\in\cD(\mathcal H)$, let $r_D>0$ recursively factorize according to $\mathcal H[D]$; we claim that $\prod_D r_D$ recursively factorizes according to $\mathcal H$.
First consider $\cD(\mathcal H)=\varnothing$: by the empty reduced-CADMG convention above, the empty product is $q_\varnothing\equiv1$, which recursively factorizes by that convention.
Otherwise, by Proposition~\ref{prop:standing-finite-state-import}(i) at each $\mathcal H[D]$, $r_D=\Phi_{\mathcal H[D]}(\eta^{(D)})$.
By the no-sharing clause of Lemma~\ref{lem:algebraic-district-factorization}, the tagged coordinate blocks of the $\mathcal H[D]$ partition those of $\mathcal H$, so the $\eta^{(D)}$ assemble to a single $\eta\in U_{\mathcal H}$ with $\eta|_D=\eta^{(D)}$, and that lemma gives $\Phi_{\mathcal H}(\eta)=\prod_D r_D$, strictly positive.
By Corollary~\ref{lem:algebraic-normalization}, $\sum_{x_R}\Phi_{\mathcal H}(\eta)(x_R\mid x_W)=1$ for every $\eta$, so this array is a probability kernel --- normalization does not follow from the district factors being normalized when the district quotient has a directed cycle, and Corollary~\ref{lem:algebraic-normalization} supplies it as an algebraic identity with no order at all.
Proposition~\ref{prop:standing-finite-state-import}(i) now applies and gives recursive factorization.
Proposition~\ref{prop:standing-finite-state-import} and the chart are invoked directly on $\mathcal H$, which is a reduced CADMG; that is the hypothesis under which both are available.

\begin{lemma}[Recursive factorization implies Markov membership]\label{lem:rf-implies-markov}
Let $\mathcal H$ be a reduced finite-state CADMG with random set $R$ and fixed set $W$, and let $q_R>0$ recursively factorize according to $\mathcal H$ \citep[Definition~3.3]{evans2019smooth}.  Then $q_R\in\mathcal P^c(\mathcal H)$, the Markov class of \cite[Theorem~16]{richardson2023nested}.  If $\widetilde{\mathcal H}$ adjoins childless fixed vertices to $\mathcal H$ and $\widetilde q$ is the constant extension of $q\in\mathcal P^c(\mathcal H)$, then $\widetilde q\in\mathcal P^c(\widetilde{\mathcal H})$.  The restriction to reduced CADMGs is necessary.
\end{lemma}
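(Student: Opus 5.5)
I would prove membership in $\mathcal P^c(\mathcal H)$ through the Tian factorization characterization $\mathcal P^c_f$ recalled just before the lemma. That is, I would show that for every ancestral set $A\subseteq R\cup W$ of $\mathcal H$, the margin of $q_R$ on $A_R:=R\cap A$ factorizes over the districts of the induced graph as $\prod_{D\in\cD(\mathcal H_{A_R})}q_D(x_D\mid x_{\pa(D)\setminus D})$.

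\emph{Step one: reduce to the top level.} By \textup{(RF2)}, applied through the canonical retained-margin convention of this appendix, the margin on an ancestral $A$ with $A_R\ne R$ is independent of the discarded fixed arguments. It is a strictly positive kernel that recursively factorizes according to the reduced CADMG $\mathcal H[A_R]$. The case $A_R=R$ is $q_R$ itself.

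\emph{Step two: read off the district product.} For any such positive recursively factorizing kernel, \textup{(RF1)} gives the district product directly, with probability-kernel factors indexed by $x_{\pa(D)\setminus D}$. Two cases need separate handling. If there is a single district, the factorization is tautological. The empty case is covered by the empty reduced-CADMG convention. Equivalently, I could use Proposition~\ref{prop:standing-finite-state-import}(i), Lemma~\ref{lem:algebraic-district-factorization} and Lemma~\ref{lem:signed-factor-positivity} to write the margin as $\Phi_{\mathcal H[A_R]}(\eta|_{A_R})$ and split it into positive district kernels. I must also check that the districts of $\mathcal H[A_R]$ are the districts of the vertex-induced graph used in $\mathcal P^c_f$, and that its parent sets agree with $\pa_{\mathcal H}$. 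This holds because ancestrality keeps every parent of $A_R$ inside $A$.

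\emph{Step three: constant extension.} Let $\widetilde{\mathcal H}$ adjoin childless fixed vertices $Z$, and let $\widetilde q$ be constant in $x_Z$. The ancestral sets of $\widetilde{\mathcal H}$ are exactly $A\cup Z'$ with $A$ ancestral in $\mathcal H$ and $Z'\subseteq Z$. The vertices of $Z$ are isolated, so they join no district and appear in no $\pa(D)$. The margins of $\widetilde q$ are therefore the constant extensions of those of $q$, and the same Tian factors serve. This gives $\widetilde q\in\mathcal P^c(\widetilde{\mathcal H})$.

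\emph{Step four: the necessity remark.} For this I would exhibit a small non-reduced example. Take a fixed vertex with no child together with a kernel that depends on it. Such a kernel is not a kernel of the form required by $\mathcal P^c_f$, since the Tian factors cannot carry that argument. This shows that recursive factorization read on an unreduced graph is not meaningful without the reduction.

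The main obstacle is the bookkeeping in step two: matching the fixed-argument sets of the \textup{(RF2)} margins with those in the Richardson et al.\ Tian characterization. In particular, I need that $W_{A_R}=\pa_{\mathcal H}(A_R)\setminus A_R$ coincides with the fixed arguments that $\mathcal P^c_f$ allows. I would settle this with the ancestrality argument from the paragraph defining $\mathfrak A_R(\mathcal H)$.
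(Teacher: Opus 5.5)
Your steps one through three are the paper's proof. You partition on $R\cap A$. For proper $A$ you take the (RF2) kernel on $\mathcal H[R\cap A]$, including its independence of the discarded fixed arguments, and read the Tian product off (RF1). At the top level you use reducedness, so that $A\supseteq W$. For the constant extension you use that childless fixed vertices are isolated, so ancestral sets, districts and $\pa(D)\setminus D$ are unchanged. One small correction: ancestrality is what matches the parent sets, but $\cD(\mathcal H[R\cap A])=\cD(\mathcal H_A)$ holds because both graphs keep exactly the bidirected edges inside $R\cap A$.

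The gap is in step four. To show that the restriction to reduced CADMGs is necessary, you need an unreduced CADMG and a kernel that \emph{satisfies} the hypothesis but is not in $\mathcal P^c$. Here the hypothesis means recursive factorization in the sense of Definition~3.3 applied verbatim to the unreduced graph. You only check the failure of the conclusion, and your example does not fix the random set tightly enough for the hypothesis to hold.

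If $|R|\ge2$, clause (RF1) as displayed is imposed even when there is a single district. Its factor signature $x_{\pa(D)\setminus D}$ already excludes a childless fixed vertex $z$, so the verbatim definition also rejects a kernel depending on $x_z$. Such a kernel is then no counterexample.

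The example works only with a singleton random set $R=\{v\}$ and an isolated fixed vertex $z$. There the base case of Definition~3.3 imposes no condition, so any $q(x_v\mid x_z)>0$ that genuinely depends on $x_z$ is accepted. Meanwhile $\mathcal P^c_f$ at $A=R\cup W$ forces $q$ to be a function of $x_v$ alone, since $z\notin\pa(v)$.

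Your closing sentence, that recursive factorization on an unreduced graph ``is not meaningful'' without reduction, is also not the assertion to be proved. The assertion is that the implication from recursive factorization to Markov membership fails once reducedness is dropped.
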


\begin{proof}
\emph{Markov membership.}\quad Let $\mathcal H$ be reduced and let $q_R>0$ recursively factorize according to $\mathcal H$.
By Theorem~16 it is enough to exhibit the $\mathcal P^c_f$ factorization at every ancestral $A$.
First consider $R=\varnothing$, when the first two cases below coincide because $R\cap A=\varnothing=R$.  Reducedness then forces $W=\varnothing$, the kernel is $q_\varnothing=1$, and the Tian product is empty.
Now assume $R\ne\varnothing$ and partition on $R\cap A$.
If $R\cap A=\varnothing$, the margin is the unit empty kernel and the factorization is the empty product.
If $R\cap A=R$, ancestrality and reducedness give $W\subseteq A$, hence $A=R\cup W$; the factorization is (RF1) if $|R|\ge2$, and if $|R|=1$, say $R=\{v\}$, reducedness gives $W=\pa_{\mathcal H}(v)\setminus\{v\}$, so $q_v(\cdot\mid x_W)$ already carries exactly the signature of the sole Tian factor and there is nothing to prove.
If $\varnothing\subsetneq R\cap A\subsetneq R$, invoke (RF2) first: it supplies the correctly indexed recursively factorizing kernel on $\mathcal H[R\cap A]$, \emph{including its independence of the discarded fixed arguments} --- taking the margin as the sole Tian factor would not by itself establish that independence, which for proper $A$ is (RF2)'s own clause.
If $R\cap A$ is a singleton, that kernel is its sole Tian factor; otherwise its (RF1) clause gives the district factorization over $\cD(\mathcal H[R\cap A])=\cD(\mathcal H_A)$ --- both graphs retain exactly the bidirected edges inside $R\cap A$, so their components agree --- with probability-kernel factors conditioned on $\pa(D)\setminus D\subseteq A$ by ancestrality.
This proves the first assertion of the lemma.

The restriction to reduced CADMGs is necessary.
At the generality of arbitrary finite-state CADMGs, take $R=\{v\}$, an isolated fixed vertex $z$, and any $q(x_v\mid x_z)>0$ genuinely depending on $x_z$: if the recursive clauses of Definition~3.3 are extended verbatim to this unreduced CADMG, the singleton case imposes no condition on the extra argument $x_z$, so the extended definition accepts $q$, while $\mathcal P^c_f$ at $A=R\cup W$ forces $q=q_{\{v\}}(x_v)$, since $z\notin\pa(v)$.
So that kernel would recursively factorize under the verbatim extension and is not Markov; the standing convention of \cite{evans2019smooth} that every displayed fixed vertex has a child --- the reduction convention adopted here --- is exactly what excludes the example, and the raw fixing graphs that temporarily violate the convention are handled by reduction, through the constant-extension assertion of Lemma~\ref{lem:rf-implies-markov}.

\emph{Constant extension.}\quad Let $\widetilde{\mathcal H}$ adjoin a set $Z$ of childless fixed vertices to $\mathcal H$ and let $\widetilde q(x_R\mid x_{W\cup Z}):=q(x_R\mid x_W)$ with $q\in\mathcal P^c(\mathcal H)$.
By Theorem~16 it suffices to verify $\mathcal P^c_f(\widetilde{\mathcal H})$.
Each $z\in Z$ is fixed, so no edge has an arrowhead at it, and childless, so it has no outgoing edge either: $z$ is isolated, hence has no descendants other than itself, is an ancestor of no random vertex, and is a parent of no vertex.
Consequently, for every ancestral $A$ of $\widetilde{\mathcal H}$, the set $A\setminus Z$ is ancestral in $\mathcal H$, and $\widetilde{\mathcal H}_A$ has the same random vertices, the same districts and the same $\pa(D)\setminus D$ as $\mathcal H_{A\setminus Z}$.
The factorization supplied by $q\in\mathcal P^c_f(\mathcal H)$ is therefore verbatim a factorization of $\widetilde q(x_{R\cap A}\mid x_{W\cup Z})$, which is the same function; Theorem~16 returns $\widetilde q\in\mathcal P^c(\widetilde{\mathcal H})$.
The transfer is routed through $\mathcal P^c_f$ deliberately: the augmented-graph construction behind $\mathcal P^c_a$ treats fixed vertices specially, so no all-four-properties shortcut is invoked.
\end{proof}

\begin{proposition}[Compatibility with RRS fixing, full form]\label{prop:er-rrs-compatibility-full}
Let $\cG$, $p$ and $P$ be as in Proposition~\ref{prop:er-equivalence-rrs-compatibility}, suppose the equivalent conditions of its clause \textup{(i)} hold, and let $\omega$ be a valid fixing sequence with reachable random set $R$, the letter $w$ being reserved for a single fixable vertex.  Fixing along $\omega$ reaches the raw pair $\widetilde\cG_R:=\phi_\omega(\cG)$, $\widetilde q_R:=\phi_\omega(p;\cG)$, and reduction produces the reduced pair $(\mathcal H_R,q_R)$.  Then:
\textup{(a)} $\widetilde q_R>0$ and is constant in the arguments indexed by the childless fixed vertices of $\widetilde\cG_R$, so $\operatorname{red}$ applies;
\textup{(b)} $q_R:=\operatorname{red}_R(\widetilde q_R)$ is a strictly positive kernel that recursively factorizes on the reduced CADMG $\mathcal H_R$; hence $q_R\in\mathcal P^c(\mathcal H_R)$ and $\widetilde q_R\in\mathcal P^c(\widetilde\cG_R)$ by Lemma~\ref{lem:rf-implies-markov};
\textup{(c)} $q_R$ depends on $\omega$ only through $R$; and for every reachable reduced pair $(\mathcal H_0,q_{R_0})$ reached by a valid sequence $\omega_0$, every $C\in\cI(\mathcal H_0)$, and every admissible Evans--Richardson recursive derivation from $\mathcal H_0$ to $C$, choosing any valid RRS fixing sequence $\sigma$ implementing that derivation,
\[
    r_C =\operatorname{red}_C\,\phi_\sigma(q_{R_0};\mathcal H_0) =\operatorname{red}_C\,\phi_{\omega_0\circ\sigma}(p;\cG) =\operatorname{red}_C\,\phi_{V\setminus C}(p;\cG) =q_C ,
\]
the middle equality by the raw lift and the commutation of fixing with constant extension followed by reduction, the third by Theorem~31 of \cite{richardson2023nested} applied in $\cG$, where nested membership has been established; order-invariance removes the dependence on the chosen $\sigma$, so the conclusion is derivation-level.  Here $r_C$ is the recursively derived kernel of Proposition~\ref{prop:standing-finite-state-import}\textup{(iii)} and $q_C$ the intrinsic kernel of \eqref{eq:fixing-kernel-bridge}.
\end{proposition}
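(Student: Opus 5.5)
The proof will be an induction along the valid fixing sequence $\omega=(w_1,\ldots,w_k)$. It carries one invariant through every stage $j$: the raw kernel $\widetilde q_{R_j}:=\phi_{(w_1,\ldots,w_j)}(p;\cG)$ is strictly positive and constant in the childless fixed arguments of $\phi_{(w_1,\ldots,w_j)}(\cG)$, and its reduction $q_{R_j}$ recursively factorizes on the reduced CADMG $\mathcal H_{R_j}$. The base case $j=0$ is clause (i)(c) of Proposition~\ref{prop:er-equivalence-rrs-compatibility}: $\cG$ has no fixed vertices, so nothing needs reducing.

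For the inductive step, let $w$ be fixable in $\mathcal H_{R_j}$. Because deleting childless fixed vertices changes neither districts nor descendants among random vertices, fixing commutes with constant extension followed by reduction. It therefore suffices to fix $w$ in the reduced pair. By Proposition~\ref{prop:standing-finite-state-import}(i), $q_{R_j}=\Phi_{\mathcal H_{R_j}}(\eta)$ for a unique $\eta$, with $\eta\in\Omega_{\mathcal H_{R_j}}$. Lemma~\ref{lem:algebraic-district-factorization} and Lemma~\ref{lem:signed-factor-positivity} split $q_{R_j}$ into positive district factors $r_D$. Fixability says $w$ has no descendant in its district $D_w$. Hence $D_w\setminus\{w\}$ together with its ancestral closure inside $D_w$ is handled by (RF2): the Markov-blanket divisor $q(x_w\mid x_{\operatorname{mb}(w)})$ equals the conditional of $r_{D_w}$ given its other arguments.

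The quotient $\phi_w(q_{R_j})$ is then the product of the untouched factors $r_D$, $D\ne D_w$, with $r_{D_w}/r_{D_w}(x_w\mid\cdot)$. The latter is the kernel on $D_w\setminus\{w\}$ with $w$ fixed. I will show this kernel recursively factorizes on the reduced graph of $D_w\setminus\{w\}$. Its districts are those of $\cG_{D_w\setminus\{w\}}$, by \cite[Proposition~24]{richardson2023nested} applied locally. Each of its district factors is, by Proposition~\ref{prop:standing-finite-state-import}(iii), a recursively derived kernel of the single-district chart, hence recursively factorizing.

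The Reassembly paragraph then makes the product recursively factorize on the new reduced CADMG. This step uses no district order, because normalization comes from Corollary~\ref{lem:algebraic-normalization}. Positivity holds because each divisor is a conditional of a positive kernel. Constancy in the newly childless fixed arguments follows from the (RF2) independence clause: the retained margin does not depend on vertices that are no longer parents. Lemma~\ref{lem:rf-implies-markov} then gives $q_R\in\mathcal P^c(\mathcal H_R)$, and its constant-extension clause gives $\widetilde q_R\in\mathcal P^c(\widetilde\cG_R)$. This proves (a) and (b). Applying (b) to every reachable set shows that $p$ satisfies the nested Markov property of \cite[Definition~27]{richardson2023nested}.

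For (c), independence of $q_R$ from $\omega$ is then \cite[Theorem~31]{richardson2023nested}, which needs only the nested membership just established. For the derivation-level identity, I will translate an Evans--Richardson derivation into a valid RRS sequence $\sigma$ as in the proof of Lemma~\ref{lem:intrinsic-set-bridge}. Random-ancestral reductions become reverse-topological fixings, and district reductions become fixings of maximal vertices in other districts. On the chart side, each random-ancestral step is a marginalization, which agrees with fixing a childless vertex: the divisor is then the conditional itself. Each district step extracts the (RF1) factor, which agrees with fixing the other districts by the same factor computation as in the inductive step. The concatenation $\omega_0\circ\sigma$ is valid in $\cG$, so Theorem~31 identifies the result with $q_C$.

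The main obstacle is the inductive step's claim that dividing by the Markov-blanket conditional yields the district-$D_w$ factor on $D_w\setminus\{w\}$ and that this factor is recursively factorizing rather than merely ordinarily Markov. The surrounding text warns that fixing does not preserve $\mathcal P^c$ in general. My first attempt at this step will work entirely in chart coordinates, aiming to show that $\phi_w$ corresponds to discarding the blocks whose intrinsic sets contain $w$ and slicing tails at $x_w$. The tool for this is Lemma~\ref{lem:sterile-marginalization} applied after moving $w$ to a sterile position within its district via fixability.
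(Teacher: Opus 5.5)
Your overall architecture is the paper's. You run an induction along $\omega$ with a positivity, constancy and recursive-factorization invariant, use Proposition~24 of \cite{richardson2023nested} for the split of $D_w\setminus\{w\}$, reassemble through algebraic normalization, apply both clauses of Lemma~\ref{lem:rf-implies-markov}, and use Theorem~31 for (c). The gap is the identity you yourself flag as the crux, $q(x_w\mid x_{\operatorname{mb}(w)})=r_{D_w}/\sum_{x_w}r_{D_w}$; the argument you give does not establish it.

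The divisor is a conditional of the whole product $\prod_D r_D$. Moreover, $x_w$ enters, as a fixed parent argument, every factor $r_D$ with $D\ne D_w$ that contains a child of $w$. Applying (RF2) to $D_w\setminus\{w\}$ inside $D_w$, or Lemma~\ref{lem:sterile-marginalization} inside $\mathcal H[D_w]$ as in your fallback plan, says nothing about those factors. It only shows that the changed factor $\sum_{x_w}r_{D_w}$ recursively factorizes on $\mathcal H[D_w\setminus\{w\}]$, which is a different claim. Your part (c) reuses the same factor computation for district reductions and childless-vertex fixings, so the gap propagates there.

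The paper closes this step by carrying $\widetilde q_j\in\mathcal P^c(\widetilde{\mathcal H}_j)$ in the invariant and applying Proposition~25 of \cite{richardson2023nested}, which is exactly this identity. It then identifies that proposition's factors with the (RF1) factors through Proposition~3.6 and Corollary~3.9 of \cite{evans2019smooth} and Lemma~15 of \cite{richardson2023nested}. Your invariant already yields that membership at every stage through both clauses of Lemma~\ref{lem:rf-implies-markov}, so this route is open to you.

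A chart-internal repair also works, and it would make your route genuinely self-contained.
\begin{itemize}
\item By fixability and acyclicity, $\operatorname{de}(w)\setminus\{w\}$ is disjoint from $D_w\cup\pa(D_w)$: a strict descendant lying in $\pa(D_w)$ would yield a strict descendant of $w$ inside $D_w$. Its complement $A_R$ in the random set is random-ancestral.
\item Summing $\operatorname{de}(w)\setminus\{w\}$ out of the whole kernel by iterating Lemma~\ref{lem:sterile-marginalization} gives $\Phi_{\mathcal H[A_R]}(\eta|_{A_R})$. In it, $w$ is sterile and $D_w$ is still its district.
\item By Lemma~\ref{lem:algebraic-district-factorization}, $x_w$ then occurs only in the $D_w$ factor. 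That factor equals $r_{D_w}$ by the nesting identity $\mathcal H[A_R][D_w]=\mathcal H[D_w]$.
\item Every argument of $r_{D_w}$ lies in $\operatorname{mb}(w)\cup\{w\}$ or among the fixed vertices. So marginalizing further leaves a cofactor free of $x_w$, and the divisor identity follows without Proposition~25 or the separate factor identification.
\end{itemize}
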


Since \textup{(a)}--\textup{(b)} hold for every valid sequence, $p$ satisfies Definition~27 of \cite{richardson2023nested}, and Theorem~31 of that source then applies; together with \textup{(a)}--\textup{(b)} this is clause \textup{(ii)} of Proposition~\ref{prop:er-equivalence-rrs-compatibility}, and \textup{(c)} contains its clause \textup{(iii)}.

\begin{proof}
\emph{The induction: clauses (a)--(b), that is, clause (ii) of Proposition~\ref{prop:er-equivalence-rrs-compatibility}.}\quad
Along a valid $\omega=\langle w_1,\ldots,w_k\rangle$, put, for $0\le j\le k$,
\[
    \widetilde{\mathcal H}_j:=\phi_{\langle w_1,\ldots,w_j\rangle}(\cG), \qquad \widetilde q_j:=\phi_{\langle w_1,\ldots,w_j\rangle}(p;\cG),
\]
\[
    (\mathcal H_j,q_j):=\operatorname{red}(\widetilde{\mathcal H}_j,\widetilde q_j), \qquad R_j:=V\setminus\{w_1,\ldots,w_j\}.
\]
Two conditioning domains must be kept apart: the raw kernel $\widetilde q_j$ conditions on $x_{\widetilde W_j}$ with $\widetilde W_j:=V\setminus R_j$, while the reduced kernel $q_j$ conditions on $x_{W_j}$ with $W_j:=\pa_{\widetilde{\mathcal H}_j}(R_j)\setminus R_j$.
It is (I1) below that identifies $\widetilde q_j$ with the constant extension of $q_j$ from the reduced to the raw domain; displayed conditionings are on $x_{W_j}$ only after that identification.

\smallskip\noindent\emph{The invariant.}\quad At every $j$:
\begin{enumerate}[label=(I\arabic*),leftmargin=4em,nosep]
    \item $\widetilde q_j>0$ and is constant in the arguments indexed by the childless fixed vertices of $\widetilde{\mathcal H}_j$, so $\operatorname{red}$ is well defined;
    \item $q_j>0$ recursively factorizes according to $\mathcal H_j$;
    \item $\widetilde q_j\in\mathcal P^c(\widetilde{\mathcal H}_j)$, so Proposition~25 of \cite{richardson2023nested} applies to the raw pair;
    \item deletion is inert for the next step, in three clauses: (I4a) the district collections of $\widetilde{\mathcal H}_j$ and $\mathcal H_j$ are equal; (I4b) for every vertex of the common random set $R_j$, its descendant set, fixability status and Markov blanket computed in the two graphs agree; (I4c) for every $v\in\mathbb F(\widetilde{\mathcal H}_j)=\mathbb F(\mathcal H_j)$ --- the equality being (I4b) --- after the (I1) identification of $\widetilde q_j$ with the constant extension of $q_j$, the fixing divisors $\widetilde q_j(x_v\mid x_{\operatorname{mb}_{\widetilde{\mathcal H}_j}(v)})$ and $q_j(x_v\mid x_{\operatorname{mb}_{\mathcal H_j}(v)})$ agree as functions on the identified domain, one $v$ at a time.
\end{enumerate}
The $R_j$ scoping in (I4b) is needed --- the two graphs have different vertex sets, so an unrestricted clause is not well formed for the deleted vertices --- and (I4c) is not a graph statement at all: graph isolation gives graphical equalities, never equality of divisor functions, which is why it uses (I1).
Why the graphical clauses hold: in a CADMG no edge has an arrowhead at a fixed vertex and bidirected edges have both endpoints random, so a fixed vertex's only possible edges are outgoing, and a childless fixed vertex is isolated --- the edge rules plus childlessness, the premise of everything below; $\widetilde{\mathcal H}_j$ is $\mathcal H_j$ plus isolated vertices, whence (I4a) and (I4b).

\smallskip\noindent\emph{Base case $j=0$.}\quad $\widetilde{\mathcal H}_0=\cG$ has no fixed vertices and $\operatorname{red}$ is the identity.
(I1)'s constancy clause is vacuous but its positivity clause is the hypothesis $p>0$; (I4) holds because the raw and reduced pairs coincide; (I2) is clause (i) of Proposition~\ref{prop:er-equivalence-rrs-compatibility}; (I3) follows from Lemma~\ref{lem:rf-implies-markov} directly, its constant-extension part not being needed since the pairs coincide.

\smallskip\noindent\emph{Inductive step: choice of the vertex.}\quad Fix $r:=w_{j+1}\in\mathbb F(\widetilde{\mathcal H}_j)$ and write $D^r$ for its district.
By (I4a)--(I4b), $D^r$ is also the district of $r$ in $\mathcal H_j$ and $r\in\mathbb F(\mathcal H_j)$.
Fixability, $\operatorname{de}_{\mathcal H_j}(r)\cap D^r=\{r\}$, says that no other vertex of $D^r$ is a descendant of $r$; hence $r$ is sterile among the random vertices of $\mathcal H_j[D^r]$ and $D^r\setminus\{r\}$ is random-ancestral in that CADMG.
This is what makes the margin $\sum_{x_r}q_{D^r}$ a retained ancestral margin of $q_{D^r}$, the object to which \textup{(RF2)} is applied in Steps~3 and~5 below.

\smallskip\noindent\emph{Step 1 (apply fixing).}\quad Apply Proposition~25 to the raw pair, legitimate by (I3): writing $\bar q_{D^r}:=\sum_{x_r}q_{D^r}$, it gives $\widetilde q_{j+1}=\bar q_{D^r}\cdot\prod_{D\ne D^r}q_D$, every other factor unchanged; if $D^r=\{r\}$ then $\bar q_{D^r}=1$, the unit empty kernel --- the generic case in a DAG --- and (RF2) is not invoked at all.

\smallskip\noindent\emph{Step 2 (identify factors).}\quad Identify Proposition~25's factors with the (RF1) district factors of $q_j$ explicitly.  Fix a total order $\prec_j$ on $R_j$ that is topological for the directed part of $\mathcal H_j$, and write $\operatorname{pre}_j(d):=\{d'\in R_j:d'\prec_j d\}$.  Then for each $D\in\cD(\mathcal H_j)$, $r_D=\prod_{d\in D}q_j(x_d\mid x_{\operatorname{pre}_j(d)},x_{W_j})=q_D$ --- the first equality by Proposition~3.6 of \cite{evans2019smooth}, with its Corollary~3.9 for iterative uniqueness and strict positivity removing version ambiguity, the second by Lemma~15 of \cite{richardson2023nested}, reconciled with its display (16) by (17).
By (I1) and (I4) the constant extension makes the raw and reduced conditional products the same function, so the identity holds on either side of $\operatorname{red}$; this is what lets (RF2) be applied to a Proposition-25 factor in Step~5.

\smallskip\noindent\emph{Step 3 (remove irrelevant arguments).}\quad Constancy of $\widetilde q_{j+1}$ in every childless fixed argument of $\widetilde{\mathcal H}_{j+1}$, in three cases --- this step establishes (I1) at $j+1$.
\begin{itemize}[nosep,leftmargin=*]
    \item If $r$ is childless in $\widetilde{\mathcal H}_{j+1}$ it disappears outright, the changed factor being $\sum_{x_r}q_{D^r}$; moreover such an $r$ occurs in no unchanged factor, since a parent of another district would have a child there and would not be childless.
    \item If an earlier fixed vertex $u$ becomes childless because fixing $r$ deleted $u\to r$, it occurs in no unchanged factor, being no longer a parent of any other district; in the changed factor, split on $D^r$: if $D^r=\{r\}$ then $\bar q_{D^r}=1$ and $u$ disappears by normalization --- (RF2) is not invoked, and could not be, Definition~3.3 imposing no clause at $|R|=1$ --- while if $|D^r|\ge2$, the independence clause of Lemma~\ref{lem:sterile-marginalization}, which is (RF2)'s clause here, says the margin depends on the fixed configuration only through the surviving parents.
    \item A vertex already childless at stage $j$ stays isolated, and by (I1) and (I4) neither the next fixing divisor nor the resulting ratio depends on it, so its constancy persists --- (I1) at $j+1$ quantifies over all childless fixed vertices, not only the new ones.
\end{itemize}
Positivity of $\widetilde q_{j+1}$: a ratio of positive quantities.

\smallskip\noindent\emph{Step 4 (reduce).}\quad Define $q_{j+1}:=\operatorname{red}_{R_{j+1}}(\widetilde q_{j+1})$ on $\mathcal H_{j+1}$, legitimate because Step~3 established (I1) at $j+1$ --- this step uses it.

\smallskip\noindent\emph{Step 5 (split the changed factor).}\quad Split the changed factor --- Proposition~24 allows $D^r\setminus\{r\}$ to split, so $\bar q_{D^r}$ must be factored over the surviving districts, which may number zero, one, or more.
Take the cases first:
\begin{itemize}[nosep,leftmargin=*]
    \item with \emph{zero} surviving districts, i.e.\ $D^r=\{r\}$, the replacement family is empty and its product is the unit kernel;
    \item with \emph{one} surviving district $E=D^r\setminus\{r\}$, set $r_E:=\bar q_{D^r}$, its recursive factorization coming directly from (RF2), (RF1) not being invoked --- at a single district it is the tautology $q=q$;
    \item with \emph{two or more}, and only here, apply the (RF1) clause of the kernel that (RF2) produced.
\end{itemize}
Under the empty- and singleton-product conventions the three cases share one statement, $\bar q_{D^r}=\prod_{E\in\cD(\mathcal H_j[D^r\setminus\{r\}])}r_E$, each $r_E$ recursively factorizing on $(\mathcal H_j[D^r\setminus\{r\}])[E]$; and every factor is positive --- each unchanged $q_D>0$, $\bar q_{D^r}>0$ as a finite sum of positive terms, and each $r_E>0$ by the signed-factor positivity argument inside $D^r\setminus\{r\}$.

\smallskip\noindent\emph{Step 6 (identify district graphs): district sets.}\quad Identify the district CADMGs of $\mathcal H_{j+1}$ --- the district \emph{sets} are not enough.
The vertices $\operatorname{red}$ deleted at stage $j$ are isolated because they are fixed and childless; \emph{given} that isolation, graph-fixing commutes with their deletion by isolation alone, while kernel-fixing commutes only after the (I1) constant-extension identification, with (I4c) supplying the divisor equality and (I4a)--(I4b) the graphical and fixability agreement; the reduction at stage $j+1$ deletes only fixed vertices, which are in no district.
Hence $\cD(\mathcal H_{j+1})=\cD(\phi_r(\mathcal H_j))$, and the fixability premise carries across the reduction --- $r\in\mathbb F(\widetilde{\mathcal H}_j)$ gives $r\in\mathbb F(\mathcal H_j)$ by (I4b), the vertexwise clause.
Since $r$ is fixable in $\mathcal H_j$, Proposition~24 gives $\cD(\phi_r(\mathcal H_j))=(\cD(\mathcal H_j)\setminus\{D^r\})\cup\cD((\mathcal H_j)_{D^r\setminus\{r\}})$, stated by RRS with the ordinary induced graph; since $(\mathcal H_j)_{D^r\setminus\{r\}}$ and $\mathcal H_j[D^r\setminus\{r\}]$ retain exactly the bidirected edges with both endpoints in $D^r\setminus\{r\}$, their districts agree.

\smallskip\noindent\emph{Step 6, continued: district CADMGs.}\quad Two graph identities then transport each factor to its district CADMG of $\mathcal H_{j+1}$: $\mathcal H_{j+1}[D]=\mathcal H_j[D]$ for $D\ne D^r$, and $\mathcal H_{j+1}[E]=(\mathcal H_j[D^r\setminus\{r\}])[E]$ for every split $E$.
For the first: $r\notin D$, $r$ is not bidirected-adjacent to $D$, so neither the arrowheads into $D$ nor $\pa(D)\setminus D$ change --- $r$'s outgoing edges survive, and a fixed parent is still a parent.
For the second, four clauses: the bidirected edges deleted by fixing are exactly those incident to $r$, and a reduced CADMG retains only bidirected edges with both endpoints in its random set, so every $r\leftrightarrow e$ is absent from both targets alike --- the deletion is invisible to both; fixability of $r$ excludes any $r\to(D^r\setminus\{r\})$, since $\operatorname{de}(r)\cap\operatorname{dis}(r)=\{r\}$; every remaining internal edge of $E$ and every external-parent arrow into $E$ survives fixing and is retained by both reductions; and the vertices deleted by $\operatorname{red}$ are parents of no remaining random vertex, so they change no $\pa(\cdot)\setminus(\cdot)$.

\smallskip\noindent\emph{Step 6, continued: the nesting identity.}\quad Here we also use the general nesting identity $(\mathcal H[C])[B]=\mathcal H[B]$, valid for any CADMG $\mathcal H$ with random set $R$ and every $B\subseteq C\subseteq R$, with no fixability hypothesis --- the typing matters, $C$ must lie in the random set (at its uses in this step, that set is $R_j$ or $D^r\setminus\{r\}$): both sides have random set $B$; both retain exactly the bidirected edges with both endpoints in $B$, a bidirected edge leaving $B$ failing the both-endpoints rule at each stage; both retain every directed edge with arrowhead in $B$, because $\mathcal H[C]$ retains all directed edges into $C\supseteq B$; and the fixed sets agree because $\pa_{\mathcal H[C]}(B)=\pa_{\mathcal H}(B)$.
Fixability of $r$ is still used elsewhere in this step --- it is the hypothesis of Propositions~24 and~25, it makes $D^r\setminus\{r\}$ random-ancestral in $\mathcal H_j[D^r]$ so that (RF2) applies, and it excludes $r\to(D^r\setminus\{r\})$ --- but not for the nesting identity.

\smallskip\noindent\emph{Step 7 (reassemble).}\quad Identify the reduced kernel, then reassemble.
Inside the bracket below, each parent-indexed factor is read as its constant extension to the raw fixed-coordinate domain, so the product is a function on that domain and $\operatorname{red}$ has something to act on; then
\[
    q_{j+1} =\operatorname{red}_{R_{j+1}}\Bigl[\Bigl(\prod_{D\ne D^r}q_D\Bigr)\Bigl(\prod_E r_E\Bigr)\Bigr] =\Bigl(\prod_{D\ne D^r}q_D\Bigr)\Bigl(\prod_E r_E\Bigr),
\]
the second equality meaning that the childless fixed arguments shown irrelevant in Step~3 are suppressed and each factor is regarded on its target district CADMG of $\mathcal H_{j+1}$.
Apply the reassembly paragraph to this family on the reduced $\mathcal H_{j+1}$: Steps~5--6 supply every hypothesis --- positivity, recursive factorization on the correct district CADMGs of $\mathcal H_{j+1}$, and the index set.
This yields (I2) at $j+1$; it is obtained here, at the reassembly, not at Step~5.

\smallskip\noindent\emph{Step 8 (restore Markov membership).}\quad Transfer back to the raw pair: Lemma~\ref{lem:rf-implies-markov} applied to the reduced pair $(\mathcal H_{j+1},q_{j+1})$, then its constant-extension part --- two steps, not one.
This yields (I3) at $j+1$, and Proposition~25 applies again.

\smallskip\noindent\emph{Closing the inductive step: (I4) at $j+1$.}\quad The childless fixed vertices of $\widetilde{\mathcal H}_{j+1}$ are isolated by the edge rules, so $\widetilde{\mathcal H}_{j+1}$ is $\mathcal H_{j+1}$ plus isolated vertices and (I4a)--(I4b) hold at $j+1$ by the argument given for the graphical clauses before the base case; (I1) at $j+1$, from Step~3, identifies $\widetilde q_{j+1}$ with the constant extension of $q_{j+1}$, so for every $v\in\mathbb F(\widetilde{\mathcal H}_{j+1})=\mathbb F(\mathcal H_{j+1})$, whose Markov blankets agree by (I4b), the two fixing divisors are the same ratio of margins of one function on the identified domain, which is (I4c).
With (I1)--(I3) at $j+1$ from Steps~3, 7 and~8, the invariant is restored.

The induction gives (a) at every stage and (b) at the terminal stage, for every valid sequence; this is clause (ii) of Proposition~\ref{prop:er-equivalence-rrs-compatibility}.
Because it proves Markov membership for the raw kernel arising from \emph{every} valid fixing sequence, $p$ satisfies Definition~27 of \cite{richardson2023nested} and is globally nested Markov.
Global nested Markovness of $p$ is the hypothesis under which Theorem~31 (order-independence) and Corollary~32 (parent indexing) of that source apply to $p$; where Theorem~35 is used for a district reduction below, Proposition~29 is applied first, to pass from global nested Markovness to nested factorization.

\medskip\noindent\emph{Clause (c), reduction by reduction; this is clause (iii) of Proposition~\ref{prop:er-equivalence-rrs-compatibility}.}\quad Fix a reachable $R_0\subseteq V$, its reduced pair $(\mathcal H_0,q_{R_0})$, and an intrinsic $C\in\cI(\mathcal H_0)$; fix an admissible Evans--Richardson recursive derivation from $\mathcal H_0$ to $C$, and choose a valid RRS fixing sequence $\sigma$ implementing it --- a derivation does not determine one, the fixing order within a district reduction being free.
For a random-ancestral reduction to $A$, remove the vertices outside $A$ by successively fixing a childless vertex of the current random graph; such a vertex exists in reverse topological order because no vertex outside $A$ is an ancestor of $A$, and Proposition~20 of \cite{richardson2023nested} identifies fixing a childless vertex with marginalization, so this agrees with the Evans--Richardson $\mathfrak m_A$ reduction.
For a district reduction to $D$, Proposition~18 supplies fixable vertices in the other nonempty districts and Proposition~24 shows that fixing can split the current district but cannot merge districts; with global nested Markovness established, Proposition~29 supplies nested factorization and Theorem~35 identifies the resulting $D$ kernel with the $D$ factor of the current reachable kernel, while Proposition~3.6 and Corollary~3.9 of \cite{evans2019smooth} identify that positive factor with the unique recursive $\mathfrak d_D$ kernel.
Thus both kinds of recursive step agree with fixing.

The stepwise-valid fixing constructions concatenate.  Theorem~31 of \cite{richardson2023nested} makes the resulting terminal graph and raw kernel independent of the valid order.  Consequently, if
\[
    \widetilde r_C :=\phi_{R_0\setminus C}\{ \phi_{V\setminus R_0}(p;\cG); \phi_{V\setminus R_0}(\cG)\},
\]
then
\[
    \widetilde r_C(x_C\mid x_{V\setminus C}) =\phi_{V\setminus C}(p;\cG)(x_C\mid x_{V\setminus C}).
\]

Three facts, used by the equalities below, make the chain well founded:
\begin{itemize}[nosep,leftmargin=*]
    \item the chosen $\sigma$, valid for the reduced pair $(\mathcal H_0,q_{R_0})$, lifts to the raw constant-extension pair by (I4);
    \item fixing commutes with constant extension followed by reduction, so the two routes through the diagram agree;
    \item the concatenated sequence $\omega_0\circ\sigma$ makes $C$ a district of a graph reachable from $\cG$, hence $C\in\cI(\cG)$ and the original-law intrinsic kernel $q_C$ is defined at all.
\end{itemize}
The induction invariant and Corollary~32 then provide the canonical parent-indexed equalities
\[
    \operatorname{red}_C(\widetilde r_C) =r_C =q_C ,
\]
each parent-indexed kernel read as its constant extension in the suppressed arguments; that is, $r_C=\operatorname{red}_C\,\phi_\sigma(q_{R_0};\mathcal H_0)=\operatorname{red}_C\,\phi_{\omega_0\circ\sigma}(p;\cG)=\operatorname{red}_C\,\phi_{V\setminus C}(p;\cG)=q_C$, the third equality by Theorem~31 applied in $\cG$.
Proving the chain for the chosen $\sigma$ and then applying order-invariance removes the dependence on the choice, so the conclusion is derivation-level.
Hence \eqref{eq:cadmg-coordinate-extraction} equals \eqref{eq:coordinate-extraction} whenever the CADMG kernel is reached from an ADMG law.  No fixing claim is made for a freestanding CADMG; there the recursively derived $r_C$ is the operative object.
\end{proof}

\subsubsection{The ambient recovery map and the inverse identities}

For completeness, we construct the ambient map rather than attributing it to the literature.  Fix a topological order $\tau$, one complete recursive derivation scheme for every intrinsic set of $\mathcal H$, and a reference state for every variable.  Apply the scheme to an arbitrary positive array $z\in\mathcal O_{\mathcal H} =(0,\infty)^{\cX_R\times\cX_W}$.  An ancestral reduction is computed by finite summation.  At a district reduction, use the selected topological order restricted to the current random set and form the usual conditional-product version.  At each such step, write $\operatorname{pre}_\tau(v)$ for the current random vertices preceding $v$ under the restricted order.  Then
\[
    r_D^z :=\prod_{v\in D} z_{v\mid\operatorname{pre}_\tau(v),W}, \qquad z_{v\mid\operatorname{pre}_\tau(v),W} :=\frac{z_{v,\operatorname{pre}_\tau(v)\mid W}} {z_{\operatorname{pre}_\tau(v)\mid W}}.
\]
Here numerator and denominator denote the corresponding finite margins of the current positive array.  Away from the model, a derived array may retain arguments that the target reduced CADMG does not retain.  Whenever this happens, evaluate those extra arguments at their fixed reference states. Repeating these operations reaches one selected array for every intrinsic set; applying the coordinate ratio then defines an element $\widetilde\Theta_{\mathcal H}(z)\in U_{\mathcal H}$.

Only finitely many sums, products and divisions occur, and every denominator is a positive sum of positive entries.  Thus $\widetilde\Theta_{\mathcal H}$ is a rational, hence $C^\infty$, map on the full-dimensional open set $\mathcal O_{\mathcal H}$.  On a positive recursively factorizing kernel, Proposition~3.6 and Corollary~3.9 identify the selected conditional products with the unique recursive factors. Consequently, the extra arguments are irrelevant there and $\widetilde\Theta_{\mathcal H}=\Theta_{\mathcal H}$ on the model.  Different derivation schemes or reference states may give different extensions away from the model; neither Theorem~31 of \cite{richardson2023nested} nor any other source is being invoked for off-model sequence invariance.  Theorem~5.5 of \cite{evans2019smooth} supplies the sums-and-divisions smoothness observation in binary notation; Appendix~C uses the same recovery operations for finite state spaces, and Corollary~5.6 explicitly returns to that general setting.

Finally, the construction in the preceding subsections gives $\Phi_{\mathcal H}\{\Theta_{\mathcal H}(q_R)\}=q_R$ for every positive recursively factorizing kernel.  Conversely, if $q_R=\Phi_{\mathcal H}(\eta)>0$ is a probability kernel, clause~(i) makes it recursively factorizing and clause~(ii) makes its representing coordinate unique.  Since $\eta$ is one representation, $\widetilde\Theta_{\mathcal H}(q_R)=\eta$.  These are exactly the two identities in \eqref{eq:standing-import-inverses}, completing the proof.
%% A.3 finite-state representation and recovery
%%%%%%%%%%%%%%%%%%%%%%%%%%%%%%%%%%%%%%%%%%%%%%%%%%%%%%%%%%%%%%%%%%%%%%%%%%%%%%
%%  PROOFS APPENDIX --- RETAINED MAINTENANCE NOTES (structure and numbering;
%%  not audit history).
%%
%%  PLACEMENT.  \input from main.tex after A_results_scope (A.1), A_notation
%%  (A.2) and A_finite_state_parameterization_proof (A.3), and before
%%  B_prototype_ledger.  This file is therefore subsection A.4 of Appendix A,
%%  and the prototype ledger is Appendix B.  Every reference to it in the
%%  manuscript is a \ref and follows automatically; ancillary artifacts that
%%  hard-code numbers do not.
%%
%%  ONE SUBSECTION PER MAIN-TEXT SECTION, in the order the results are stated.
%%  Statements are NOT repeated: each proof is headed by \begin{proof}[Proof of
%%  <Result>~\ref{...}], so the printed heading carries the live number.
%%
%%  NOT MOVED.  Proposition~\ref{prop:standing-finite-state-import}: its
%%  "proof" in the main text was already a pointer to
%%  Appendix~\ref{app:finite-state-parameterization-proof}, where the real
%%  argument lives.  Moving the pointer would have created a pointer to a
%%  pointer.
%%%%%%%%%%%%%%%%%%%%%%%%%%%%%%%%%%%%%%%%%%%%%%%%%%%%%%%%%%%%%%%%%%%%%%%%%%%%%%

\subsection{Proofs of the main-text results}\label{app:proofs}

This subsection collects the proofs of results stated in Sections~\ref{sec:setup}--\ref{sec:sequential-union}.  Statements are not repeated; each proof is headed by the result it proves.  Sections~\ref{sec:finite-sample} and~\ref{sec:discussion} state no result that requires a separate proof.

\subsubsection{Proofs in Section~\ref{sec:setup}}\label{app:proofs-setup}

\begin{proof}[Proof of Lemma~\ref{lem:sterile-marginalization}]
    Fix $x_{R_a}$ and $x_W$, and put $N_-:=\operatorname{nc}(x_{R_a})$.  For $t\in\cX_a$, abbreviate
    \[
        F_A^{(t)}(y_A) :=F_A^{\mathcal H}\{\theta;y_A,(x_{R_a},t),x_W\}.
    \]

    If $a\in T(C)=\pa_{\mathcal H}(C)$ for an intrinsic set $C$, then $a$ has a child in $C\subseteq R$, contrary to sterility.  Hence $a$ occurs in no tail, and every $F_A^{(t)}$ is independent of $t$; below we suppress the superscript.

    At the corner value $x_a=k_a$, split the outer sum in \eqref{eq:general-forward-map} according as $a\notin A$ or $a\in A$.  Writing $A=B$ in the first part and $A=B\cup\{a\}$ in the second gives
    \[
        \Phi_{\mathcal H}(\theta)(x_{R_a},k_a\mid x_W)=S_0-S_1,
    \]
    where
    \begin{align*}
        S_0 &:= \sum_{N_-\subseteq B\subseteq R_a}(-1)^{|B\setminus N_-|} \sum_{\substack{y_B\in\widetilde\cX_B\\y_{N_-}=x_{N_-}}} F_B(y_B),\\
        S_1 &:= \sum_{N_-\subseteq B\subseteq R_a}(-1)^{|B\setminus N_-|} \sum_{\substack{y_B\in\widetilde\cX_B\\y_{N_-}=x_{N_-}}} \sum_{z\in\widetilde\cX_a}F_{B\cup\{a\}}(y_B,z).
    \end{align*}

    Indeed, adjoining $a$ changes the sign because $|B\cup\{a\}\setminus N_-|=|B\setminus N_-|+1$.

    For $z\in\widetilde\cX_a$, the noncorner set of $(x_{R_a},z)$ is $N_-\cup\{a\}$.  Every admissible outer set is therefore $B\cup\{a\}$, the inner constraint forces $y_a=z$, and its sign is $(-1)^{|B\setminus N_-|}$.  Consequently,
    \[
        \sum_{z\in\widetilde\cX_a} \Phi_{\mathcal H}(\theta)(x_{R_a},z\mid x_W)=S_1.
    \]

    The equality uses the tail observation above: these are the same functions $F_{B\cup\{a\}}$ that occur in the corner expansion.  Adding the corner and noncorner contributions cancels the two copies of $S_1$ and leaves $S_0$.

    It remains to identify the survivor.  Sterility implies that $a$ is not a random ancestor of any vertex in $R_a$, so $R_a$ is random-ancestral and $\mathcal H[R_a]$ is obtained by the random-ancestral reduction of Definition~2.5 of \cite{evans2019smooth}, and is therefore reachable.  If an intrinsic set $C$ has $H(C)\subseteq R_a$ but $a\in C$, then sterility gives $a\notin\pa_{\mathcal H}(C)$ and hence $a\in H(C)$, a contradiction. Thus every head in $\mathfrak P_{\mathcal H}(B)$, $B\subseteq R_a$, has its whole intrinsic set in $R_a$.  Lemmas~4.9 and~4.13 of \cite{evans2019smooth} identify, respectively, the inherited intrinsic sets, heads and tails and the two recursive-head partitions of $B$.  The products therefore agree term by term.

    Finally, no discarded fixed vertex $w\in W\setminus W_a$ can occur in a surviving tail: such an occurrence would give $w$ a child in an intrinsic set contained in $R_a$, and hence put $w$ in $W_a$.  Thus $S_0$ depends on the fixed configuration only through $x_{W_a}$ and is exactly the right side of \eqref{eq:sterile-marginalization}.  Remark~3.2 of \cite{evans2019smooth} records the analogous convention for kernels, but the argument just given establishes the asserted irrelevance directly for the possibly signed array here.  The cancellation is the general-state analogue of Lemma~5.3 of that paper; the sterile deletion and reachable restriction follow its proof of Theorem~5.4, with the finite-state convention announced in Appendix~C.
\end{proof}

\begin{proof}[Proof of Corollary~\ref{lem:algebraic-normalization}]
    We induct on $|R|$, uniformly over all coordinate vectors and all fixed configurations.  If $R=\varnothing$, the only summand is indexed by $A=\varnothing$ and its empty product is one.  If $R\ne\varnothing$, the acyclicity of the directed random subgraph gives a vertex $a\in R$ that is sterile among the random vertices.  Apply Lemma~\ref{lem:sterile-marginalization} and then the induction hypothesis to $\mathcal H[R\setminus\{a\}]$.
\end{proof}

\subsubsection{Proofs in Section~\ref{sec:geometry}}\label{app:proofs-geometry}

\begin{proof}[Proof of Theorem~\ref{thm:forward-chart}]
    \emph{Step 1: openness, normalization, and model membership.} By Remark~\ref{rem:forward-map-multiaffine}, each cell of $\Phi_{\mathcal H}(\eta)$ is a polynomial in $\eta$.  Hence $\Omega_{\mathcal H}$ is the finite intersection, over $\cX_R\times\cX_W$, of inverse images of $(0,\infty)$ under continuous maps, and is open.  Corollary~\ref{lem:algebraic-normalization} gives $\Phi_{\mathcal H}(\eta)\in\mathcal K_{\mathcal H}$ for every $\eta\in U_{\mathcal H}$.  If $\eta\in\Omega_{\mathcal H}$, this array is strictly positive; Proposition~\ref{prop:standing-finite-state-import}(i) then makes it recursively factorizing.  Thus $\Phi_{\mathcal H}(\eta)\in\mathcal Q_+(\mathcal H)$.

    \emph{Step 2: inverse identities and bijectivity.} For $q\in\mathcal Q_+(\mathcal H)$, Proposition~\ref{prop:standing-finite-state-import}(iii) defines $\Theta_{\mathcal H}(q)$, and clause (v) gives
    \begin{equation}\label{eq:inverse-identities}
        \Phi_{\mathcal H}\{\Theta_{\mathcal H}(q)\}=q.
    \end{equation}

    Since $q>0$, this identity also puts $\Theta_{\mathcal H}(q)$ in $\Omega_{\mathcal H}$ by definition; hence it establishes surjectivity onto $\mathcal Q_+(\mathcal H)$. Conversely, Step~1 permits the second identity in clause (v) to be applied to every $\eta\in\Omega_{\mathcal H}$, yielding
    \[
        \widetilde\Theta_{\mathcal H}\{\Phi_{\mathcal H}(\eta)\} =\Theta_{\mathcal H}\{\Phi_{\mathcal H}(\eta)\}=\eta.
    \]

    The second identity establishes injectivity, and therefore the map is bijective.  On $\mathcal Q_+(\mathcal H)$ the recovered vector is unique by clause (ii), so the restriction of the selected ambient extension agrees with $\Theta_{\mathcal H}$ and is independent of the off-model choices.  No such independence is asserted elsewhere on $\mathcal O_{\mathcal H}$.

    \emph{Step 3: ambient smooth left inverse, injective differential, and embedding.} Remark~\ref{rem:forward-map-multiaffine} makes the forward map polynomial and therefore $C^\infty$. Proposition~\ref{prop:standing-finite-state-import}(iv) makes the selected map $\widetilde\Theta_{\mathcal H}:\mathcal O_{\mathcal H}\to U_{\mathcal H}$ rational and $C^\infty$.  Since the reverse-inverse identity from Step~2 holds on the open set $\Omega_{\mathcal H}$, the chain rule at $\eta_0\in\Omega_{\mathcal H}$, with $q_0=\Phi_{\mathcal H}(\eta_0)$, gives
    \begin{equation}\label{eq:left-inverse-differential}
        \mathbb D\widetilde\Theta_{\mathcal H,q_0}\circ \mathbb D\Phi_{\mathcal H,\eta_0}=I_{U_{\mathcal H}}.
    \end{equation}
    Thus $\mathbb D\Phi_{\mathcal H,\eta_0}$ is injective.  The inverse of $\Phi_{\mathcal H}$ on its image is the restriction of the continuous ambient map $\widetilde\Theta_{\mathcal H}$; hence $\Phi_{\mathcal H}$ is a homeomorphism onto its image.  Together with its injective differential, this proves that it is a smooth embedding into $\mathcal K_{\mathcal H}$.  Its inverse is the restriction of a smooth ambient map, as asserted.

    \emph{Step 4: ADMG law chart and tangent-space equality.} When $W=\varnothing$, probability mass functions and laws are in one-to-one correspondence, and Proposition~\ref{prop:standing-finite-state-import}(i) identifies $\mathcal Q_+(\cG)$ with the strictly positive recursively factorizing mass functions, and Proposition~\ref{prop:er-equivalence-rrs-compatibility}(i) identifies those with the mass functions of $\cN_+(\cG)$.  This proves the asserted law-level bijection.

    Fix $P\in\cN_+(\cG)$.  Differentiating \eqref{eq:algebraic-normalization} shows that $\sum_{x_V}\mathbb D\Phi_{\cG,\theta_0}[u](x_V)=0$.  Thus $J_Pu\in L_2^0(P)$ for every $u\in U_{\cG}$.  Equation~\eqref{eq:left-inverse-differential} makes $\mathbb D\Phi_{\cG,\theta_0}$ injective, and cellwise division by the positive mass function $p$ is an invertible linear operation; hence $J_P$ is injective.

    For each $u\in U_{\cG}$, openness of $\Omega_{\cG}$ gives a sufficiently short regular two-sided path $p_t=\Phi_{\cG}(\theta_0+tu)$, whose score is $J_Pu$.  Hence $\ran(J_P)\subseteq\cT_P\cN(\cG)$.  Conversely, let $p_t$ be any regular finite-state model path through $P$, with score $s$.  After shortening its parameter interval, $p_t$ is strictly positive.  Put
    \[
        \theta_t:=\widetilde\Theta_{\cG}(p_t)=\Theta_{\cG}(p_t), \qquad u:=\dot\theta_0 =\mathbb D\widetilde\Theta_{\cG,p}[p s]\in U_{\cG}.
    \]

    The ambient smoothness of $\widetilde\Theta_{\cG}$ makes this curve differentiable at zero, while \eqref{eq:inverse-identities} gives $p_t=\Phi_{\cG}(\theta_t)$.  Therefore
    \[
        p(x_V)s(x_V)=\dot p_0(x_V) =\mathbb D\Phi_{\cG,\theta_0}[u](x_V),
    \]
    and $s=J_Pu$.  The set of all path scores is therefore exactly $\ran(J_P)$.  This range is a finite-dimensional linear subspace of $L_2^0(P)$ and hence is closed, so taking its linear span and closure yields $\cT_P\cN(\cG)=\ran(J_P)$.

    \emph{Step 5: simultaneous directions.} Choose $r>0$ such that the open ball $B(\eta_0,r)$ in $U_{\mathcal H}$ lies in $\Omega_{\mathcal H}$, and put $M:=\sum_{j=1}^m\lVert u_j\rVert$.  If $M=0$, take any $\epsilon>0$; otherwise take $0<\epsilon<r/M$.  Whenever $|t_j|<\epsilon$,
    \[
        \left\lVert\sum_{j=1}^m t_ju_j\right\rVert \leq \epsilon M<r,
    \]
    so \eqref{eq:finite-direction-rectangle} lies in $\mathcal Q_+(\mathcal H)$.  Because membership in $\Omega_{\mathcal H}$ imposes positivity jointly over the finite set $\cX_R\times\cX_W$, this single cube works for every $x_W$.  Linear independence of the directions makes its parameter differential injective and therefore gives the final dimension assertion.
\end{proof}

\begin{proof}[Proof of Corollary~\ref{cor:fisher-chart-pullback}]
    The first identity follows by bilinearity from \eqref{eq:fisher-chart-matrix}; it is precisely the matrix representation of $J_P^*J_P$.  If $u\ne0$, injectivity of $J_P$ in Theorem~\ref{thm:forward-chart} gives $u^\top I_{\mathrm F}(\theta_0)u=\lVert J_Pu\rVert_{P,2}^2>0$.
\end{proof}

\begin{proof}[Proof of Corollary~\ref{cor:block-directness}]
    Apply the isomorphism $J_P$ to the coordinate decomposition $U_{\cG}=\bigoplus_C\iota_C(U_C)$.  If $\sum_CJ_{C,P}u_C=0$, injectivity of $J_P$ gives $\sum_C\iota_Cu_C=0$, hence every $u_C=0$.
\end{proof}

\begin{proof}[Proof of Lemma~\ref{lem:algebraic-district-factorization}]
    Write the districts as $D_1,\ldots,D_m$, put $N=\operatorname{nc}(x_R)$ and $N_j=N\cap D_j$, and, for each $A\subseteq R$, put $A_j=A\cap D_j$.  The district operation in Definition~2.5 of \cite{evans2019smooth} is a reachable reduction.  Its Lemmas~4.13--4.14 therefore give
    \[
        \mathfrak P_{\mathcal H}(A) =\mathop{\dot\bigcup}_{j=1}^m \mathfrak P_{\mathfrak d_{D_j}(\mathcal H)}(A_j).
    \]

    Lemma~4.9 of the same paper identifies the inherited recursive heads and tails, so the displayed partition splits the partition product $F_A^{\mathcal H}$ into the corresponding district products.  Moreover,
    \[
        (-1)^{|A\setminus N|} =\prod_{j=1}^m(-1)^{|A_j\setminus N_j|},
    \]
    and the constrained set of states $y_A$ in \eqref{eq:general-forward-map} is the Cartesian product of the corresponding sets of states $y_{A_j}$.  Finally, $A\mapsto(A_1,\ldots,A_m)$ is a bijection from $\{A:N\subseteq A\subseteq R\}$ onto $\prod_j\{A_j:N_j\subseteq A_j\subseteq D_j\}$.  Finite distributivity in \eqref{eq:general-forward-map} now gives \eqref{eq:top-level-factorization}.  This is the general finite-state form of \cite[Lemma~5.2 and Appendix~C]{evans2019smooth}.

    By Lemma~4.9, the intrinsic sets of $\mathfrak d_D(\mathcal H)=\mathcal H[D]$ are exactly the intrinsic sets of $\mathcal H$ contained in $D$, with the same heads and tails.  Every intrinsic set is bidirected-connected and hence lies in a unique district.  This proves the coordinate-set and no-sharing assertions.  The argument used neither positivity nor a district order.
\end{proof}

\begin{proof}[Proof of Lemma~\ref{lem:reachable-chart-realization}]
    Let $q_C^\theta:=\operatorname{red}_C\{\phi_{V\setminus C}(p_\theta;\cG)\}$.  Strict positivity of $p_\theta$ and of every fixing divisor makes it a strictly positive kernel, and Proposition~\ref{prop:er-equivalence-rrs-compatibility}(ii) makes it recursively factorize according to the reduced CADMG $\cG[C]$.  Lemma~4.9 of \cite{evans2019smooth} identifies the intrinsic sets of this CADMG with the global intrinsic sets $S\subseteq C$, with the same recursive heads and tails.

    Extract the coordinate indexed by such an $S$ from $q_C^\theta$.  Fixing $C\setminus S$ after fixing $V\setminus C$ is the same as fixing $V\setminus S$ directly: the two concatenated valid sequences have the same terminal CADMG and kernel by Theorem~31 of \cite{richardson2023nested}.  With graph arguments and irrelevant fixed arguments suppressed, the kernel identity is
    \begin{equation}\label{eq:district-fixing-composition}
        \phi_{C\setminus S}\!\left\{\phi_{V\setminus C}(p_\theta)\right\} =\phi_{V\setminus S}(p_\theta).
    \end{equation}

    Therefore the extracted coordinate is the global coordinate $\theta_S$.  In vector form,
    \[
        \Theta_{\cG[C]}(q_C^\theta) =\theta_{\downarrow C}.
    \]

    The CADMG inverse identity in Proposition~\ref{prop:standing-finite-state-import} gives \eqref{eq:reachable-chart-realization}.  In particular, $\Phi_{\cG[C]}(\theta_{\downarrow C})=q_C^\theta>0$, so $\theta_{\downarrow C}\in\Omega_{\cG[C]}$ by definition.
\end{proof}

\begin{proof}[Proof of Proposition~\ref{prop:district-locality}]
    If $\theta\in\Omega_{\cG}$, then $\Phi_{\cG}(\theta)>0$, and Lemma~\ref{lem:signed-factor-positivity}, applied to \eqref{eq:top-level-factorization}, makes every district factor strictly positive.  Hence $\theta_{[D]}\in\Omega_{\mathfrak d_D(\cG)}$ for every $D$.  Conversely, if every district block lies in its district domain, every factor in \eqref{eq:top-level-factorization} is strictly positive, and their product is $\Phi_{\cG}(\theta)>0$.  Thus $\theta\in\Omega_{\cG}$, proving \eqref{eq:district-domain-product}.  Theorem~\ref{thm:forward-chart}, applied to each district CADMG, also makes the positive factors recursively factorizing probability kernels.

    Every district $D$ is intrinsic, $\cG[D]=\mathfrak d_D(\cG)$, and $\theta_{0,\downarrow D}=\theta_{0,[D]}$.  Thus Lemma~\ref{lem:reachable-chart-realization}, applied once with $C=D$, gives \eqref{eq:district-factor-as-fixing}.

    The no-sharing assertion in Lemma~\ref{lem:algebraic-district-factorization} shows that changing only the $D$ block changes only its factor.  If a differentiable chart path has $\dot\theta_0=u\in U_D^{\mathrm{nat}}$, its score is $J_Pu\in\cS_D^{\mathrm{nat}}(P)$.  Conversely, every member of $\cS_D^{\mathrm{nat}}(P)$ is $J_Pu$ for some $u\in U_D^{\mathrm{nat}}$, and Theorem~\ref{thm:forward-chart} realizes it by the two-sided path $\Phi_{\cG}(\theta_0+tu)$.
\end{proof}

\begin{proof}[Proof of Theorem~\ref{thm:district-orthogonality}]
    It suffices, by Corollary~\ref{cor:block-directness}, to prove pairwise orthogonality.  Fix distinct districts $D$ and $D'$, and let $u\in U_D^{\mathrm{nat}}$ and $v\in U_{D'}^{\mathrm{nat}}$.  Local openness in Theorem~\ref{thm:forward-chart} supplies a two-parameter rectangle
    \[
        p_{t,r} =\Phi_{\cG}(\theta_0+tu+rv)
    \]
    through $P$.  By \eqref{eq:district-domain-product}, every coordinate vector in this rectangle has each district component in its strict-positivity domain.  Hence every factor $r_{K,\theta_{[K]}}$ in \eqref{eq:top-level-factorization} is strictly positive throughout the rectangle; being polynomial in its coordinates, its logarithm is smooth there.  We may therefore write $s_D=\partial_t\log p_{t,r}|_{(0,0)}$ and $s_{D'}=\partial_r\log p_{t,r}|_{(0,0)}$. Lemma~\ref{lem:algebraic-district-factorization} implies the pointwise identity
    \[
        \left.\partial_{tr}^2\log p_{t,r}(x_V)\right|_{(0,0)}=0,
    \]
    because the $t$- and $r$-coordinates occur in different factors of \eqref{eq:top-level-factorization}.  The identity
    \[
        \sum_{x_V}p_{t,r}(x_V)=1
    \]
    may be differentiated twice because the state space is finite.  It gives
    \[
        0 =\E_P\!\left\{s_Ds_{D'} +\left.\partial_{tr}^2\log p_{t,r}(X_V)\right|_{(0,0)}\right\} =\E_P(s_Ds_{D'}).
    \]

    The district quotient is absent from the argument and may contain directed cycles.
\end{proof}

\begin{proof}[Proof of Proposition~\ref{prop:gram-projection}]
    For $a\in\R^{d_D^{\mathrm{nat}}}$,
    \[
        a^\top G_D^{\mathrm{nat}}(P)a =\E_P\!\left[\left\{\sum_j a_jJ_Pe_{Dj}^{\mathrm{nat}}(X_V)\right\}^2\right] =\left\lVert\sum_j a_jJ_Pe_{Dj}^{\mathrm{nat}}\right\rVert_{P,2}^2.
    \]

    If $a\ne0$, the basis combination $\sum_j a_je_{Dj}^{\mathrm{nat}}$ is nonzero, and injectivity of $J_P$ makes the final norm strictly positive.  Thus $G_D^{\mathrm{nat}}(P)$ is positive definite.

    For $\alpha\in\R^{d_D^{\mathrm{nat}}}$, put $g_\alpha=(b_D^{\mathrm{nat}})^\top\alpha$.  The projection normal equations are
    \[
        0=\E_P\!\left[b_D^{\mathrm{nat}} \{f-g_\alpha\}\right] \quad\Longleftrightarrow\quad G_D^{\mathrm{nat}}(P)\alpha =\E_P(b_D^{\mathrm{nat}}f).
    \]

    Solving this nonsingular system gives \eqref{eq:district-gram-projection}; Theorem~\ref{thm:district-orthogonality} gives \eqref{eq:global-gram-projection}.
\end{proof}

\subsubsection{Proofs in Section~\ref{sec:counterexample}}\label{app:proofs-counterexample}

\begin{proof}[Verification of Example~\ref{prop:raw-failure}]
    \emph{Step 0: place the law in the model.} Equation~\eqref{eq:verma-rational-sem} gives every one of the 32 cell masses as a rational number, with minimum cell $27/8000>0$.  Form $\widetilde\theta_0:=\widetilde\Theta_{\cG}(p)$ from this table and verify exactly that $\Phi_{\cG}(\widetilde\theta_0)=p$.  By Proposition~\ref{prop:standing-finite-state-import}(i) this roundtrip places $p$ among the strictly positive recursively factorizing mass functions, so $\widetilde\theta_0\in\Omega_{\cG}$ and, by Proposition~\ref{prop:er-equivalence-rrs-compatibility}(i), $P\in\cN_+(\cG)$; uniqueness in Proposition~\ref{prop:standing-finite-state-import}(ii) then identifies $\widetilde\theta_0=\Theta_{\cG}(P)=:\theta_0$.

    \emph{Step 1: the chart-score basis.} Order the cells lexicographically and put $D_P=\operatorname{diag}\{p(x_V):x_V\in\cX_V\}$.  Differentiating the forward polynomial at $\theta_0$ gives the probability differential $\dot P:=\mathbb D\Phi_{\cG,\theta_0}$, a $32\times24$ matrix; the score matrix is $J:=D_P^{-1}\dot P$, whose $j$th column is $J_Pe_j$.  Theorem~\ref{thm:forward-chart} makes $J_P$ injective with range $\cT_P\cN(\cG)$, so the columns of $J$ form a basis of that tangent space and $G:=J^\top D_PJ$ is nonsingular.

    \emph{Step 2: compute the metric projection.} For any cell vector $v$, its $L_2(P)$ projection onto $\ran(J)$ is
    \[
        \Pi_{\cT_P\cN(\cG)}v =J (J^\top D_PJ)^{-1}J^\top D_Pv.
    \]

    Indeed, the right-hand side lies in $\ran(J)$ and its residual has $D_P$-inner product zero with every column of $J$.  Construct $h$ from the same rational table by first summing the table to obtain $m_z:=\pr_P(Y=1\mid Z=z)$ and then setting $h(x_V)=x_Y-m_{x_Z}$. Substitution of $v=h$ in the exact-projection display above, followed by exact Gaussian elimination over $\mathbb Q$, gives
    \[
        (h-JG^{-1}J^\top D_Ph)^\top D_P(h-JG^{-1}J^\top D_Ph) =\frac{49}{6250000}.
    \]

    The residual itself has the closed form
    \[
        h-\Pi_{\cT_P\cN(\cG)}h=\frac{7}{2500}\,(2W-1)(2B-1),
    \]
    verified cellwise, so its squared norm is $(7/2500)^2=49/6250000$ by inspection; direct substitution gives $J^\top D_Pr=0$ for $r:=h-\Pi_{\cT_P\cN(\cG)}h$.  The squared residual is positive, so $h\notin\ran(J)$.

    \emph{Constraint-based certificate.} The projection can be confirmed without differentiating the chart.  Put $v:=(2W-1)(2B-1)$.  Along any regular model path $p_t$ through $P$, $\E_{p_t}(v)=\E_{p_t}(2W-1)\,\E_{p_t}(2B-1)$ by $W\ind B$, and both factors vanish at $P$ because $\pr_P(W=1)=\pr_P(B=1)=1/2$; differentiating at $t=0$ gives $\E_P(vs)=0$ for every model score $s$, so $v\perp\cT_P\cN(\cG)$, while $\|v\|_{P,2}=1$ and, by direct summation, $\langle h,v\rangle_P=7/2500$.  On strictly positive laws the model is $\mathcal M_+=\{p>0:\ W\ind(A,B),\ Y\ind W\mid(B,Z)\}$ (Section~\ref{app:B-primary}), cut out by seven cross-product equations; at $P$ their gradients together with the normalization row have rank $8$, so their common null space among cell perturbations has dimension $24=\dim\cT_P\cN(\cG)$ and equals $D_P\cT_P\cN(\cG)$.  Exact rational elimination places $D_P\{h-\tfrac{7}{2500}v\}$ in that null space, so $h-\tfrac{7}{2500}v\in\cT_P\cN(\cG)$ and $\Pi_{\cT_P\cN(\cG)}h=h-\tfrac{7}{2500}v$ follows from the two facts just stated; the elimination is recorded in the supplement.

    \emph{Step 3: isolate the native $W$ component.} The defining conditional residual satisfies $\E_P(h\mid Z)=0$ and hence $\E_Ph=0$.  Since $W$ is binary and its native block is its unrestricted marginal, $\cS_{\{W\}}^{\mathrm{nat}}(P)$ is exactly the one-dimensional space of mean-zero functions of $W$.  Conditional expectation is therefore the orthogonal projection onto this block.  Direct summation gives
    \[
        \E_P(h\mid W=0)=\frac{259}{12500},\qquad \E_P(h\mid W=1)=-\frac{259}{12500},
    \]
    which verifies the native $W$ component recorded in Section~\ref{app:B-primary}.  Every entry used in these three steps is a rational function of the specified cell table; Appendix~\ref{app:prototype-ledger} records an independent reconstruction of the forward map and the projection. The proof uses the rational SEM of Section~\ref{app:B-primary}, the forward-chart tangent identity, and exact finite-state Gram projection; observational centering is used only to define $h$ and its native-$W$ projection.
\end{proof}

\subsubsection{Proofs in Section~\ref{sec:intervention-bridge}}\label{app:proofs-bridge}

\begin{proof}[Source of the import for Proposition~\ref{prop:causal-identification-interface}]
    This proposition assembles the cited identification results for the three intervention classes; it does not infer one intervention class from another.

    \emph{Case I1: node interventions.} Theorem~48 of \cite{richardson2023nested} gives the fixing-kernel product for a node intervention and makes \eqref{eq:active-intrinsicness} the identification criterion.  With the node value $a_w$ substituted for each excluded parent $w$, its district factors are exactly those in \eqref{eq:imported-intervention-product}.  Applied with target set $R_{\mathsf I}$, which is ancestrally closed in $\cG_{V\setminus A_{\mathsf I}}$, the same theorem identifies the joint response law on $R_{\mathsf I}$, licensing the joint reading of \eqref{eq:imported-intervention-product} in this case.

    \emph{Case I2: complete-source edge interventions.} Theorem~1 and equation~(13) of \cite{shpitser2018identification}, applied with target set $R_{\mathsf I}$, give the joint response law on $R_{\mathsf I}$ displayed in \eqref{eq:imported-intervention-product}.  The application with target set $R_{\mathsf I}$ is licensed by the ancestral identity $\operatorname{an}_{\cG_{V\setminus A_{\mathsf I}}}(R_{\mathsf I})=R_{\mathsf I}$, so the theorem's pruned set is $R_{\mathsf I}$ itself.  The theorem's preamble hypotheses are used exactly as stated in \textup{(I2)} and \textup{(H2)}--\textup{(H4)}: identification of the node-intervention law $a\mapsto p(\{V\setminus A_{\mathsf I}\}(a))$ by that paper's equation~(11), equivalently intrinsicness in $\cG$ of every district of $\cG_{V\setminus A_{\mathsf I}}$, together with the complete-source convention and the common assignment $a_{a,D}$ to all arrows from source $a$ into district $D\in\cD(\mathcal H_{\mathsf I})$.  The cited formula returns the raw fixing kernels $\phi_{V\setminus D}(p;\cG)$; these are the constant extensions of the reduced kernels $q_{D,P}$ in their childless fixed arguments \eqref{eq:fixing-kernel-bridge}, so boundary evaluation agrees after the irrelevant arguments are suppressed.  The separately stated condition \eqref{eq:active-intrinsicness} is retained because the statistical bridge of Theorem~\ref{thm:normalized-intervention-bridge} uses it directly; in the system-wide case $Y=V\setminus A_{\mathsf I}$, where $\mathcal H_{\mathsf I}=\cG_{V\setminus A_{\mathsf I}}$, the two conditions coincide.  Theorem~5 and Corollary~2 of that paper give the corresponding completeness statement within its causal model and intervention convention.

    \emph{Case I3: path interventions.} For the direct route of (I3), Theorems~3--4 of \cite{shpitser2013counterfactual} give the district product and its observed-data identification for the two-value path-specific intervention of the direct route in \textup{(I3)}, under the proper, live and consistent path set of \textup{(H4)} and the no-recanting-district and identifiable-district-term hypotheses of \textup{(H3)}, and within the nonparametric structural equation model with independent errors of their Supplement~A.2, which \textup{(H2)} names.  Those results identify the response of $Y$; the unsummed product is read only as the normalized auxiliary law supplied by Theorem~\ref{thm:normalized-intervention-bridge}, and no joint counterfactual law on $R_{\mathsf I}$ is imported in this case.  By the reduced route of \textup{(I3)}, a path intervention with a proper, $Y$-live, $Y$-consistent path set and an edge-consistent assignment, as required in \textup{(H4)}, enters this case only after an independently established induced-edge equality has reduced it to an intervention satisfying every condition in \textup{(I2)}, to which Case~I2 then applies; Section~5.3, Lemma~5.7 and Corollary~5.2 of \cite{shpitser2016causal} supply the definition, the equality and the identification under the multiple-world model on a DAG, and no general observed-ADMG reduction is inferred from them.

    Two separate facts make the boundary configuration $c_D^{\mathsf I}$ well defined with only the two displayed cases: ancestrality in \eqref{eq:active-ancestral-set} forces every non-source parent of an active district into $R_{\mathsf I}$, and the class-specific assignment --- the node value in \textup{(I1)}, the complete-source common value in \textup{(I2)}, and the path-formula value in \textup{(I3)} --- makes every parent in $A_{\mathsf I}$ assigned rather than left natural.  Together they leave no natural parent outside $R_{\mathsf I}$; the ``natural-parent closure'' needed by the compact product is therefore a consequence here, not a condition attributed to Theorem~1 of \cite{shpitser2018identification}.

    In all three cases, the latent-projection and applicable causal-model hypotheses connect the cited causal response to the observed fixing kernels, and \eqref{eq:active-intrinsicness} ensures that each displayed district kernel is identified.  Summing over $R_{\mathsf I}\setminus Y$ gives the $Y$ margin in every case.  No cited theorem is applied outside its stated causal model, intervention class, or hypothesis set, and in \textup{(I3)} only the $Y$-margin conclusion is imported.
\end{proof}

\begin{proof}[Proof of Lemma~\ref{lem:configured-factor-reassembly}]
    \emph{Step 1: realize the global fixing factor.} The hypothesis \eqref{eq:active-intrinsicness} makes $D$ an intrinsic, hence reachable, set of $\cG$.  Lemma~\ref{lem:reachable-chart-realization} therefore writes the left-hand fixing kernel before boundary evaluation as
    \[
        \operatorname{red}_D\{\phi_{V\setminus D}(p_\theta;\cG)\} =\Phi_{\cG[D]}(\theta_{\downarrow D}).
    \]

    \emph{Step 2: match the graphical indices.} The CADMGs $\cG[D]$ and $\mathfrak d_D(\mathcal H)$ have the same random vertices and exactly the same directed and bidirected edges among them. Consequently, they have the same intrinsic subsets of $D$ and the same recursive heads.  Equation~\eqref{eq:active-tail-restriction} says that the active tail of each such intrinsic set is obtained from its global tail by removing exactly the excluded vertices in $A$.

    \emph{Step 3: substitute the configured tails.} Evaluate each removed tail coordinate at its assigned value $a_{w,D}$. District compatibility makes this value unambiguous for every coordinate in the $D$ factor.  By \eqref{eq:tail-slice-map}, the resulting coordinate entry is precisely the corresponding row of $\operatorname{Sel}_{\mathsf I}\theta$.  Thus every coordinate appearing in the global $D$ factor is replaced by, and only by, its selected active-chart coordinate.

    \emph{Step 4: compare the factors.} Insert the entries from Step~3 in the general forward formula \eqref{eq:general-forward-map}.  The two finite sums have identical subsets, signs, recursive-head partitions, and coordinate products, so they agree term by term.  This proves \eqref{eq:configured-factor-reassembly}.  Steps~2--4 are algebraic and do not use positivity.  Positivity is used only now: $\theta\in\Omega_{\cG}$ and Lemma~\ref{lem:reachable-chart-realization} make the evaluated fixing kernel strictly positive for every fixed-parent configuration; equality transfers that positivity to the active factor.  Thus the identity uses active intrinsicness and district-compatible tail values, while strict positivity additionally uses \(\theta\in\Omega_{\cG}\); it uses no causal-model hypothesis.
\end{proof}

\begin{proof}[Proof of Corollary~\ref{cor:active-factor-locality}]
    Apply Lemma~\ref{lem:configured-factor-reassembly} once at the specified active district $D$.  Its right-hand side uses only selected coordinates indexed by intrinsic sets $S\subseteq D$.  Every such $S$ has native tag $\delta(S)=\delta(D)$, whereas $u\in U_K^{\mathrm{nat}}$ has nonzero entries only at tags with native district $K$.  If $K\ne\delta(D)$, all active-$D$ rows of $\operatorname{Sel}_{\mathsf I}u$ are therefore zero.  Differentiating the right-hand side of \eqref{eq:configured-factor-reassembly} gives zero, and the equality gives the asserted derivative on the left.  No causal identification hypothesis is used in this locality argument.
\end{proof}

\begin{proof}[Proof of Theorem~\ref{thm:normalized-intervention-bridge}]
    \emph{Step 1: reassemble every configured district factor.} For each $D\in\cD(\mathcal H)$, Lemma~\ref{lem:configured-factor-reassembly} replaces the configured global fixing kernel by its selected active factor.  Multiplying those identities gives
    \[
        \Gamma_{\mathsf I}(p_\theta)(x_R) =\prod_{D\in\cD(\mathcal H)} \Phi_{\mathfrak d_D(\mathcal H)} \left((\operatorname{Sel}_{\mathsf I}\theta)_{\downarrow D}\right) \left(x_D\mid x_{\pa_{\mathcal H}(D)\setminus D}\right).
    \]

    \emph{Step 2: identify the active forward map.} Lemma~\ref{lem:algebraic-district-factorization}, applied to the CADMG $\mathcal H$, identifies the last product with $\Phi_{\mathcal H}(\operatorname{Sel}_{\mathsf I}\theta)(x_R)$.  This proves \eqref{eq:active-chart-bridge}.  The multiplication and factorization in Steps~1--2 use finite algebra only; they do not use positivity.

    \emph{Step 3: enter the active positivity domain.} Every factor in Step~1 is strictly positive by Lemma~\ref{lem:configured-factor-reassembly}; hence their product, equivalently the active forward array in Step~2, is strictly positive in every cell and fixed slice.  By the defining equivalence \eqref{eq:cadmg-positive-coordinate-domain}, this is exactly the assertion $\operatorname{Sel}_{\mathsf I}\theta\in\Omega_{\mathcal H}$.

    \emph{Step 4: normalize and close the active law.} Corollary~\ref{lem:algebraic-normalization} gives
    \[
        \sum_{x_R}\Phi_{\mathcal H} (\operatorname{Sel}_{\mathsf I}\theta)(x_R)=1.
    \]
    Together with Step~3, Theorem~\ref{thm:forward-chart} places this kernel in $\cN_+(\mathcal H)$, proving \eqref{eq:active-chart-domain}.

    \emph{Step 5: establish regularity and record the hypotheses used.} The map $\operatorname{Sel}_{\mathsf I}$ is linear and $\Phi_{\mathcal H}$ is polynomial, so the bridge is polynomial in $\theta$. Composing it with the smooth inverse chart $\Theta_{\cG}$ makes it $C^\infty$ in $p$.  The statistical proof uses $\theta\in\Omega_{\cG}$ for positivity, \eqref{eq:active-intrinsicness} to realize every active district factor, and the fixed, district-compatible boundary values to define the tail slices.  It does not use the latent causal model or any causal identification theorem; those hypotheses enter only Proposition~\ref{prop:causal-identification-interface} at the true law.
\end{proof}

\begin{proof}[Proof of Corollary~\ref{cor:active-side-nested-geometry}]
    Equation~\eqref{eq:active-chart-domain} gives $P^{\mathsf I}\in\cN_+(\mathcal H)$.  Apply Theorem~\ref{thm:forward-chart} to the CADMG $\mathcal H$: its score differential is precisely the operator in \eqref{eq:active-chart-score-operator}, and the theorem identifies its range with $\cT_{P^{\mathsf I}}\cN(\mathcal H)$.  No additional intervention or causal hypothesis is used.
\end{proof}

\begin{proof}[Proof of Theorem~\ref{thm:intervention-chart-eif}]
    Write \(Q_t:=\Gamma_{\mathsf I}(P_t)\) and let \(q_t\) denote its mass function.

    \emph{Step 1: differentiate the active-law path.} The bridge identity gives
    \[
        Q_t =\Phi_{\mathcal H} \{\operatorname{Sel}_{\mathsf I}\Theta_{\cG}(P_t)\}.
    \]

    The chain rule at \(t=0\) gives
    \[
        \dot q_0 =\mathbb D\Phi_{\mathcal H,\eta_0} [\operatorname{Sel}_{\mathsf I}u] =q_0h_{\mathsf I}^u.
    \]

    Algebraic normalization holds identically in the active coordinates, so differentiating \(\sum_{x_R}\Phi_{\mathcal H}(\eta)(x_R)=1\) at \(\eta_0\) yields
    \[
        \E_{P^{\mathsf I}}(h_{\mathsf I}^u) =\sum_{x_R}\mathbb D\Phi_{\mathcal H,\eta_0} [\operatorname{Sel}_{\mathsf I}u](x_R)=0.
    \]
    Thus score centering follows from normalization rather than being assumed.

    \emph{Step 2: differentiate the response mean.} Finite-state differentiation under the sum and Step~1 give
    \begin{align*}
        \left.\frac{\mathrm d}{\mathrm dt}\Psi_{\mathsf I,g}(P_t)\right|_{t=0} &=\sum_{x_R}g(x_Y)\dot q_0(x_R)\\
        &=\E_{P^{\mathsf I}}\{g(X_Y)h_{\mathsf I}^u(X_R)\}\\
        &=\E_{P^{\mathsf I}} [\{g(X_Y)-\Psi_{\mathsf I,g}(P)\}h_{\mathsf I}^u(X_R)],
    \end{align*}
    which is \eqref{eq:intervention-chart-derivative}.

    \emph{Step 3: express the derivative in native coordinates.} Write $u=\sum_Ku_K$, where $u_K\in U_K^{\mathrm{nat}}$ and $u_K=\sum_{j=1}^{d_K^{\mathrm{nat}}}(u_K)_je_{Kj}^{\mathrm{nat}}$, and identify $u_K$ below with the coordinate column $((u_K)_j)_{j=1}^{d_K^{\mathrm{nat}}}$.  The map $\operatorname{Sel}_{\mathsf I}$ is block diagonal by native district: a retained active coordinate indexed by $S\subseteq D$ uses only the global coordinate for the same $S$, and $D$ lies inside the unique native district containing $S$.  Thus \eqref{eq:intervention-chart-derivative} equals \(\sum_Ka_K^\top u_K\), by the definition \eqref{eq:intervention-coordinate-derivative}.

    \emph{Step 4: verify the Gram pairing.} The observed score of $P_t$ is $J_Pu=\sum_K(b_K^{\mathrm{nat}})^\top u_K$.  Cross-district orthogonality and the identity \(G_K^{\mathrm{nat}}(P) =\E_P\{b_K^{\mathrm{nat}}(b_K^{\mathrm{nat}})^\top\}\) give
    \begin{align*}
        \E_P\!\left[ \left\{\sum_K(b_K^{\mathrm{nat}})^\top \{G_K^{\mathrm{nat}}(P)\}^{-1}a_K\right\}J_Pu \right] &=\sum_K a_K^\top\{G_K^{\mathrm{nat}}(P)\}^{-1} \E_P\{b_K^{\mathrm{nat}}(b_K^{\mathrm{nat}})^\top\}u_K\\
        &=\sum_Ka_K^\top u_K.
    \end{align*}

    \emph{Step 5: establish tangent membership and uniqueness.} Each summand in \eqref{eq:direct-intervention-eif} is a linear combination of the entries of \(b_K^{\mathrm{nat}}\), hence belongs to \(\cS_K^{\mathrm{nat}}(P)\).  Their sum therefore lies in \(\cT_P\cN(\cG)\).  Step~4 shows that it represents the derivative against every model score; uniqueness of the Riesz representer inside the tangent space makes it the canonical gradient.  The identification theorems of Proposition~\ref{prop:causal-identification-interface} are not used, except when the statistical target is interpreted causally at the true law.
\end{proof}

\begin{proof}[Proof of Corollary~\ref{cor:excluded-source-native-block}]
    Every output row of $\operatorname{Sel}_{\mathsf I}$ is indexed by an intrinsic set inside an active district $D\subseteq R$.  Such a set cannot have native district $K$ when $K\cap R=\varnothing$.  Thus the \(K\)-tagged columns are explicitly excluded:
    \[
        \operatorname{Sel}_{\mathsf I}\iota_C=0 \quad\text{for every }C\text{ with }\delta(C)=K, \qquad \operatorname{Sel}_{\mathsf I}U_K^{\mathrm{nat}}=\{0\}.
    \]

    Consequently, \eqref{eq:intervention-coordinate-derivative} gives $a_K=0$. Only the native-tag definition and \(K\cap R=\varnothing\) are used; no positivity or causal assumption is needed beyond the standing existence of the selection map.
\end{proof}

\begin{proof}[Proof of Proposition~\ref{prop:positive-smooth-source-mixture}]
    \emph{Step 1: positivity and normalization.} Theorem~\ref{thm:normalized-intervention-bridge} makes each \(P_\theta^{\mathsf I,a}\) a strictly positive normalized mass function. Admissibility gives nonnegative weights summing to one.  Hence
    \[
        \sum_{x_R}\Gamma_\mu(p_\theta)(x_R) =\sum_a\mu_\theta(a)\sum_{x_R} P_\theta^{\mathsf I,a}(x_R)=1.
    \]

    At least one weight is positive, and every component is positive in every cell, so the mixture is strictly positive.

    \emph{Step 2: smoothness as a law functional.} For each \(a\), the selected active law is polynomial in \(\theta\), its mean is a finite linear sum of its cells, and \(\mu_\theta(a)\) is smooth by admissibility.  The finite sums in \eqref{eq:independent-source-mixture-and-mean} are therefore smooth in \(\theta\).  Composing them with the smooth inverse chart \(\theta=\Theta_{\cG}(p)\) proves \(C^\infty\) smoothness in \(p\).

    \emph{Step 3: polynomiality in chart coordinates.} If every \(\mu_\theta(a)\) is polynomial, then each summand is a product of two polynomials in \(\theta\), so both finite sums are polynomial.  In particular, under marginal source standardization, \(\mu_\theta(a)=\sum_{x_V}\mathbf 1\{x_A=a\} \Phi_{\cG}(\theta)(x_V)\) is a finite sum of forward-map polynomials. The proof uses only admissibility and the componentwise normalized bridge; it does not claim that the mixture belongs to the active nested model or import a new causal identification theorem.
\end{proof}

\begin{proof}[Proof of Theorem~\ref{thm:independent-source-mixture-eif}]
    \emph{Step 1: apply the product rule.} Differentiate the mean line of \eqref{eq:independent-source-mixture-and-mean} at \(\theta_0\).  Since the index set is finite,
    \[
        \mathbb D\Psi_{\mu,g,\theta_0}[u] =\sum_a\mathbb D\mu_{\theta_0}[u](a)\,\psi_a +\sum_a\mu_0(a)\mathbb D\psi_{a,\theta_0}[u].
    \]

    \emph{Step 2: evaluate the component-law contribution.} Write \(u=\sum_K\sum_j(u_K)_je_{Kj}^{\mathrm{nat}}\).  Applying Theorem~\ref{thm:intervention-chart-eif} to each fixed component gives
    \[
        \sum_a\mu_0(a)\mathbb D\psi_{a,\theta_0}[u] =\sum_K(\alpha_K^{\mathrm{cmp}})^\top u_K.
    \]

    This step differentiates only the configured component laws while holding the mixture weights at their baseline values.

    \emph{Step 3: evaluate the weight contribution.} Linearity of \(\mathbb D\mu_{\theta_0}\) and the same coordinate expansion give
    \[
        \sum_a\mathbb D\mu_{\theta_0}[u](a)\psi_a =\sum_K\sum_j(u_K)_j \sum_a\mathbb D\mu_{\theta_0}[e_{Kj}^{\mathrm{nat}}](a)\psi_a =\sum_K(\alpha_K^\mu)^\top u_K.
    \]

    Steps~2--3 prove \eqref{eq:independent-source-mixture-derivative} and keep the component and weight chain-rule terms separate.

    \emph{Step 4: form the native-block Riesz representer.} For every model score \(J_Pu\), cross-district orthogonality and the native Gram identities give
    \[
        \E_P\{\phi_{P,\mu,g}^{\mathrm{eff}}J_Pu\} =\sum_K(\alpha_K^{\mathrm{cmp}}+\alpha_K^\mu)^\top u_K.
    \]

    Each term in \eqref{eq:independent-source-mixture-eif} is a linear combination of native-block scores, so their sum belongs to the model tangent space. It is therefore the unique canonical gradient.  The argument requires neither active-graph membership of the mixture nor any causal-identification theorem: the derivative identity is statistical throughout.  A causal interpretation of the mixture mean requires causal identification of the component means, as recorded after \eqref{eq:independent-source-mixture-and-mean}.
\end{proof}

\begin{proof}[Proof of Corollary~\ref{cor:exogenous-source-mixture}]
    Because \(\mu_\theta\equiv\mu\), \(\mathbb D\mu_{\theta_0}[u](a)=0\) for every \(a\) and \(u\), so \(\alpha_K^\mu=0\).  If \(a_K^{(a)}\) denotes the coordinate derivative \eqref{eq:intervention-coordinate-derivative} for component \(\mathsf I(a)\), then
    \[
        \alpha_K^{\mathrm{cmp}}=\sum_a\mu(a)a_K^{(a)}.
    \]

    The score vectors \(b_K^{\mathrm{nat}}\) and Gram matrices \(G_K^{\mathrm{nat}}(P)\) do not depend on \(a\).  Consequently,
    \begin{align*}
        \phi_{P,\mu,g}^{\mathrm{eff}} &=\sum_K(b_K^{\mathrm{nat}})^\top \{G_K^{\mathrm{nat}}(P)\}^{-1} \sum_a\mu(a)a_K^{(a)}\\
        &=\sum_a\mu(a)\phi_{P,\mathsf I(a),g}^{\mathrm{eff}}.
    \end{align*}

    A point-mass law leaves its single component.  This reduction uses only linearity and the exogeneity of the weights; the fresh-draw interpretation in Definition~\ref{def:admissible-source-law} remains in force.
\end{proof}

\begin{proof}[Proof of Corollary~\ref{cor:source-standardized-eif}]
    \emph{Step 1: differentiate the marginal source weight.} Let \(p_t\) be the chart path in direction \(e_{Kj}^{\mathrm{nat}}\), whose observed-data score is \(b_{Kj}^{\mathrm{nat}}\).  Then
    \begin{align*}
        \mathbb D\mu_{\theta_0}[e_{Kj}^{\mathrm{nat}}](a) &=\left.\frac{\mathrm d}{\mathrm dt}\sum_{x_V} \mathbf 1\{x_A=a\}p_t(x_V)\right|_{t=0}\\
        &=\sum_{x_V}\mathbf 1\{x_A=a\} p(x_V)b_{Kj}^{\mathrm{nat}}(x_V)\\
        &=\E_P\{\mathbf 1(X_A=a)b_{Kj}^{\mathrm{nat}}\}.
    \end{align*}

    \emph{Step 2: identify the weight coordinate.} Substitute Step~1 into the weight line of \eqref{eq:mixture-coordinate-contributions} and interchange the finite sum with expectation:
    \[
        (\alpha_K^\mu)_j =\E_P\{\psi_{X_A}b_{Kj}^{\mathrm{nat}}\}.
    \]

    Every chart score is mean zero, while \(\E_P(\psi_{X_A})=\sum_a\pr_P(X_A=a)\psi_a=\Psi_{\mu,g}(P)\). Subtracting this constant gives \eqref{eq:source-standardized-weight-coordinate}.

    \emph{Step 3: identify the component representer.} At the baseline law the component weights are \(\pr_P(X_A=a)\).  The linearity calculation in Corollary~\ref{cor:exogenous-source-mixture}, applied to the component term alone, gives
    \[
        \sum_a \pr_P(X_A=a)\,\phi_{P,\mathsf I(a),g}^{\mathrm{eff}}.
    \]

    \emph{Step 4: identify the weight representer.} Equation~\eqref{eq:source-standardized-weight-coordinate} is exactly the vector of inner products of \(\psi_{X_A}-\Psi_{\mu,g}(P)\) with the native score bases.  The blockwise Gram formula in Proposition~\ref{prop:gram-projection} therefore makes its Riesz representer
    \[
        \Pi_{\cT_P\cN(\cG)} \{\psi_{X_A}-\Psi_{\mu,g}(P)\}.
    \]

    Adding Steps~3--4 proves \eqref{eq:source-standardized-eif}.  The proof uses the marginal rule \eqref{eq:source-standardization-law} and the independent-redraw semantics of Definition~\ref{def:admissible-source-law}; it does not replace the fresh draw by a unit's natural source value.
\end{proof}

\begin{proof}[Proof of Corollary~\ref{cor:source-standardized-complementary-blocks}]
    Corollary~\ref{cor:excluded-source-native-block} gives the first identity.  For a fixed exogenous law, all weight coordinates vanish because $\mathbb D\mu_\theta=0$.  It remains to consider the marginal rule \eqref{eq:source-standardization-law}.  Because \(w\) is parentless and its native district is the singleton \(K_w=\{w\}\), its native factor is the unrestricted marginal \(p(w)\). The corresponding native score block is therefore
    \[
        \cS_{K_w}^{\mathrm{nat}}(P) =\{f(X_w):\E_P f(X_w)=0\}.
    \]
    In particular, \(\psi_{X_w}-\Psi_{\mu,g}(P)\) belongs to this block.  Cross-district orthogonality then gives, for \(K\ne K_w\),
    \[
        \alpha_K^\mu =\E_P[ \{\psi_{X_w}-\Psi_{\mu,g}(P)\}b_K^{\mathrm{nat}}]=0.
    \]

    If \(w\mapsto\psi_w\) is nonconstant, strict positivity of \(p(w)\) makes the centered function in the native-score block just displayed nonzero. Its native coordinate vector cannot vanish because \(G_{K_w}^{\mathrm{nat}}(P)\) is positive definite.  This proves the final claim.  The argument uses the parentless singleton-source hypotheses and the marginal independent-redraw rule; it does not assert a natural-value intervention.
\end{proof}

\subsubsection{Proofs in Section~\ref{sec:targeted}}\label{app:proofs-targeted}

\begin{proof}[Proof of Corollary~\ref{cor:explicit-eif}]
    Equation~\eqref{eq:eif-projection} identifies the EIF as the tangent-space projection of \(\widetilde\phi_P\).  The internal orthogonal sum in Theorem~\ref{thm:district-orthogonality} and the district projection formula \eqref{eq:district-gram-projection} therefore give the displayed sum.  If a native district carries several active derivative channels, linearity of the derivative first adds their coordinate derivative vectors in that district; the single inverse Gram matrix is then applied to the aggregate.
\end{proof}

\begin{proof}[Proof of Proposition~\ref{prop:estimated-eif-stability}]
    \emph{Step 1: compact positive neighborhood.} Because \(\Omega_{\cG}\) is open, a sufficiently small closed Euclidean ball \(\mathcal K\) about \(\theta_0\) is convex and satisfies \(\mathcal K\Subset\Omega_{\cG}\).  Each cell map \(\theta\mapsto p_\theta(x_V)\) is polynomial and strictly positive on \(\mathcal K\); compactness and finiteness of \(\cX_V\) therefore give one constant \(c>0\) for the first bound in \eqref{eq:local-eif-stability-bounds}.

    \emph{Step 2: scores and Gram matrices.} For every native basis vector,
    \[
        J_{P_\theta}e_{Kj}^{\mathrm{nat}} =\frac{\mathbb D\Phi_{\cG,\theta}[e_{Kj}^{\mathrm{nat}}]}{p_\theta}
    \]
    is smooth on \(\mathcal K\), since the denominator is uniformly positive. Thus the finite score vectors and their Gram matrices are smooth.  Injectivity of \(\mathbb D\Phi_{\cG,\theta}\) makes every \(G_K^{\mathrm{nat}}(\theta)\) positive definite.  Continuity of its smallest eigenvalue, followed by a minimum over the finitely many districts and the compact set \(\mathcal K\), gives a common \(\kappa>0\).  Matrix inversion is therefore smooth and uniformly bounded on \(\mathcal K\).

    \emph{Step 3: target derivatives.} For a fixed intervention of the classes (I1)--(I3), the selected active coordinate, normalized configured active law, target mean, and its native-coordinate derivatives are smooth by Theorems~\ref{thm:normalized-intervention-bridge} and \ref{thm:intervention-chart-eif}.  Because the intervention-assignment set and the active state spaces are finite, compactness also gives a common positive lower bound for all cells of all configured active laws.  For an independent-draw mixture, the finite sums in \eqref{eq:mixture-coordinate-contributions} show that \(\gamma_K\) is smooth whenever the stipulated source-law rule \(\theta\mapsto\mu_\theta\) is \(C^\infty\).  No polynomiality of that rule is required.

    \emph{Step 4: EIF smoothness and the Lipschitz bound.} The identities \(\beta_K=G_K^{-1}\gamma_K\) and \eqref{eq:coherent-eif-map} now make \(\beta_K\) and \(\phi_\theta\) smooth. Their first derivatives attain finite maxima on \(\mathcal K\); the mean-value theorem and equivalence of norms on the fixed finite-dimensional spaces give the Lipschitz line of \eqref{eq:local-eif-stability-bounds}. On any other compact convex subset of \(\Omega_{\cG}\), the same minima and maxima argument gives set-dependent constants.
\end{proof}

\begin{proof}[Proof of Lemma~\ref{lem:mixture-error-bookkeeping}]
    For the component contribution, write \(\mu_\eta=\mu_\theta+\Delta\mu\) and \(d_{K,a}(\eta)=d_{K,a}(\theta)+\Delta d_{K,a}\), multiply, and subtract \(\mu_\theta d_{K,a}(\theta)\).  This gives the component-error line of \eqref{eq:mixture-error-decompositions}, whose mixed product is \(\Delta\mu(a)\Delta d_{K,a}\).  Applying the same calculation to \(m_{K,a}\psi_a\) gives the weight-error line, with the different mixed product \(\Delta m_{K,a}\Delta\psi_a\).  Applying it to \(\mu_\theta(a)\psi_a\) gives the value line.  Finally, two applications of the product rule to the finite sum \(F(\theta)=\sum_a\mu_\theta(a)\psi_a(\theta)\) give \eqref{eq:smooth-mixture-second-derivative}, including the factor two on the mixed first-derivative term.
\end{proof}

\begin{proof}[Proof of Lemma~\ref{lem:exact-one-step-remainder}]
    \emph{Step 1: integrate the target derivative.} Pathwise differentiation and the Riesz identity along the segment give
    \[
        F(\eta)-F(\theta_0) =\int_0^1 \E_{P_{\theta_s}}(\phi_{\theta_s}J_{P_{\theta_s}}\delta)\,\mathrm ds.
    \]

    \emph{Step 2: integrate the fixed-EIF centering identity.} Hold \(\phi_\eta\) fixed while varying the law.  Differentiation of \(s\mapsto P_{\theta_s}\phi_\eta\) gives \(P_{\theta_s}(\phi_\eta J_{P_{\theta_s}}\delta)\).  Since \(P_\eta\phi_\eta=0\), the fundamental theorem of calculus, with its sign fixed by integrating from \(s=0\) to \(s=1\), yields
    \[
        \E_{P_{\theta_0}}(\phi_\eta) =-\int_0^1 \E_{P_{\theta_s}}(\phi_\eta J_{P_{\theta_s}}\delta)\,\mathrm ds.
    \]

    Adding the two integrated identities proves \eqref{eq:exact-one-step-remainder}.

    \emph{Step 3: obtain the quadratic bound.} Proposition~\ref{prop:estimated-eif-stability} gives \(\|\phi_{\theta_s}-\phi_\eta\|_\infty \leq C_{\mathcal K_\star}(1-s)\|\delta\|\).  Smoothness and local positivity on the compact set give \(\sup_{\theta\in\mathcal K_\star}\|J_{P_\theta}\|_{\mathrm{op}}<\infty\), hence \(\|J_{P_{\theta_s}}\delta\|_\infty \leq C_{\mathcal K_\star}\|\delta\|\).  Integrating their product over \([0,1]\) proves \eqref{eq:quadratic-one-step-bound}.
\end{proof}

\begin{proof}[Proof of Proposition~\ref{prop:source-block-drift-cancellation}]
    \emph{Step 1: factor the source block.} Write a chart vector as $(\kappa,\lambda)$, where $\kappa\in U_{K_w}^{\mathrm{nat}}$ and $\lambda$ collects the complementary native blocks. Because $w$ is a parentless singleton district, its top-level chart factor is the unrestricted marginal $m_\kappa(w)$.  This marginal is affine in the noncorner coordinates $\kappa$. Lemma~\ref{lem:algebraic-district-factorization} therefore gives
    \[
        p_{(\kappa,\lambda)}(x_V) =m_\kappa(x_w)h_\lambda(x_{V\setminus\{w\}},x_w),
    \]
    where $h_\lambda$ is the product of the remaining top-level factors and is independent of $\kappa$.  Hence the full observed law is affine in the source block.

    \emph{Step 2: remove the source block from each fixed response.} For every fixed source value $a$, the tail-slice selection identity in Theorem~\ref{thm:normalized-intervention-bridge} has no column in $U_{K_w}^{\mathrm{nat}}$.  It therefore gives the global identity $\psi_a(\kappa,\lambda)=\psi_a(\lambda)$; Corollary \ref{cor:excluded-source-native-block} records the corresponding derivative locality.  Marginal standardization consequently gives
    \[
        F(\kappa,\lambda) =\sum_a m_\kappa(a)\psi_a(\lambda),
    \]
    which is affine in the same source block.

    \emph{Step 3: use exact affinity, not a Taylor approximation.} Put $\delta=\theta_0-\eta\in U_{K_w}^{\mathrm{nat}}$.  Affinity gives the exact, not merely first-order, identities
    \[
        p_{\theta_0}-p_\eta= \mathbb D\Phi_{\cG,\eta}[\delta], \qquad F(\theta_0)-F(\eta)=\mathbb DF(\eta)[\delta].
    \]

    \emph{Step 4: apply the Riesz identity at the fitted law.} The Riesz identity at $P_\eta$ and centering of $\phi_\eta$ now yield
    \begin{align*}
        F(\theta_0)-F(\eta) &=\E_{P_\eta}\{\phi_\eta J_{P_\eta}\delta\}\\
        =\sum_{x_V}\phi_\eta(x_V) \{p_{\theta_0}(x_V)-p_\eta(x_V)\}\\
        &=\E_{P_{\theta_0}}(\phi_\eta).
    \end{align*}

    Rearrangement proves \eqref{eq:exact-source-block-cancellation} with the sign convention in \eqref{eq:coherent-second-order-remainder}.
\end{proof}

\begin{proof}[Proof of Theorem~\ref{thm:coherent-one-step}]
    \emph{Step 1: exact fold decomposition.} For each fold, the definition of \(R_2\) gives
    \[
        F(\widehat\theta_{-v})-F(\theta_0) =R_2(\widehat\theta_{-v},\theta_0) -P\phi_{\widehat\theta_{-v}}.
    \]

    Substitute this identity into \eqref{eq:cross-fitted-one-step}.  Then add and subtract \((\mathbb P_{n,v}-P)\phi_{\theta_0}\) and use \(\sum_v(n_v/n)\mathbb P_{n,v}=\mathbb P_n\).  This gives \eqref{eq:cross-fitted-exact-expansion} without approximation.

    \emph{Step 2: quadratic population remainder.} Convexity of \(\mathcal K\) licenses Lemma \ref{lem:exact-one-step-remainder} in every fold, so
    \[
        \max_v|R_2(\widehat\theta_{-v},\theta_0)| \leq C\max_v\|\widehat\theta_{-v}-\theta_0\|^2 =o_p(n^{-1/2}).
    \]

    \emph{Step 3: conditional empirical remainder.} Conditional on the training observations outside \(I_v\), the fitted EIF is fixed and the validation observations are i.i.d. from \(P\).  The conditional variance of the unweighted \(v\)th empirical remainder is bounded by
    \[
        n_v^{-1}\|\phi_{\widehat\theta_{-v}}-\phi_{\theta_0}\|_{L_2(P)}^2.
    \]

    By Proposition~\ref{prop:estimated-eif-stability}, this is \(n_v^{-1}o_p(1)\), uniformly over the fixed number of folds.  Conditional Chebyshev, \(n_v/n\to\rho_v>0\), and fixed \(L\) therefore make the fold-weighted empirical remainder in \eqref{eq:cross-fitted-exact-expansion} \(o_p(n^{-1/2})\).

    \emph{Step 4: leading limit law.} Steps 2--3 reduce the exact expansion to
    \[
        \widehat\Psi_{\mathrm{cf}}-F(\theta_0) = (\mathbb P_n-P)\phi_{\theta_0}+o_p(n^{-1/2}).
    \]

    The ordinary i.i.d. finite-support central limit theorem proves \eqref{eq:one-step-asymptotic-linearity} and, when \(\sigma^2>0\), the stated normal limit.

    \emph{Step 5: variance consistency.} Let \(\widehat\phi_i=\phi_{\widehat\theta_{-v}}(X_i)\) for \(i\in I_v\).  The local Lipschitz bound gives \(\max_i|\widehat\phi_i-\phi_{\theta_0}(X_i)|=o_p(1)\), and hence \(\overline\phi_{\mathrm{cf}}=o_p(1)\).  Expanding the square and applying the finite-support law of large numbers to \(\phi_{\theta_0}^2\) yields \(\widehat\sigma^2_{\mathrm{cf}}\to_pP\phi_{\theta_0}^2\).
\end{proof}

\begin{proof}[Proof of Corollary~\ref{cor:source-slice-foldwise-unbiased}]
    \emph{Step 1: conditional identity.} Conditional on \(\mathcal F_{-v}\), the fitted law is fixed and the validation observations are i.i.d. from \(P\).  Therefore
    \[
        \E(\mathbb P_{n,v}\phi_{\widehat\theta_{-v}}\mid\mathcal F_{-v}) =P\phi_{\widehat\theta_{-v}}.
    \]

    The source-block-only condition permits Proposition \ref{prop:source-block-drift-cancellation}, which makes this conditional mean plus \(F(\widehat\theta_{-v})\) exactly \(F(\theta_0)\).  This proves \eqref{eq:source-slice-foldwise-unbiasedness}.

    \emph{Step 2: exact representation.} Subtract that conditional mean within each fold and sum with weights \(n_v/n\).  The result is precisely \eqref{eq:source-slice-cross-fitted-representation}; no convergence rate for the source block has entered.

    \emph{Step 3: unconditional unbiasedness.} Each summand in the exact representation has mean zero by iterated expectation.  Finite weighted summation therefore proves exact unconditional unbiasedness for every \(n\).

    \emph{Step 4: mean-square rate.} Compactness, finite support, and continuity give the uniform bound
    \[
        \sup_{\eta\in\mathcal K_s}P\phi_\eta^2<\infty.
    \]

    Conditional variance then makes the \(v\)th centered fold term have second moment \(O(n_v^{-1})\). Cauchy--Schwarz bounds every cross-fold covariance by the product of the two standard deviations; fixed \(L\) and \(n_v/n\to\rho_v\in(0,1)\) now give \eqref{eq:source-slice-mean-square-rate}, without asserting independence of the fold contributions.
\end{proof}

\begin{proof}[Proof of Corollary~\ref{cor:same-sample-one-step}]
    The exact same-sample decomposition is
    \[
        \widehat\Psi_{\mathrm{os}}-F(\theta_0) = (\mathbb P_n-P)\phi_{\theta_0} +R_2(\widehat\theta,\theta_0) +(\mathbb P_n-P)(\phi_{\widehat\theta}-\phi_{\theta_0}).
    \]

    The exact remainder lemma and the assumed rate give \(R_2(\widehat\theta,\theta_0)=o_p(n^{-1/2})\).  Since the support is fixed and finite, local EIF stability gives the explicit product bound
    \[
        |(\mathbb P_n-P)(\phi_{\widehat\theta}-\phi_{\theta_0})| \leq\|\mathbb P_n-P\|_1 \|\phi_{\widehat\theta}-\phi_{\theta_0}\|_\infty,
    \]
    and the two factors are respectively $O_p(n^{-1/2})$ and $o_p(n^{-1/4})$.  Their product is $o_p(n^{-3/4})=o_p(n^{-1/2})$. The leading empirical mean consequently gives \eqref{eq:one-step-asymptotic-linearity}.  No entropy or Donsker condition is needed because the deterministic finite-support norm bound controls the same-sample stochastic remainder.
\end{proof}

\begin{proof}[Proof of Proposition~\ref{prop:modular-eif-remainder}]
    \emph{Step 1: centering and the coefficient identity.} Every component of \(b_{K,\eta}^{\mathrm{nat}}\) is a \(Q\)-mean-zero chart score.  Hence \(Q\phi_\eta^{\mathrm{mod}}=0\), regardless of the modular coefficients.  Moreover,
    \[
        \widetilde G_K^{\mathrm{nat}} \{\widetilde\beta_K-\beta_K(\eta)\} =\widetilde\gamma_K-\gamma_K(\eta) +\{G_K^{\mathrm{nat}}(\eta)-\widetilde G_K^{\mathrm{nat}}\} \beta_K(\eta),
    \]
    which is equivalent to \eqref{eq:modular-coefficient-identity} on the stated positive-definite event.

    \emph{Step 2: coefficient and function bounds.} The eigenvalue event gives \(\|(\widetilde G_K^{\mathrm{nat}})^{-1}\|_{\mathrm{op}}\leq2/\kappa\). The exact \(\beta_K(\eta)\) and score bases are uniformly bounded on \(\mathcal K\), and the number and dimensions of the blocks are fixed. Equation~\eqref{eq:modular-coefficient-identity} therefore yields
    \[
        \max_K\|\widetilde\beta_K-\beta_K(\eta)\| \leq C\epsilon_n, \qquad \|\phi_\eta^{\mathrm{mod}}-\phi_\eta\|_{L_2(P)} +\|\phi_\eta^{\mathrm{mod}}-\phi_\eta\|_\infty \leq C\epsilon_n.
    \]

    The sup-norm conclusion uses finite support and the local positivity bound.

    \emph{Step 3: exact added remainder.} Add and subtract \(\phi_\eta\) in the fold-specific one-step correction. Because both directions are centered under \(Q\),
    \[
        P(\phi_\eta^{\mathrm{mod}}-\phi_\eta) =(P-Q)(\phi_\eta^{\mathrm{mod}}-\phi_\eta).
    \]

    Separating the validation empirical fluctuation gives exactly \eqref{eq:modular-eif-extra-remainder}.  If modular centering is absent, the same algebra leaves the additional term \(Q\phi_\eta^{\mathrm{mod}}\).

    \emph{Step 4: population and empirical rates.} Smoothness of \(\Phi_{\cG}\) on \(\mathcal K\) gives \(\|P-Q\|_1\leq C\|\eta-\theta_0\|\), so the population term is bounded by \(C\|\eta-\theta_0\|\epsilon_n=o_p(n^{-1/2})\).  Conditional on the training sample, the modular quantities are fixed; the validation empirical term has conditional standard deviation at most \(C\epsilon_n n_v^{-1/2}\), which is \(o_p(n^{-1/2})\) because \(n_v/n\) stays bounded away from zero and \(\epsilon_n=o_p(1)\).  Without training-sample measurability the same conclusion follows from the sup-norm bound of Step~2: $|(\mathbb P_{n,v}-P)(\phi_\eta^{\mathrm{mod}}-\phi_\eta)|\leq\|\mathbb P_{n,v}-P\|_1\|\phi_\eta^{\mathrm{mod}}-\phi_\eta\|_\infty=O_p(n^{-1/2})\epsilon_n$, which needs no independence between the two factors.
\end{proof}

\begin{proof}[Proof of Proposition~\ref{prop:targeting-coordinate-lift}]
    Linearity of $J_{P_\theta}$ and the definition of the district score vector give
    \[
        J_{P_\theta}\zeta(\theta) =\sum_KJ_{P_\theta}\iota_K^{\mathrm{nat}}\beta_K(\theta) =\sum_K(b_{K,\theta}^{\mathrm{nat}})^{\top}\beta_K(\theta) =\phi_\theta.
    \]

    Smoothness follows from Proposition~\ref{prop:estimated-eif-stability}.  For an admissible mixture, this uses the stipulated $C^\infty$ source-law rule; because $\zeta$ contains $\mathbb D\mu_\theta$, merely assuming that $\mu_\theta$ is $C^1$ would not in general make $\zeta$ locally Lipschitz.
\end{proof}

\begin{proof}[Proof of Theorem~\ref{thm:least-favorable-chart-flow}]
    \emph{Step 1: uniform local existence.} The open-set hypothesis supplies \(\zeta\in C^1(O;U_{\cG})\) with \(\mathcal K_1\Subset O\), hence \(\zeta\) is locally Lipschitz on a neighborhood of \(\mathcal K_1\); Proposition~\ref{prop:targeting-coordinate-lift} makes the hypothesis automatic for the target functionals considered here.  Put
    \[
        r:=\operatorname{dist}(\mathcal K_0,\mathcal K_1^c)>0, \qquad M:=\sup_{\theta\in\mathcal K_1}\|\zeta(\theta)\|<\infty.
    \]

    Compactness supplies a common Lipschitz constant on a neighborhood of \(\mathcal K_1\), so Picard--Lindel\"of gives existence and uniqueness for every initial point in \(\mathcal K_0\).

    \emph{Step 2: no exit and continuation.} If \(M>0\), choose \(\epsilon_0<r/M\); if \(M=0\), choose any sufficiently small positive \(\epsilon_0\).  Up to a putative first exit from \(\mathcal K_1\), integration of the ODE gives
    \[
        \|\vartheta_\eta(\epsilon)-\eta\| \leq M|\epsilon|<r.
    \]

    This contradicts the definition of \(r\).  The solution consequently remains in the compact subset \(\mathcal K_1\) and the continuation theorem extends it uniquely over \((-\epsilon_0,\epsilon_0)\), uniformly in \(\eta\in\mathcal K_0\).

    \emph{Step 3: model membership and the cellwise score.} Theorem~\ref{thm:forward-chart} gives \(p_{\vartheta_\eta(\epsilon)}\in\cN_+(\cG)\).  The chain rule and Proposition \ref{prop:targeting-coordinate-lift} give, cellwise,
    \[
        \frac{\mathrm d}{\mathrm d\epsilon}\log p_{\vartheta_\eta(\epsilon)} =J_{P_{\vartheta_\eta(\epsilon)}} \dot\vartheta_\eta(\epsilon) =J_{P_{\vartheta_\eta(\epsilon)}} \zeta\{\vartheta_\eta(\epsilon)\} =\phi_{\vartheta_\eta(\epsilon)}.
    \]
\end{proof}

\begin{proof}[Proof of Corollary~\ref{cor:exact-empirical-eif-equation}]
    Equation~\eqref{eq:least-favorable-score-identity} gives $(\ell_n^\eta)'(\epsilon) =\mathbb P_n\phi_{\vartheta_\eta(\epsilon)}$.  Evaluate this identity at the interior stationary point.
\end{proof}

\begin{proof}[Proof of Theorem~\ref{thm:chart-tmle}]
    We first work where $\widehat\theta\in\mathcal K_0$.  Steps~2--3 show that $E_n$ has probability tending to one.  Exact identities involving the root and the update hold on $E_n$.

    \emph{Step 1: population slope identity.} Write
    \[
        H_n(\epsilon) :=\mathbb P_n\phi_{\vartheta_{\widehat\theta}(\epsilon)}.
    \]

    Because $\E_{P_\theta}(\phi_\theta)=0$ for every $\theta$, differentiating $\E_{P_{\vartheta_{\theta_0}(\epsilon)}} \{\phi_{\vartheta_{\theta_0}(\epsilon)}\}=0$ at zero, and using \eqref{eq:least-favorable-score-identity}, gives
    \[
        P\,\mathbb D\phi_{\theta_0}[\zeta(\theta_0)] =-P\phi_{\theta_0}^2=-\sigma^2.
    \]
    Here and below expectations written as multiplication by a law are over $\cX_V$.  More explicitly, the differentiated identity is
    \[
        0=P\{\phi_{\theta_0}^2 +\mathbb D\phi_{\theta_0}[\zeta(\theta_0)]\}.
    \]

    \emph{Step 2: uniform empirical slope.} Put
    \[
        h_0(\epsilon) :=P\phi_{\vartheta_{\theta_0}(\epsilon)}.
    \]

    The population slope identity of Step~1 says $h_0'(0)=-\sigma^2$.  By continuity, there is a sufficiently small fixed $\delta\in(0,\epsilon_0)$ such that $h_0'(\epsilon)\in[-3\sigma^2/2,-\sigma^2/2]$ whenever $|\epsilon|\leq\delta$.  Smoothness on $\mathcal K_1$, continuous dependence of the ODE solution on its initial value, and the finite-support uniform law of large numbers give the following explicit bound.  With
    \[
        q_{\eta,\epsilon} :=\mathbb D\phi_{\vartheta_\eta(\epsilon)} [\zeta\{\vartheta_\eta(\epsilon)\}],
    \]

    \begin{equation}\label{eq:uniform-targeting-slope}
        \begin{split}
            \sup_{|\epsilon|\leq\delta} |H_n'(\epsilon)-h_0'(\epsilon)| &\leq \|\mathbb P_n-P\|_1 \sup_{\substack{\eta\in\mathcal K_0,\, |\epsilon|\leq\delta\\x_V\in\cX_V}} |q_{\eta,\epsilon}(x_V)|\\
            &\quad+ \sup_{|\epsilon|\leq\delta} |P(q_{\widehat\theta,\epsilon} -q_{\theta_0,\epsilon})| =o_p(1),
        \end{split}
    \end{equation}

    The first supremum is finite by compactness.  The second term is $o_p(1)$ uniformly by $\widehat\theta\to_p\theta_0$ and uniform continuous dependence of $\vartheta_\eta$ on $\eta$.  Here $H_n'(\epsilon)=\mathbb P_nq_{\widehat\theta,\epsilon}$ and $h_0'(\epsilon)=Pq_{\theta_0,\epsilon}$.

    \emph{Step 3: sign bracket, unique root, and root rate.} At zero,
    \begin{align*}
        H_n(0) ={}&(\mathbb P_n-P)\phi_{\theta_0} +(\mathbb P_n-P)(\phi_{\widehat\theta}-\phi_{\theta_0}) +P(\phi_{\widehat\theta}-\phi_{\theta_0})\\
        ={}&O_p(n^{-1/2})+O_p(\|\widehat\theta-\theta_0\|),
    \end{align*}
    where Proposition~\ref{prop:estimated-eif-stability} and $\|\mathbb P_n-P\|_1=O_p(n^{-1/2})$ control the last two terms.  Equation \eqref{eq:uniform-targeting-slope} implies that $H_n$ is strictly decreasing on $[-\delta,\delta]$ with probability tending to one.  More explicitly, on an event whose probability tends to one,
    \[
        \sup_{|\epsilon|\leq\delta}H_n'(\epsilon)\leq-\sigma^2/4, \qquad |H_n(0)|<\delta\sigma^2/4.
    \]

    Consequently,
    \[
        H_n(-\delta)>0>H_n(\delta), \qquad |\widehat\epsilon| \leq \frac{4|H_n(0)|}{\sigma^2}.
    \]

    The intermediate- and mean-value theorems give the unique root, \eqref{eq:tmle-root-rate}, and, because $(\ell_n^{\widehat\theta})''=H_n'<0$ there, its strict-local-maximum interpretation.  These inequalities also prove that $E_n$ has probability tending to one.  On $E_n$ the unique root is a measurable function of the empirical cell frequencies and the measurable initial estimator, by continuity of the flow and the strictly monotone sign bracket.  The stated fallback therefore makes $\widehat\epsilon$ and $\theta^\star$ measurable on every sample.  Setting $\widehat\epsilon=0$ on $E_n^c$ preserves \eqref{eq:tmle-root-rate}.

    \emph{Step 4: updated chart rate.} If $M=\sup_{\theta\in\mathcal K_1}\|\zeta(\theta)\|$, then
    \[
        \|\theta^\star-\theta_0\| \leq\|\widehat\theta-\theta_0\|+M|\widehat\epsilon|,
    \]
    which proves \eqref{eq:tmle-updated-rate} and keeps the update inside $\mathcal K_1$ with probability tending to one.

    \emph{Step 5: exact von Mises identity and its two remainders.} By \eqref{eq:exact-empirical-eif-equation} and the definition of $R_2$,
    \[
        F(\theta^\star)-F(\theta_0) =(\mathbb P_n-P)\phi_{\theta^\star} +R_2(\theta^\star,\theta_0).
    \]

    The convexity of $\mathcal K_1$ licenses Lemma \ref{lem:exact-one-step-remainder}, so the last term is $o_p(n^{-1/2})$.  Fixed finite support and the Lipschitz line of \eqref{eq:local-eif-stability-bounds} give
    \[
        |(\mathbb P_n-P)(\phi_{\theta^\star}-\phi_{\theta_0})| \leq\|\mathbb P_n-P\|_1 \|\phi_{\theta^\star}-\phi_{\theta_0}\|_\infty =o_p(n^{-1/2}).
    \]

    \emph{Step 6: first-order limit and variance consistency.} Substitution into the von Mises identity of Step~5 gives
    \[
        F(\theta^\star)-F(\theta_0) =(\mathbb P_n-P)\phi_{\theta_0}+o_p(n^{-1/2}),
    \]
    which proves \eqref{eq:tmle-asymptotic-linearity}; the ordinary i.i.d. finite-support central limit theorem supplies its Gaussian limit.  Moreover, $\|\phi_{\theta^\star}-\phi_{\theta_0}\|_\infty=o_p(1)$ and the finite-support law of large numbers prove variance consistency.

    \emph{Step 7: one-step equivalence.} Corollary~\ref{cor:same-sample-one-step}, formed from the same initial $\widehat\theta$, has the identical leading expansion.  Subtracting the two expansions proves \eqref{eq:tmle-one-step-equivalence}.  All rate and convergence-in-probability conclusions extend to the stated fallback, because their failure probabilities increase by at most the probability of $E_n^c$, which tends to zero.  The fallback remains in $\mathcal K_0\Subset\Omega_{\cG}$; it need not solve the empirical EIF equation.
\end{proof}

\begin{proof}[Proof of Corollary~\ref{cor:tmle-substitution}]
    \emph{Step 1: model membership.} On $E_n$, Theorem~\ref{thm:least-favorable-chart-flow} keeps every point of the local update in $\Omega_{\cG}$.  On $E_n^c$, the fallback in Theorem~\ref{thm:chart-tmle} belongs to $\mathcal K_0\Subset\Omega_{\cG}$.  Thus $p_{\theta^\star}\in\cN_+(\cG)$ and $F(\theta^\star)$ is the target functional at a single model law on every sample.

    \emph{Step 2: response-range preservation.} For a fixed intervention of the classes (I1)--(I3), $F(\theta^\star)$ is the expectation of $g$ under the positive normalized configured active law supplied by Theorem~\ref{thm:normalized-intervention-bridge}.  For an admissible independent-draw mixture, it is a convex combination of such expectations. In either case $a\leq g\leq b$ implies $a\leq F(\theta^\star)\leq b$.
\end{proof}

\subsubsection{Proofs in Section~\ref{sec:sequential-union}}\label{app:proofs-sequential}

\begin{proof}[Proof of the margin-compatibility statement of Section~\ref{sec:sequential-union}]
    Fix the system-wide node intervention of Section~\ref{sec:sequential-union}, with $A$, $V^*$, $\mathcal H=\cG_{V^*}$ and $\beta$ as there, a subset $Y_0\subseteq V^*$, and $R_0:=\operatorname{an}_{\mathcal H}(Y_0)$, which is ancestral in $\mathcal H$.  Write $\operatorname{Sel}^{(Y_0)}_{\mathsf I}$ and $\Gamma^{(Y_0)}_{\mathsf I}$ for the tail-slice map \eqref{eq:tail-slice-map} and the configured law \eqref{eq:statistical-intervention-functional} of the same node intervention with response set $Y_0$, whose active set is $R_0$ and whose active graph is $\cG_{R_0}$; put $\eta:=\operatorname{Sel}_{\mathsf I}\theta$, so that $\Gamma_{\mathsf I}(p_\theta)=\Phi_{\mathcal H}(\eta)$ by \eqref{eq:active-chart-bridge}.

    \emph{Step 1: the margin is the forward map of the restricted coordinates.}  Order $V^*\setminus R_0$ topologically in $\mathcal H$ and sum out its vertices from last to first.  At its turn each vertex is sterile among the remaining random vertices: its children in $V^*\setminus R_0$ come later in the order and have already been removed, and it has no child in $R_0$ because $R_0$ is ancestral in $\mathcal H$.  Since $\mathcal H$ has no fixed vertices and Lemma~\ref{lem:sterile-marginalization} gives $W_a\subseteq W$, every intermediate reduced CADMG is the vertex-induced ADMG on the remaining vertices, and iterating that lemma yields
    \[
        \sum_{x_{V^*\setminus R_0}}\Phi_{\mathcal H}(\eta)(x_{V^*}) =\Phi_{\cG_{R_0}}(\eta|_{R_0})(x_{R_0}),
    \]
    where $\cG_{R_0}=\mathcal H[R_0]$ is the random-ancestral reduction of $\mathcal H$ to $R_0$.  By Lemma~4.9 of \cite{evans2019smooth}, as in the proof of Lemma~\ref{lem:sterile-marginalization}, the intrinsic sets of $\cG_{R_0}$ are exactly the intrinsic sets $S$ of $\mathcal H$ with $S\subseteq R_0$, with the same recursive heads and tails; hence $\eta|_{R_0}$ consists of the blocks $\eta_S$, $S\in\cI(\cG_{R_0})$.

    \emph{Step 2: the response set $Y_0$ satisfies \eqref{eq:active-intrinsicness}.}  Every district $E$ of $\cG_{R_0}$ is bidirected-connected in $\mathcal H$ and hence lies in a district $D$ of $\mathcal H$, which is intrinsic in $\cG$ by \eqref{eq:active-intrinsicness} for the system-wide intervention and is therefore the whole random set of a reachable CADMG whose single district is $D$ (Section~\ref{app:er-rrs-compatibility}).  In that CADMG fix the vertices of $D\setminus R_0$ one at a time in reverse topological order: none of them has a child in $R_0$, so each is childless among the random vertices at its turn and therefore fixable.  Since fixing deletes no edge between random vertices, the terminal CADMG has random set $D\cap R_0$ and its districts are those of $\cG_{D\cap R_0}$, which include $E$; thus $E$ is a district of a reachable CADMG, so $E\in\cI(\cG)$ \citep[Definition~33]{richardson2023nested}, and Theorem~\ref{thm:normalized-intervention-bridge} applies to the node intervention with response set $Y_0$.

    \emph{Step 3: the selected blocks coincide.}  For $S\in\cI(\cG_{R_0})$, the tail-slice map of the response set $Y_0$ selects $\theta_S(a_{H(S)}\mid x_{T_{\cG_{R_0}}(S)},\beta_{T_{\cG}(S)\cap A})$, while $\eta_S=\theta_S(a_{H(S)}\mid x_{T_{\mathcal H}(S)},\beta_{T_{\cG}(S)\cap A})$ by \eqref{eq:tail-slice-map}: a node intervention assigns the single value $\beta_a$ to every arrow out of a source $a$, whichever active district receives it, so the two source-tail entries agree, and $T_{\cG_{R_0}}(S)=T_{\cG}(S)\setminus A=T_{\mathcal H}(S)$ by \eqref{eq:active-tail-restriction} applied to both response sets.  Hence $\eta|_{R_0}=\operatorname{Sel}^{(Y_0)}_{\mathsf I}\theta$, and Step~1 together with \eqref{eq:active-chart-bridge} for the response set $Y_0$ gives
    \[
        \sum_{x_{V^*\setminus R_0}}\Gamma_{\mathsf I}(p_\theta)(x_{V^*}) =\Phi_{\cG_{R_0}}\bigl(\operatorname{Sel}^{(Y_0)}_{\mathsf I}\theta\bigr) =\Gamma^{(Y_0)}_{\mathsf I}(p_\theta) \qquad(\theta\in\Omega_{\cG}).
    \]
    Consequently $\E_{\Gamma_{\mathsf I}(P)}\{g(X_{Y_0})\}=\E_{\Gamma^{(Y_0)}_{\mathsf I}(P)}\{g(X_{Y_0})\}$ on all of $\cN_+(\cG)$, and the canonical gradients of Theorem~\ref{thm:intervention-chart-eif} under the two conventions are the same function.  The argument uses that the values of a node intervention do not depend on the active district; it is stated for the node interventions of Section~\ref{sec:sequential-union} only.
\end{proof}

\begin{proof}[Proof of Proposition~\ref{prop:mr-block-order-screen}]
    \emph{Step 1: necessity.} Suppose the stated observational order exists.  Contracting each literal block to one vertex preserves the order of every observational arrow between blocks and every added source-precedence arrow.  The induced block order is therefore topological, so the block graph is acyclic.

    \emph{Step 2: sufficiency.} Suppose the block graph is acyclic and choose one of its topological orders. Within each active district choose a topological order of the induced directed subgraph; every source block is already a singleton.  Concatenate these within-block orders in the block order.  An internal observational arrow is respected by the within-block order, a cross-block observational arrow is respected by the block order, and an added arrow $\{a\}\to D$ places $a$ before every vertex of $D$.  The concatenated order is therefore the required observational topological order, with every block a literal interval.

    The necessity direction uses the displayed observational order; the sufficiency direction uses acyclicity of the block graph and of the directed part of the ADMG, together with the block construction in \eqref{eq:mr-block-partition}.
\end{proof}

\begin{proof}[Proof of Lemma~\ref{lem:source-isolated-active-block-slice}]
    \emph{Step 1: identify the native blocks.} Source isolation implies that deleting $A$ leaves every non-source observational district unchanged.  Hence each $D_k$ is a native district of $\cG$ and therefore an intrinsic set, every $a\in A$ is its own native district, and $\mathcal B_{\mathsf I}=\cD(\cG)$.  Write the factor in the outermost district factorization \eqref{eq:top-level-factorization} for a block $E$ as $r_{E,\theta_{[E]}}$.

    \emph{Step 2: recover the observational transition factor.} The order from Proposition~\ref{prop:mr-block-order-screen} is topological for the district quotient and makes every district a literal block.  Starting with the last block and summing successively over entire future district blocks, normalization of each district kernel removes every future factor. All preceding block factors cancel between the conditional numerator and denominator.  This includes a source for a later active district that happens to precede $D_k$: source blocks are singletons, so such a source is itself an entire preceding block and its factor cancels.  Therefore
    \begin{equation}\label{eq:observational-block-is-native-factor}
        \pr_\theta(X_{D_k}=x_{D_k}\mid\mathcal F_k^-) =r_{D_k,\theta_{[D_k]}} (x_{D_k}\mid X_{\pa_{\cG}(D_k)\setminus D_k}).
    \end{equation}

    \emph{Step 3: configure the source-tail slots.} If an active vertex outside $D_k$ is a parent of $D_k$, its directed arrow induces an arrow from its active district to $D_k$ in the block graph.  The topological block order places that district among $D_1,\ldots,D_{k-1}$, so every active parent on the right of \eqref{eq:observational-block-is-native-factor} lies in $B_{k-1}$. Source-tail coverage \eqref{eq:source-tail-coverage} places every excluded source-tail slot in $A_k$.  Evaluating those slots at $X_{A_k}=\beta_k$ is thus exactly the configured native-district factor in the node-intervention product.

    \emph{Step 4: identify the active transition.} Theorem~\ref{thm:normalized-intervention-bridge} identifies this configured factor, globally on $\Omega_{\cG}$, with the $D_k$ factor of $P_\theta^{\mathsf I}$.  The active districts inherit the same topological block order.  A second successive-normalization argument, now in $\mathcal H$, identifies that active factor with $K_{k,\theta}^{\mathsf I}(x_{D_k}\mid x_{B_{k-1}})$.  Substituting $X_{A_k}=\beta_k$ in \eqref{eq:observational-block-is-native-factor} proves \eqref{eq:mr-sequential-kernel-interface} pointwise.  Every identity used is global on the positive chart, so the conclusion is global as well.

    No graphical hypothesis beyond Assumption~\ref{ass:mr-sequential-transport} enters the argument.
\end{proof}

\begin{proof}[Proof of Lemma~\ref{lem:sequential-transport-centering}]
    \emph{Step 1: preserve the transition under source reweighting.} The full-history ladder is $\mathcal F_k^-$-measurable and, by \eqref{eq:corrected-ladder-normalization}, integrates to one.  Multiplication by this ladder therefore does not change the conditional law of $X_{D_k}$ given $\mathcal F_k^-$ on its support.  That support has $X_{A_k}=\beta_k$, so Lemma~\ref{lem:source-isolated-active-block-slice} identifies this conditional law under $\widetilde P_{k,P}$ with the $k$th transition kernel of $P^{\mathsf I}$.

    \emph{Step 2: transport the prefix.} Integrate $f(X_{B_k})$ first over this transition kernel.  The resulting function depends only on $X_{B_{k-1}}$.  Multiplication by the density ratio in \eqref{eq:sequential-prefix-bridge} then replaces the $\widetilde P_{k,P}$ prefix marginal by the $P^{\mathsf I}$ prefix marginal. This proves \eqref{eq:sequential-prefix-transport}.

    \emph{Step 3: center every admissible weight.} The same conditional identity and \eqref{eq:sequential-outcome-recursion} give
    \[
        \E_P(R_{k,P}\mid\mathcal F_k^-,X_{A_k}=\beta_k)=0.
    \]

    Multiplication by \eqref{eq:admissible-candidate-weight} proves \eqref{eq:robust-transition-centering}.

    \emph{Step 4: verify admissibility of the true weight.} $H_{k,P}$ has the required indicator support and its remaining factor is $\mathcal F_k^-$-measurable, because both the full-history ladder and the prefix density ratio are measurable there.

    No graphical hypothesis beyond Assumption~\ref{ass:mr-sequential-transport} enters.
\end{proof}

\begin{proof}[Proof of Theorem~\ref{thm:exact-sequential-drift}]
    \emph{Step 1: eliminate the true residual terms.} Lemma~\ref{lem:sequential-transport-centering} gives $P_0(\bar H_kR_{0k})=P_0(H_{0k}R_{0k})=0$ and also gives, by prefix transport,
    \[
        P_0(H_{0k}\bar R_k)=P_0^{\mathsf I}\bar R_k.
    \]

    \emph{Step 2: telescope the transported candidate residuals.} Because $\bar R_k=\bar Q_k-\bar Q_{k-1}$,
    \[
        \sum_{k=1}^mP_0^{\mathsf I}\bar R_k =P_0^{\mathsf I}(\bar Q_m-\bar Q_0) =\psi_0-\bar Q_0.
    \]

    \emph{Step 3: expand the product errors.} For each $k$,
    \[
        P_0\{(\bar H_k-H_{0k})(\bar R_k-R_{0k})\} =P_0(\bar H_k\bar R_k)-P_0(H_{0k}\bar R_k),
    \]
    because the two terms containing $R_{0k}$ vanish by Step~1.  Summing and using Step~2 gives
    \[
        \sum_{k=1}^mP_0(\bar H_k\bar R_k)-\psi_0+\bar Q_0,
    \]
    which is the left-hand side of \eqref{eq:exact-sequential-drift}.
\end{proof}

\begin{proof}[Proof of Theorem~\ref{thm:sequential-multiple-robustness}]
    Write $\widehat M^{(-\ell)}$ for the fitted summand displayed in the theorem and put $M_0:=\psi_0+\phi_{\mathrm{seq}}$.

    \emph{Step 1: decompose each validation fold.} Adding and subtracting $P_0\widehat M^{(-\ell)}$ gives
    \begin{align*}
        \widehat\psi_{\mathrm{seq}}-\psi_0 &=(\mathbb P_n-P_0)M_0\\
        &\quad+\sum_{\ell=1}^L\frac{n_\ell}{n} (\mathbb P_{n,\ell}-P_0) \{\widehat M^{(-\ell)}-M_0\}\\
        &\quad+\sum_{\ell=1}^L\frac{n_\ell}{n} P_0\{\widehat M^{(-\ell)}-\psi_0\}.
    \end{align*}

    The first term equals $(\mathbb P_n-P_0)\phi_{\mathrm{seq}}$ because the constant $\psi_0$ cancels.

    \emph{Step 2: control the population drift.} Conditionally on the training data, Theorem \ref{thm:exact-sequential-drift} and Cauchy--Schwarz bound the absolute value of the last line by
    \[
        \sum_{\ell=1}^L\frac{n_\ell}{n} \sum_{k=1}^m \|\widehat H_k^{(-\ell)}-H_{0k}\|_{P_0,2} \|\widehat R_k^{(-\ell)}-R_{0k}\|_{P_0,2}.
    \]

    This is $o_p(1)$ under the consistency rate and $o_p(n^{-1/2})$ under the intersection rate.

    \emph{Step 3: control the centered empirical terms.} For consistency, condition separately on the training sample associated with fold $\ell$.  Uniform boundedness makes that fold's weighted centered term $O_p(n^{-1/2})$ by conditional Chebyshev; fixed $L$ makes their sum $O_p(n^{-1/2})=o_p(1)$.  At the all-correct intersection, the conditional variance of the weighted term for fold $\ell$ equals $n_\ell/n^2$ times the conditional variance under $P_0$ of $\widehat M^{(-\ell)}-M_0$ given the training sample, hence is at most
    \[
        \frac{n_\ell}{n^2} \|\widehat M^{(-\ell)}-M_0\|_{P_0,2}^2 =o_p(n^{-1}),
    \]
    using $n_\ell/n\to\rho_\ell$ and the assumed uniform $L_2(P_0)$ convergence. Conditional Chebyshev makes each fold contribution $o_p(n^{-1/2})$, and fixed $L$ makes their sum $o_p(n^{-1/2})$.  This foldwise argument does not require cross-fold conditional independence.

    \emph{Step 4: obtain the limit and estimate its variance.} Steps~1--3 prove \eqref{eq:sequential-asymptotic-linearity}.  The finite-support central limit theorem yields the centered Gaussian limit with variance $P_0\phi_{\mathrm{seq}}^2$.  The same conditional second-moment argument, together with $\widehat\psi_{\mathrm{seq}}\to_p\psi_0$, shows that the empirical second moment of the cross-fitted centered summands converges to $P_0\phi_{\mathrm{seq}}^2$.
\end{proof}

\begin{proof}[Proof of Theorem~\ref{thm:sequential-projection-comparison}]
    \emph{Step 1: obtain first-order nuisance movement.} Along a smooth chart path $P_t$ through $P_0$, evaluate the candidate nuisances in Theorem~\ref{thm:exact-sequential-drift} at $P_t$ while taking expectation under $P_0$.  On the fixed positive support, every source propensity, finite conditional expectation, and prefix Radon--Nikodym ratio is a smooth rational function of the chart coordinates.  Hence $H_{k,P_t}-H_{k,P_0}=O(t)$ and $R_{k,P_t}-R_{k,P_0}=O(t)$, uniformly over the finite support.  Both nuisance differences on the right of \eqref{eq:exact-sequential-drift} are therefore $O(t)$.

    \emph{Step 2: identify the ambient representer.} Write $\phi_{\mathrm{seq},t}$ for the all-correct sequential influence function evaluated at $P_t$.  The exact drift identity and Step~1 give
    \[
        \Psi(P_t)-\Psi(P_0)+P_0\phi_{\mathrm{seq},t}=O(t^2), \qquad P_t\phi_{\mathrm{seq},t}=0.
    \]

    If $s$ is the score of the path, differentiation of the centering identity gives
    \[
        P_0\dot\phi_{\mathrm{seq},0} =-P_0(\phi_{\mathrm{seq}}s).
    \]

    Differentiating the first identity and substituting this equality yields
    \[
        \left.\frac{\mathrm d}{\mathrm dt}\Psi(P_t)\right|_{t=0} =P_0(\phi_{\mathrm{seq}}s)
    \]
    for every smooth chart path $P_t$ through $P_0$ with score $s$.  Hence $\phi_{\mathrm{seq}}$ is an influence function.

    \emph{Step 3: project onto the model tangent space.} The canonical gradient is the metric projection of any influence function onto $\cT_{P_0}\cN(\cG)$.  Corollary~\ref{cor:explicit-eif}, applied to the ambient representer $\phi_{\mathrm{seq}}$, gives the Gram formula in \eqref{eq:sequential-canonical-projection}.

    \emph{Step 4: compare variances.} The projection residual is orthogonal to the canonical gradient.  Pythagoras therefore gives \eqref{eq:sequential-pythagoras}; equality holds exactly when the residual vanishes, equivalently when $\phi_{\mathrm{seq}}\in\cT_{P_0}\cN(\cG)$.
\end{proof}

\begin{proof}[Proof of Proposition~\ref{prop:sequential-strict-variance-gap}]
    \emph{Step 1: verify every clause of the sequential scope.} Here $A_{\mathsf I}=\{W\}$ and $V^*=\{A,B,Z,Y\}$.  The source $W$ is a singleton observational district, so \eqref{eq:source-isolation} holds.  The active vertices form the single district $D_1=\{A,B,Z,Y\}$; by source isolation this is also a native district of $\cG$ and hence is intrinsic.  Its excluded-parent set is
    \[
        \pa_{\cG}(D_1)\setminus V^*=\{W\},
    \]
    so choosing $A_1=\{W\}$ verifies source-tail coverage.  The block graph has the two vertices $\{W\}$ and $D_1$ and the single block arrow $\{W\}\to D_1$ (induced by $W\to Z$ and agreeing with the formal precedence constraint).  It is acyclic, and its topological order places the singleton source block before the literal active block.  These facts verify all clauses of Assumption~\ref{ass:mr-sequential-transport}.

    \emph{Step 2: compute the sequential representer.} The intervention sets $W=0$, and the rational SEM gives $\pr_{P_0}(W=0)=1/2$. With one transition, $Q_1=Y$ and $Q_0=\psi_0$.  Hence \eqref{eq:sequential-influence-function} gives the displayed $\phi_{\mathrm{seq}}$.  Because $W$ is parentless and its district is a singleton, the outermost district factorization is $p_0(w,x_{V\setminus\{W\}})=p_0(w)r_0(x_{V\setminus\{W\}}\mid w)$, and the node intervention evaluates the second factor at $w=0$.  Thus $P_0^{\mathsf I}=P_0(\,\cdot\mid W=0)$ and $\psi_0=\E_{P_0}(Y\mid W=0)=2189/4000$.  Both parentlessness and the singleton district are used: this ordinary source slice is precisely what fails for the parentless but bidirected source in the third exclusion above.

    \emph{Step 3: an analytic certificate for the canonical gradient.} Write
    \[
        \begin{aligned}
            \mu(b,z)&:=\E_{P_0}(Y\mid B=b,Z=z), &
            q_0(z\mid b)&:=\pr_{P_0}(Z=z\mid B=b,W=0),\\
            q(z\mid b)&:=\pr_{P_0}(Z=z\mid B=b), &
            h(b)&:=\sum_z q_0(z\mid b)\mu(b,z).
        \end{aligned}
    \]
    Define the following function at the primary law $P_0=P$ of \eqref{eq:verma-rational-sem}:
    \[
        \phi_{\mathrm{eff}}
        =\{h(B)-\psi_0\}
        +2\mathbf 1(W=0)\{\mu(B,Z)-h(B)\}
        +\frac{q_0(Z\mid B)}{q(Z\mid B)}\{Y-\mu(B,Z)\}.
    \]
    Write $f_B$, $f_Z$ and $f_Y$ for its three summands.  They satisfy
    \[
        P_0 f_B=0,\qquad
        \E_{P_0}(f_Z\mid B,W)=0,\qquad
        \E_{P_0}(f_Y\mid B,Z)=0,
    \]
    and, because $Y\ind W\mid(B,Z)$ at $P_0$, also $\E_{P_0}(f_Y\mid W,B,Z)=0$.

    \emph{Derivative representation.} At every law in
    $\cN_+(\cG)=\{p>0:\ W\ind(A,B),\ Y\ind W\mid(B,Z)\}$ (Section~\ref{app:B-primary}),
    the $(W,B,Z,Y)$ margin factorizes as
    \[
        \begin{aligned}
            p(w,b,z,y)&=p(w)p(b)p(z\mid b,w)p(y\mid b,z),\\
            \Psi(P)&=\sum_{b,z}p(b)\,\pr_P(Z=z\mid B=b,W=0)\,\E_P(Y\mid B=b,Z=z).
        \end{aligned}
    \]
    For any smooth model path with score $s$, its marginal score is
    $s_{\mathrm{marg}}=\E_{P_0}(s\mid W,B,Z,Y)$.  Differentiating the target through the $p(b)$, $p(z\mid b,w)$ and $p(y\mid b,z)$ factors gives, respectively, its inner products with the first, second and third summands of $\phi_{\mathrm{eff}}$.  The second summand uses $1/\pr_{P_0}(W=0)=2$; the third uses the ratio of the target $(B,Z)$ mass $p_0(b)q_0(z\mid b)$ to its observed mass $p_0(b)q(z\mid b)$.  Variation of $p(w)$ has derivative zero and inner product zero with $\phi_{\mathrm{eff}}$.  Each factor score of the margin is centered given the conditioning variables of its factor, and the $p(y\mid b,z)$ score is centered given $(W,B,Z)$ by $Y\ind W\mid(B,Z)$; since $f_B$ is a function of $B$ alone and $W\ind B$, $f_Z$ is centered given $(B,W)$, and $f_Y$ is centered given $(W,B,Z)$, every summand is orthogonal to the scores of the other three factors.  Therefore
    \[
        \left.\frac{\mathrm d}{\mathrm dt}\Psi(P_t)\right|_{t=0}
        =P_0(\phi_{\mathrm{eff}}s_{\mathrm{marg}})
        =P_0(\phi_{\mathrm{eff}}s).
    \]
    Thus the displayed function represents the derivative along every model path, including paths that vary the conditional law of $A$.

    \emph{Tangent membership.} The primary SEM also gives $A\ind(W,B,Z,Y)$ at $P_0$.  Keeping $p_0(a)$ and $p_0(w)$ fixed, tilt each of the three remaining factors separately by $1+tf_B$, $1+tf_Z$ or $1+tf_Y$, respectively, in
    \[
        p_t(a,w,b,z,y)=p_0(a)p_0(w)p_t(b)p_t(z\mid b,w)p_t(y\mid b,z).
    \]
    Each tilt is normalized by the corresponding centering identity above and is strictly positive for all sufficiently small two-sided $t$, since the support is finite.  Each resulting path obeys both displayed conditional independences and has score equal to its designated summand.  Hence all three summands, and their sum, belong to $\cT_{P_0}\cN(\cG)$.  A derivative representer in the tangent space is the canonical gradient.  This tangent-path argument uses the additional independence of $A$ at the primary law; it is not an assertion of this formula at arbitrary model laws.

    \emph{Exact values.} In lexicographic order of $(b,z)$, the SEM gives
    \[
        \mu=(17/100,127/200,73/100,183/200),\qquad
        h=(433/2000,439/500).
    \]
    The three summands are pairwise orthogonal by their conditional centering and the marginal factorization above.  Direct finite summation gives
    \[
        P_0f_B^2=\frac{1750329}{16000000},\qquad
        P_0f_Z^2=\frac{19949}{800000},\qquad
        P_0f_Y^2=\frac{491839}{3200000}.
    \]
    Adding these values, and using $P_0\phi_{\mathrm{seq}}^2=\psi_0(1-\psi_0)/\pr_{P_0}(W=0)=2\cdot\tfrac{2189}{4000}\cdot\tfrac{1811}{4000}$, gives
    \[
        P_0\phi_{\mathrm{seq}}^2=\frac{3964279}{8000000}, \qquad
        P_0\phi_{\mathrm{eff}}^2=\frac{576063}{2000000}, \qquad
        P_0(\phi_{\mathrm{seq}}-\phi_{\mathrm{eff}})^2=\frac{1660027}{8000000}.
    \]
    The derivative representation proves, with $F(\theta):=\Psi(P_\theta)$ and $\theta_0=\Theta_{\cG}(P_0)$, all coordinate equations
    \[
        P_0\{\phi_{\mathrm{seq}}J_{P_0}e_j\}
        =P_0\{\phi_{\mathrm{eff}}J_{P_0}e_j\}
        =\mathbb DF(\theta_0)[e_j],\qquad j=1,\ldots,24.
    \]
    It also gives $P_0\{\phi_{\mathrm{eff}}(\phi_{\mathrm{seq}}-\phi_{\mathrm{eff}})\}=0$.  The exact native-district Gram calculation recorded in Appendix~\ref{app:prototype-ledger} independently corroborates these identities.

    \emph{Step 4: conclude strictness.} Theorem~\ref{thm:sequential-projection-comparison} identifies $\phi_{\mathrm{eff}}$ as the canonical projection of $\phi_{\mathrm{seq}}$.  The strictly positive residual fraction proves that the two representers differ and establishes the claimed strict variance gap. The analytic derivative and tangent-path arguments in Step~3 identify the projection, and its exact squared norm follows from three finite rational sums.
\end{proof}
%% A.4 proofs of the main-text results

\section{Exact worked examples and computational verification}\label{app:prototype-ledger}

This appendix records the exact laws, certificates and computational checks behind the worked examples of Sections~\ref{sec:setup}--\ref{sec:finite-sample}.  Each subsection groups the graphs, laws and certificates of one example or control; the reproducing scripts, the evidence classification and the secondary diagnostics of the finite-sample study are collected in Section~\ref{app:B-repro}.

\subsection{The primary example}\label{app:B-primary}

The binary worked example uses $V=\{W,A,B,Z,Y\}$ with the graph in \eqref{eq:verma-counterexample-graph}.  Two rational laws are used: the primary law $P$ in \eqref{eq:verma-rational-sem} and the second law $P'$ in \eqref{eq:verma-active-latent-sem}, in which all three latent pathways are active; Table~\ref{tab:verma-exact-computational-checks} summarizes the exact checks performed at both.

\paragraph*{The nested model of \eqref{eq:verma-counterexample-graph} is its ordinary Markov model}

Let $\mathcal M_+:=\{p>0:\ W\ind(A,B),\ Y\ind W\mid(B,Z)\}$ on an arbitrary finite state space.  Then $\cN_+(\cG)=\mathcal M_+$.  Nested membership gives both independences through the global Markov property (Proposition~\ref{prop:er-equivalence-rrs-compatibility}(ii) and \cite[Theorem~16]{richardson2023nested}); both are $m$-separations of $\cG$: every path from $W$ to $A$ or to $B$ collides at $Z$ or at $Y$, and given $\{B,Z\}$ every path from $W$ to $Y$ is blocked at $Z$, at $B$, or at the collider $A$.  Conversely, let $p\in\mathcal M_+$ and write $p=p(w)\,q_\Delta(a,b,z,y\mid w)$ with $q_\Delta:=p(\cdot\mid w)$, a strictly positive kernel on the single-district CADMG $\cG[\Delta]$.  By the reassembly step of Appendix~\ref{app:finite-state-parameterization-proof}, $p$ recursively factorizes according to $\cG$ once $q_\Delta$ recursively factorizes according to $\cG[\Delta]$, and then $P\in\cN_+(\cG)$ by Proposition~\ref{prop:er-equivalence-rrs-compatibility}(i).  Since $\cG[\Delta]$ has one district, only \textup{(RF2)} is at issue; every proper ancestral set of $\cG[\Delta]$ is an ancestral set of $\cG[ABZ]$ or of $\cG[BZY]$ with the same margin and, by the nesting identity $(\mathcal H[C])[B]=\mathcal H[B]$, the same target CADMG, so it suffices to treat these two.
\begin{enumerate}[label=(\roman*),nosep]
    \item $\sum_a q_\Delta=p(b,z,y\mid w)=p(b)\,p(z\mid b,w)\,p(y\mid b,z)$ by $B\ind W$ and $Y\ind W\mid(B,Z)$; on $\cG[BZY]$, whose districts are $\{B,Y\}$ and $\{Z\}$, this is the district product of the probability kernels $r_{BY}(b,y\mid z):=p(b)p(y\mid b,z)$ and $r_Z(z\mid b,w):=p(z\mid b,w)$.  The proper ancestral margins of $r_{BY}$ on $\cG[BY]$ are $p(b)$, which does not depend on the discarded fixed argument $z$, and the normalized kernel $\sum_b p(b)p(y\mid b,z)$; those of $q_{BZY}$ are $p(b)$, which does not depend on $w$, and $p(b)p(z\mid b,w)$, a product of single-vertex kernels on $\cG[BZ]$ whose own proper margin $p(b)$ does not depend on $w$.
    \item $\sum_y q_\Delta=p(a,b,z\mid w)=p(a,b)\,p(z\mid a,b,w)$ by $W\ind(A,B)$; the proper ancestral margins on the single-district $\cG[ABZ]$ are $p(a)$, $p(b)$ and $p(a,b)$, none depending on the discarded $w$ by the same joint independence, with $p(a,b)$ on $\cG[AB]$ having the single-vertex margins $p(a)$ and $p(b)$, and $\sum_a p(a,b)p(z\mid a,b,w)=p(b)p(z\mid b,w)$ as in \textup{(i)}.
\end{enumerate}
Hence $q_\Delta$ recursively factorizes according to $\cG[\Delta]$ and $p\in\cN_+(\cG)$.  The joint independence in \textup{(ii)} cannot be weakened to $B\ind W$: the binary law $p(w,a,b,z,y)=\{1+\varepsilon(2w-1)(2a-1)\}/32$, $0<|\varepsilon|<1$, satisfies $B\ind W$ and $Y\ind W\mid(B,Z)$ but not $W\ind A$, and is not nested Markov.  Consequently the tangent space of $\cN(\cG)$ at every $P\in\cN_+(\cG)$ is that of the ordinary Markov model, the efficiency bounds of the two models coincide, and every quantity computed at $P$ and $P'$ in this appendix is a quantity of the ordinary Markov model.

No DAG on $V$ represents this model.  Let $\mathcal D$ be a DAG on $V$ whose strictly positive model equals $\mathcal M_+$.  For two vertices $u,v$ adjacent in $\mathcal D$, a positive law under which $v$ depends on $u$ while all other coordinates are mutually independent and independent of $(u,v)$ factorizes according to $\mathcal D$ whichever way the edge is oriented, and it has $u\not\ind v\mid S$ for every $S\subseteq V\setminus\{u,v\}$; since $W\ind A$, $W\ind B$ and $Y\ind W\mid(B,Z)$ hold throughout $\mathcal M_+$, the vertex $W$ is nonadjacent to $A$, $B$ and $Y$ in $\mathcal D$.  Conversely, at the law $P'$ of \eqref{eq:verma-active-latent-sem} each of the seven pairs $\{A,B\}$, $\{A,Z\}$, $\{A,Y\}$, $\{B,Z\}$, $\{B,Y\}$, $\{Z,Y\}$ and $\{W,Z\}$ is dependent given every subset of the remaining three vertices (exact enumeration of all $56$ pair--conditioning-set combinations, recorded in the supplement); a nonadjacent pair of $\mathcal D$ is d-separated by some such subset, so all seven pairs are adjacent in $\mathcal D$: the skeleton is complete on $\{A,B,Z,Y\}$, and $W$ is adjacent only to $Z$.  The binary witness lifts to every finite state space: group each $\cX_v$ into two nonempty classes, let $c(x_V)$ record the class of each coordinate, and put $\widetilde p(x_V):=p_{P'}(c(x_V))\prod_{v\in V}w_v(x_v)$ with positive weights $w_v$ summing to one over each class; for disjoint $X,Y,S\subseteq V$ every conditional-independence cross-product $\widetilde p(x,y,s)\widetilde p(s)-\widetilde p(x,s)\widetilde p(y,s)$ equals the corresponding binary cross-product at $P'$ times the positive factor $w_X(x)w_Y(y)\prod_{v\in S}w_v(s_v)^2$, so $\widetilde p$ satisfies both restrictions of $\mathcal M_+$ and exhibits all $56$ dependences.  The independence $W\ind A$ then forces $W\to Z\leftarrow A$.  Given $\{B,Z\}$, the path $W\to Z\leftarrow A\,\text{---}\,Y$ is open, since $Z$ is a conditioned collider and $A$ an unconditioned noncollider, contradicting $Y\ind W\mid(B,Z)$.

\paragraph*{Intrinsic sets and block dimensions}

The nine intrinsic sets of $\cG$ are $\{W\}$, with $d_{\{W\}}=1$, and the following eight sets in the $\Delta$ block; the left column contains intrinsic sets, not merely their recursive heads.
\[
    \begin{array}
        {c|c|c|c} C & H(C) & T(C) & d_C=2^{|T(C)|}\\
        \hline \{A\} & \{A\} & \varnothing & 1\\
        \{B\} & \{B\} & \varnothing & 1\\
        \{A,B\} & \{A,B\} & \varnothing & 1\\
        \{Y\} & \{Y\} & \{Z\} & 2\\
        \{B,Y\} & \{B,Y\} & \{Z\} & 2\\
        \{Z\} & \{Z\} & \{B,W\} & 4\\
        \{A,B,Z\} & \{A,Z\} & \{B,W\} & 4\\
        \{A,B,Y,Z\} & \{A,Y\} & \{B,W,Z\} & 8
    \end{array}
\]
Thus $d_\Delta^{\mathrm{nat}}=1+1+1+2+2+4+4+8=23$ and the chart has dimension $1+23=24$, as used in Section~\ref{sec:counterexample}.

\paragraph*{The two laws}

The primary law $P$ is generated by mutually independent $W,A,U\sim\operatorname{Bernoulli}(1/2)$.  In lexicographic order of the displayed binary conditioning arguments, its complete latent-SEM specification is
\begin{equation}\label{eq:verma-rational-sem}
    \begin{gathered}
        \pr(B=1\mid U=0,1)=(1/10,9/10),\\
        \pr(Z=1\mid W,B)=(1/10,4/5,7/10,9/10),\\
        \pr(Y=1\mid Z,U)=(1/10,4/5,3/5,19/20),\\
        p_P(w,a,b,z,y) =\frac18\sum_{u=0}^1 p_B(b\mid u)p_Z(z\mid w,b)p_Y(y\mid z,u),
    \end{gathered}
\end{equation}
where $p_B,p_Z,p_Y$ are the Bernoulli mass functions determined by the three success-probability tables.  Thus \eqref{eq:verma-rational-sem} specifies all 32 observed cells and is the law used both in Section~\ref{sec:counterexample} and in Proposition~\ref{prop:sequential-strict-variance-gap}.

For completeness, the second law $P'$ is generated by mutually independent $W\sim\operatorname{Bernoulli}(1/2)$, $U_1\sim\operatorname{Bernoulli}(2/5)$, $U_2\sim\operatorname{Bernoulli}(3/5)$, and $U_3\sim\operatorname{Bernoulli}(1/2)$.  In lexicographic order of the stated binary arguments,
\begin{equation}\label{eq:verma-active-latent-sem}
    \begin{aligned}
        \pr(A=1\mid U_1,U_2)&=(1/5,3/5,7/10,9/10),\\
        \pr(B=1\mid U_1,U_3)&=(1/10,4/5,3/10,17/20),\\
        \pr(Z=1\mid W,B,U_2)&=(1/10,1/4,7/10,4/5, 3/5,7/10,4/5,9/10),\\
        \pr(Y=1\mid Z,U_3)&=(1/10,4/5,3/5,19/20),\\
        p_{P'}(w,a,b,z,y) &=\frac12\sum_{u_1,u_2,u_3=0}^1 p_{U_1}(u_1)p_{U_2}(u_2)p_{U_3}(u_3) p_A(a\mid u_1,u_2)p_B(b\mid u_1,u_3)\\
        &\hspace{37mm}{} \times p_Z(z\mid w,b,u_2)p_Y(y\mid z,u_3).
    \end{aligned}
\end{equation}
Here and above, each mechanism factor on the right-hand side denotes the Bernoulli mass function determined by its displayed success probability; the factor $1/2$ is the mass of $W=w$. Together with \eqref{eq:verma-rational-sem}, these assignments make both rational laws in Table~\ref{tab:verma-exact-computational-checks} explicit.

\paragraph*{Intrinsic coordinate versus observational conditional}

At $P$, the values displayed in Section~\ref{sec:counterexample} are direct calculations from \eqref{eq:verma-rational-sem}: $q_{\{Y\}}(0\mid 0)=\tfrac12\bigl(\tfrac9{10}+\tfrac15\bigr)=\tfrac{11}{20}$, while $\pr_P(Z=0)=\tfrac38$ and $\pr_P(Y=0,Z=0)=\tfrac{1077}{4000}$ give $\pr_P(Y=0\mid Z=0)=\tfrac{359}{500}$.

\paragraph*{The raw direction of Example~\ref{prop:raw-failure}}

The exact projection of $h=X_Y-\E_P(X_Y\mid X_Z)$ is verified in Appendix~\ref{app:proofs-counterexample}.  Its squared relative residual is
\[
    \frac{\|h-\Pi_{\cT_P\cN(\cG)}h\|_{P,2}^2}{\|h\|_{P,2}^2} =\frac{224}{4742865}\approx4.72\times10^{-5},
\]
so the residual is about $0.687\%$ of $\|h\|_{P,2}$ in norm, yet strictly nonzero.  Moreover, the native $\{W\}$ component of $h$ is $\Pi_{\cS_{\{W\}}^{\mathrm{nat}}(P)}h=\E_P(h\mid W)$, equal to $259/12500$ at $W=0$ and to $-259/12500$ at $W=1$.

\paragraph*{The attribution ratio in Table~\ref{tab:verma-exact-computational-checks}}

The final row of Table~\ref{tab:verma-exact-computational-checks} has the following precise definition.  For a strictly positive binary mass $p$ on the five variables above, the reachable $\{B,Y\}$ kernel obtained by fixing $A,Z,W$ has the first form below; set
\begin{equation}\label{eq:BY-constraint-map}
    \begin{gathered}
        q_{BY,p}(b,y\mid z,w):=p(b\mid w)p(y\mid b,z,w),\\
        F_{BY}(p):= \bigl(q_{BY,p}(b,y\mid z,1)-q_{BY,p}(b,y\mid z,0) \bigr)_{(b,y,z)\in\{0,1\}^3}\in\R^8.
    \end{gathered}
\end{equation}
Here the index is lexicographic.  For $L\in\cN_+(\cG)$ with mass $\ell$, define the score-coordinate constraint operator $L_{BY}(\ell):L_2^0(L)\to\R^8$ by $L_{BY}(\ell)s:=\mathbb DF_{BY}(\ell)[\ell s]$, the derivative of $F_{BY}$ along $\ell_t=\ell(1+ts)$, and let $L_{BY}(\ell)^*$ denote its adjoint from Euclidean $\R^8$ to $L_2^0(L)$, whose inner product is $\langle f,g\rangle_L=L(fg)$.  Define
\begin{equation}\label{eq:BY-attribution-ratio}
    \begin{aligned}
        \mathcal V_{BY}(L)&:=\ran\{L_{BY}(\ell)^*\},\\
        r_L&:=h_L-\Pi_{\cT_L\cN(\cG)}h_L, &h_L&:=Y-\E_L(Y\mid Z),\\
        \rho_{BY}(L)&:= \frac{\|\Pi_{\mathcal V_{BY}(L)}r_L\|_{L,2}^2}{\|r_L\|_{L,2}^2}, &r_L&\ne0.
    \end{aligned}
\end{equation}

At both displayed laws, $F_{BY}(p_P)=F_{BY}(p_{P'})=0$ and $L_{BY}(\ell)$ has rank five, its row space being spanned by the gradients of the constraints $B\ind W$ (one) and $Y\ind W\mid(B,Z)$ (four); thus its eight rows are redundant.  Exact calculation gives $\rho_{BY}(P)=1$ and $\rho_{BY}(P')=0.9718477398\ldots$, which are the entries in the final row.  This metric-dependent attribution is deliberately confined to this computational appendix.

\begin{table}[!t]
    \centering
    \caption{Exact computational checks at the laws $P$ and $P'$ of the graph in Section~\ref{sec:counterexample}.  The last row is a descriptive diagnostic and is not used in any formal result.}\label{tab:verma-exact-computational-checks}
    \begin{tabular}{p{0.48\linewidth}p{0.18\linewidth}p{0.22\linewidth}}
    \toprule
        Check & $P$ & $P'$ \\
    \midrule
        Nested-model tangent dimension & 24 & 24 \\
        Recursive-head differential rank & 24 & 24 \\
        Native district dimensions & $1+23$ & $1+23$ \\
        Cross-district Gram block & exactly zero & exactly zero \\
        Within-district obliqueness & present & present \\
        Law-specific $h_L\notin\cT_L\cN(\cG)$ & yes & yes \\
        Squared residual norm explained by the gradients of $B\ind W$ and $Y\ind W\mid(B,Z)$
        & 100\% & $\approx97.1848\%$ \\
    \bottomrule
    \end{tabular}
\end{table}

\paragraph*{Independent verification}

A separate forward-map verification checked exact roundtrip. Writing $D_P=\operatorname{diag}\{p(x_V):x_V\in\cX_V\}$ and $\dot P:=\mathbb D\Phi_{\cG,\theta_0}=D_PJ_P$ for the probability differential, and writing $M_P$ for the Jacobian of the nested constraints, it also checked $M_P\dot P=0$ and $\mathbb D\Theta_{\cG,P}\dot P=I$, equality of the forward and reconstructed score bases, and positivity/normalization/nested constraints under small individual and combined coordinate perturbations.  The supplied exhaustive-order verification enumerates all 112 valid fixing sequences across 22 reachable sets (21 nonempty, plus the empty set) and checks 1632 order-comparison rows at each law; their addition leaves the tangent rank unchanged.  These tests make the forward construction independent of the constraint-nullspace and chart-extraction routes.

\subsection{Cyclic-quotient control}\label{app:B-cyclic}

Consider the binary ADMG $\cG_{\mathrm{cyc}}$ on $\{A,A',B,B'\}$ whose bidirected and directed edges are
\begin{equation}\label{eq:cyclic-counterexample-graph}
    \begin{aligned}
        \text{bidirected: }&A\leftrightarrow A',\quad B\leftrightarrow B',\\
        \text{directed: }&A\to B,\quad B'\to A'.
    \end{aligned}
\end{equation}

The directed vertex graph is acyclic, but the two native districts $D_1=\{A,A'\}$ and $D_2=\{B,B'\}$ form a directed two-cycle in the district quotient; see Figure~\ref{fig:cyclic-counterexample-graph}.

\begin{figure}
    \centering
    \begin{tikzpicture}[
        -Latex, semithick,
        state/.style={circle, draw, minimum width=0.62cm, inner sep=1pt},
        bidirected/.style={Latex-Latex, dashed}]
        \node[state] (A)  at (0,1.2) {$A$};
        \node[state] (Ap) at (0,0)   {$A'$};
        \node[state] (B)  at (2,1.2) {$B$};
        \node[state] (Bp) at (2,0)   {$B'$};
        \draw (A) -- (B);
        \draw (Bp) -- (Ap);
        \draw[bidirected] (A) -- (Ap);
        \draw[bidirected] (B) -- (Bp);
    \end{tikzpicture}
    \caption{The graph $\cG_{\mathrm{cyc}}$ of \eqref{eq:cyclic-counterexample-graph}.  Edge conventions as in Figure~\ref{fig:verma-counterexample-graph} of the article.}\label{fig:cyclic-counterexample-graph}
\end{figure}

Let $P_{\mathrm{cyc}}$ be the strictly positive rational law specified by \eqref{eq:cyclic-rational-coordinates} below.  Its recursive-head chart has dimension $10=5+5$.  Exact differentiation of its forward map gives
\begin{equation}\label{eq:cyclic-cross-district-gram-zero}
    \bigl\langle J_{P_{\mathrm{cyc}}}u, J_{P_{\mathrm{cyc}}}v\bigr\rangle_{P_{\mathrm{cyc}}}=0 \quad\text{for all }u\in U_{D_1}^{\mathrm{nat}},\ \ v\in U_{D_2}^{\mathrm{nat}}.
\end{equation}

All 25 entries of the cross-district Gram block vanish.

The same law shows that observational centering does not imply tangency.  Define
\begin{equation}\label{eq:cyclic-raw-directions}
    f_1=A'-\E_{P_{\mathrm{cyc}}}(A'\mid B'), \qquad f_2=B-\E_{P_{\mathrm{cyc}}}(B\mid A).
\end{equation}

Although $\E_{P_{\mathrm{cyc}}}(f_1\mid B') =\E_{P_{\mathrm{cyc}}}(f_2\mid A)=0$, exact calculation yields
\begin{equation}\label{eq:cyclic-raw-direction-failure}
    \E_{P_{\mathrm{cyc}}}(f_1f_2)=-\frac{1}{5250}\ne0, \qquad f_1,f_2\notin\cT_{P_{\mathrm{cyc}}}\cN(\cG_{\mathrm{cyc}}).
\end{equation}

The two non-tangency assertions carry exact certificates:
\[
    \|f_1-\Pi_{\cT_{P_{\mathrm{cyc}}}\cN(\cG_{\mathrm{cyc}})}f_1\|_{P_{\mathrm{cyc}},2}^2=\frac{1}{22050}, \qquad \|f_2-\Pi_{\cT_{P_{\mathrm{cyc}}}\cN(\cG_{\mathrm{cyc}})}f_2\|_{P_{\mathrm{cyc}},2}^2=\frac{1}{1250}.
\]

For a concrete four-vertex instance of the district forward map of Section~\ref{sec:setup}: for $D_1=\{A,A'\}$, the CADMG $\mathfrak d_{D_1}(\cG_{\mathrm{cyc}})$ has random set $\{A,A'\}$, fixed set $\{B'\}$, and retained edges $A\leftrightarrow A'$ and $B'\to A'$; the intrinsic set $C=D_1$ has $H(C)=\{A,A'\}$ and $T(C)=\{B'\}$, so in the binary convention one of the blocks supplied to $\Phi_{\mathfrak d_{D_1}(\cG_{\mathrm{cyc}})}$ is $\{\theta_C(0,0\mid b'):b'\in\{0,1\}\}$ and has dimension two, and the district map synthesizes its conditional cell array from this block together with the other intrinsic blocks contained in $D_1$.

Take the binary corner state to be one at every vertex of $\cG_{\mathrm{cyc}}$ and define $\theta^{\mathrm{cyc}}$ by the ten recursive-head coordinates
\begin{equation}\label{eq:cyclic-rational-coordinates}
    \begin{aligned}
        \theta_{\{A\}}(0)&=\frac25, &\bigl\{\theta_{\{A'\}}(0\mid b')\bigr\}_{b'=0,1} &=\left(\frac35,\frac13\right),\\
        \bigl\{\theta_{\{B\}}(0\mid a)\bigr\}_{a=0,1} &=\left(\frac14,\frac23\right), &\theta_{\{B'\}}(0)&=\frac37,\\
        \bigl\{\theta_{\{A,A'\}}(0,0\mid b')\bigr\}_{b'=0,1} &=\left(\frac3{10},\frac15\right), &\bigl\{\theta_{\{B,B'\}}(0,0\mid a)\bigr\}_{a=0,1} &=\left(\frac3{20},\frac3{10}\right).
    \end{aligned}
\end{equation}

Because the corner state is one, every displayed head argument is the all-zero noncorner event.  These are intrinsic-kernel coordinates, not ordinary observational conditional probabilities.  Put $p_{\mathrm{cyc}}:=\Phi_{\cG_{\mathrm{cyc}}}(\theta^{\mathrm{cyc}})$ and let $P_{\mathrm{cyc}}$ denote its law.  The resulting asymmetric interior rational law has minimum cell probability $3/200$.

Writing $\dot P_{\mathrm{cyc}} :=\mathbb D\Phi_{\cG_{\mathrm{cyc}},\theta^{\mathrm{cyc}}}$, the cyclic-quotient verification checks a $10=5+5$ recursive-head chart, exact roundtrip, $\mathbb D\Theta_{\cG_{\mathrm{cyc}},P_{\mathrm{cyc}}} \dot P_{\mathrm{cyc}}=I$, $M_{P_{\mathrm{cyc}}}\dot P_{\mathrm{cyc}}=0$, 24 signed coordinate and combined perturbations, all 65 valid fixing sequences and 416 order-comparison rows, and the Gram counts recorded in the table below.  For the explicit direction $f$ with cell values
\[
    f=\bigl(-\tfrac25,\ 4,\ \tfrac25,\ \tfrac65,\ -\tfrac75,\ -\tfrac94,\ -\tfrac15,\ -1,\ \tfrac65,\ 2,\ \tfrac32,\ -1,\ \tfrac74,\ -9,\ -\tfrac45,\ -\tfrac83\bigr)
\]
in the lexicographic order of $(A,A',B,B')$ --- zero before one, the last coordinate varying fastest ---
\[
    \Bigl\|\Pi_{\cS_{D_1}^{\mathrm{nat}}(P_{\mathrm{cyc}})}f-\textstyle\sum_{C\in\cI(\cG_{\mathrm{cyc}}):\,\delta(C)=D_1}\Pi_{\cS_C(P_{\mathrm{cyc}})}f\Bigr\|_{P_{\mathrm{cyc}},2}^2 =\tfrac{2159299899629}{571536000000}\ne0,
\]
with $D_1=\{A,A'\}$: the joint and blockwise projections need not agree, and this witness makes their difference a nonzero operator --- the exact certificate for the sentence following the district Gram projection in Section~\ref{sec:geometry}.  An independently written replication, which uses a constraint-route tangent basis, fixing-based coordinate extraction and two laws generated from latent SEMs rather than the forward map, reproduces the counts in the table below at both of its laws; at the second law the raw inner product is $4/25$, exactly reproducing the independent hand calculation, and both raw directions again fail tangent membership.  It is a computational replication, not part of the proof.

\paragraph*{Parallel computational controls}

The parallel computational controls of this subsection and of Sections~\ref{app:B-three} and~\ref{app:B-mixed} report the following exact Gram counts; their further checks are described in the respective subsections.

{\small\begin{center}\begin{tabular}{@{}>{\raggedright\arraybackslash}p{0.42\linewidth}>{\raggedright\arraybackslash}p{0.12\linewidth}>{\raggedright\arraybackslash}p{0.15\linewidth}>{\raggedright\arraybackslash}p{0.20\linewidth}@{}}
Control & Chart dimension & Cross-district Gram entries & Within-district Gram entries \\[2pt]
$P_{\mathrm{cyc}}$ of \eqref{eq:cyclic-rational-coordinates} & $10=5+5$ & all 25 zero & 12 of 16 cross-block pairs nonzero \\
cyclic replication, two latent-SEM laws & 10 & all 25 zero & in $D_1$: 6 of 10 off-diagonal entries nonzero \\
three bidirected pairs with quotient chain $D_1\to D_2\to D_3$ (Section~\ref{app:B-three}) & 13 & all 55 zero & 15 of 23 off-diagonal entries nonzero \\
$\cG_3$ of \eqref{eq:three-district-control-graph}, quotient cycle (Section~\ref{app:B-three}) & 15 & all 75 zero & 18 of 30 off-diagonal entries nonzero \\
$\cG_3$, independent implementation at a distinct latent-SEM law & 15 & all 75 zero & 18 of 30 off-diagonal entries nonzero \\
graph of Section~\ref{sec:counterexample} with ternary $Z$ (Section~\ref{app:B-mixed}) & 38 & all 37 zero & 196 of 666 off-diagonal nonzero \\
$\cG_{\mathrm{cyc}}$ with ternary $A'$ (Section~\ref{app:B-mixed}) & 14 & all 45 zero & in $D_1$: 20 of 36 off-diagonal entries nonzero \\
\end{tabular}\end{center}}

Entries in the last column are counted once per unordered coordinate pair.  The first row counts pairs of coordinates lying in distinct intrinsic blocks of the same district, summed over both districts; the rows marked ``in $D_1$'' count all pairs within that district only; the remaining rows count all pairs within a district, summed over districts.

\paragraph*{Raw head-family spaces and the retraction at $P_{\mathrm{cyc}}$}

Fix the lexicographic cell ordering of $\cX_V$ and the coordinate ordering of $U_{\cG_{\mathrm{cyc}}}$.  Let $J_{\mathrm{cyc}}$ be the $16\times10$ score-evaluation matrix whose $j$th column is the chart score $J_{P_{\mathrm{cyc}}}e_j$ evaluated over the $16$ cells, and let $B_{\mathrm{obs}}$ be a $16\times m$ matrix whose columns are the cell-evaluation vectors, in the same cell ordering, of any spanning family of $\sum_{C\in\cI(\cG_{\mathrm{cyc}})}\mathcal L_C^{\mathrm{obs}}(P_{\mathrm{cyc}})$, the observationally centered head-family spaces of Remark~\ref{rem:cyclic-quotient-diagnostic}; the ranks below do not depend on the chosen family.  Exact row reduction gives
\[
    \operatorname{rank}(J_{\mathrm{cyc}})=10,\qquad \operatorname{rank}(B_{\mathrm{obs}})=11,\qquad \operatorname{rank}[B_{\mathrm{obs}}\ J_{\mathrm{cyc}}]=11,
\]
the certificate for the strict containment $10<11$ displayed in that remark.  The derivation scheme $\sigma_*$ used by the retraction certificate in Section~\ref{sec:geometry} is fixed completely as follows: topological order $A\prec B'\prec B\prec A'$; corner state one at every vertex; reference state zero for every argument the scheme discards; for each native pair $\{A,A'\}$ and $\{B,B'\}$, the district reduction to that pair; and for each singleton, the district reduction to its pair followed by the random-ancestral reduction to the singleton.  The computed certificate evaluates exactly this scheme.  For the retraction of Remark~\ref{rem:chart-not-projection}, with this scheme and the direction $f_1$ of \eqref{eq:cyclic-raw-directions}, exact rational computation gives
\[
    \bigl\|\mathcal R^{\sigma_*}_{P_{\mathrm{cyc}}}f_1-\Pi_{\cT_{P_{\mathrm{cyc}}}\cN(\cG_{\mathrm{cyc}})}f_1\bigr\|_{P_{\mathrm{cyc}},2}^2 =\tfrac{871}{2844450}>0 .
\]

\subsection{Three-district control}\label{app:B-three}

Let $\cG_3$ be the six-node control specified by
\begin{equation}\label{eq:three-district-control-graph}
    \begin{gathered}
        D_A=\{A,A'\},\qquad D_B=\{B,B'\},\qquad D_C=\{C,C'\},\\
        A\leftrightarrow A',\qquad B\leftrightarrow B',\qquad C\leftrightarrow C',\\
        A\to B,\qquad B'\to C,\qquad C'\to A'.
    \end{gathered}
\end{equation}
Thus its district quotient is a directed three-cycle although its directed vertex graph is acyclic; see Figure~\ref{fig:three-district-control-graph}.

%%  Layout note: every edge of the graph is one side of a regular hexagon, and
%%  the sides alternate bidirected (within a district) and directed (between
%%  districts), so the picture has exact three-fold rotational symmetry and the
%%  directed three-cycle reads off it directly.  The diagram is set in black
%%  because solid versus dashed already carries the meaning here.

\begin{figure}
    \centering
    \begin{tikzpicture}[
        -Latex, semithick,
        state/.style={circle, draw, minimum width=0.62cm, inner sep=1pt},
        bidirected/.style={Latex-Latex, dashed}]
        \node[state] (A)  at (90:1.7)  {$A$};
        \node[state] (Ap) at (150:1.7) {$A'$};
        \node[state] (Cp) at (210:1.7) {$C'$};
        \node[state] (C)  at (270:1.7) {$C$};
        \node[state] (Bp) at (330:1.7) {$B'$};
        \node[state] (B)  at (30:1.7)  {$B$};
        \draw[bidirected] (A) -- (Ap);
        \draw[bidirected] (B) -- (Bp);
        \draw[bidirected] (C) -- (Cp);
        \draw (A)  -- (B);
        \draw (Bp) -- (C);
        \draw (Cp) -- (Ap);
        \node at (120:2.4) {\footnotesize$D_A$};
        \node at (0:2.4)   {\footnotesize$D_B$};
        \node at (240:2.4) {\footnotesize$D_C$};
    \end{tikzpicture}
    \caption{The six-node control $\cG_3$ of \eqref{eq:three-district-control-graph}.  Edge conventions as in Figure~\ref{fig:verma-counterexample-graph} of the article; $D_A$, $D_B$ and $D_C$ label the three native districts.}\label{fig:three-district-control-graph}
\end{figure}

For $P\in\cN_+(\cG_3)$, define the raw residuals
\begin{equation}\label{eq:three-district-raw-residuals}
    R_A=A'-\E_P(A'\mid C'),\qquad R_B=B-\E_P(B\mid A),\qquad R_C=C-\E_P(C\mid B').
\end{equation}

Their pairwise orthogonality is structural.  Every $P\in\cN_+(\cG_3)$ recursively factorizes by Proposition~\ref{prop:er-equivalence-rrs-compatibility}(i) and therefore lies in the Markov class $\mathcal P^c(\cG_3)$ by Lemma~\ref{lem:rf-implies-markov}; its $m$-separation global Markov property \citep[Theorem~16]{richardson2023nested} gives
\[
    B\ind(A',C')\mid A,\qquad C\ind(A,B)\mid B',\qquad A'\ind(C,B')\mid C'.
\]
For example,
\[
    \E_P(R_AR_B) =\E_P\!\left[R_A\, \E_P\{R_B\mid A,A',C'\}\right]=0,
\]
and, in the same pattern, $\E_P(R_BR_C) =\E_P\!\left[R_B\,\E_P\{R_C\mid A,B,B'\}\right]=0$ by the second independence, while $\E_P(R_AR_C) =\E_P\!\left[R_C\,\E_P\{R_A\mid C,B',C'\}\right]=0$ by the third.  Hence
\begin{equation}\label{eq:three-district-pairwise-orthogonality}
    \E_P(R_AR_B)=\E_P(R_AR_C)=\E_P(R_BR_C)=0
\end{equation}
for every $P\in\cN_+(\cG_3)$.

These zeros do not certify tangency.  Let $P_3$ be the strictly positive rational law specified by \eqref{eq:three-district-rational-coordinates} below, and evaluate the residuals above at $P=P_3$.  Exact metric projection gives
\begin{equation}\label{eq:three-district-projection-residuals}
\begin{aligned}
    \|R_A-\Pi_{\cT_{P_3}\cN(\cG_3)}R_A\|_{P_3,2}^2 &=\frac{459}{14000000},\\
    \|R_B-\Pi_{\cT_{P_3}\cN(\cG_3)}R_B\|_{P_3,2}^2 &=\frac{916839}{40962500000},\\
    \|R_C-\Pi_{\cT_{P_3}\cN(\cG_3)}R_C\|_{P_3,2}^2 &=\frac{2388933}{18700000000}.
\end{aligned}
\end{equation}
Thus each residual lies outside the tangent space.  Separately,
\begin{equation}\label{eq:three-district-third-moment}
    \E_{P_3}(R_AR_BR_C)=\frac{15309}{50000000000}\ne0.
\end{equation}
Thus this graph supplies a structurally forced false positive for tangency: the graph-implied conditional independences together with observational centering force the three displayed pairwise inner products to vanish even though the directions are not model scores.  Non-tangency and the nonzero third moment are law-level counterexamples; unlike the pairwise zeros, they are not asserted as universal graph identities.  The independently generated second law reported later in this appendix is a computational replication only, not part of this formal counterexample.

\paragraph*{Three-district controls}

For the cyclic three-district graph $\cG_3$ in \eqref{eq:three-district-control-graph}, take the binary corner state to be one at every vertex.  Define the fifteen recursive-head coordinates
\begin{equation}\label{eq:three-district-rational-coordinates}
    \begin{aligned}
        \theta_{\{A\}}^{(3)}(0)&=\frac{14}{25}, &\{\theta_{\{A'\}}^{(3)}(0\mid c')\}_{c'=0,1} &=\left(\frac{13}{20},\frac25\right),\\
        \{\theta_{\{B\}}^{(3)}(0\mid a)\}_{a=0,1} &=\left(\frac{11}{20},\frac{63}{200}\right), &\theta_{\{B'\}}^{(3)}(0)&=\frac{87}{200},\\
        \{\theta_{\{C\}}^{(3)}(0\mid b')\}_{b'=0,1} &=\left(\frac{31}{50},\frac3{10}\right), &\theta_{\{C'\}}^{(3)}(0)&=\frac{17}{50},\\
        \{\theta_{\{A,A'\}}^{(3)}(0,0\mid c')\}_{c'=0,1} &=\left(\frac{43}{100},\frac{37}{125}\right),\\
        \{\theta_{\{B,B'\}}^{(3)}(0,0\mid a)\}_{a=0,1} &=\left(\frac{573}{2000},\frac{189}{1000}\right),\\
        \{\theta_{\{C,C'\}}^{(3)}(0,0\mid b')\}_{b'=0,1} &=\left(\frac{34}{125},\frac{87}{500}\right).
    \end{aligned}
\end{equation}

The relevant head--tail pairs are $\{A\}\mid\varnothing$, $\{A'\}\mid\{C'\}$, $\{B\}\mid\{A\}$, $\{B'\}\mid\varnothing$, $\{C\}\mid\{B'\}$, $\{C'\}\mid\varnothing$, $\{A,A'\}\mid\{C'\}$, $\{B,B'\}\mid\{A\}$, and $\{C,C'\}\mid\{B'\}$, so the display contains $3\cdot1+6\cdot2=15$ coordinates.  Put $p_3:=\Phi_{\cG_3}(\theta^{(3)})$ and let $P_3$ denote its law.  Direct evaluation gives a normalized strictly positive 64-cell law with
\[
    \min_{x_V}p_3(x_V)=\frac{65637}{50000000}.
\]
For the three residuals in \eqref{eq:three-district-raw-residuals}, exact projection onto the fifteen forward-chart score columns gives the squared residual norms displayed in \eqref{eq:three-district-projection-residuals} above; the projection residuals, rather than the third-order moment \eqref{eq:three-district-third-moment}, certify the three non-tangency statements.

Three parallel controls accompany this example.  The acyclic comparison consists of three bidirected pairs whose district quotient is the chain $D_1\to D_2\to D_3$; at its exact latent-SEM law the chart and the independently generated fixing-constraint tangent both have dimension 13, and all 34 signed coordinate, district, and combined paths pass positivity, constraint, and roundtrip checks.  A companion replaces the chain by the directed quotient cycle $D_1\to D_2\to D_3\to D_1$, with dimension 15 and 38 signed paths passing the same checks.  An independently written chart implementation, with its own forward map and fixing-based extraction, repeats the cyclic case at a distinct latent-SEM law.  Their Gram counts are collected in the table of Section~\ref{app:B-cyclic}.  The first supplies an acyclic implementation check, the second stress-tests the absence of a district-order hypothesis, and the third is a computational replication only, not an additional law in the formal counterexample.

\subsection{Mixed-cardinality controls}\label{app:B-mixed}

Two exact latent-SEM laws exercise the general corner-state convention rather than its binary specialization: the graph of Section~\ref{sec:counterexample} with ternary $Z$ (48 cells, nine intrinsic sets, chart dimension 38) and the cyclic-quotient graph with ternary $A'$ (24 cells, six intrinsic sets, chart dimension 14).  Both verifications check roundtrip at all cells, tangent nullity and forward rank ($38$ and $14$), and the Gram counts collected in the table of Section~\ref{app:B-cyclic}; both scripts also assert algebraic normalization and district factorization at seeded pseudorandom signed rational coordinate vectors (three and two per script, respectively) and retain a 4000-point signed-factor search only as a negative control, not as proof of the domain product; the universal identities are Corollary~\ref{lem:algebraic-normalization} and Lemma~\ref{lem:algebraic-district-factorization}.

\subsection{Intervention identification and independent-draw mixtures}\label{app:B-intervention}

This subsection records the two latent-variable models behind Example~\ref{ex:split-boundary-nonidentification}, the compatible-edge intervention control for the configured product of Proposition~\ref{prop:causal-identification-interface}, and the exact checks of the independent-draw mixture identities of Section~\ref{sec:intervention-bridge}.

\paragraph*{The mixed-boundary construction of Example~\ref{ex:split-boundary-nonidentification}}

On $A\to B$, $A\to B'$, $B\leftrightarrow B'$, let $A\sim\operatorname{Bernoulli}(p)$, $U\sim\operatorname{Bernoulli}(1/2)$, and $E,E'\sim\operatorname{Bernoulli}(1/10)$ be mutually independent, where $0<p<1$.  With $\oplus$ denoting addition modulo two, consider
\[
    \begin{array}
        {lll} \text{Model 1:}&B=U\oplus E,&B'=U\oplus A\oplus E',\\
        \text{Model 2:}&B=U\oplus A\oplus E,&B'=U\oplus E'.
    \end{array}
\]
In either model, conditional on $A=a$, $B$ is uniform and $B\oplus B'=a\oplus E\oplus E'$.  Since $\pr(E\oplus E'=0)=(9/10)^2+(1/10)^2=41/50$, the conditional joint table in both models is
\[
    \pr(B=b,B'=b'\mid A=a)
 = \begin{cases}
        41/100,&b\oplus b'=a,\\
        9/100,&b\oplus b'\ne a.
    \end{cases}
\]
Multiplication by the common marginal law of $A$ proves equality of the two observed laws, and every observed cell is positive.  For the joint law $\pr\{B(0)=b,B'(1)=b'\mid A=1\}$ of the split responses of Example~\ref{ex:split-boundary-nonidentification}, the counterfactual indexed by the mixed boundary $(a_{A\to B},a_{A\to B'})=(0,1)$ at the natural value $A=1$, the two model parities become $1\oplus E\oplus E'$ and $E\oplus E'$: the first law assigns total mass $9/50$ to the two equal cells and $41/50$ to the two unequal cells, the second reverses those totals, and the mass within each parity class is split equally, so the two mixed-boundary kernels are at total-variation distance
\[
    \frac12\left\{2\left|\frac9{100}-\frac{41}{100}\right| +2\left|\frac{41}{100}-\frac9{100}\right|\right\} =\frac{16}{25}.
\]
Thus $\|K_1(1)-K_2(1)\|_{\mathrm{TV}}=16/25$ for the natural-retention responses $K_i(a)$ of Example~\ref{ex:split-boundary-nonidentification}.  At the natural value $A=0$ the mixed boundary is $(0,0)$ and the two kernels coincide, so $K_1(0)=K_2(0)$; the difference of the natural-retention laws averaged over $A\sim\operatorname{Bernoulli}(p)$ is $p\{K_1(1)-K_2(1)\}$, of total-variation distance $16p/25>0$, and the same value is the distance of the joint $(A,B,B')$ response laws.  In the zero-noise specialization the mixed-boundary kernels are at distance one and the unconditional response laws at distance $p$; the noisy version shows that the obstruction persists inside the positive model.

\paragraph*{Intervention-bridge controls}

On the graph $A\to B\to C$, $A\to B'$, and $B\leftrightarrow B'$, an exact latent-SEM verification compares the compatible edge intervention assigning $A=0$ to both arrows entering $\{B,B'\}$ with the configured fixing product.  Equality holds at all eight active cells at the base law and three independently perturbed SEM mechanisms.  Three additional unrestricted positive chart perturbations verify the statistical-neighborhood identity, positivity, and normalization without presuming a latent realization.  At every checked law, the configured product equals the explicitly instantiated active-graph chart at the selected tail coordinates, and active-chart extraction recovers those coordinates exactly.  All 80 cross-native factor derivatives and all eight active-law derivatives along the excluded source block vanish.  Both an exact dual-number calculation and an independent five-point polynomial calculation verify \eqref{eq:intervention-chart-derivative}, with native contributions $-67/30$ and $-271/280$.  The complete observed score basis then yields an EIF with zero mean satisfying all nine coordinate Riesz equations; all 20 cross-native Gram entries vanish exactly.

The incompatibility verification covers both the zero-noise and the positive-noise witness of Example~\ref{ex:split-boundary-nonidentification}; for the positive-noise witness, the mixed-boundary distance $16/25$ and the unconditional split-response distance $16p/25$ are derived above.

\paragraph*{Independent-draw mixture controls}

On the binary graph of Section~\ref{sec:counterexample} with source $W$, the global and active charts have dimensions 24 and 15.  At a latent-SEM law and two unrestricted positive-chart perturbations, the verification instantiates both fixed $W$ responses using literal $15\times24$ $0/1$ selection matrices.  It verifies fixed-component normalization and active-chart recovery, and at the base law also matches both components to direct latent-SEM interventions at all 32 active cells.  For the marginally standardized independent redraw, the complete EIF has zero mean and satisfies all 24 coordinate Riesz equations; deleting the weight term fails exactly the singleton $\{W\}$ equation.  The component and weight terms occupy complementary native blocks in this root-singleton-source setting, all 23 cross-native Gram entries vanish, the exogenous-mixture EIF equals the corresponding average of the fixed-component EIFs, and a point mass recovers the fixed construction.

A non-marginal smooth law $\nu_\theta(1)=1/5+\pr_\theta(Y=1)/2$ separately verifies the general $\mathbb D\mu$ formula and all 24 Riesz equations.  Its weight derivative is nonzero on the source coordinate and eight active coordinates, overlapping the component derivative on all eight.

As a named negative-control invariant, the deliberately incorrect marginal-standardization weight formula remains exactly mean zero but fails nine of the 24 Riesz equations at the base law and both perturbations; the nine failed coordinates are exactly the support of the true cross-block weight derivative.  A scalar, dual-number-free exact symmetric-difference calculation independently verifies all 24 derivatives for each of the standardized and cross-block targets.

\subsection{Estimated EIF, targeting, and source-block drift}\label{app:B-targeting}

\paragraph*{Estimated-EIF controls}

The estimated-EIF verification retains the same chart but adds the smooth nonpolynomial rule
\[
    \rho_\theta(1) =\frac{1+\pr_\theta(Y=1)}{3+2\pr_\theta(Y=1)}, \qquad \rho_\theta(0)=1-\rho_\theta(1).
\]

Exact algebra verifies its derivative and certifies nonpolynomiality along a fixed affine coordinate line.  At the base law and four successively halved rational chart perturbations, every observational score is centered and every native Gram block has positive exact $LDL^\top$ pivots.  An independent scalar affine-primitive calculation agrees with all 48 coordinate derivatives across the standardized and $\rho$ targets.

The score basis, Gram blocks, component contribution, weight contribution, Gram coefficients, and EIF all change at first order, while the coherent population one-step remainder scales quadratically for both policies.  Omitting either contribution, using the wrong marginal weight for $\rho$, or dropping fitted-law centering leaves a first-order drift.  The two distinct bilinear terms in the target-value and weight-coordinate expansions are nonzero and scale quadratically.  Finally, the component, weight, and Gram perturbations satisfy the exact inverse-matrix coefficient identity and the centered modular population-drift identity.

\paragraph*{Targeting controls}

The targeting verification deliberately separates exact finite algebra from numerical flow evidence.  At the base law of Section~\ref{sec:counterexample} and two unrestricted positive-chart perturbations, for both a fixed response and the smooth rational-mixture target, the rational suite asserts cellwise $\phi_\theta=J_{P_\theta}\zeta(\theta)$, $\mathbb DF(\theta)[\zeta(\theta)]=P_\theta\phi_\theta^2$, and directional normalization.  It also asserts the exact null update at a synthetic score-equation law, the fixed-direction score-discrepancy identity and its first-order scaling, quadratic central-difference convergence to the population score slope $-P\phi^2$, and fixed point-mass examples in which the coherent one-step estimate exits $[0,1]$ while the corresponding plug-in target remains in range.

A separate $70$-digit implementation integrates the fixed-response least-favorable coordinate ODE with a named fixed-step classical RK4 solver. All selected stages and endpoints remain positive and normalized, and $\phi=J\zeta$ is reassembled there as an indexing check; the Riesz coordinate equations provide the substantive gradient check.  The $2/4/8/16$-step endpoint refinements have fourth-order ratios $16.003659$ and $16.001830$.  The checked local score root has residual below $5.2\times10^{-34}$, a strict sign change, and negative curvature; at a score-equation fit the numerical stopping rule returns the null update, whose exact algebraic validity is established by the rational suite. Along four halved perturbations, the targeted plug-in minus coherent one-step differences have ratios between $3.9985$ and $4.0000$, and every targeted indicator mean remains in $[0,1]$.  The observed $18$ nonzero active coordinates are law and boundary specific; only vanishing of the excluded source block is treated as structural.  These deterministic log values are checked by corresponding assertions.

\paragraph*{Source-block drift controls}

The exact source-drift suite uses the same signed remainder as \eqref{eq:coherent-second-order-remainder}.  It asserts $R_2(\eta,\theta_0)=0$ for 60 nontrivial binary source-block changes and for 16 simultaneous two-coordinate changes of a ternary source block, including positive marginals close to the boundary.  In every case it separately checks that the fixed-response means and conditional observed law given the source remain unchanged, the active EIF contribution has zero truth expectation, and the source contribution equals $\psi_W-F(\eta)$ pointwise and $F(\theta_0)-F(\eta)$ in truth expectation.

The suite also retains the boundaries of the proposition.  Twelve multicoordinate active-block changes have nonzero exact remainders of both signs, while eight special single-coordinate active lines have zero remainder by coordinatewise affinity; hence neither arbitrary outcome-side robustness nor a converse is inferred.  Ten source-only changes under the smooth non-marginal law $\rho(1)=\{1+\pr_\theta(Y=1)\}/\{3+2\,\pr_\theta(Y=1)\}$ have nonzero remainder.  Finally, the exact targeting field has nonzero active coordinates at a source-only fit, and a rational forward step leaves the source-only slice with nonzero remainder.  This is exact finite-law evidence for Proposition \ref{prop:source-block-drift-cancellation}, together with a negative control against promoting that proposition to an exact universal-flow TMLE theorem.

\subsection{Sequential drift and variance}\label{app:B-sequential}

\paragraph*{Sequential union-model verification}

The sequential suite first verifies the three shortcut failures of Section~\ref{sec:sequential-union}.  The two-source ladder with deleted histories has mean $8/5$, whereas the full-history ladder has mean one.  Exhaustive ordering checks distinguish the active-quotient, source-aware restricted-order, and literal full-block conditions.  In the interleaved graph $L\leftrightarrow Y$, $L\to A\to Y$, the true outcome residual and a weight depending on $L$ have drift $9/160$ although the putative product representation is zero.  In $A\leftrightarrow Y$, fixing $A=1$ gives $\pr(Y=1)=1/2$, conditioning on $A=1$ gives $\pr(Y=1\mid A=1)=4/5$, and the fixing divisor $p(a\mid y)$ recovers $1/2$.  Each failure is asserted by the script as a negative control.

The interleaved-source claim is generated as follows.  Let $U\sim\operatorname{Bernoulli}(1/2)$ and, conditionally on its displayed arguments, take
\begin{equation}\label{eq:interleaved-source-rational-sem}
    \begin{gathered}
        \pr(L=1\mid U=u)=\frac{1+3u}{5},\qquad \pr(A=1\mid L=l)=\frac{1+2l}{4},\\
        \pr(Y=1\mid A=a,U=u)=\frac{1+2a+3u}{8},\\
        p_{\mathrm{int}}(l,a,y) =\frac12\sum_{u=0}^1p_L(l\mid u)p_A(a\mid l)p_Y(y\mid a,u).
    \end{gathered}
\end{equation}

The noises used by these Bernoulli mechanisms are mutually independent. Under $\operatorname{do}(A=1)$, the response probabilities in lexicographic order of $(l,y)$ are $(11/40,9/40,13/80,27/80)$ and hence $\psi=9/16$.  Define
\begin{equation}\label{eq:interleaved-source-nuisance-fixture}
    H_0=\frac{\mathbf 1(A=1)}{\pr_{P_{\mathrm{int}}}(A=1\mid L)},\qquad R_0=Y-\frac9{16},\qquad \bar H=H_0(1+L),\qquad \bar R=R_0.
\end{equation}
Then $P_{\mathrm{int}}(H_0R_0)=0$, whereas exact summation gives $P_{\mathrm{int}}(\bar H\bar R)=9/160$.  Since $\bar R-R_0=0$, the putative product drift is zero.  Thus the displayed $9/160$ is reproducible from \eqref{eq:interleaved-source-rational-sem}-- \eqref{eq:interleaved-source-nuisance-fixture} alone.

The two-transition example has graph
\[
    S\to A\to B,\qquad S\to B, \qquad A\leftrightarrow A',\qquad B\leftrightarrow B',
\]
and intervention $\operatorname{do}(S=0)$.  An independently generated strictly positive 32-cell latent-SEM law, specified in \eqref{eq:two-transition-rational-sem} below, has two active districts and chart dimension 15.

For an explicit specification, let $S,U_1,U_2$ be mutually independent with success probabilities $2/5,2/5,7/10$, and take all mechanism noises mutually independent.  In lexicographic order of the stated binary arguments, set
\begin{equation}\label{eq:two-transition-rational-sem}
    \begin{gathered}
 \pr(A=1\mid S,U_1)=(1/10,3/5,2/5,9/10),\\
 \pr(A'=1\mid U_1=0,1)=(1/6,5/8),\qquad
 \pr(B'=1\mid U_2=0,1)=(1/4,7/10),\\
 \pr(B=1\mid s,a,u_2)
   =\frac{1+3s+2a+2u_2}{10},\\ \begin{aligned}
        p_{\mathrm{two}}(s,a,a',b,b') &=p_S(s)\sum_{u_1,u_2=0}^1p_{U_1}(u_1)p_{U_2}(u_2)\\[-2pt]
        &\quad{}\times p_A(a\mid s,u_1)p_{A'}(a'\mid u_1) p_B(b\mid s,a,u_2)p_{B'}(b'\mid u_2).
    \end{aligned} \end{gathered}
\end{equation}

This is a normalized positive law with minimum cell $459/62500$.  Under $\operatorname{do}(S=0)$, $P^{\mathsf I}=P_{\mathrm{two}}(\,\cdot\mid S=0)$. For outcome $g=B$, its true sequential nuisances reduce to
\begin{equation}\label{eq:two-transition-true-nuisances}
    \begin{gathered}
        Q_{2,0}=B,\qquad Q_{1,0}=\frac6{25}+\frac A5,\qquad Q_{0,0}=\frac3{10},\\
        R_{01}=\frac{10A-3}{50},\qquad R_{02}=B-\frac6{25}-\frac A5,\qquad H_{01}=H_{02}=\frac{\mathbf 1(S=0)}{3/5}.
    \end{gathered}
\end{equation}

Relative to the true nuisances in \eqref{eq:two-transition-true-nuisances}, the state-dependent both-wrong specification is
\begin{equation}\label{eq:two-transition-both-wrong-fixture}
    \begin{gathered}
        \bar Q_k=Q_{k,0}+\delta_k,\qquad (\delta_0,\delta_1,\delta_2) =\left(\frac1{10},\frac{1+A+2A'}{20},0\right),\\
        \bar R_k=\bar Q_k-\bar Q_{k-1},\qquad \bar H_1=2H_{01},\qquad \bar H_2=H_{02}\left(1+\frac A2+\frac{A'}3\right).
    \end{gathered}
\end{equation}

Exact summation of \eqref{eq:two-transition-rational-sem} under these nuisances gives both sides of \eqref{eq:exact-sequential-drift} equal to $-259/6000$.  Its transitionwise contributions are exactly $0$ and $-259/6000$: the example tests the two-transition bookkeeping and a state-dependent misspecification, but its nonzero drift is carried entirely by the second transition.

The four-pattern check indexes its deterministic fixtures by the choice vector $\omega=(\omega_1,\omega_2)\in\{H,R\}^2$ of \eqref{eq:transition-union-regime}: $\omega_k=R$ nominates the residual of transition $k$ as correct and $\omega_k=H$ its weight.  Put $c^{\omega}_2=0$ and, working backward for $k=2,1$, set
\[
    c^{\omega}_{k-1}= \begin{cases}
        c^{\omega}_k, & \omega_k=R,\\
        c^{\omega}_k+k/10, & \omega_k=H,
    \end{cases} \qquad
    \bar Q^{\omega}_k=Q_{k,0}+c^{\omega}_k, \qquad
    \bar H^{\omega}_k= \begin{cases}
        2H_{0k}, & \omega_k=R,\\
        H_{0k}, & \omega_k=H.
    \end{cases}
\]

The regression formula applies for $k=0,1,2$ and the weight formula for $k=1,2$.  At each transition exactly one nuisance is then correct: under $\omega_k=R$ the adjacent offsets coincide, so $\bar R^{\omega}_k=\bar Q^{\omega}_k-\bar Q^{\omega}_{k-1}=R_{0k}$, while the weight is deliberately doubled; under $\omega_k=H$ the weight is correct and the residual is wrong by $-k/10$.  Exact summation gives zero drift for all four $\omega$.  For the three both-wrong controls, define instead $\bar Q_k=Q_{k,0}+c_k$, $\bar R_k=\bar Q_k-\bar Q_{k-1}$, and $\bar H_k=s_kH_{0k}$ with constant offsets and scales.  Their configurations and exact drifts are
\begin{equation}\label{eq:two-transition-control-fixtures}
    \begin{aligned}
        (c_0,c_1,c_2)&=(1/10,0,0),(1/5,1/5,0),(3/10,1/5,0),\\
        (s_1,s_2)&=(2,1),(1,2),(2,2),\\
        (d_1,d_2,d_3)&=(-1/10,-1/5,-3/10).
    \end{aligned}
\end{equation}

The three entries in each row are paired by position.  Since $P_{\mathrm{two}}(H_{0k})=1$, the right-hand side of \eqref{eq:exact-sequential-drift} reduces under constant offsets and scales to
\[
    (s_1-1)(c_1-c_0)+(s_2-1)(c_2-c_1).
\]

Substitution gives both sides equal to $d_j$ under the $j$th configuration.  The first two controls separately activate the first and second drift summands, whereas the third activates both.

In summary, at $P_{\mathrm{two}}$ indicator-basis enumeration proves prefix transport and every residual-centering identity exactly; both sides of \eqref{eq:exact-sequential-drift} equal $-259/6000$ under the state-dependent both-wrong specification; all four correctness patterns cancel; the three constant-offset controls give $-1/10,-1/5,-3/10$; and the sequential representer satisfies all 15 chart-coordinate Riesz equations but has exact projection gap $740027/559138125$.

The mediation-graph count mentioned in Remark~\ref{rem:robustness-count-comparison} is as follows, with $K_{\mathrm{med}}$ the number of causally ordered mediators of the generalized mediation functional of \cite{zhou2022semiparametric}.  For orientation only, and under Section~\ref{sec:sequential-union}'s \emph{system-wide} fixed-node representation: let $\cG_{\mathrm{med}}$ be a directed acyclic graph whose vertex set is exactly $\{X,A,M_1,\ldots,M_{K_{\mathrm{med}}},Y\}$, one vertex per displayed block, and take $\mathcal H_{\mathrm{med}}$ to be its vertex-induced subgraph on every vertex but $A$. A directed acyclic graph carries no bidirected edge, so every district of $\cG_{\mathrm{med}}$ is a singleton and source isolation \eqref{eq:source-isolation} holds.  Taking $A_D=\{A\}$ when $A$ is a parent of the vertex of $D$, and $A_D=\varnothing$ otherwise, gives source-tail coverage, since $A$ is the only excluded vertex; each added precedence arrow then duplicates an existing observational arrow, so the block graph \eqref{eq:mr-block-partition} is $\cG_{\mathrm{med}}$ itself and is acyclic. Assumption~\ref{ass:mr-sequential-transport} is thus satisfied here rather than merely posited.  Deleting $A$ leaves $K_{\mathrm{med}}+2$ active singleton districts, so $\operatorname{do}(A=a)$ makes the present reading report at most $2^{K_{\mathrm{med}}+2}$ choice-vector labels.  Three scope conditions are doing work and none may be dropped: the guarantee is for a \emph{single interventional mean} $\E\{Y(a)\}$, which is one component of a total-effect contrast --- consistency of the contrast $\E\{Y(a)\}-\E\{Y(a')\}$ needs simultaneous correctness for both components and is not proved here; additional vertices increase the district count, and Section~\ref{sec:intervention-bridge}'s ancestry-pruned active graph $\cG_{R_{\mathsf I}}$ can decrease it; and a multivariate mediator or baseline represented by several vertices contributes further singleton districts. Assumption~\ref{ass:mr-sequential-transport} is in force throughout.

\paragraph*{The strict variance gap of Proposition~\ref{prop:sequential-strict-variance-gap}}

Proposition~\ref{prop:sequential-strict-variance-gap} uses the rational law in \eqref{eq:verma-rational-sem}.  Exact enumeration reproduces the constants established in the proof of Proposition~\ref{prop:sequential-strict-variance-gap} in Section~\ref{app:proofs-sequential}; the raw sequential representer satisfies all 24 Riesz equations and its projection residual is orthogonal to the canonical gradient.

\paragraph*{The one-edge Verma variant of Remark~\ref{rem:verma-variant}}

Let $\cG^{+}$ be the graph of \eqref{eq:verma-counterexample-graph} with the additional directed edge $W\to B$.  Its districts are again $\{W\}$ and $\Delta$, $W$ is still a root singleton district, and $\cG^{+}$ is not mb-shielded.  The intrinsic sets of $\cG^{+}$ and their binary block dimensions are $\{A\}$ and $\{W\}$ (one each), $\{B\}$, $\{Y\}$ and $\{A,B\}$ (two each), $\{Z\}$, $\{B,Y\}$ and $\{A,B,Z\}$ (four each) and $\Delta$ (eight), so $\dim\cN(\cG^{+})=28$; the only $m$-separation of $\cG^{+}$ is $W\ind A$, so its ordinary Markov model has dimension $30$ inside the $31$-dimensional simplex.  Fixing $A$, $Z$ and $W$ still gives $q_{BY,p}(b,y\mid z,w)=p(b\mid w)p(y\mid b,z,w)$, but the fixing graph $\phi_{\{A,Z,W\}}(\cG^{+})$ retains $W\to B$, so this kernel is not constrained to be invariant in $w$; fixing $B$ next gives $q_{Y}(y\mid z,w)=\sum_b p(b\mid w)p(y\mid b,z,w)$ on a CADMG in which $Y$ has the single parent $Z$, and that margin is constrained to be invariant in $w$.  This is the Verma restriction of $\cG^{+}$; it fails at generic points of the ordinary Markov model.

\paragraph*{The law $P^{+}$}

The law $P^{+}$ is generated by mutually independent $W,A,U\sim\operatorname{Bernoulli}(1/2)$ with, in lexicographic order of the displayed binary conditioning arguments,
\begin{equation}\label{eq:verma-variant-rational-sem}
    \begin{gathered}
        \pr(B=1\mid U,W)=(1/10,\,1/5,\,4/5,\,9/10),\\
        \pr(Z=1\mid W,B)=(1/10,4/5,7/10,9/10),\\
        \pr(Y=1\mid Z,U)=(1/10,4/5,3/5,19/20),\\
        p_{P^{+}}(w,a,b,z,y)=\frac18\sum_{u=0}^{1}p_B(b\mid u,w)\,p_Z(z\mid w,b)\,p_Y(y\mid z,u),
    \end{gathered}
\end{equation}
the $Z$ and $Y$ mechanisms being those of \eqref{eq:verma-rational-sem}.  All $32$ cells are positive, with minimum $9/2000$.  The displayed mechanisms have a latent DAG in which $A$ is isolated, whose latent projection is a proper subgraph of $\cG^{+}$.  Adjoin independent latent variables $U_{AB}$ and $U_{AZ}$, also independent of $W,A,U$, with arrows $U_{AB}\to A$, $U_{AB}\to B$, $U_{AZ}\to A$, and $U_{AZ}\to Z$, and let the displayed mechanisms ignore their values.  The observed law is unchanged, while the augmented DAG has latent projection exactly $\cG^{+}$.  The latent-margin theorem \citep[Theorem~46]{richardson2023nested} therefore gives $P^{+}\in\cN_+(\cG^{+})$.

\paragraph*{Variance bounds at $P^{+}$}

For the fixed intervention $\operatorname{do}(W=0)$, which satisfies Assumption~\ref{ass:mr-sequential-transport} with $m=1$ and has the representer $\phi_{\mathrm{seq}}=\mathbf 1(W=0)(Y-\psi_0)/\pr_{P^{+}}(W=0)$ of Proposition~\ref{prop:sequential-strict-variance-gap}, exact enumeration gives
\begin{equation}\label{eq:verma-variant-values}
    \begin{aligned}
        \psi_0&=\frac{271}{500}, & P^{+}\phi_{\mathrm{seq}}^2&=\frac{62059}{125000}=0.496472,\\
        v_{\mathrm{ord}}&=\frac{62059}{125000}, & v_{\mathrm{nest}}&=\frac{80079247161084049}{207997899004170000}=0.3850002694\ldots,
    \end{aligned}
\end{equation}
where $v_{\mathrm{ord}}$ and $v_{\mathrm{nest}}$ are the efficiency bounds for $\psi_0$ in the ordinary Markov model of $\cG^{+}$ and in $\cN_+(\cG^{+})$, namely the squared norms of the $L_2(P^{+})$ projections of $\phi_{\mathrm{seq}}$ onto the respective tangent spaces, of dimensions $30$ and $28$.  At this law $A$ is independent of $(W,B,Z,Y)$, so the single ordinary constraint gradient is orthogonal to $\phi_{\mathrm{seq}}$ and $v_{\mathrm{ord}}=P^{+}\phi_{\mathrm{seq}}^2$; the Verma restriction alone reduces the bound by $(v_{\mathrm{ord}}-v_{\mathrm{nest}})/v_{\mathrm{ord}}\approx22.45\%$, and $(v_{\mathrm{ord}}/v_{\mathrm{nest}})^{1/2}\approx1.1356$.  The absence of an ordinary-model gain is a property of this law, not of the graph.

\subsection{Evidence status and reproducibility}\label{app:B-repro}

\paragraph*{Evidence status at a glance}

Numerical diagnostics in this appendix are reported at the precision displayed and are reproducible with the supplied scripts; exact rational certificates, prescribed algorithm constants and stated bounds are not covered by this convention.  The grid below is a nonfloating reading aid; the suite descriptions in Sections~\ref{app:B-primary}--\ref{app:B-sequential} record what each suite asserts, and the evidence distinction that closes this section states what each class of computation establishes.

{\small\begin{center}\begin{tabular}{@{}>{\raggedright\arraybackslash}p{0.30\linewidth}>{\raggedright\arraybackslash}p{0.40\linewidth}>{\raggedright\arraybackslash}p{0.24\linewidth}@{}}
Suite & Evidence class & Anchors \\[2pt]
Laws $P$, $P'$ of the graph of Section~\ref{sec:counterexample} & exact rational, fully specified in this appendix & Example~\ref{prop:raw-failure}, Section~\ref{sec:counterexample} \\
Variant law $P^{+}$ of the graph $\cG^{+}$ & exact rational, fully specified in this appendix & Remark~\ref{rem:verma-variant} \\
Cyclic and three-district controls & exact; independently written replications are corroboration only & Sections~\ref{sec:cyclic-control}--\ref{sec:three-district-raw-control}, Sections~\ref{app:B-cyclic}--\ref{app:B-three} \\
Intervention-bridge controls & exact, including the $16/25$ and $16p/25$ distances & Example~\ref{ex:split-boundary-nonidentification}, Theorem~\ref{thm:normalized-intervention-bridge} \\
Mixture and estimated-EIF suites & exact identities at finite laws & Theorem~\ref{thm:independent-source-mixture-eif}, Proposition~\ref{prop:estimated-eif-stability} \\
Targeting & exact rational algebra; the RK4 flow is numerical evidence only & Theorem~\ref{thm:least-favorable-chart-flow} \\
Source-drift suite & exact, with negative controls defining the scope boundary & Proposition~\ref{prop:source-block-drift-cancellation} \\
Sequential suite & exact finite-law checks; three negative controls delineate the scope boundary & Section~\ref{sec:sequential-union} \\
Finite-sample study & Monte Carlo simulation; structural checks executable, coverage descriptive & Section~\ref{sec:finite-sample} \\
\end{tabular}\end{center}}

\paragraph*{Reproducibility artifacts}
\ifarxivversion

The aforementioned suites' scripts, deterministic logs, and reproduction instructions are arranged by suite in the supplemental material and can be obtained by \emph{formal} email upon request.

% The scripts, deterministic logs and reproduction instructions of the suites above are organized by suite in the
% supplementary material that accompanies the journal version of this article; except where the grid above
% identifies a suite as numerical, every script uses rational arithmetic.  That material also carries the
% retraction-certificate assets for the derivation scheme $\sigma_*$ of Section~\ref{app:B-cyclic}: the scheme
% asset, the verifier (version 1.0.1), its verification output and the external binding record.

\else

The supplemental archive is organized by suite as follows; except where the grid above identifies a suite as numerical, every script uses rational arithmetic.
\begin{itemize}[nosep,leftmargin=*]
    \item \path{forward_chart_audit.py} and \path{exhaustive_fixing_audit.py}: the forward reconstruction and exhaustive fixing-order controls.
    \item \path{cyclic_district_prototype.py}: the cyclic forward-map control.  The retraction-certificate artifacts for the derivation scheme $\sigma_*$ of Section~\ref{app:B-cyclic} are included in the public supplement under \nolinkurl{supplement/sigma_star/}: the scheme asset \nolinkurl{paper1_sigma_star_s003.json}; the verifier \nolinkurl{verify_paper1_sigma_star.py}, version 1.0.1; the verification output \nolinkurl{paper1_sigma_star_s003.verify.json}; and the external binding record \nolinkurl{paper1_sigma_star_s003.record.json}.
    \item \path{cyclic_independent_replication.py}: its independently written replication.
    \item \path{cyclic_independent_replication.log}: the exact asserted run of that replication, stored in the same directory.
    \item \path{general_state_audit/}: the mixed-cardinality scripts and deterministic logs.
    \item \path{three_district_audit/}: the acyclic and cyclic three-district scripts, exact logs, and result table; its \path{independent_replication/} subdirectory contains the independent implementation's two scripts, exact assertions, deterministic log, and reproduction instructions.
    \item \path{intervention_bridge_audit/}: the intervention checks; its script uses exact rational and dual-number arithmetic, supplements automatic differentiation by the derivative-free five-point check, and protects every output-log claim by an assertion.
    \item \path{source_standardized_audit/}: the independent-mixture checks; its exact script, deterministic log, and reproduction instructions cover both the marginally standardized and general $P$-dependent weight derivatives.
    \item \path{estimated_eif_audit/}: the coherent estimated-EIF, smooth-weight, quadratic-remainder, negative-control, and modular-error checks, again with an exact script, deterministic log, and reproduction instructions.
    \item \path{targeting_audit/}: the exact targeting algebra, high-precision RK4 flow, exact source-block drift suite, deterministic logs, and their explicit evidence boundaries; the recorded flow environment is Python $3.11.11$ with \texttt{mpmath} $1.3.0$.
    \item \path{sequential_robustness_audit/}: the sequential boundary checks, two-transition union drift, and canonical-projection comparisons.
    \item \path{verma_variant_audit/}: the one-edge variant calculation of Remark~\ref{rem:verma-variant}.
    \item \path{finite_sample_simulation/}: the finite-sample comparison, with the implementation, recorded replicate-level outputs, diagnostics, deterministic log, and the summary figure.
\end{itemize}
\fi
\paragraph*{Finite-sample study: implementation settings}

The executable structural checks of the finite-sample implementation govern complete inclusion, targeting convergence, positivity, and range preservation, but deliberately do not turn Monte Carlo coverage into a pass/fail criterion.  The preliminary table is $\bar p_n(x)=\{N_x+1/2\}/(n+16)$, adding a Jeffreys pseudocount $1/2$ to each of the 32 cells.  Its extracted coordinate is accepted only when every reconstructed cell exceeds $10^{-8}$; otherwise a 55-step bisection locates a locally probed boundary point of the segment from the uniform $1/32$ reference law, after which the segment fraction is multiplied by $0.98$ and the accepted coordinate is verified strictly positive.  The extracted coordinate satisfies the rate condition of Corollary~\ref{cor:same-sample-one-step}: at the strictly interior truth, whose minimum cell $27/8000$ exceeds the acceptance floor, the smoothed empirical table is $\sqrt n$-consistent and the ambient recovery map $\widetilde\Theta_{\cG}$ is smooth near the true table, so the extracted coordinate is $\sqrt n$-consistent, hence $o_p(n^{-1/4})$, and the retraction is inactive with probability tending to one.

Each targeting update recomputes the EIF coordinate and performs a bounded numerical likelihood line search over locally probed signed bounds; nonpositive candidates receive infinite loss and every accepted iterate is verified positive.  The two signed bracket radii are capped at $2$; a boundary is located using the $10^{-10}$ cell floor, 45 bisection steps, and the same $0.98$ contraction.  The line search rejects candidate cells at or below $10^{-12}$.  At most 200 updates are attempted.  If the positive bracket or line search fails, or the update limit is reached before $|\mathbb P_n\phi|\leq10^{-7}$, the last positive iterate is retained and the run is classified as nonconverged.  The reported run has no such failures under this $10^{-7}$ rule.

On the empty-$\{W{=}0\}$ event, of probability $2^{-n}$ at this law, the sequential estimator--standard-error pair is set to the prespecified fallback $(1/2,1/2)$: the fallback point estimate lies in the response range, the accompanying standard error is a prespecified convention rather than an influence-function estimate, and the resulting Wald interval $1/2\pm z_{0.975}/2$ is not range-preserving.  No such event occurred in any of the $1500$ reported replicates; a synthetic all-$W{=}1$ sample test asserts exactly this pair, and the fallback branch is outside the executed path of the recorded run.

\paragraph*{Finite-sample study: secondary diagnostics}

In the study of Section~\ref{sec:finite-sample}, no targeting run failed, left the positive chart, or violated the response range, and the largest terminal empirical EIF residual was below $10^{-7}$.  Initial-coordinate retraction occurred in $12.4\%$, $1.4\%$, and $0\%$ of replicates at $n=1000$, $2500$, and $5000$, without excluding any sample.  The mean absolute one-step--targeted difference decreased from $1.85\times10^{-4}$ to $3.85\times10^{-5}$ over the three sample sizes.  As a secondary diagnostic --- \emph{interior} meaning a minimum fitted cell above $10^{-7}$ and convergence a score norm below $10^{-4}$ --- every interior converged full-chart MLE satisfied the numerical null-update property, with largest observed correction $3.82\times10^{-7}$; boundary fits were excluded from this diagnostic only, not from the primary results.

\paragraph*{Reproducibility}

The complete rational specifications of the two laws of the graph of Section~\ref{sec:counterexample}, the variant law $P^{+}$, the formal three-district law, and both sequential specifications appear above.  Thus every law and nuisance input used by the exact claims in Sections~\ref{sec:counterexample} and~\ref{sec:sequential-union} is available in the paper.  The accompanying supplemental archive supplies every script named in this appendix, deterministic logs for the exact-arithmetic suites, and concise reproduction instructions.  The main theorems do not rely on a numerical acceptance threshold: all finite-state algebraic checks use exact rational arithmetic, while the separately identified ODE experiment reports its precision and refinement diagnostics explicitly.

\paragraph*{Role of the computations}

Analytic arguments establish the general results of this paper.  Exact arithmetic verifies the specified finite laws and identities on which the worked examples rest, at the evidence class recorded for each suite in the grid above; the numerical flow experiment and the Monte Carlo study illustrate computational behaviour and are not cited as evidence for any analytic conclusion.  Two limits are not carried by a grid row: the binary count $d_C=2^{|T(C)|}$ alone is not evidence for the precise nonbinary coordinate convention, which the mixed-cardinality calculations of Section~\ref{app:B-mixed} exercise directly but only at two finite examples; and the cyclic two- and three-district calculations are sharp falsification tests for quotient-order arguments, not replacements for the arbitrary-graph proof of top-level parameter separation.

\subsection{Finite-sample study: implementation and results}\label{app:B-finite-sample}

This subsection records the implementation and the complete results of the finite-sample illustration of Section~\ref{sec:finite-sample}, at the rational law of Proposition~\ref{prop:sequential-strict-variance-gap}.  Section~\ref{sec:finite-sample} of the article states the design---target, sample sizes, replicates, seed and the three estimators---and reports the variance comparison; the exact variance constants are established in the proof of that proposition, in Section~\ref{app:proofs-sequential}.

\paragraph*{The sequential estimator as a same-sample pooled ratio}

With $\widehat e_n:=\mathbb P_n\mathbf 1(W=0)$ the full-sample empirical propensity, the weights $\mathbf 1(W_i=0)/\widehat e_n$ have empirical mean exactly one on the positive-denominator event $\{\widehat e_n>0\}$, so the augmented average $\mathbb P_n[\widehat Q_0+\mathbf 1(W=0)\{Y-\widehat Q_0\}/\widehat e_n]$ takes the same value for every constant $\widehat Q_0$, namely the pooled ratio
\[
    \widehat\psi_{\mathrm{seq}} =\frac{\sum_{i=1}^n\mathbf 1(W_i=0)Y_i} {\sum_{i=1}^n\mathbf 1(W_i=0)}.
\]
This ratio is the implementation used here; it is not the cross-fitted estimator \eqref{eq:sequential-cross-fitted-estimator}, which does not equal it in general.  By the delta method for a ratio of means, its influence function at $P_0$ is $\mathbf 1(W=0)(Y-\psi_0)/\pr_{P_0}(W=0)=\phi_{\mathrm{seq}}$, the all-correct sequential influence function in Proposition~\ref{prop:sequential-strict-variance-gap}, so the variance comparison below applies to it.

\paragraph*{Standard errors}

Each estimator is accompanied by a nominal $95\%$ Wald interval whose standard error is computed from its estimated influence function, matching the implementation exactly:
\[
    \widehat{\operatorname{se}} =\Bigl[\frac1n\,\mathbb P_n \bigl\{\widehat\phi-\mathbb P_n\widehat\phi\bigr\}^{2}\Bigr]^{1/2}.
\]

\paragraph*{Initialization}

Both chart estimators start from a preliminary chart coordinate obtained by recursive-head extraction from the Jeffreys-smoothed empirical $32$-cell table $\bar p_n(x)=\{N_x+1/2\}/(n+16)$, retracted towards the uniform reference law only when its forward reconstruction has a cell at or below the acceptance floor; this preliminary estimator satisfies the rate condition of Corollary~\ref{cor:same-sample-one-step}, and the justification, the retraction rule, and the search settings are recorded in Section~\ref{app:B-repro}.

\paragraph*{Practical targeting}

The targeted estimator performs a bounded numerical likelihood line search along a straight chart direction, recomputes the EIF coordinate at the updated law, and iterates until $|\mathbb P_n\phi|\leq10^{-7}$, every accepted iterate being verified strictly positive; it is the model-valid practical variant of Remark~\ref{rem:straight-line-numerical-targeting}, not a numerical claim about the exact flow of Theorem~\ref{thm:least-favorable-chart-flow}.

\paragraph*{Uncertainty and the exceptional event}

For the sequential estimator the plugged-in influence values have empirical mean exactly zero on the positive-denominator event, so the centered and uncentered standard errors coincide.  On the event that no sampled observation has $W_i=0$, of probability $2^{-n}$ at this law, the sequential ratio is undefined and is replaced by the prespecified fallback recorded in Section~\ref{app:B-repro}; the event occurred in none of the $1500$ replicates.

\paragraph*{Results and diagnostics}

Table~\ref{tab:finite-sample-variance-comparison} reports the results.

\begin{table}[H]
    \centering
    \caption{Finite-sample comparison at the rational law of Section~\ref{sec:counterexample}: $500$ replicates at each $n$.  Coverage is for nominal $95\%$ Wald intervals; under the nominal-coverage null its Monte Carlo standard error is $\{0.95(0.05)/500\}^{1/2}\approx0.00975$.  Bias, empirical standard deviation, mean standard error and mean width are rounded to five decimals, and the supplement records them in full precision; the coverage entries are exact multiples of $1/500$.}\label{tab:finite-sample-variance-comparison}
    \small
    \setlength{\tabcolsep}{4.5pt}
    \begin{tabular}{r l r r r r r}
    \toprule
        $n$ & Estimator & Bias & Empirical SD & Mean SE & Coverage & Mean width \\
    \midrule
        1000 & Chart one-step & $-0.00009$ & $0.01726$ & $0.01691$ & $0.952$ & $0.06629$ \\
        & Chart targeted & $\phantom{-}0.00004$ & $0.01727$ & $0.01691$ & $0.952$ & $0.06630$ \\
        & Sequential & $-0.00033$ & $0.02174$ & $0.02223$ & $0.944$ & $0.08713$ \\
    \addlinespace
        2500 & Chart one-step & $-0.00001$ & $0.01107$ & $0.01073$ & $0.950$ & $0.04206$ \\
        & Chart targeted & $\phantom{-}0.00004$ & $0.01109$ & $0.01073$ & $0.948$ & $0.04206$ \\
        & Sequential & $-0.00018$ & $0.01523$ & $0.01408$ & $0.928$ & $0.05517$ \\
    \addlinespace
        5000 & Chart one-step & $\phantom{-}0.00004$ & $0.00775$ & $0.00759$ & $0.938$ & $0.02973$ \\
        & Chart targeted & $\phantom{-}0.00007$ & $0.00775$ & $0.00759$ & $0.940$ & $0.02973$ \\
        & Sequential & $\phantom{-}0.00017$ & $0.01014$ & $0.00995$ & $0.940$ & $0.03902$ \\
    \bottomrule
    \end{tabular}
\end{table}

The observed sequential interval widths exceed those of the one-step and targeted estimators by approximately $31.2\%$ to $31.4\%$ across the three sample sizes, computed from the unrounded replicate summaries recorded in the supplement, in agreement with Section~\ref{sec:finite-sample} of the article.  The coverage values are descriptive rather than acceptance criteria; the sequential value $0.928$ at $n=2500$ is about $2.26$ null standard errors below $0.95$.

No targeting run failed, left the positive chart, or violated the response range, and no sample was excluded; retraction frequencies, the one-step--targeted differences, and the full-chart likelihood null-update check are recorded as secondary diagnostics in Section~\ref{app:B-repro}.
%% B.9: the finite-sample study moved from Section 8 of the article
%% B.10: the three motivating examples of Section~7.1 of the article, moved verbatim (P2 packaging candidate).
\subsection{Three shortcuts that fail: the exact laws}\label{app:B-shortcuts}

This section carries in full the three examples summarized in Section~\ref{sec:shortcuts} of the article: each exhibits, at an exact strictly positive law, the failure of one natural shortcut in the restricted sequential construction, and together they motivate the scope conditions stated there.

The first shortcut deletes earlier source values from the histories of later propensities.  The product of the resulting inverse propensities need not have mean one, so it need not define a change of probability.  For example, let $A_1\prec A_2$ be binary and suppose
\[
    \pr(A_1=1)=\frac12, \qquad \pr(A_2=1\mid A_1=1)=\frac45, \qquad \pr(A_2=1\mid A_1=0)=\frac15.
\]
Then $\pr(A_2=1)=1/2$.  At the anchor $(1,1)$, using the two marginal denominators gives
\[
    \E\!\left[ \frac{\mathbf 1(A_1=1)}{1/2} \frac{\mathbf 1(A_2=1)}{1/2} \right] =\frac85,
\]
whereas keeping $A_1=1$ in the second denominator replaces $1/2$ by $4/5$ and makes the expectation one.

The second shortcut allows an intervention source inside a transition block by restricting the order only among active vertices.  Consider
\[
    L\leftrightarrow Y, \qquad L\to A\to Y,
\]
under $\operatorname{do}(A=1)$.  The active graph has the single district $\{L,Y\}$, so the literal block-order condition is vacuous after restricting to the active vertices.  Nevertheless every observational topological order has $L\prec A\prec Y$: the source lies inside the active block.  A candidate weight may then depend on $L$, part of the current transition, and the true district residual need not be centered against that weight.  The exact positive law and nuisance specification in \eqref{eq:interleaved-source-rational-sem}--\eqref{eq:interleaved-source-nuisance-fixture} of Section~\ref{app:B-sequential} exhibit a correct outcome residual and a weight of this form whose drift is $9/160$, rather than zero.

The third shortcut substitutes conditioning for fixing at a source that is bidirected-confounded with its district, even when the source precedes the block.  In the graph $A\leftrightarrow Y$, fixing $A$ under a node intervention gives the kernel $q_Y(y)=p(y)$, whereas conditioning on $A$ gives $p(y\mid a)$.  At the strictly positive law
\[
    p(0,0)=p(1,1)=\frac25, \qquad p(0,1)=p(1,0)=\frac1{10},
\]
the two probabilities of $Y=1$ are $1/2$ and $4/5$, respectively, so the ordinary weight $\mathbf 1(A=1)/\pr(A=1)$ transports to the wrong law.  Placing the source before the block does not exclude this example: what removes $A$ from a bidirected district is the fixing divisor $p(a\mid y)$, not a propensity measurable with respect to the past.
%% B.10: the three motivating examples moved from Section 7.1
%% B.11: the worked non-Markov mixture example of Section~5.4 of the article, moved verbatim (candidate d1).
\subsection{A strictly positive mixture that is not Markov}\label{app:B-mixture}

This section carries in full the counterexample summarized in the independent-draw extension of Section~\ref{sec:intervention-bridge} of the article: an equal mixture of two strictly positive component laws that is not Markov for its active graph, which is why the active chart is used componentwise there rather than assigned to the mixture.

For example, let $A\to X$ and $A\to Z$, with binary $X,Z$ and no edge between them in the active graph.  Suppose that under $\operatorname{do}(A=0)$ the two responses are independent $\operatorname{Bernoulli}(1/4)$ variables and that under $\operatorname{do}(A=1)$ they are independent $\operatorname{Bernoulli}(3/4)$ variables.  Both fixed laws are strictly positive and Markov for the two isolated active vertices.  Their equal mixture, $Q_{\mathrm{mix}}$ say, satisfies
\[
    \pr_{Q_{\mathrm{mix}}}(X=1,Z=1)=\frac{5}{16}\ne\frac14 =\pr_{Q_{\mathrm{mix}}}(X=1)\,\pr_{Q_{\mathrm{mix}}}(Z=1),
\]
so it is not Markov, and hence not nested Markov, for that active graph.
%% B.11: the non-Markov mixture example moved from Section 5.4

\bibliographystyle{imsart-number} % Style BST file (imsart-number.bst or imsart-nameyear.bst)
%\bibliography{bibliography}       % Bibliography file (usually '*.bib')
\bibliography{ref.bib}

\end{document}